\numberwithin{equation}{subsection}
\numberwithin{figure}{subsection}
\newlist{clist}{enumerate}{1}
\setlist*[clist]{label=(\roman*), nosep}
\crefname{thm}{Theorem}{Theorems}
\crefname{thm0}{Theorem}{Themrems}
\crefname{dfn}{Definition}{Definitions}
\crefname{prp}{Proposition}{Propositions}
\crefname{lem}{Lemma}{Lemmas}
\crefname{cor}{Corollary}{Corollaries}
\crefname{clm}{Claim}{Claims}
\crefname{fct}{Fact}{Facts}
\crefname{rmk}{Remark}{Remarks}
\crefname{eg}{Example}{Examples}
\crefname{figure}{Figure}{Figures}
\crefname{table}{Table}{Tables}
\crefname{section}{\S\!}{\S\S\!}
\crefname{subsection}{\S\!}{\S\S\!}
\crefname{subsubsection}{\S\!}{\S\S\!}
\crefname{appendix}{Appendix}{Appendices}
\crefname{equation}{}{}
\theoremstyle{definition}
\newtheorem{thm}{Theorem}[subsection]
\newtheorem{thm0}{Theorem}
\newtheorem{dfn}[thm]{Definition}
\newtheorem{prp}[thm]{Proposition}
\newtheorem{lem}[thm]{Lemma}
\newtheorem{cor}[thm]{Corollary}
\newtheorem{rmk}[thm]{Remark}
\newtheorem{eg}[thm]{Example}
\newtheorem*{rmk*}{Remark}
\newtheorem*{thm*}{Theorem}
\newtheorem*{Notation}{Notations}
\newtheorem*{Ackn}{Acknowledgements}
\newtheorem*{Org}{Organization}
\newcommand{\ol}{\overline}
\newcommand{\wt}{\widetilde}
\newcommand{\ve}{\varepsilon}
\newcommand{\pdd}{\partial}
\newcommand{\ceq}{\coloneqq} 
\newcommand{\bs}{\mathbin{\setminus}}
\newcommand{\xr}{\xrightarrow}
\newcommand{\inj}{\hookrightarrow}
\newcommand{\lsto}{\xr{\sim}}
\newcommand{\rarr}{\ratio\Longleftrightarrow}
\newcommand{\bbN}{\mathbb{N}}
\newcommand{\bbZ}{\mathbb{Z}}
\newcommand{\bbR}{\mathbb{R}}
\newcommand{\bbC}{\mathbb{C}}
\newcommand{\clF}{\mathcal{F}}
\newcommand{\clG}{\mathcal{G}}
\newcommand{\clE}{\mathcal{E}}
\newcommand{\clJ}{\mathcal{J}}
\newcommand{\frB}{\mathfrak{B}}
\newcommand{\frU}{\mathfrak{U}}
\newcommand{\vac}{\ket{0}}
\newcommand{\bfF}{\mathbf{F}}
\newcommand{\bfV}{\mathbf{V}}
\newcommand{\loc}{\mathrm{loc}}
\newcommand{\sfM}{\mathsf{M}}
\newcommand{\Lin}{\mathsf{Mod}\,\bbC}
\newcommand{\CAlg}{\mathsf{CAlg}}
\newcommand{\sfA}{\mathsf{A}}
\newcommand{\PFA}{\mathsf{PFA}}
\newcommand{\FA}{\mathsf{FA}}
\newcommand{\PCSh}{\mathsf{PCSh}}
\newcommand{\bgrVA}{\mathsf{grVA}^{\mathsf{bb}}}
\newcommand{\cbgrVA}{\mathsf{grComVA}^{\mathsf{bb}}}
\newcommand{\fcbgrVA}{\mathsf{grComVA}^{\mathsf{bb}, \mathsf{fin}}}
\newcommand{\LPFA}{\mathsf{PFA}^{\mathsf{loc}}(\bbC)}
\newcommand{\LFA}{\mathsf{FA}^{\mathsf{loc}}(\bbC)}
\newcommand{\HolFA}{\mathsf{HolFA}_{\bbC}}
\newcommand{\HolPFA}{\mathsf{HolPFA}_{\bbC}}
\newcommand{\LHFA}{\mathsf{HolFA}^{\mathsf{loc}}_{\bbC}}
\newcommand{\LCS}{\mathsf{LCS}_{\bbC}}
\newcommand{\abs}[1]{\left| #1 \right|}
\newcommand{\dbr}[1]{\llbracket #1 \rrbracket} 
\newcommand{\dpr}[1]{(\!( #1 )\!)}
\DeclareMathOperator{\id}{id}
\DeclareMathOperator{\End}{End}
\DeclareMathOperator{\Hom}{Hom}
\DeclareMathOperator{\Ker}{Ker}
\DeclareMathOperator{\Cok}{Coker}
\DeclareMathOperator{\Res}{Res}
\DeclareMathOperator{\Spn}{Span}
\DeclareMathOperator{\Fld}{Field}
\DeclareMathOperator{\Conf}{Conf}
\DeclareMathOperator{\Ob}{Ob}
\DeclareMathOperator{\oCom}{\mathcal{C}{\kern-0.1em}\mathit{om}}
\DeclareMathOperator{\oEnd}{\mathcal{E}{\kern-0.1em}\mathit{nd}}
\DeclareMathOperator{\conv}{conv}
\begin{document}

\title{A note on vertex algebras and Costello-Gwilliam factorization algebras}
\author{Yusuke Nishinaka}
\date{2025.01.29} 
\address{Graduate School of Mathematics, Nagoya University.
 Furocho, Chikusaku, Nagoya, Japan, 464-8602.}
\email{m21035a@math.nagoya-u.ac.jp}
\thanks{This work is supported by JSPS Research Fellowship for Young Scientists (No.\,23KJ1120).}
\keywords{}

\begin{abstract}
We show that the construction of vertex algebras from Costello-Gwilliam factorization algebras on $\mathbb{C}$ can be achieved without the discreteness condition on the weight spaces. Furthermore, we construct locally constant factorization algebras from commutative vertex algebras, and discuss the relationship between this construction and the jet algebras of commutative algebras. 
\end{abstract}

\maketitle
\tableofcontents

\setcounter{section}{-1}
\section{Introduction}\label{s:0}

The notion of factorization algebras, introduced by Costello and Gwilliam, is a mathematical formulation of the structure of observables in a field theory, whether in classical or quantum field theory. In \cite{CG1, CG2}, they developed the theory of factorization algebras and the Batalin-Vilkovisky (BV) formalism for perturbative quantum field theory. One of the main results of \cite{CG1, CG2} is the deformation quantization theorem, which states that the factorization algebra associated with a classical field theory admits a quantization when certain obstruction groups vanish. This quantization theorem is proved using the techniques of perturbative renormalization developed mathematically by Costello \cite{C}. 

On the other hand,  it is known that the notion of vertex algebras, introduced by Borcherds \cite{Bo}, is an algebraic framework of the chiral part of two-dimensional conformal field theory. Thus, it is naturally expected that factorization algebras on the complex plane $\bbC$ with suitable conditions recover vertex algebras. In fact, Costello and Gwilliam showed in \cite[\S5.3]{CG1} how to construct vertex algebras from prefactorization algebras called $S^1$-equivariant holomorphically translation invariant prefactorization algebras. Conversely, it is also expected that the construction of factorization algebras from vertex algebras is possible. Costello and Gwilliam  constructed using factorization envelopes, the Kac-Moody factorization algebra and the quantum observables of the $\beta\gamma$ system in \cite[\S5.4, \S5.5]{CG1}, which correspond respectively to the affine vertex algebra and the $\beta\gamma$ vertex algebra. Using the same method, Williams \cite{W} constructed Virasoro factorization algebra, which corresponds to the Virasoro vertex algebra. For general vertex algebras, Bruegmann \cite{B} constructed factorization algebras from vertex algebras. However, the connection between the factorization algebras constructed by him and those constructed as factorization envelops in \cite[\S5.4, \S5.5]{CG1} and \cite{W} is not established (see the example in the introduction of \cite{B}). 

We now remark that there is another notion of factorization algebras introduced by Beilinson and Drinfeld \cite{BD}. Beilinson-Drinfeld factorization algebras are essentially the same as chiral algebras, which provide an algebro-geometric formulation of two-dimensional conformal field theory. Furthermore, it is known that the category of translation-equivariant chiral algebras on the affine line $\mathbb{A}^1$ is equivalent to that of vertex algebras \cite[\S0.15]{BD} (see also \cite[Corollary A.2]{BDHK}).
In \cite[\S1.4.1]{CG1}, Costello and Gwilliam heuristically explain the relationship between the two notions of factorization algebras, but establishing this connection as a rigorous mathematical theorem remains an open problem. 

\medskip

This note began with the motivation to identify the class of factorization algebras that gives a categorical equivalence to vertex algebras. Unfortunately, this goal has not yet been achieved and appears to be very challenging. However, we have uncovered several new aspects of the relationship between vertex algebras and factorization algebras, which are outlined below. 

The first result is about the construction of vertex algebras from prefactorization algebras with values in the category $\LCS$ of locally convex spaces: 

\begin{thm0}[{\cref{thm:PFAVA}, \cref{prp:LPFAcomVA}}]
If  $S^1\ltimes \bbC$-equivariant prefactorization algebra $\clF$ on the complex number plane $\bbC$ with values in $\LCS$ is holomorphic (\cref{dfn:holoPFA}), then the linear space 
\[
\bfV(\clF)\ceq \bigoplus_{\Delta\in \bbZ}\clF^0_\Delta
\]
has the structure of a $\bbZ$-graded vertex algebra induced by the prefactorization algebra structure and the $S^1\ltimes \bbC$-equivariant structure of $\clF$. Here $\clF^0$ denotes the costalk at $0\in \bbC$, and $\clF^0_\Delta\subset \clF^0$ denotes the weight space with respect to the rotation action of $S^1$. Additionally, if $\clF$ is locally constant (\cref{dfn:LPFA}), then the vertex algebra $\bfV(\clF)$ is commutative. 
\end{thm0}

Our construction is similar to the one proposed by Costello and Gwilliam in \cite[\S5.3]{CG1} or the one by Bruegmann in \cite[\S1.3]{B}, but it offers some advantages: 

\begin{itemize}
\item 
As we mentioned above, Costello and Gwilliam constructed vertex algebras from prefactorization algebras called $S^1$-equivariant holomorphically translation invariant prefactorization algebras. These prefactorization algebras take values in the category of cochain complexes of differentiable vector spaces (see \cite[\S3.5.1]{CG1} for the definition of differentiable vector spaces). A notable feature of the category of differentiable vector spaces is that it is an abelian category containing many classes of topological vector spaces as subcategories. This makes it much easier to perform homological algebra since, as is well-known, the category of topological vector spaces is not abelian. However, as shown in \cite[Theorem 5.3.3]{CG1}, the vertex algebra structure emerges at the level of cohomology, so it is not essential for prefactorization algebras to take values in cochain complex in order to produce vertex algebras. Since the notion of  differentiable vector spaces is relatively unfamiliar, especially to algebraists, it seems better to adopt $\LCS$ as the target category of prefactorization algebras. 

\item
Bruegmann also constructed vertex algebras from prefactorization algebras in \cite[\S1.3]{B}. He adopted the category of bornological vector spaces as the target for prefactorization algebras. However, what he actually constructed from prefactorization algebras were geometric vertex algebras, and he also proved in \cite{B2} the categorical equivalence between geometric vertex algebras and vertex algebras, which is a simplified version of Huang's result in \cite{H}. Consequently, his construction is divided into two parts, making the direct connection between vertex algebras and prefactorization algebras unclear. In contrast, we construct vertex algebras directly. 

\item
In both the constructions by Costello and Gwilliam, and by Bruegmann, a discreteness condition is imposed on prefactorization algebras (see condition (iii) in \cite[Theorem 5.3.3]{CG1} and the first paragraph of \cite[\S1.3]{B}). We show that this discreteness condition is unnecessary, as expected by Bruegmann, using the complex analysis of functions with values in a locally convex space. 
\end{itemize}

\medskip

The second result concerns the inverse construction. We construct locally constant factorization algebras from commutative vertex algebras: 

\begin{thm0}[{\cref{prp:constLHFA}, \cref{prp:VbfVisom}}]\label{thm0:2}
For a commutative $\bbZ$-graded vertex algebra 
\[
V=\bigoplus_{\Delta\in \bbZ}V_\Delta
\]
 such that $V_\Delta=0$ ($\Delta\ll 0$) and $\dim_{\bbC}V_\Delta<\infty$ ($\Delta\in \bbZ$), one can construct a locally constant holomorphic factorization algebra $\bfF^{\loc}_{\ol{V}}$, which satisfies
\[
\bfV(\bfF^{\loc}_{\ol{V}})\cong V
\]
as $\bbZ$-graded vertex algebras. 
\end{thm0}

The construction of $\bfF^{\loc}_{\ol{V}}$ is briefly as follows: In \cref{ss:constLCFA}, we construct a locally constant factorization algebra $\bfF^{\loc}_A$ on $\bbC$ with values in the category $\Lin$ of linear spaces from a commutative algebra $A$. The condition $V_\Delta=0$ for $\Delta\ll 0$ guarantees that $\ol{V}\ceq \prod_{\Delta\in \bbZ}V_\Delta$ has the canonical structure of a commutative algebra induced from that of $V$. Thus, we obtain a locally constant factorization algebra $\bfF^{\loc}_{\ol{V}}$ with values in $\Lin$. Additionally, by using the conformal weight and the translation operator of $V$, we can define an $S^1\ltimes \bbC$-equivariant structure on $\bfF^{\loc}_{\ol{V}}$. Imposing the condition $\dim_{\bbC}V_\Delta<\infty$ for $\Delta\in \bbZ$, the canonical topology of $V_\Delta$ ensures that $\bfF^{\loc}_{\ol{V}}$ becomes a factorization algebra with values in $\LCS$. Finally, it can be verified that $\bfF^{\loc}_{\ol{V}}$ is holomorphic. 

As mentioned above, Bruegmann constructed factorization algebras from vertex algebras in \cite[\S1.4]{B}. The relationship between our construction and his has not been established yet. Furthermore, unfortunately, our construction cannot yet be extended to general vertex algebras. However, our approach leads to the following result about jet construction of differential algebras: 

\begin{thm0}[{\cref{prp:bfFjet}}]\label{thm0:3}
Let $A$ be a finitely generated commutative algebra, and denote by $\clJ\!A$ the (infinite) jet algebra of $A$. For a commutative $\bbZ$-graded vertex algebras $V=\bigoplus_{\Delta\in\bbZ}V_\Delta$ satisfying the conditions in \cref{thm0:2} and a morphism $\varphi\colon \bfF^{\loc}_A\to \bfF^{\loc}_{V}$ of factorization algebras, there exists a unique morphism $\wt{\varphi}\colon \bfF^{\loc}_{\ol{\clJ\!A}}\to \bfF^{\loc}_{\ol{V}}$ of $S^1\ltimes \bbC$-equivariant factorization algebras which commutes
\[
\begin{tikzcd}[row sep=huge, column sep=huge]
\bfF^{\loc}_A \arrow[r] \arrow[d, "\varphi"'] & 
\bfF^{\loc}_{\ol{\clJ\!A}} \arrow[d, "\wt{\varphi}"]\\
\bfF^{\loc}_{V} \arrow[r] & 
\bfF^{\loc}_{\ol{V}}
\end{tikzcd}
\]
Here horizontal arrows denote the morphisms of factorization algebras induced from the canonical injections $A\inj \clJ\!A\inj \ol{\clJ\!A}$ and $V\inj\ol{V}$. 
\end{thm0}

The jet construction plays an important role in the geometric study of vertex algebras: It is known that every vertex algebra is canonically filtered \cite{L}, and the spectrum of the associated graded algebra is called the singular support. The singular support is a vertex Poisson scheme, which can be viewed as a chiral analogue of Poisson schemes. Since vertex algebras are quite complicated, it is often hoped that the problems can be reduced to the study of geometric objects associated with a vertex algebra, such as the singular support. Research from this perspective has been growing; we mention here only \cite{A2, AM, A3}. Jet schemes of affine Poisson schemes are a typical example of vertex Poisson schemes \cite{A}. Therefore, it might be expected that \cref{thm0:3} offers new insights into such geometric studies of vertex algebras. 

\begin{Org}
\cref{s:FA} is an exposition of the notion of factorization algebras. Sections \cref{ss:Pcsh} to \cref{ss:equivFA} cover preliminaries on precosheaves, factorization algebras, and equivariant factorization algebras, respectively. In \cref{ss:exten}, we construct prefactorization algebras from those defined on a given open basis. This extension construction from an open basis generalizes the idea presented in the example of \cite[\S3.1.1]{CG1}, where the construction of factorization algebras on $\bbR$ from associative algebras over $\bbR$ is discussed. We note that our setting of construction differs slightly from that in \cite[\S7.2]{CG1} (see \cref{exten}). In \cref{ss:LFA}, we recall the definition of locally constant factorization algebras on $\bbC$, and discuss the basic properties of this class of factorization algebras. Although these properties may be well-known to experts, we could not find them in the literature, including \cite{CG1, CG2}. 

\cref{s:LCFAcomalg} provides the categorical equivalence between locally constant factorization algebras on $\bbC$ with values in $\Lin$ and commutative algebras over $\bbC$. Although this is a special case of the equivalence of $(\infty, 1)$-categories between locally constant factorization algebras on $\bbR^n$ and $E_n$-algebras stated in \cite[Theorem 6.4.2]{CG1}, we give an explicit proof since a comprehensive proof for the general case is not available. In \cref{ss:constcomalg}, we construct commutative algebras from locally constant prefactorization algebras. Conversely, in \cref{ss:constLCFA}, we construct locally constant factorization algebras from commutative algebras using the extension from an open basis discussed in \cref{ss:exten}. Finally, \cref{ss:adjcomalgLCPFA} gives a proof of the categorical equivalence. 

\cref{s:constVALFA} is the main part of this note. \cref{ss:VA} provides a preliminary on vertex algebras, including the jet construction of differential algebras. In \cref{ss:constVA}, we construct vertex algebras from prefactorization algebras on $\bbC$ with values in $\LCS$, called holomorphic. As mentioned above, our construction does not require prefactorization algebras to be discrete and is not divided into two parts via geometric vertex algebras. In \cref{ss:constcomVA}, we show that if holomorphic prefactorization algebras are assumed to be locally constant, then the resulting vertex algebras are commutative. \cref{ss:constLocFA} gives a partial inverse construction, namely, the construction of locally constant holomorphic factorization algebras from commutative vertex algebras. As explained briefly above, this construction is achieved by modifying the method in \cref{ss:constLCFA}. Furthermore, in this \cref{ss:constLocFA}, we discuss the relationship between our construction and jet algebras of commutative algebras. 

\cref{s:App} develops the calculus of functions with values in a locally convex space and is structured to prove the Laurent series expansion (\cref{thm:Laurent}). This appendix is used to remove the discreteness condition imposed on prefactorization algebras by Costello-Gwilliam and Bruegmann. While it is well-known that complex analysis holds for functions with values in a Banach space, we could not find literature that extends this to the setting of locally convex spaces. Although this generalization is a straightforward exercise, we give detailed proofs to ensure the completeness of this note and to make the content more accessible to algebraists familiar with vertex algebras. 
\end{Org}

\begin{Ackn}
I would like to thank my advisor Shintarou Yanagida for his kind support. This note was developed following several discussions held with him. I would also like to thank Alexander Schenkel for pointing out my misunderstandings in the first version of this note. Furthermore, I am grateful to Yuto Moriwaki for valuable discussions we had in Hakone and Nagoya. This work is supported by JSPS Research Fellowship for Young Scientists (No.\,23KJ1120).
\end{Ackn}

\begin{Notation}
Here we list the notations used throughout in this note. Some of them are also introduced in each section. 

\begin{itemize}
\item 
The symbol $\varnothing$ denotes the empty set. 

\item 
The symbol $\bbN$ denotes the set $\{0, 1, 2, \ldots\}$ of non-negative integers. 

\item 
The symbol $\delta_{m, n}$ denotes the Kronecker delta. 

\item 
For a positive integer $m\in \bbZ_{>0}$, we denote $[m]\ceq \{1, \ldots, m\}$. 

\item 
For sets $E_1, \ldots, E_n$, we denote $E_1\sqcup \cdots \sqcup E_n\ceq E_1\cup \cdots \cup E_n$ if $E_i\cap E_j=\varnothing$ for all $i\neq j$. 

\item 
For a topological space $X$, we denote by $\frU_X$ the set of all open subsets of $X$, and by $\frU_X(x)$ the set of all open neighborhood of $x\in X$. 

\item 
For a topological space $X$, an an open basis $\frB$ of $X$ and an open subset $U\subset X$, we denote 
\[
\frB(U)\ceq \{L\in \frB\mid L\subset U\}. 
\]

\item 
For $R\in \bbR_{>0}$ and $z\in \bbC$, we denote by $D_R(z)\ceq\{\zeta\in \bbC\mid |\zeta-z|<R\}$ the open disk, and by $D_R^{\times}(z)\ceq D_R(z)\bs\{z\}$ the punctured disk. Also, by convention we denote $D_{\infty}(z)\ceq \bbC$ for all $z\in \bbC$. 

\item
For $n\in \bbZ_{>0}$ and a subset $E\subset\bbC$, we denote 
\[
\Conf_n(E)\ceq\{(z_1, \ldots, z_n)\in E^n\mid z_i\neq z_j\ (i, j\in[n], i\neq j)\}.
\] 

\item 
We denote by $\Lin$ the category of linear spaces over $\bbC$. 

\item 
We denote by $\LCS$ the category of locally convex spaces over $\bbC$, whose morphisms are continuous linear maps.  

\item
A commutative $\bbC$-algebra always means a unital and associative one, and a morphism of commutative $\bbC$-algebras always means a $\bbC$-linear map which preserves the multiplications and the units. We denote by $\CAlg_{\bbC}$ the category of commutative $\bbC$-algebras. 
\end{itemize}
\end{Notation}

\section{Factorization algebras}\label{s:FA}

This section is an exposition of the notions of precosheaves, prefactorization algebras, and equivariant prefactorization algebras, which is minimum for our usage in this note. 

\subsection{Precosheaves}\label{ss:Pcsh}

In this subsection we recall the notion of precosheaves and costalks. 
For a topological space $X$, we denote by $\frU_X$ the set of all open subsets of $X$. 

\begin{dfn}\label{dfn:pcoshf}
Let $X$ be a topological space, and $\sfA$ a category. A map $\clF\colon \frU_X\to \Ob\sfA$ together with a family of morphisms
\[
\clF^U_V\colon \clF(U)\to \clF(V) \quad
(U, V\in \frU_X, U\subset V)
\] 
of $\sfA$ is called a \emph{precosheaf} on $X$ with valued in $\sfA$ if the following conditions hold: 
\begin{clist}
\item 
For open subsets $U, V, W\subset X$ such that $U\subset V\subset W$, we have $\clF^V_W\circ \clF^U_V=\clF^U_W$. 

\item 
For an open subset $U\subset X$, we have $\clF^U_U=\id_{\clF(U)}$.  
\end{clist}
\end{dfn}

Let $X$ be a topological space and $\sfA$ a category. The set $\frU_X$ is an ordered set with respect to inclusions, so it can be viewed as a category. A precosheaf $\clF$ is nothing but a functor $\clF\colon \frU_X\to\sfA$.

\begin{dfn}
Let $\clF, \clG\colon \frU_X\to \sfA$ be precosheaves and 
\[
\varphi_U\colon \clF(U)\to \clG(U)
\]
be a morphism of $\sfA$ for each open subset $U\subset X$. A family $\varphi=\{\varphi_U\}_{U\in \frU_X}$ is called a \emph{morphism of precosheaves} if 
\[
\clG^U_V\circ \varphi_U=\varphi_V\circ \clF^U_V
\]
for all open subsets $U\subset V\subset X$. 
In other words, a morphism $\varphi\colon \clF\to \clG$ of precosheves is a natural transformation from the functor $\clF$ to $\clG$. 
\end{dfn}

We denote by $\PCSh(X, \sfA)$ the category of precosheaves $\clF\colon \frU_X\to \sfA$. 

Now, fix a topological space $X$ and a category $\sfA$ which admits projective limits. 
For $x\in X$ and a precosheaf $\clF\colon \frU_X\to \sfA$, notice that 
\[
(\{\clF(U)\}_{U\in \frU_X(x)}, \{\clF^U_V\}_{V\supset U})
\]
is a projective system in $\sfA$. Here $\frU_X(x)$ denotes the set of all open neighborhood of $x$, and is ordered by reverse inclusion. 

\begin{dfn}
For $x\in X$ and a precosheaf $\clF\colon \frU_X\to \sfA$, let
\[
\clF^x\ceq \varprojlim_{U\in \frU_X(x)}\clF(U), 
\]
and denote by 
\[
\clF^x_U\colon \clF^x\to \clF(U)  
\]
the canonical projection for each $U\in \frU_X(x)$. The object $\clF^x$ is called the \emph{costalk} of $\clF$ at $x\in X$. 
\end{dfn}

Let  $x\in X$, and $\clF, \clG\colon \frU_X\to \sfA$ be precosheaves. For a morphism $\varphi\colon \clF\to \clG$ of precosheaves, there exists a unique morphism $\varphi^x\colon \clF^x\to\clG^x$ such that 
\[
\clG^x_U\circ \varphi^x=\varphi_U\circ \clF^x_U
\]
 for all $U\in \frU_X(x)$. 
Thus, the map $\clF\mapsto \clF^x$ gives a functor 
\[
(-)^x\colon
\PCSh(X, \sfA)\to \sfA. 
\]

\subsection{Prefactorization algebras}

In this subsection we cite from \cite{CG1} the notion of prefactorization algebras. 
Let $X$ be a topological space, and $\sfM$ a symmetric monoidal category. We use the following notations: 
\begin{itemize}
\item 
For subsets $U_1, \ldots, U_n\subset X$, we denote $U_1\sqcup \cdots \sqcup U_n\ceq U_1\cup\cdots \cup U_n$ if $U_i  \cap U_j=\varnothing$ for all $i\neq j$. 

\item 
We denote by $\otimes$ the tensor product of $\sfM$. 

\item 
We denote by $1_{\sfM}$ the unit object of $\sfM$. 
\end{itemize}

\medskip

The following definition of prefactorization algebras is the same as \cite[Definition 1.2.1]{B}, and it appears different from that in \cite{CG1}. However, they are essentially the same. See \cref{rmk:PFA} after the definition. 

\begin{dfn}[{\cite[\S3.1.1, \S3.1.2]{CG1}}]\label{dfn:PFA}
Consider: 
\begin{itemize}
\item 
a precosheaf $\clF\colon \frU_X\to \sfM$, 

\item 
a family of morphisms in $\sfM$
\[
\clF^{U, V}_W\colon \clF(U)\otimes \clF(V)\to \clF(W) \quad
(U, V, W\in \frU_X, U\sqcup V\subset W),
\]
called the multiplication,

\item 
a morphism $\eta\colon 1_{\sfM}\to \clF(\varnothing)$, called the unit.
\end{itemize}
A tuple $(\clF, \{\clF^{U, V}_W\}, \eta)$ is called a \emph{prefactorization algebra} on $X$ with valued in $\sfM$ if it satisfies the following conditions: 
\begin{clist}
\item 
For open subsets $U_1, U_2, V, W\subset X$ such that $U_1\sqcup U_2\subset V\subset W$, we have
\[
\clF^V_W\circ \clF^{U_1, U_2}_V=\clF^{U_1, U_2}_W. 
\]
Also, for open subsets $U_1, U_2, V_1, V_2, W\subset X$ such that
\[
U_1\subset V_1, \quad
U_2\subset V_2, \quad
V_1\sqcup V_2\subset W, 
\]
we have
\[
\clF^{V_1, V_2}_W\circ (\clF^{U_1}_{V_1}\otimes \clF^{U_2}_{V_2})=\clF^{U_1, U_2}_W. 
\]

\item 
For open subsets $U_1, U_2, V\subset X$ such that $U_1\sqcup U_2\subset V$, the following diagram commutes: 
\[
\begin{tikzcd}[row sep=huge, column sep=huge]
\clF(U_1)\otimes \clF(U_2) \arrow[r, "\sim"] \arrow[rd, "\clF^{U_1, U_2}_V"'] & \clF(U_2)\otimes \clF(U_1) \arrow[d, "\clF^{U_2, U_1}_V"]\\
& \clF(V)
\end{tikzcd}
\]
Here $\clF(U_1)\otimes \clF(U_2)\lsto \clF(U_2)\otimes \clF(U_1)$ denotes the brading of $\sfM$. 

\item (Associativity) 
For open subsets $U_1, U_2, U_3, V_1, V_2, W\subset X$ such that
\[
U_1\sqcup U_2\subset V_1,\quad 
U_2\sqcup U_3\subset V_2,\quad 
U_1\sqcup V_2\subset W, \quad
U_3\sqcup V_1\subset W, 
\]
the following diagram commutes: 
\begin{center}
\begin{tikzpicture}[auto]
\node (1) at (-3., 2) {$(\clF(U_1)\otimes \clF(U_2))\otimes \clF(U_3)$}; \node (2) at (3., 2) {$\clF(U_1)\otimes (\clF(U_2)\otimes \clF(U_3))$};
\node (4) at (-3, 0) {$\clF(V_1)\otimes \clF(U_3)$};
\node (5) at (3, 0) {$\clF(U_1)\otimes \clF(V_2)$};
\node (3) at (0, -1.5) {$\clF(W)$}; 
\draw[->] (1) to node {$\sim$} (2);
\draw[->] (1) to node [swap]{$\scriptstyle \clF^{U_1, U_2}_{V_1}\otimes \id_{\clF(U_3)}$} (4);
\draw[->] (2) to node {$\scriptstyle \id_{\clF(U_1)}\otimes\clF^{U_2, U_3}_{V_2}$} (5);
\draw[->] (4) to node [swap]{$\scriptstyle \clF^{V_1, U_3}_W$} (3);
\draw[->] (5) to node {$\scriptstyle \clF^{U_1, V_2}_W$} (3); 
\end{tikzpicture}
\end{center}
Here $(\clF(U_1)\otimes \clF(U_2))\otimes \clF(U_3)\lsto \clF(U_1)\otimes (\clF(U_2)\otimes \clF(U_3))$ denotes the associator of $\sfM$. 

\item (Unit) 
For an open subset $U\subset X$, the following diagrams commute: 
\[
\begin{tikzcd}[row sep=huge, column sep=huge]
1_{\sfA}\otimes \clF(U) \arrow[r, "\eta\otimes \id_{\clF(U)}"] \arrow[rd, "\sim"', sloped] &  \clF(\varnothing) \otimes \clF(U) \arrow[d, "\clF^{\varnothing, U}_U"] \\
& \clF(U)
\end{tikzcd}
\]
Here $1_{\sfM}\otimes \clF(U)\lsto \clF(U)$ denotes the left unitor of $\sfM$. 
\end{clist}
\end{dfn}

\begin{rmk}\label{rmk:PFA}
Let $(\clF, \{m^{U_1, \ldots, U_n}_V\}, \eta)$ be an unital prefactorization algebra in the sense of \cite[\S3.1.1, \S3.1.2]{CG1}, i.e., $(\clF, \{m^{U_1, \ldots, U_n}_V\}, \eta)$ is a data consisting of
\begin{itemize}
\item 
a map $\clF\colon \frU_X\to \Ob\sfM$, 

\item 
a morphism 
\[
m^{U_1, \ldots, U_n}_V\colon \bigotimes_{i=1}^n\clF(U_i)\to \clF(V)
\]
for open subsets $U_1, \ldots, U_n, V\subset X$ such that $U_1\sqcup \cdots \sqcup U_n\subset V$, 

\item 
a morphism $\eta\colon 1_{\sfM}\to \clF(\varnothing)$,
\end{itemize}
which satisfies the certain compatibility. Then $\clF$ is a precosheaf by $\clF^U_V\ceq m^U_V$, and the precosheaf $\clF\colon \frU_X\to \sfM$ is a prefactorization algebra in the sense of \cref{dfn:PFA} by the multiplication $\clF^{U, V}_W\ceq m^{U, V}_W$ and the unit $\eta$. 
Conversely, let $(\clF, \{\clF^{U, V}_W\}, \eta)$ be a prefactorization algebra in the sense of \cref{dfn:PFA}. For $n\in \bbN$, $n\ge 2$ and open subsets $U_1, \ldots, U_n, V\subset X$ such that $U_1\sqcup \cdots \sqcup U_n\subset V$, define a morphism 
\[
\clF^{U_1, \ldots, U_n}_V\colon \bigotimes_{i=1}^n\clF(U_i)\to \clF(V)
\]
by inductively 
\[
\clF^{U_1, \ldots, U_n}_V\ceq
\clF^{U_1\sqcup \cdots \sqcup U_{n-1}, U_n}_V\circ
\bigl(\clF^{U_1, \ldots, U_{n-1}}_{U_1\sqcup \cdots \sqcup U_{n-1}}\otimes \id_{\clF(U_n)}\bigr). 
\]
Then $(\clF, \{\clF^{U_1, \ldots, U_n}_V\}, \eta)$ is an unital prefactorization algebra in the sense of \cite[\S3.1.1, \S3.1.2]{CG1}. 
\end{rmk}

\begin{dfn}[{\cite[\S3.1.4]{CG1}}]\label{dfn:morPFA}
Let $\clF, \clG\colon \frU_X\to \sfM$ be prefactorization algebras. A morphism $\varphi\colon \clF\to \clG$ of precosheaves is called a \emph{morphism of prefactorization algebras} if the following conditions hold: 
\begin{clist}
\item 
For open subsets $U, V, W\subset X$ such that $U\sqcup V\subset W$, we have 
\[
\varphi_W\circ \clF^{U, V}_W=\clG^{U, V}_W\circ (\varphi_U\otimes \varphi_V). 
\]

\item 
We have $\varphi_{\varnothing}\circ \eta_{\clF}=\eta_{\clG}$, where $\eta_{\clF}$ and $\eta_{\clG}$ denotes the units. 
\end{clist}
\end{dfn}

For a topological space $X$, and a symmetric monoidal category $\sfM$, we denote by $\PFA(X, \sfM)$ the category of prefactorization algebras $\clF\colon \frU_X\to \sfM$. 

\medskip

Next, we turn to define the notion of factorization algebras, following \cite[\S6.1]{CG1}.  

Let $X$ be a topological space, $\sfA$ a category which admits coproducts, and $\clF\colon \frU_X\to \sfA$ a precosheaf. 
For an open subset $U\subset X$ and an open cover $\{U_i\}_{i\in I}$ of $U$, consider the morphisms 
\begin{align*}
&\iota_i\circ \clF^{U_i\cap U_j}_{U_i}\colon \clF(U_i\cap U_j)\to \bigoplus_{i\in I}\clF(U_i) \quad (i, j\in I), \\
&\iota_j\circ \clF^{U_i\cap U_j}_{U_j}\colon \clF(U_i\cap U_j)\to \bigoplus_{i\in I}\clF(U_i) \quad (i, j\in I), \\
&\clF^{U_i}_U\colon \clF(U_i)\to \clF(U) \quad(i\in I). 
\end{align*}
Here $\iota_i\colon \clF(U_i)\to \bigoplus_{i\in I}\clF(U_i)$ denotes the canonical morphism of the coproduct. These morphisms induce the following morphisms in $\sfA$ respectively: 
\begin{align}\label{eq:coeq}
\begin{split}
&p\colon \bigoplus_{i, j\in I}\clF(U_i\cap U_j)\to \bigoplus_{i\in I}\clF(U_i), \\
&q\colon \bigoplus_{i, j\in I}\clF(U_i\cap U_j)\to \bigoplus_{i\in I}\clF(U_i), \\
&\pi\colon \bigoplus_{i\in I}\clF(U_i)\to \clF(U). 
\end{split}
\end{align}

\begin{rmk}
Suppose that $\sfA$ admits any colimits. A precosheaf $\clF$ is called a cosheaf if for an open subset $U\subset X$ and an open cover $\{U_i\}_{i\in I}$ of $U$, the diagram \eqref{eq:defcosh} below gives a coequalizer. A factorization algebra is a kind of cosheaf but we adopt a different Grothendieck topology. 
\end{rmk}

\begin{dfn}[{\cite[Definition 6.1.1]{CG1}}]
Let $X$ be a topological space and $U\subset X$ an open subset. An open cover $\{U_i\}_{i\in I}$ of $U$ is called a \emph{Weiss cover} if for finitely many points $x_1, \ldots, x_n\in U$, there exists $i\in I$ such that $x_1, \ldots, x_n\in U_i$. 
\end{dfn}

Now, fix a topological space $X$ and a symmetric monoidal category $\sfM$ which admits colimits. 

\begin{dfn}[{\cite[Definition 6.1.3]{CG1}}]\label{dfn:FA}
A prefactorization algebra $\clF\colon \frU_X\to \sfM$ is called a \emph{factorization algebra} if it satisfies the following conditions:
\begin{clist}
\item 
For an open subset $U\subset X$ and Weiss cover $\{U_i\}_{i\in I}$, 
\begin{equation}\label{eq:defcosh}
\begin{tikzcd}
\displaystyle\bigoplus_{i, j\in I}\clF(U_i\cap U_j) \arrow[r, shift left=.75ex,"p"] \arrow[r, shift right=.75ex,swap,"q"] 
& \displaystyle\bigoplus_{i\in I}\clF(U_i) \arrow[r, "\pi"]
& \clF(U)
\end{tikzcd}
\end{equation}
is a diagram of coequalizer. Here we use the notations defined in \eqref{eq:coeq}. 

\item 
For open subsets $U, V\subset X$ such that $U\cap V=\varnothing$, the multiplication 
\[
\clF^{U, V}_{U\sqcup V}\colon \clF(U)\otimes \clF(V)\to \clF(U\sqcup V)
\]
is an isomorphism. 
\end{clist} 
\end{dfn}

Let $\clF\colon \frU_X\to \sfM$ be a factorization algebra and $\frB$ an open basis of $X$ such that for each open subset $U\subset X$, the family 
\[
\frB(U)\ceq \{L\in \frB\mid L\subset U\}
\]
is a Weiss cover of $U$. For an open subset $U$, notice that 
\[
\bigl(\{\clF(L)\}_{L\in \frB(U)}, \{\clF^L_M\}_{L\subset M}\bigr)
\]
is an inductive system in $\sfM$, where $\frB(U)$ is ordered by inclusion. 

\begin{prp}\label{prp:FUindlim}
Let $\clF\colon \frU_X\to \sfM$ be a factorization algebra and $\frB$ an open basis of $X$ such that for each open subset $U\subset X$, the family $\frB(U)$ is a Weiss cover of $U$. For an open subset $U\subset X$, we have
\[
\clF(U)=\varinjlim_{L\in \frB(U)}\clF(L)
\]
with the canonical morphism $\clF^L_U\colon \clF(L)\to \clF(U)$. 
\end{prp}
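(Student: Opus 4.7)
The plan is to verify directly that $\clF(U)$, equipped with the structure maps $\{\clF^L_U\}_{L\in \frB(U)}$, satisfies the universal property of the colimit over the poset $\frB(U)$ ordered by inclusion. That $\{\clF^L_U\}_L$ is a cocone is immediate from the precosheaf relation $\clF^M_U\circ \clF^L_M=\clF^L_U$ for $L\subset M$ in $\frB(U)$.

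For the factorization, let $T\in \sfM$ be an object equipped with a cocone $\{\alpha_L\colon \clF(L)\to T\}_{L\in \frB(U)}$ (so $\alpha_M\circ \clF^L_M=\alpha_L$ whenever $L\subset M$ in $\frB(U)$), and bundle these into a single morphism $\alpha\colon \bigoplus_{L\in \frB(U)}\clF(L)\to T$. Since $\frB(U)$ is a Weiss cover of $U$ by assumption, the defining axiom of a factorization algebra supplies the coequalizer
\[
\bigoplus_{L, M\in \frB(U)}\clF(L\cap M)\rightrightarrows \bigoplus_{L\in \frB(U)}\clF(L)\xrightarrow{\pi}\clF(U),
\]
in which the two parallel arrows are the morphisms $p,q$ from \eqref{eq:coeq}. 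The existence and uniqueness of a morphism $\beta\colon \clF(U)\to T$ with $\beta\circ \clF^L_U=\alpha_L$ for every $L$ is therefore reduced to showing $\alpha\circ p=\alpha\circ q$; uniqueness of $\beta$ will then follow from $\pi$ being an epimorphism.

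This equalization amounts to showing, for every pair $L, M\in \frB(U)$, that
\[
\alpha_L\circ \clF^{L\cap M}_L=\alpha_M\circ \clF^{L\cap M}_M\colon \clF(L\cap M)\to T.
\]
Here the hypothesis is used a second time: applied to the open subset $L\cap M$, it says that $\frB(L\cap M)$ is a Weiss cover of $L\cap M$, so the factorization algebra axiom yields an epimorphism $\pi_{L\cap M}\colon \bigoplus_{N\in \frB(L\cap M)}\clF(N)\to \clF(L\cap M)$. It therefore suffices to check the equality after precomposing with each $\clF^N_{L\cap M}$. For such an $N$ one has $N\subset L$ and $N\subset M$ within $\frB(U)$, so by the precosheaf relation together with the cocone condition
\[
\alpha_L\circ \clF^{L\cap M}_L\circ \clF^N_{L\cap M}=\alpha_L\circ \clF^N_L=\alpha_N=\alpha_M\circ \clF^N_M=\alpha_M\circ \clF^{L\cap M}_M\circ \clF^N_{L\cap M},
\]
as required.

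The main subtlety is exactly this nested invocation of the factorization algebra axiom on each pairwise intersection $L\cap M$, which is why the hypothesis must supply a Weiss cover $\frB(V)$ for every open $V\subset X$ and not merely for $U$ itself.
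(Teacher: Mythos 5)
Your proof is correct and follows essentially the same route as the paper's: bundle the cocone into a map out of the coproduct, use the Weiss-cover coequalizer at $U$ to reduce to $\alpha\circ p=\alpha\circ q$, and then settle each equality $\alpha_L\circ\clF^{L\cap M}_L=\alpha_M\circ\clF^{L\cap M}_M$ by invoking the coequalizer/epimorphism at $L\cap M$ and checking on the pieces of $\frB(L\cap M)$. The paper's proof is the same argument phrased via the universal property of the coequalizer rather than explicitly via epimorphy of $\pi_{L\cap M}$, but these are equivalent.
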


\begin{proof}
It is clear that $\clF^M_U\circ \clF^L_M=\clF^L_U$ for $L, M\in \frB(U)$, $L\subset M$. Let $A\in \Ob\sfM$, and $\{f_L\colon \clF(L)\to A\}_{L\in \frB(U)}$ be a family of morphisms such that
$f_M\circ \clF^L_M=f_M$ for $L, M\in\frB(U)$, $L\subset M$. To construct a morphism $\clF(U)\to A$, we use the coequalizer diagram \eqref{eq:defcosh}. By the universality of the coproduct $\bigoplus_{L\in \frB(U)}\clF(L)$, there exists a unique morphism $f\colon \bigoplus_{L\in \frB(U)}\clF(L)\to A$ which commutes
\[
\begin{tikzcd}[row sep=huge, column sep=huge]
\clF(L) \arrow[r, hookrightarrow, "\iota_L"] \arrow[rd, "f_L"']& 
\displaystyle\bigoplus_{L\in \frB(U)}\clF(L) \arrow[d, "f"] \\
& 
A
\end{tikzcd}
\]
We claim that $f\circ p=f\circ q$. To prove this, it is enough to show that $f\circ p\circ \iota_{L, M}=f\circ q\circ \iota_{L, M}$ for each $L, M\in \frB(U)$, where $\iota_{L, M}\colon \clF(L\cap M) \to \bigoplus_{L, M\in \frB(U)}\clF(L\cap M)$ denotes the canonical inclusion, or equivalently $f_L\circ \clF^{L\cap M}_L=f_M\circ \clF^{L\cap M}_M$. Since we have a coequalizer diagram
\[
\begin{tikzcd}
\displaystyle\bigoplus_{L', M'\in \frB(L\cap M)}\clF(L'\cap M') \arrow[r, shift left=.75ex,"p'"] \arrow[r, shift right=.75ex,swap,"q'"] 
& \displaystyle\bigoplus_{L'\in \frB(L\cap M)}\clF(L') \arrow[r, "\pi'"]
& \clF(L\cap M), 
\end{tikzcd}
\]
the equation $f_L\circ \clF^{L\cap M}_L=f_M\circ \clF^{L\cap M}_M$ follows from $f_L\circ \clF^{L\cap M}_L\circ \pi'=f_M\circ \clF^{L\cap M}_M\circ \pi'$, but this is clear since 
\[
f_L\circ \clF^{L\cap M}_L\circ \pi'\circ \iota_{L'}=
f_{L'}=
f_M\circ \clF^{L\cap M}_M\circ \pi'\circ \iota_{L'} \quad
(L'\in \frB(L\cap M)). 
\]
Thus, there is a unique morphism $\wt{f}\colon \clF(U)\to A$ such that $\wt{f}\circ \pi=f$. We find that $\wt{f}$ is a unique morphism satisfying $f\circ \clF^L_U=f_L$ for each $L\in \frB(U)$. 
\end{proof}

\subsection{Equivariant prefactorization algebras}\label{ss:equivFA}

In this subsection we cite from \cite{CG1} the notion of equivariant prefactorization algebras.  
Let $X$ be a topological space, $\sfM$ a symmetric monoidal category, and $G$ a group acting on $X$. For a precosheaf $\clF\colon \frU_X\to \sfM$ and $a\in G$, a map
\[
a\clF\colon \frU_X\to \Ob\sfM, \quad U\mapsto \clF(aU)
\]
is a precosheaf by letting
\[
(a\clF)^U_V\ceq \clF^{aU}_{aV}\colon (a\clF)(U)\to (a\clF)(V)
\]
for each open subsets $U\subset V\subset X$. 

\begin{dfn}[{\cite[Definition 3.7.1]{CG1}}]\label{dfn:equivPFA}
A prefactorization algebra $\clF\colon \frU_X\to \sfM$ equipped with a morphism
\[
\sigma_a=\{\sigma_{a, U}\}_{U\in \frU_X}\colon 
\clF\to a\clF
\]
of precosheaves for each $a\in G$ is called a \emph{$G$-equivariant prefactorization algebra} if the following conditions hold: 
\begin{clist}
\item 
For $a, b\in G$ and an open subset $U\subset X$, we have $\sigma_{ab, U}=\sigma_{a, bU}\circ \sigma_{b, U}$. 

\item 
For the unit $1_G\in G$ and an open subset $U\subset X$, we have $\sigma_{1_G, U}=\id_{\clF(U)}$. 

\item 
For $a\in G$ and open subsets $U, V, W\subset X$ such that $U\sqcup V\subset W$, we have 
\[
\sigma_{a, W}\circ \clF^{U, V}_W=\clF^{aU, aV}_{aW}\circ (\sigma_{a, U}\otimes \sigma_{a, V}).
\]

\item 
For $a\in G$, we have $\sigma_{a, \varnothing}\circ \eta=\eta$, where $\eta\colon 1_{\sfM}\to \clF(\varnothing)$ denotes the unit of $\clF$. 
\end{clist}
\end{dfn}

\begin{dfn}\label{dfn:morequivPFA}
Let $\clF, \clG\colon \frU_X\to \sfM$ be $G$-equivariant prefactorization algebras. A morphism $\varphi\colon \clF\to \clG$ of prefactorization algebras is called a \emph{morphism of $G$-equivariant prefactorization algebras} if 
\[
\tau_{a, U}\circ \varphi_U=\varphi_{aU}\circ \sigma_{a, U}
\]
for each $a\in G$ and an open $U\subset X$. Here $\sigma_a\colon \clF\to a\clF$ and $\tau_a\colon\clG\to a\clG$ denote the $G$-equivariant structure on $\clF$ and $\clG$ respectively. 
\end{dfn}

For a topological space $X$, a symmetric monoidal category $\sfM$, and a group $G$ acting on $X$,  we denote by $\PFA_G(X, \sfM)$ the category of $G$-equivariant prefactorization algebras $\clF\colon \frU_X\to \sfM$. 

\subsection{Extension from an open basis}\label{ss:exten}

In this subsection we discuss the extension construction of prefactorization algebras from those defined on a give open basis. This construction is slightly different from that in \cite[\S7.2]{CG1} to use later in \cref{ss:constLCFA} (see \cref{exten} below). 
Let $X$ be a topological space, and $\sfA$ a category which admits inductive limits. 
For an open basis $\frB$ of $X$ and an open subset $U\subset X$, we denote
\[
\frB(U)\ceq \{L\in \frB\mid L\subset U\}.
\]

Let $\frB$ be an open basis, and $F\colon \frB\to \sfA$ a precosheaf, i.e., a functor from $\frB$ to $\sfA$. For an open subset $U\subset X$, 
\[
(\{F(L)\}_{L\in \frB(U)}, \{F^L_M\}_{L\subset M})
\]
is an inductive system in $\sfA$, where $\frB(U)$ is ordered by inclusion. We define
\[
\clE_F(U)\ceq \varinjlim_{L\in \frB(U)}F(L)
\]
and denote by
\[
\wt{F}^L_U\colon F(L)\to \clE_F(U)
\]
the canonical morphism for each $L\in \frB(U)$. For open subsets $U\subset V \subset X$, the universality of $\clE_F(U)=\varinjlim_{L\in \frB(U)}F(L)$ yields a unique morphism
\[
(\clE_F)^U_V\colon \clE_F(U)\to \clE_F(V)
\]
which satisfies
\[
(\clE_F)^U_V\circ \wt{F}^L_U=\wt{F}^L_V
\]
for each $L\in \frB(U)$. Hence the map 
\[
\clE_F\colon \frU_X\to \Ob\sfA, \quad
U\mapsto \clE_F(U)
\]
is a precosheaf by $(\clE_F)^U_V$. Notice that $\wt{F}^L_L\colon F(L)\to \clE_F(L)$ is an isomorphism for $L\in \frB$. 

Let $F, G\colon \frB\to \sfA$ be precosheaves, and $\varphi\colon F\to G$ be a morphism of precosheaves, i.e., a natural transformation from $F$ to $G$. For an open subset $U\subset X$, the universality of $\clE_F(U)$ induces a unique morphism 
\[
\clE_{\varphi, U}\colon \clE_F(U)\to \clE_G(U), 
\]
which satisfies
\[
\clE_{\varphi, U}\circ \wt{F}^L_U=\wt{G}^L_U\circ \varphi_L
\]
for all $L\in \frB(U)$. We find that $\clE_\varphi\colon \clE_F\to \clE_G$ is a morphism of precosheaves. Thus, the map $F\mapsto \clE_F$ gives a functor
\[
\clE\colon \PCSh(\frB, \sfA)\to \PCSh(X, \sfA), 
\]
where $\PCSh(\frB, \sfA)$ denotes the category of precosheaves on $\frB$. 

\medskip

Now, fix 
\begin{itemize}
\item
a symmetric monoidal category $\sfM$ such that inductive limits exists, and any inductive limits commute with the tensor product, 

\item
an open basis $\frB$ satisfying: 
\begin{clist}
\item
$\varnothing\in \frB$. 

\item
For $L, M\in \frB$ such that  $L\cap M=\varnothing$, we have $L\sqcup M\in \frB$. 
\end{clist}
\end{itemize}

Let $F\colon \frB\to \sfM$ be a prefactorization algebra. 
For open subsets $U, V, W\subset X$ such that $U\sqcup V\subset W$, the universality of 
\[
\clE_F(U)\otimes \clE_F(V)=
\varinjlim_{\substack{L\in \frB(U)\\ M\in \frB(V)}}F(L)\otimes F(M)
\]
yields an unique morphism
\[
(\clE_F)^{U, V}_W\colon \clE_F(U)\otimes \clE_F(V)\to \clE_F(W)
\]
which satisfies
\[
(\clE_F)^{U, V}_W\circ (\wt{F}^L_U\otimes \wt{F}^M_V)=
\wt{F}^{L\sqcup M}_W\circ F^{L, M}_{L\sqcup M}
\]
for all $L\in \frB(U)$ and $M\in \frB(V)$. The precosheaf $\clE_F\colon \frU_X\to \sfM$ is a prefactorization algebra by the multiplication $(\clE_F)^{U, V}_W$ and the unit
\[
\begin{tikzcd}
1_{\sfA} \arrow[r]&
F(\varnothing) \arrow[r, "\wt{F}^{\varnothing}_{\varnothing}"] &
\clE_F(\varnothing),  
\end{tikzcd}
\]
where $1_{\sfA}\to F(\varnothing)$ indicates the unit of $F$. 

\begin{prp}\label{prp:extmultisom}
Let $F\colon \frB\to \sfM$ be a prefactorization algebra such that 
\[
F^{L, M}_{L\sqcup M}\colon F(L)\otimes F(M)\to F(L\sqcup M)
\]
is an isomorphism for each $L, M\in \frB$, $L\cap M=\varnothing$. For open subsets $U, V\subset X$ such that $U\cap V=\varnothing$, if 
\[
\frB(U\sqcup V)=\{L\sqcup M\mid L\in \frB(U), M\in \frB(V)\}, 
\]
then
\[
(\clE_F)^{U, V}_{U\sqcup V}\colon \clE_F(U)\otimes \clE_F(V)\to \clE_F(U\sqcup V)
\]
is an isomorphism. 
\end{prp}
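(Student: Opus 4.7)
The plan is to identify both sides of $(\clE_F)^{U, V}_{U\sqcup V}$ with the same colimit $\varinjlim_{(L,M)\in \frB(U)\times \frB(V)} F(L)\otimes F(M)$ and show that the multiplication map is the resulting comparison isomorphism. Since any inductive limits in $\sfM$ commute with the tensor product, the source rewrites as
\[
\clE_F(U)\otimes \clE_F(V)
\;=\;
\Bigl(\varinjlim_{L\in \frB(U)} F(L)\Bigr)\otimes \Bigl(\varinjlim_{M\in \frB(V)} F(M)\Bigr)
\;\cong\;
\varinjlim_{(L,M)\in \frB(U)\times \frB(V)} F(L)\otimes F(M).
\]

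For the target, I would next promote the set-level hypothesis $\frB(U\sqcup V)=\{L\sqcup M\mid L\in \frB(U),\, M\in \frB(V)\}$ to an isomorphism of ordered sets $\frB(U)\times \frB(V)\lsto \frB(U\sqcup V)$, $(L,M)\mapsto L\sqcup M$. Surjectivity is the hypothesis; injectivity and order-reflection follow from $U\cap V=\varnothing$ by recovering the summands via $L=(L\sqcup M)\cap U$ and $M=(L\sqcup M)\cap V$, which also shows $L_1\sqcup M_1\subset L_2\sqcup M_2$ iff $L_1\subset L_2$ and $M_1\subset M_2$. Composing with this isomorphism of indexing posets,
\[
\clE_F(U\sqcup V)
\;=\;
\varinjlim_{N\in \frB(U\sqcup V)} F(N)
\;\cong\;
\varinjlim_{(L,M)\in \frB(U)\times \frB(V)} F(L\sqcup M).
\]

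Now the hypothesis that each $F^{L,M}_{L\sqcup M}\colon F(L)\otimes F(M)\to F(L\sqcup M)$ is an isomorphism, combined with the compatibility (i) in \cref{dfn:PFA} applied to $L\subset L'$ in $\frB(U)$ and $M\subset M'$ in $\frB(V)$, shows that the collection $\{F^{L,M}_{L\sqcup M}\}_{(L,M)}$ is a natural isomorphism between the two diagrams $\frB(U)\times \frB(V)\to \sfM$ given by $(L,M)\mapsto F(L)\otimes F(M)$ and $(L,M)\mapsto F(L\sqcup M)$. Passing to the colimit yields an isomorphism between the two inductive limits above.

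The remaining step, which I expect to be the only delicate one, is to check that the isomorphism constructed by composing these three identifications coincides with $(\clE_F)^{U,V}_{U\sqcup V}$. This is essentially formal: by the defining relation
\[
(\clE_F)^{U,V}_{U\sqcup V}\circ (\wt{F}^L_U\otimes \wt{F}^M_V)
\;=\;
\wt{F}^{L\sqcup M}_{U\sqcup V}\circ F^{L,M}_{L\sqcup M},
\]
both maps agree after pre-composition with the universal cocone morphisms out of $F(L)\otimes F(M)$ for every $(L,M)\in \frB(U)\times \frB(V)$, and the universal property of the colimit forces them to coincide. The main obstacle is therefore organizational: keeping track of the indexing equivalence and of the naturality of $F^{L,M}_{L\sqcup M}$ long enough to apply the universal property; once this is done, the isomorphism statement follows.
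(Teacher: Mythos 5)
Your proposal is correct and takes essentially the same route as the paper: both identify $\clE_F(U)\otimes\clE_F(V)$ and $\clE_F(U\sqcup V)$ with the colimit of $(L,M)\mapsto F(L)\otimes F(M)$ over $\frB(U)\times\frB(V)$, using the assumption on $\frB(U\sqcup V)$ to reindex and the isomorphisms $F^{L,M}_{L\sqcup M}$ to compare the two diagrams. The paper phrases this as a direct verification that $\bigl(\clE_F(U\sqcup V),\{\wt{F}^{L\sqcup M}_{U\sqcup V}\circ F^{L,M}_{L\sqcup M}\}\bigr)$ satisfies the universal property, while you package the same content as a composition of three canonical identifications, but the underlying argument is identical.
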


\begin{proof}
For $L\in \frB(U)$ and $M\in \frB(V)$, denote
\[
\varphi_{L, M}
\ceq 
\wt{F}^{L\sqcup M}_{U\sqcup V}\circ F^{L, M}_{L\sqcup M}
\colon 
F(L)\otimes F(M)\to \clE_F(U\sqcup V). 
\]
It is enough to prove that $(\clE_F(U\sqcup V), \{\varphi_{L, M}\}_{L\in \frB(U), M\in \frB(V)})$ is the inductive limit of 
\[
\bigl(\{F(L)\otimes F(M)\}_{L\in \frB(U), M\in \frB(V)}, \{F^L_{L'}\otimes F^M_{M'}\}_{L\subset L', M\subset M'}
\bigr). 
\]
First, a direct calculation shows that 
\[
\varphi_{L', M'}\circ (F^L_{L'}\otimes F^M_{M'})=\varphi_{L, M} 
\quad
(L, L'\in \frB(U), M, M'\in \frB(V), L\subset L', M\subset M'). 
\]
Let $A\in \Ob\sfM$, and $\{\alpha_{L, M}\colon F(L)\otimes F(M)\to A\}_{L\in \frB(U), M\in \frB(V)}$ be a family of morphisms satisfying
\[
\alpha_{L', M'}\circ (F^L_{L'}\otimes F^M_{M'})=\alpha_{L, M} 
\quad
(L, L'\in \frB(U), M, M'\in \frB(V), L\subset L', M\subset M'). 
\]
Denote for $L\in \frB(U)$ and $M\in \frB(V)$, 
\[
\wt{\alpha}_{L, M}\ceq \alpha_{L, M}\circ (F^{L, M}_{L\sqcup M})^{-1}\colon F(L\sqcup M)\to A, 
\]
then we have
\[
\wt{\alpha}_{L', M'}\circ F^{L\sqcup M}_{L'\sqcup M'}
=
\wt{\alpha}_{L, M}
\quad (L, L'\in \frB(U), M, M'\in \frB(V), L\subset L', M\subset M'). 
\]
Thus, by the assumption for $\frB(U\sqcup V)$, the universality of $\clE_F(U\sqcup V)$ yields an unique morphism
\[
\alpha\colon \clE_F(U\sqcup V)\to A
\]
such that 
\[
\alpha\circ \wt{F}^{L\sqcup M}_{U\sqcup V}
=
\wt{\alpha}_{L, M}
\quad (L\in \frB(U), M\in \frB(V)). 
\]
We find that $\alpha$ is an unique morphism satisfying 
\[
\alpha\circ \varphi_{L, M}
=
\alpha_{L, M}
\quad(L\in \frB(U), M\in \frB(V)), 
\]
and hence the claim follows. 
\end{proof}

Let $F, G\colon \frB\to \sfM$ be prefactorization algebras, and $\varphi\colon F\to G$ be a morphism of prefactorization algebras. Then, we find that $\clE_\varphi\colon \clE_F\to \clE_G$ is a morphism of prefactorization algebras. Thus, the functor $\clE\colon \PCSh(\frB, \sfA)\to \PCSh(X, \sfA)$ gives rise to the functor
\[
\clE\colon \PFA(\frB, \sfM)\to \PFA(X, \sfM), 
\]
where $\PFA(\frB, \sfM)$ denotes the category of prefactorization algebras on $\frB$. 

\begin{rmk}\label{exten}
In \cite[\S7.2]{CG1}, Costello and Gwilliam show that the categorical equivalence between factorization algebras on $X$ and factorization algebras on a given factorization basis of $X$ (\cite[Proposition 7.2.3]{CG1}, see also \cite[Definition 7.2.1]{CG1}). Our construction $\clE_F$ from an open basis $\frB$ does not require $\frB$ to be a factorization basis since we will construct a locally constant factorization algebra in \cref{ss:constLCFA} from an open basis which is not closed under finite intersections. 
\end{rmk}

Next, fix a group $G$ acting on $X$, and suppose that $aL\in \frB$ for all $a\in G$ and $L\in \frB$. 

Let $F\colon \frB\to \sfM$ be a $G$-equivariant prefactorization algebra. We denote its structure morphism by
\[
\sigma_{a, L}\colon F(L)\to F(aL) \quad (a\in G, L\in \frB). 
\]
For $a\in G$ and an open subset $U\subset X$, the universality of $\clE_F(U)=\varinjlim_{L\in \frB(U)}F(L)$ yields an unique morphism
\[
\wt{\sigma}_{a, U}\colon 
\clE_F(U)\to \clE_F(aU)
\]
which satisfies
\[
\wt{\sigma}_{a, U}\circ \wt{F}^L_U=
\wt{F}^{aL}_{aU}\circ \sigma_{a, L}
\]
for each $L\in \frB(U)$. The prefactorization algebra $\clE_F\colon \frU_X\to \sfM$ is a $G$-equivariant prefactorization algebra by the structure morphism $\wt{\sigma}_{a, U}$. 

Let $F, F'\colon \frB\to \sfM$ be $G$-equivariant prefactorization algebras, and $\varphi\colon F\to F'$ be a morphism of $G$-equivariant prefactorization algebras. 
Then, we find that $\clE_\varphi\colon \clE_F\to \clE_{F'}$ is a morphism of $G$-equivariant prefactorization algebras. Thus, the functor $\clE\colon\PFA(\frB, \sfA)\to \PFA(X, \sfA)$ gives rise to the functor
\[
\clE\colon \PFA_G(\frB, \sfM)\to \PFA_G(X, \sfM), 
\]
where $\PFA_G(\frB, \sfM)$ denotes the category of $G$-equivariant prefactorization algebras on $\frB$. 

\subsection{Locally constant factorization algebras on $\bbC$}\label{ss:LFA}

In this subsection we recall the notion of locally constant factorization algebras on $\bbC$, and discuss the basic properties of this class of factorization algebras. 
Fix a symmetric monoidal category $\sfM$ which admits any colimits. 

\begin{dfn}[{\cite[Definition 6.4.1]{CG1}}]\label{dfn:LPFA}
A prefactorization algebra $\clF\colon \frU_{\bbC}\to \sfM$ is called locally constant if for $0<r<R\le \infty$ and $z, w\in \bbC$ such that $D_r(z)\subset D_R(w)$, the morphism $\clF^{D_r(z)}_{D_R(w)}\colon \clF(D_r(z))\to \clF(D_R(w))$ is an isomorphism in $\sfM$. 
\end{dfn}

We denote by $\PFA^{\mathsf{loc}}(\bbC, \sfM)$ the full subcategory of $\PFA(\bbC, \sfM)$ whose objects are locally constant prefactorization algebras. Also, denote by $\FA^{\mathsf{loc}}(\bbC, \sfM)$ the full subcategory of $\PFA^{\mathsf{loc}}(\bbC, \sfM)$ consisting of locally constant factorization algebras. 

\begin{lem}\label{lem:locFA}
A factorization algebra $\clF\colon \frU_{\bbC}\to \sfM$ is locally constant if and only if the following condition holds: For $0<r<R<\infty$ and $z, w\in \bbC$ such that $D_r(z)\subset D_R(w)$, the morphism $\clF^{D_r(z)}_{D_R(w)}\colon \clF(D_r(z))\to \clF(D_R(w))$ is an isomorphism. 
\end{lem}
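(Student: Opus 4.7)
The forward implication is immediate, since the definition of locally constant in \cref{dfn:LPFA} already includes the case $R=\infty$ (with $D_\infty(w)=\bbC$). So the substance of the lemma is the converse: assume the finite-radius hypothesis, and deduce that $\clF^{D_r(z)}_{\bbC}$ is an isomorphism for every $r>0$ and $z\in\bbC$. Everything else covered by \cref{dfn:LPFA} is already built into the hypothesis.

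The plan is to construct a convenient Weiss cover of $\bbC$ to which the factorization (cosheaf) axiom can be applied. Fix $z\in\bbC$ and $r>0$, and pick $n_0\in\bbN$ with $n_0>r$. Consider the nested family $\{D_n(z)\}_{n\ge n_0}$. It covers $\bbC$, and it is a Weiss cover because any finite collection of points $x_1,\dots,x_k\in\bbC$ is bounded, so it lies in $D_n(z)$ whenever $n>\max_i|x_i-z|$. The nestedness also gives $D_i(z)\cap D_j(z)=D_{\min(i,j)}(z)$, so that every intersection appearing in the coequalizer diagram \eqref{eq:defcosh} is itself a member of the cover.

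The next step is to identify the Weiss coequalizer for this nested cover with the sequential colimit $\varinjlim_{n\ge n_0}\clF(D_n(z))$ taken along the transition maps $\clF^{D_n(z)}_{D_{n+1}(z)}$. This is a standard diagram chase: in the two-term part $\bigoplus_{i,j}\clF(D_{\min(i,j)}(z))\rightrightarrows\bigoplus_i\clF(D_i(z))$, the relations imposed on summands $\clF(D_i(z))$ and $\clF(D_j(z))$ for $i\le j$ are precisely the identifications that define the sequential colimit, and all other relations follow by composition. This identification is the main technical point; it uses nothing more than the definition of coequalizer and cofinality of the subdiagram indexed by $i\le j$. Under our hypothesis each transition map $\clF^{D_n(z)}_{D_{n+1}(z)}$ is an isomorphism, so the sequential colimit is canonically isomorphic to $\clF(D_{n_0}(z))$, and the comparison map may be identified with $\clF^{D_{n_0}(z)}_{\bbC}$. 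Hence $\clF^{D_{n_0}(z)}_{\bbC}$ is an isomorphism.

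Finally, factor
\[
\clF^{D_r(z)}_{\bbC}=\clF^{D_{n_0}(z)}_{\bbC}\circ\clF^{D_r(z)}_{D_{n_0}(z)}.
\]
The first factor is an isomorphism by the previous step, and the second by the finite-radius hypothesis (since $n_0>r$). The main obstacle in this plan is the collapse of the Weiss coequalizer to the sequential colimit: once that is established, the rest is a short diagram argument.
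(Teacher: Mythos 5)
Your proof is correct and follows essentially the same strategy as the paper: cover $\bbC$ by a nested family of disks centered at $z$ (the paper uses the uncountable family $\{D_r(z)\}_{r>0}$, you use a countable cofinal subfamily), apply the factorization coequalizer axiom, and conclude from the fact that all transition maps in the nested system are isomorphisms. Where the paper constructs the inverse of $\clF^{D_R(z)}_{\bbC}$ by hand from the universal property of the coequalizer, you instead observe abstractly that the Weiss coequalizer for a nested cover coincides with the filtered colimit along the inclusion poset, then note that a filtered colimit of isomorphisms is an isomorphism; this is a clean reformulation of the same idea and is valid.
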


\begin{proof}
It is trivial that the condition is necessary. To prove that the condition is sufficient, it is enough to show that $\clF^{D_R(z)}_{\bbC}$ is an isomorphism for $R\in \bbR_{>0}$ and $z\in \bbC$. Since $\{D_r(z)\}_{r\in \bbR_{>0}}$ is a Weiss cover of $\bbC$, we have a coequalizer diagram
\[
\begin{tikzcd}
\displaystyle\bigoplus_{r, s\in \bbR_{>0}}\clF(D_r(z)\cap D_s(z)) \arrow[r, shift left=.75ex,"p"] \arrow[r, shift right=.75ex,swap,"q"] 
& \displaystyle\bigoplus_{r\in \bbR_{>0}}\clF(D_r(z)) \arrow[r, "\pi"]
& \clF(\bbC),
\end{tikzcd}
\]
where the morphism $p, q, \pi$ are defined as in \eqref{eq:coeq}. For $r\in \bbR_{>0}$, define a morphism $f_r\colon \clF(D_r(z))\to \clF(D_R(z))$ by
\[
f_r\ceq 
\begin{cases}
\clF^{D_r(z)}_{D_R(z)} & (r\le R) \\
(\clF^{D_R(z)}_{D_r(z)})^{-1} & (r>R), 
\end{cases}
\]
then, the family $\{f_r\}_{r\in \bbR_{>0}}$ induces the morphism $f\colon \bigoplus_{r\in \bbR_{>0}}\clF(D_r(z))\to \clF(D_R(z))$. Since one can check $f\circ p=f\circ q$, the universality of the coequalizer $\clF(\bbC)$ yields a unique morphism $\wt{f}\colon \clF(\bbC)\to \clF(D_R(z))$ such that $\wt{f}\circ \pi=f$. We find that $\wt{f}$ is the inverse of $\clF^{D_R(z)}_{\bbC}$. 
\end{proof}

We denote
\begin{equation}\label{eq:basisofC}
\frB\ceq
\{D_{r_1}(z_1)\sqcup\cdots \sqcup D_{r_l}(z_l)\mid 
l\in \bbN, r_i\in \bbR_{>0}, z_i\in \bbC,\, D_{r_i}(z_i)\cap D_{r_j}(z_j)=\varnothing\ (i\neq j)\},
\end{equation}
then $\frB$ is an open basis of $\bbC$ such that for each open subset $U\subset \bbC$, the family $\frB(U)$ is a Weiss cover of $U$. 

\begin{lem}\label{lem:fLindep}
Let $\clF\colon \frU_{\bbC}\to\sfM$ be a locally constant prefactorization algebra, and fix $l\in \bbZ_{>0}$. For a domain $U\subset \bbC$ and 
\[
L=D_{r_1}(z_1)\sqcup \cdots \sqcup D_{r_l}(z_l), \quad
M=D_{s_1}(w_1)\sqcup \cdots \sqcup D_{s_l}(w_l)\ \in \frB(U), 
\]
we have
\[
\clF^{\{D_{r_i}(z_i)\}_{i=1}^l}_U\circ (\otimes_{i=1}^l(\clF^{D_{r_i}(z_i)}_{\bbC})^{-1})=
\clF^{\{D_{s_i}(w_i)\}_{i=1}^l}_U\circ (\otimes_{i=1}^l(\clF^{D_{s_i}(w_i)}_{\bbC})^{-1}).
\]
\end{lem}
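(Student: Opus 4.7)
The plan is to show that $\mu_L \ceq \clF^{\{D_{r_i}(z_i)\}_{i=1}^l}_U \circ (\otimes_{i=1}^l (\clF^{D_{r_i}(z_i)}_\bbC)^{-1})$ depends only on the ordered tuple of centers $\mathbf{z} = (z_1, \ldots, z_l) \in \Conf_l(U)$, and then to invoke path-connectedness of $\Conf_l(U)$ to conclude that $\mu_L$ is in fact independent of the choice of $L \in \frB(U)$ with $l$ ordered disks realizing $\mathbf{z}$.

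First I would establish subrefinement invariance: if $N = D_{t_1}(p_1)\sqcup\cdots\sqcup D_{t_l}(p_l) \in \frB(U)$ satisfies $D_{t_i}(p_i) \subset D_{r_i}(z_i)$ for every $i$, then $\mu_N = \mu_L$. The iterated form of axiom (i) of \cref{dfn:PFA} (see \cref{rmk:PFA}) gives
\[
\clF^{\{D_{t_i}(p_i)\}_{i=1}^l}_U = \clF^{\{D_{r_i}(z_i)\}_{i=1}^l}_U \circ \bigotimes_{i=1}^l \clF^{D_{t_i}(p_i)}_{D_{r_i}(z_i)},
\]
while the precosheaf axiom yields $\clF^{D_{t_i}(p_i)}_\bbC = \clF^{D_{r_i}(z_i)}_\bbC \circ \clF^{D_{t_i}(p_i)}_{D_{r_i}(z_i)}$. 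Local constancy of $\clF$ makes each $\clF^{D_{t_i}(p_i)}_{D_{r_i}(z_i)}$ invertible, and these factors cancel against the inverses in $\mu_N$, giving $\mu_N = \mu_L$. In particular $\mu_L$ depends only on $\mathbf{z}$ and not on the radii, so I denote this common value by $\mu_{\mathbf{z}}$.

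Next I would prove local constancy of $\mathbf{z} \mapsto \mu_{\mathbf{z}}$ on $\Conf_l(U)$: for any $\mathbf{z} \in \Conf_l(U)$, pick $\epsilon > 0$ so small that the disks $D_{2\epsilon}(z_i)$ are pairwise disjoint and contained in $U$. For any $\mathbf{z}' = (z_1', \ldots, z_l') \in \Conf_l(U)$ with $|z_i' - z_i| < \epsilon/2$ for each $i$, the small disks $D_{\epsilon/2}(z_i)$ are contained in both $D_{\epsilon}(z_i)$ (realizing $\mathbf{z}$) and $D_{\epsilon}(z_i')$ (realizing $\mathbf{z}'$), so two applications of the subrefinement step give $\mu_{\mathbf{z}} = \mu_{\mathbf{z}'}$.

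Finally, since $U$ is a domain in $\bbC$, the ordered configuration space $\Conf_l(U) = U^l \bs \bigcup_{i \neq j}\{z_i = z_j\}$ is path-connected, a classical fact obtained by removing a codimension-$2$ subset from the connected manifold $U^l$. Joining the centers of $L$ to those of $M$ by a path in $\Conf_l(U)$, covering its image by finitely many neighborhoods on which $\mu_{(-)}$ is constant (by compactness and local constancy), and chaining equalities, one concludes $\mu_L = \mu_M$. The main obstacle is this last step: the $2$-dimensionality of $\bbC$ is essential, for in one real dimension $\Conf_l(\bbR)$ would split into $l!$ components ordered by magnitude, and $\mu_{(-)}$ would genuinely depend on the permutation — the dimensional origin of the commutativity of $E_2$-algebras as opposed to the mere associativity of $E_1$-algebras.
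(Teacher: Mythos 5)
Your proposal is correct and follows essentially the same strategy as the paper's proof: reduce to independence of the radii (your subrefinement step, the paper's observation that the $\forall s_i$ and $\exists s_i$ conditions are equivalent), establish local constancy of the map $\mathbf{z}\mapsto\mu_{\mathbf{z}}$ on $\Conf_l(U)$, and invoke connectedness of $\Conf_l(U)$. The paper phrases the last step via the set $E$ being open with open complement rather than via a covering of a path, but these are the same argument; you merely spell out the intermediate lemmas that the paper treats as evident.
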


\begin{proof}
For $L=D_{r_1}(z_1)\sqcup \cdots \sqcup D_{r_l}(z_l)\in \frB(U)$, denote by $f_L$ the composition of morphisms 
\[
\begin{tikzcd}[row sep=huge, column sep=huge]
\clF(\bbC)^{\otimes l} \arrow[r, "\otimes_{i=1}^l(\clF^{D_{r_i}(z_i)}_{\bbC})^{-1}"] & [1.3cm]
\bigotimes_{i=1}^l\clF(D_{r_i}(z_i)) \arrow[r, "\clF^{\{D_{r_i}(z_i)\}_{i=1}^l}_U"] & [0.8cm]
\clF(U). 
\end{tikzcd}
\]
Fix $L=D_{r_1}(z_1)\sqcup \cdots \sqcup D_{r_l}(z_l)\in\frB(U)$, and set
\[
E\ceq \{(w_1, \ldots, w_l)\in \Conf_l(U)\mid \forall s_i\in \bbR_{>0},\, M=D_{s_1}(w_1)\sqcup \cdots \sqcup D_{s_l}(w_l)\in \frB(U) \Longrightarrow f_L=f_M\}. 
\]
Then, for $(w_1, \ldots, w_l)\in \Conf_l(U)$, we have
\[
(w_1, \ldots, w_l)\in E\Longleftrightarrow
\exists s_i\in \bbR_{>0},\, M=D_{s_1}(w_1)\sqcup \cdots \sqcup D_{s_l}(w_l)\in \frB(U) \land 
f_L=f_M. 
\]
Hence we find that $(z_1, \ldots, z_l)\in E$, and that both $E$ and $\Conf_l(U)\bs E$ are open subsets of $\bbC^l$. Thus $\Conf_l(U)=E$ since $\Conf_l(U)\subset \bbC^l$ is a connected open subset. Now the lemma is clear. 
\end{proof}

\begin{lem}\label{lem:locFAdomain}
Let $\clF\colon \frU_{\bbC}\to \sfM$ be a locally constant prefactorization algebra. For $R\in \bbR_{>0}$, $z\in \bbC$ and a domain $U\subset \bbC$ such that $D_R(z)\subset U$: 
\begin{enumerate}
\item 
The morphism $\clF^{D_R(z)}_U\colon \clF(D_R(z))\to \clF(U)$ is a split monomorphism. 

\item 
If $\clF$ is a factorization algebra, then $\clF^{D_R(z)}_U\colon \clF(D_R(z))\to \clF(U)$ is an isomorphism. 
\end{enumerate}
\end{lem}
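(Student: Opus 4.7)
For (1), \cref{lem:locFA} (applied with $R=\infty$, as allowed in \cref{dfn:LPFA}) shows $\clF^{D_R(z)}_\bbC$ is an isomorphism, so the morphism
\[
r\ceq (\clF^{D_R(z)}_\bbC)^{-1}\circ \clF^U_\bbC\colon \clF(U)\to \clF(D_R(z))
\]
is well defined. The precosheaf composition $\clF^U_\bbC\circ \clF^{D_R(z)}_U=\clF^{D_R(z)}_\bbC$ gives $r\circ \clF^{D_R(z)}_U=\id_{\clF(D_R(z))}$, exhibiting $\clF^{D_R(z)}_U$ as a split monomorphism with retraction $r$.

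For (2), since $\clF$ is a factorization algebra and the basis $\frB$ of \eqref{eq:basisofC} Weiss-covers every open subset, \cref{prp:FUindlim} gives $\clF(U)=\varinjlim_{L\in \frB(U)}\clF(L)$. Hence $\clF^{D_R(z)}_U$ is an isomorphism if and only if $r$ is also a section of it, which by the universality of the inductive limit amounts to $\clF^{D_R(z)}_U\circ r\circ \clF^L_U=\clF^L_U$ for every $L\in \frB(U)$. The case $L=\varnothing$ is immediate from $\clF^\varnothing_\bbC=\clF^{D_R(z)}_\bbC\circ \clF^\varnothing_{D_R(z)}$. For $L=\bigsqcup_{i=1}^l D_{r_i}(z_i)$ with $l\ge 1$, applying \cref{lem:fLindep} to each domain $V\in\{U,\bbC,D_R(z)\}$ produces a morphism
\[
\wt{f}^V_l\ceq \clF^{\{D_{r_i}(z_i)\}_{i=1}^l}_V\circ \bigl(\otimes_{i=1}^l(\clF^{D_{r_i}(z_i)}_\bbC)^{-1}\bigr)
\]
independent of the choice of $L\in \frB(V)$.

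The crucial identity $\wt{f}^U_l=\wt{f}^U_1\circ \wt{f}^\bbC_l$ is established by exploiting this choice-independence: pick $L$ with every $D_{r_i}(z_i)\subset D_R(z)\subset U$. The precosheaf composition axioms then factor $\clF^{\{D_{r_i}(z_i)\}_{i=1}^l}_U$ and $\clF^{\{D_{r_i}(z_i)\}_{i=1}^l}_\bbC$ through $\clF^{\{D_{r_i}(z_i)\}_{i=1}^l}_{D_R(z)}$, yielding $\wt{f}^U_l=\clF^{D_R(z)}_U\circ \wt{f}^{D_R(z)}_l$ and $\wt{f}^\bbC_l=\clF^{D_R(z)}_\bbC\circ \wt{f}^{D_R(z)}_l$; substituting $\wt{f}^{D_R(z)}_l=(\clF^{D_R(z)}_\bbC)^{-1}\circ \wt{f}^\bbC_l$ into the first and recognizing $\wt{f}^U_1=\clF^{D_R(z)}_U\circ (\clF^{D_R(z)}_\bbC)^{-1}$ gives the identity. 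Then for an arbitrary $L=\bigsqcup_{i=1}^l D_{r_i}(z_i)\in \frB(U)$, a direct computation
\begin{align*}
\clF^{D_R(z)}_U\circ r\circ \clF^{\{D_{r_i}(z_i)\}_{i=1}^l}_U
&=\wt{f}^U_1\circ \clF^{\{D_{r_i}(z_i)\}_{i=1}^l}_\bbC\\
&=\wt{f}^U_1\circ \wt{f}^\bbC_l\circ \bigl(\otimes_{i=1}^l\clF^{D_{r_i}(z_i)}_\bbC\bigr)
=\wt{f}^U_l\circ \bigl(\otimes_{i=1}^l\clF^{D_{r_i}(z_i)}_\bbC\bigr)
=\clF^{\{D_{r_i}(z_i)\}_{i=1}^l}_U,
\end{align*}
combined with the factorization isomorphism $\clF^{\{D_{r_i}(z_i)\}_{i=1}^l}_L$, yields $\clF^{D_R(z)}_U\circ r\circ \clF^L_U=\clF^L_U$, finishing (2).

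The main obstacle is the passage from $l=1$ (where \cref{lem:fLindep} essentially supplies the result) to $l\ge 2$. The resolution rests on the choice-independence in \cref{lem:fLindep}: computing $\wt{f}^U_l$ with an $L$ whose disks all lie inside $D_R(z)$ makes the common factorizations through $\wt{f}^{D_R(z)}_l$ automatic, and the identity so obtained propagates to arbitrary $L\in \frB(U)$.
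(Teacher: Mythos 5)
Your proof is correct. Part (1) matches the paper exactly (you just spell out the retraction $r$ that the paper leaves implicit). For part (2), you take a genuinely different route: the paper proves that $\clF^{D_R(z)}_U$ is an epimorphism (testing $f\circ\clF^L_U=g\circ\clF^L_U$ against the colimit generators after replacing $L$ by an $L'\subset D_R(z)$ via \cref{lem:fLindep}), and then invokes the categorical fact that a split monomorphism which is also an epimorphism is an isomorphism. You instead show directly that your retraction $r$ is a two-sided inverse, checking $\clF^{D_R(z)}_U\circ r\circ\clF^L_U=\clF^L_U$ against the same generators. Both approaches lean on the same two ingredients — \cref{prp:FUindlim} to reduce to $L\in\frB(U)$, and \cref{lem:fLindep} to bring an arbitrary $L$ inside $D_R(z)$ — but the paper's epimorphism argument is shorter, while your version has the merit of being fully constructive: it exhibits the inverse rather than deducing its existence. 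Note that both proofs implicitly use the factorization-algebra condition (ii) of \cref{dfn:FA} to know $\clF^{\{D_{r_i}(z_i)\}_i}_L$ is an isomorphism; you acknowledge this at the end, whereas the paper buries it in the phrase ``there exists an isomorphism $\clF(L)\lsto\clF(L')$.''
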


\begin{proof}
(1) This is clear since $\clF^U_{\bbC}\circ \clF^{D_R(z)}_U=\clF^{D_R(z)}_{\bbC}$ is an isomorphism. 

(2) It is enough to show that $\clF^{D_R(z)}_U$ is a epimorphism. Take $A\in \Ob\sfM$ and morphisms $f, g\colon \clF(U)\to A$ such that $f\circ \clF^{D_R(z)}_U=g\circ \clF^{D_R(z)}_U$. To prove $f=g$, by \cref{prp:FUindlim}, it suffices to show that $f\circ \clF^L_U=g\circ \clF^L_U$ for all $L=D_{r_1}(z_1)\sqcup \cdots \sqcup D_{r_l}(z_l)\in \frB(U)$. Take $r'\in \bbR_{>0}$ and $z_1', \ldots, z_l'\in \bbC$ such that
\[
L'\ceq D_{r'}(z_1')\sqcup \cdots \sqcup D_{r'}(z_l')\subset D_R(z),  
\]
then we get
\[
f\circ \clF^{L'}_U=
f\circ \clF^{D_R(z)}_U\circ \clF^{L'}_{D_R(z)}=
g\circ \clF^{D_R(z)}_U\circ \clF^{L'}_{D_R(z)}=
g\circ \clF^{L'}_U. 
\]
Also, by \cref{lem:fLindep}, there exists an isomorphism $\clF(L)\lsto \clF(L')$ which commutes
\[
\begin{tikzcd}[row sep=huge, column sep=huge]
\clF(L) \arrow[r, "\clF^L_U"] \arrow[d, "\sim"' sloped] & \clF(U) \\
\clF(L') \arrow[ru, "\clF^{L'}_U"']
\end{tikzcd}
\]
Thus, we have $f\circ \clF^L_U=g\circ \clF^L_U$. 
\end{proof}

\section{Commutative algebras and locally constant factorization algebras}
\label{s:LCFAcomalg}

In the remaining of this note we will work over the field $\bbC$ of complex numbers, so that all linear spaces and linear maps are defined over $\bbC$, unless otherwise stated. 

Although it is known that the equivalence of $(\infty, 1)$-categories between locally constant factorization algebras on $\bbR^n$ and $E_n$-algebras, we explicitly prove this equivalence for the case $n=2$ with the target category $\Lin$. We use the following notations: 
\begin{itemize}
\item 
We denote by $\LPFA\ceq \PFA^{\mathsf{loc}}(\bbC, \Lin)$ the category of locally constant prefactorization algebras with values in $\Lin$. 

\item 
We denote by $\LFA\ceq \FA^{\mathsf{loc}}(\bbC, \Lin)$ the full subcategory of $\LPFA$ whose objects are factorization algebras. 
\end{itemize}

\subsection{Commutative algebras from locally constant prefactorization algebras}
\label{ss:constcomalg}

We construct a commutative algebra from a locally constant prefactorization algebra. 
Let $\clF\colon \frU_{\bbC}\to \Lin$ be a locally constant prefactorization algebra. 

\begin{dfn}\label{dfn:commult}
For $l\in\bbZ_{>0}$, define a linear map $\mu_l$ as the composition
\[
\begin{tikzcd}[column sep=huge]
\clF(\bbC)^{\otimes l} \arrow[r, "\otimes_{i=1}^n(\clF^{D_R(z_i)}_{\bbC})^{-1}"] & [1.2cm]
\bigotimes_{i=1}^l\clF(D_R(z_i)) \arrow[r, "\clF^{D_R(z_1), \ldots, D_R(z_l)}_{\bbC}"]& [1.2cm]
\clF(\bbC)
\end{tikzcd}
\]
taking $R\in \bbR_{>0}$ and $z_1, \ldots, z_l\in \bbC$ such that $D_R(z_i)\cap D_R(z_j)=\varnothing$ ($i, j\in[l]$, $i\neq j$). 
Note that by \cref{lem:fLindep}, the definition of $\mu_l$ is independent of the choice of $R\in \bbR_{>0}$ and $z_1, \ldots, z_l\in \bbC$. 
\end{dfn}

\begin{lem}\label{lem:asscomuni}
\ 
\begin{enumerate}
\item 
For $a, b, c\in \clF(\bbC)$, we have $\mu_2(\mu_2(a\otimes b)\otimes c)=\mu_3(a\otimes b\otimes c)=\mu_2(a\otimes \mu_2(b\otimes c))$. 

\item
For $a, b\in \clF(\bbC)$, we have $\mu_2(a\otimes b)=\mu_2(b\otimes a)$. 

\item 
For $a\in \clF(\bbC)$, we have $\mu_2(a\otimes \clF^{\varnothing}_{\bbC}(1_\clF))=a$, where $\bbC\to \clF(\varnothing)$, $1\mapsto 1_{\clF}$ denotes the unit. 
\end{enumerate}
\end{lem}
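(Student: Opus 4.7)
All three statements will follow by combining the prefactorization axioms in \cref{dfn:PFA} with the freedom, guaranteed by \cref{lem:fLindep}, to choose any configuration of pairwise disjoint disks when computing the operations $\mu_l$. Fix pairwise disjoint small disks $U_i \ceq D_R(z_i)$ ($i=1,2,3$) in $\bbC$ and write $\tilde{a}\ceq(\clF^{U_1}_{\bbC})^{-1}(a)$, $\tilde{b}\ceq (\clF^{U_2}_{\bbC})^{-1}(b)$, $\tilde{c}\ceq (\clF^{U_3}_{\bbC})^{-1}(c)$. The overall strategy is to pull the defining morphisms of $\mu_2$ and $\mu_3$ through a larger disk $V_1$ (resp.\ $V_2$) using compatibility condition (i), and then invoke the appropriate prefactorization axiom.

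For part (1), choose disks $V_1, V_2\subset \bbC$ with $U_1\sqcup U_2\subset V_1$, $V_1\cap U_3=\varnothing$, and $U_2\sqcup U_3\subset V_2$, $V_2\cap U_1=\varnothing$. Using condition (i) of \cref{dfn:PFA}, I would rewrite
\[
\mu_2(a\otimes b)=\clF^{V_1}_{\bbC}\circ \clF^{U_1, U_2}_{V_1}(\tilde{a}\otimes \tilde{b}),
\]
so that
\[
\mu_2(\mu_2(a\otimes b)\otimes c)=\clF^{V_1, U_3}_{\bbC}\circ \bigl(\clF^{U_1, U_2}_{V_1}\otimes \id_{\clF(U_3)}\bigr)(\tilde{a}\otimes \tilde{b}\otimes \tilde{c}),
\]
and an analogous identity for $\mu_2(a\otimes \mu_2(b\otimes c))$ using $V_2$. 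The associativity axiom (iii) applied with $W=\bbC$ identifies the two expressions. Taking the degenerate choice $V_1=U_1\sqcup U_2$ and comparing with the recursive definition of $\clF^{U_1,U_2,U_3}_{\bbC}$ from \cref{rmk:PFA} shows that the common value is exactly $\mu_3(a\otimes b\otimes c)$. The main bookkeeping obstacle here is matching this recursive definition of $\mu_3$ cleanly to both groupings; \cref{lem:fLindep} is what allows the choice of $V_1$ to be irrelevant.

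For part (2), I would directly apply the braiding axiom (ii) from \cref{dfn:PFA}: since the braiding on $\Lin$ is $x\otimes y\mapsto y\otimes x$, one has $\clF^{U_1,U_2}_{\bbC}(\tilde{a}\otimes \tilde{b})=\clF^{U_2,U_1}_{\bbC}(\tilde{b}\otimes \tilde{a})$, which by the definition of $\mu_2$ is exactly $\mu_2(b\otimes a)$.

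For part (3), set $b\ceq \clF^{\varnothing}_{\bbC}(1_\clF)$. Since $\clF^{\varnothing}_{\bbC}=\clF^{U_2}_{\bbC}\circ \clF^{\varnothing}_{U_2}$ by the precosheaf axiom, we get $\tilde{b}=\clF^{\varnothing}_{U_2}(1_\clF)$. Condition (i) applied to the inclusion $\varnothing\subset U_2$ yields
\[
\mu_2(a\otimes b)=\clF^{U_1,U_2}_{\bbC}\bigl(\tilde{a}\otimes \clF^{\varnothing}_{U_2}(1_\clF)\bigr)
=\clF^{U_1,\varnothing}_{\bbC}(\tilde{a}\otimes 1_\clF),
\]
and a further application of (i) rewrites the right-hand side as $\clF^{U_1}_{\bbC}\circ \clF^{U_1,\varnothing}_{U_1}(\tilde{a}\otimes 1_\clF)$. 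Finally, the unit axiom (iv) says $\clF^{\varnothing,U_1}_{U_1}(1_\clF\otimes \tilde{a})=\tilde{a}$, and the braiding axiom (ii) removes the ordering discrepancy $U_1,\varnothing$ versus $\varnothing, U_1$, so the whole expression collapses to $\clF^{U_1}_{\bbC}(\tilde{a})=a$.
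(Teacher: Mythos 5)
Your proposal is correct, and for parts~(2) and~(3) it follows essentially the same route as the paper (your part~(3) is actually slightly more explicit: the paper only names condition~(iv) for the last step, while the collapse of $\clF^{U_1,\varnothing}_{\bbC}(\tilde a\otimes 1_\clF)$ to $a$ also quietly uses the braiding~(ii) and the first part of~(i), exactly as you spell out).

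Part~(1) is where your approach genuinely diverges. The paper never explicitly invokes the associativity axiom~(iii): it unfolds $\mu_2(\mu_2(a\otimes b)\otimes c)$ directly to $\mu_3(a\otimes b\otimes c)$ using only the two halves of condition~(i) and the recursive definition of $\clF^{U_1,U_2,U_3}_{\bbC}$ from \cref{rmk:PFA}, and then declares the identity $\mu_2(a\otimes\mu_2(b\otimes c))=\mu_3(a\otimes b\otimes c)$ to follow ``similarly.'' Since that recursion is written from the left, the ``similar'' computation actually does require associativity~(iii) to rebracket, so the paper hides an appeal to~(iii). You instead apply~(iii) head-on to identify the two groupings, which makes the logical dependency visible. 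This buys transparency; what it costs is a mild geometric side-condition you gloss over: you need disks $V_1\supset U_1\sqcup U_2$ disjoint from $U_3$ and simultaneously $V_2\supset U_2\sqcup U_3$ disjoint from $U_1$, which fails for some configurations (e.g.\ $z_1,z_2,z_3$ collinear with $z_1$ in the middle), so you should say that \cref{lem:fLindep} lets you pick a configuration (three disks at the vertices of a large triangle, small radius) where both exist. Two further small imprecisions: the ``degenerate choice $V_1=U_1\sqcup U_2$'' is not literally available in the definition of $\mu_2$ (which uses disks), so the passage to $\mu_3$ is really an extra application of both halves of~(i), not a specialization of $V_1$; and the irrelevance of the auxiliary $V_1$ is governed by axiom~(i), whereas \cref{lem:fLindep} governs only the choice of the disks $U_i$ feeding $\mu_l$ --- worth separating the two roles. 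None of these affect the correctness of the plan.
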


\begin{proof}
(1) We prove $\mu_2(\mu_2(a\otimes b)\otimes c)=\mu_3(a\otimes b\otimes c)$. Take $0<r<R$ and $z_1, z_2, z, w\in \bbC$ such that 
\[
D_r(z_1)\sqcup D_r(z_2)\subset D_R(z), \quad
D_R(z)\cap D_R(w)=\varnothing, 
\]
then 
\begin{align*}
&\mu_2(\mu_2(a\otimes b)\otimes c)\\
&=
\clF^{D_R(z), D_R(w)}_{\bbC}
\Bigl(
(\clF^{D_R(z)}_{\bbC})^{-1}\clF^{D_r(z_1), D_r(z_2)}_{\bbC}
	\bigl(
	(\clF^{D_r(z_1)}_{\bbC})^{-1}(a)\otimes (\clF^{D_r(z_2)}_{\bbC})^{-1}(b)
	\bigr)
\otimes (\clF^{D_R(w)}_{\bbC})^{-1}(c)
\Bigr)\\
&=
\clF^{D_R(z), D_R(w)}_{\bbC}
\Bigl(
\clF^{D_r(z_1), D_r(z_2)}_{D_R(z)}
	\bigl(
	(\clF^{D_r(z_1)}_{\bbC})^{-1}(a)\otimes (\clF^{D_r(z_2)}_{\bbC})^{-1}(b)
	\bigr)
\otimes (\clF^{D_R(w)}_{\bbC})^{-1}(c)
\Bigr)\\
&=
\clF^{D_r(z_1), D_r(z_2), D_R(w)}_{\bbC}
\bigl(
(\clF^{D_r(z_1)}_{\bbC})^{-1}(a)\otimes (\clF^{D_r(z_2)}_{\bbC})^{-1}(b)\otimes (\clF^{D_R(w)}_{\bbC})^{-1}(c)
\bigr)\\
&=
\mu_3(a\otimes b\otimes c),  
\end{align*}
where we used the first part of \cref{dfn:PFA} (i) in the second equality. 
Similarly, one can prove $\mu_2(a\otimes \mu_2(b\otimes c))=\mu_3(a\otimes b\otimes c)$. 

(2) This follows from \cref{dfn:PFA} (ii). 

(3) Take $R\in \bbR_{>0}$ and $z_1, z_2\in \bbC$ such that $D_R(z_1)\cap D_R(z_2)=\varnothing$, then
\begin{align*}
\mu_2(a\otimes \clF^{\varnothing}_{\bbC}(1_{\clF}))
&=
\clF^{D_R(z_1), D_R(z_2)}_{\bbC}
\bigl(
(\clF^{D_R(z_1)}_{\bbC})^{-1}(a)\otimes (\clF^{D_R(z_2)}_{\bbC})^{-1}\clF^{\varnothing}_{\bbC}(1_\clF)
\bigr)\\
&=
\clF^{D_R(z_1), D_R(z_2)}_{\bbC}
\bigl(
(\clF^{D_R(z_1)}_{\bbC})^{-1}(a)\otimes \clF^{\varnothing}_{D_R(z_2)}(1_{\clF})
\bigr)\\
&=
\clF^{D_R(z_1), \varnothing}_{\bbC}
\bigl(
(\clF^{D_R(z_1)}_{\bbC})^{-1}(a)\otimes 1_{\clF}
\bigr)\\
&=a, 
\end{align*}
where we used the second part of \cref{dfn:PFA} (i) in the third equality and \cref{dfn:PFA} (iv) in the forth equality. 
\end{proof}

\begin{rmk}\label{rmk:ntermop}
Similar to the proof of \cref{lem:asscomuni} (1), one can prove 
\[
\mu_2(\mu_n(a_1\otimes \cdots \otimes a_n)\otimes a_{n+1})=
\mu_{n+1}(a_1\otimes \cdots \otimes a_{n+1})
\]
for $n\in \bbZ_{>0}$ and $a_1, \ldots, a_{n+1}\in \clF(\bbC)$. As a result, we have
\[
a_1\cdots a_n=\mu_n(a_1\otimes \cdots \otimes a_n),
\]
where we denote
\[
a_1\cdots a_n\ceq (a_1\cdots a_{n-1})a_n, \quad a_1a_2\ceq \mu_2(a_1\otimes a_2). 
\]
\end{rmk}

\begin{prp}
For a locally constant prefactorization algebra $\clF\colon \frU_{\bbC}\to \Lin$, the linear space $\clF(\bbC)$ has the structure of commutative algebra as follows: 
\begin{itemize}
\item 
The multiplication is $\mu_2\colon \clF(\bbC)\otimes \clF(\bbC)\to \clF(\bbC)$ defined in \cref{dfn:commult}. 

\item 
The unit is $\clF^{\varnothing}_{\bbC}(1_\clF)$, where $\bbC\to \clF(\varnothing)$, $1\mapsto 1_\clF$ denotes the unit. 
\end{itemize}
\end{prp}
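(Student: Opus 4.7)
The plan is to verify the three defining axioms of a commutative unital $\bbC$-algebra by appealing directly to \cref{lem:asscomuni}, which has already been established for the multiplication $\mu_2$ and the distinguished element $\clF^{\varnothing}_{\bbC}(1_{\clF})$. All of the essential content has in fact already been proved; this proposition merely assembles those facts into a single structural statement.

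First, I would observe that $\mu_2\colon \clF(\bbC)\otimes \clF(\bbC)\to \clF(\bbC)$ is a $\bbC$-linear map by construction, since it is a composition of the $\bbC$-linear structure maps coming from the prefactorization algebra $\clF$ and the inverses of the isomorphisms $\clF^{D_R(z_i)}_{\bbC}$ guaranteed by local constancy. Associativity is then a direct consequence of \cref{lem:asscomuni}(1), which supplies the chain of equalities
\[
\mu_2(\mu_2(a\otimes b)\otimes c)
= \mu_3(a\otimes b\otimes c)
= \mu_2(a\otimes \mu_2(b\otimes c))
\]
for all $a,b,c\in \clF(\bbC)$, via the three-disk operation $\mu_3$ as an intermediary.

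Next, commutativity $\mu_2(a\otimes b)=\mu_2(b\otimes a)$ is precisely the statement of \cref{lem:asscomuni}(2). For the unit axiom, \cref{lem:asscomuni}(3) gives the identity $\mu_2(a\otimes \clF^{\varnothing}_{\bbC}(1_{\clF}))=a$, which is one-sided; combining this with the commutativity established above yields the two-sided identity $\mu_2(\clF^{\varnothing}_{\bbC}(1_{\clF})\otimes a)=a$, so that $\clF^{\varnothing}_{\bbC}(1_{\clF})$ is a unit for $\mu_2$.

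Since each of associativity, commutativity, and the unit property is extracted verbatim (or via an immediate use of commutativity) from \cref{lem:asscomuni}, no further obstacle arises. If anything, the only step that required genuine work was \cref{lem:fLindep}, which justified that the definition of $\mu_l$ in \cref{dfn:commult} is independent of the auxiliary choice of $R$ and centers $z_1,\ldots,z_l$; that was the real technical heart, and having it in hand reduces the present proposition to a straightforward bookkeeping of the three parts of \cref{lem:asscomuni}.
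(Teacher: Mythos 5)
Your proposal is correct and takes essentially the same route as the paper, which simply cites \cref{lem:asscomuni} wholesale. The only addition you make is the explicit (and correct) remark that the one-sided unit identity of \cref{lem:asscomuni}(3) upgrades to a two-sided one via commutativity, which the paper leaves implicit.
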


\begin{proof}
This follows from \cref{lem:asscomuni}. 
\end{proof}

\begin{rmk}\label{rmk:clFDRcalg}
Let $\clF\colon \frU_{\bbC}\to \Lin$ be a locally constant prefactorization algebra. 
For $R\in \bbR_{>0}$ and $z\in \bbC$, since $\clF^{D_R(z)}_{\bbC}\colon \clF(D_R(z))\to \clF(\bbC)$ is a linear isomorphism, the linear space $\clF(D_R(z))$ inherits the structure of a commutative algebra from $\clF(\bbC)$. The multiplication on $\clF(D_R(z))$ is equal to
\[
a\cdot b=
\clF^{D_r(z_1), D_r(z_2)}_{D_R(z)}\bigl((\clF^{D_r(z_1)}_{D_R(z)})^{-1}(a)\otimes (\clF^{D_r(z_2)}_{D_R(z)})^{-1}(b)\bigr) 
\]
for any $r\in \bbR_{>0}$ and $z_1, z_2\in \bbC$ such that $D_r(z_1)\sqcup D_r(z_2)\subset D_R(z)$. Also, the unit of $\clF(D_R(z))$ is $\clF^{\varnothing}_{D_R(z)}(1_\clF)$. 
\end{rmk}

\begin{prp}
For locally constant prefactorization algebras $\clF, \clG\colon \frU_{\bbC}\to \Lin$ and a morphism $\varphi\colon \clF\to \clG$ of prefactorization algebras, the linear map $\varphi_{\bbC}\colon \clF(\bbC)\to \clG(\bbC)$ is a morphism of commutative algebras. Thus, the map $\clF\mapsto \clF(\bbC)$ gives rise to the functor
\[
(-)(\bbC)\colon \LPFA\to \CAlg_{\bbC}
\]
\end{prp}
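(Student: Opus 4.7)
The plan is to unpack both the multiplication and the unit of the commutative algebra structure on $\clF(\bbC)$ using their concrete descriptions, and then show that each piece is preserved by $\varphi_{\bbC}$ by invoking the two compatibility axioms in \cref{dfn:morPFA}, together with the precosheaf naturality of $\varphi$.

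First I will handle preservation of the unit. By the unit compatibility in \cref{dfn:morPFA}~(ii) we have $\varphi_{\varnothing}(1_\clF) = 1_{\clG}$, and by the naturality of $\varphi$ with respect to the inclusion $\varnothing \subset \bbC$ we have $\varphi_{\bbC} \circ \clF^{\varnothing}_{\bbC} = \clG^{\varnothing}_{\bbC} \circ \varphi_{\varnothing}$. Chaining these gives $\varphi_{\bbC}(\clF^{\varnothing}_{\bbC}(1_\clF)) = \clG^{\varnothing}_{\bbC}(1_\clG)$, as desired.

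Next I will show preservation of the product $\mu_2$. Fix $R \in \bbR_{>0}$ and $z_1, z_2 \in \bbC$ with $D_R(z_1) \cap D_R(z_2) = \varnothing$; by \cref{dfn:commult} the product on $\clF(\bbC)$ (resp.\ $\clG(\bbC)$) is computed using this choice, and the definition is independent of it by \cref{lem:fLindep}. Precosheaf naturality of $\varphi$ gives $\varphi_{\bbC} \circ \clF^{D_R(z_i)}_{\bbC} = \clG^{D_R(z_i)}_{\bbC} \circ \varphi_{D_R(z_i)}$ for $i=1,2$; since $\clF^{D_R(z_i)}_{\bbC}$ and $\clG^{D_R(z_i)}_{\bbC}$ are isomorphisms (both $\clF$ and $\clG$ are locally constant), this rewrites as
\[
\varphi_{D_R(z_i)} \circ (\clF^{D_R(z_i)}_{\bbC})^{-1} = (\clG^{D_R(z_i)}_{\bbC})^{-1} \circ \varphi_{\bbC}.
\]
Combined with the multiplication compatibility $\varphi_{\bbC} \circ \clF^{D_R(z_1), D_R(z_2)}_{\bbC} = \clG^{D_R(z_1), D_R(z_2)}_{\bbC} \circ (\varphi_{D_R(z_1)} \otimes \varphi_{D_R(z_2)})$ from \cref{dfn:morPFA}~(i), a short diagram chase yields $\varphi_{\bbC}(\mu_2^{\clF}(a \otimes b)) = \mu_2^{\clG}(\varphi_{\bbC}(a) \otimes \varphi_{\bbC}(b))$ for all $a, b \in \clF(\bbC)$.

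Finally, functoriality of $(-)(\bbC)$ is immediate: for morphisms of prefactorization algebras $\varphi\colon \clF \to \clG$ and $\psi\colon \clG \to \clH$, one has $(\psi \circ \varphi)_{\bbC} = \psi_{\bbC} \circ \varphi_{\bbC}$, and $(\id_{\clF})_{\bbC} = \id_{\clF(\bbC)}$, directly from the componentwise definition of morphisms in $\LPFA$. There is no real obstacle here; the mildest subtlety is simply making sure that the two isomorphisms $(\clF^{D_R(z_i)}_{\bbC})^{-1}$ and $(\clG^{D_R(z_i)}_{\bbC})^{-1}$ appearing in the definition of $\mu_2$ are intertwined by $\varphi$, which follows formally from precosheaf naturality once one notes that naturality of an invertible arrow automatically gives naturality of its inverse.
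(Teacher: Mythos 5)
Your proof is correct and takes essentially the same approach as the paper: unit preservation via \cref{dfn:morPFA}~(ii) together with precosheaf naturality at $\varnothing\subset\bbC$, and multiplication preservation by intertwining the inverse isomorphisms $(\clF^{D_R(z_i)}_{\bbC})^{-1}$ and $(\clG^{D_R(z_i)}_{\bbC})^{-1}$ through $\varphi$ and then applying the multiplication compatibility from \cref{dfn:morPFA}~(i). The only minor addition on your side is the explicit (but trivial) verification of functoriality, which the paper leaves implicit.
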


\begin{proof}
We proceed the following steps: 
\begin{itemize}
\item
The linear map $\varphi_{\bbC}$ preserves the multiplication: For $a, b\in \clF(\bbC)$, take $R\in \bbR_{>0}$ and $z_1, z_2\in \bbC$ such that $D_R(z_1)\cap D_R(z_2)=\varnothing$, then
\begin{align*}
\varphi_{\bbC}(ab)
&=
\varphi_{\bbC}\clF^{D_R(z_1), D_R(z_2)}_{\bbC}
\bigl(
(\clF^{D_R(z_1)}_{\bbC})^{-1}(a)\otimes (\clF^{D_R(z_2)}_{\bbC})^{-1}(b)
\bigr)\\
&=
\clG^{D_R(z_1), D_R(z_2)}_{\bbC}
\bigl(
\varphi_{D_R(z_1)}(\clF^{D_R(z_1)}_{\bbC})^{-1}(a)\otimes \varphi_{D_R(z_2)}(\clF^{D_R(z_2)}_{\bbC})^{-1}(b)
\bigr)\\
&=
\clG^{D_R(z_1), D_R(z_2)}_{\bbC}
\bigl(
(\clG^{D_R(z_1)}_{\bbC})^{-1}\varphi_{\bbC}(a)\otimes (\clG^{D_R(z_2)}_{\bbC})^{-1}\varphi_{\bbC}(b)
\bigr)\\
&=
\varphi_{\bbC}(a)\varphi_{\bbC}(b). 
\end{align*}

\item
The linear map $\varphi_{\bbC}$ preserves the unit: We have
\[
\varphi_{\bbC}\clF^{\varnothing}_{\bbC}(1_{\clF})=
\clG^{\varnothing}_{\bbC}\varphi_{\varnothing}(1_{\clF})=
\clG^{\varnothing}_{\bbC}(1_{\clG}). 
\qedhere
\]
\end{itemize}
\end{proof}

\subsection{Locally constant factorization algebras from commutative algebras}
\label{ss:constLCFA}

Next, we construct a locally constant factorization algebra from a commutative algebra. Let $A$ be a commutative algebra. We use the notation $\frB$ defined in \eqref{eq:basisofC}. 
Note that $\frB$ is an open basis of $\bbC$ satisfying: 
\begin{clist}
\item 
$\varnothing\in \frB$. 

\item 
For $L, M\in \frB$ such that $L\cap M=\varnothing$, we have $L\sqcup M\in \frB$. 
\end{clist}

\begin{dfn}
Define a map $F^{\loc}_A\colon \frB\to \Ob(\Lin)$ by
\[
F^{\loc}_A(D_{r_1}(z_1)\sqcup \cdots \sqcup D_{r_l}(z_l))\ceq
\bigotimes_{i=1}^lF^{\loc}_A(D_{r_i}(z_i)), \quad 
F^{\loc}_A(D_{r_i}(z_i))\ceq A
\]
with the convention $F^{\loc}_A(\varnothing)\ceq\bbC$ for $l=0$. Also, for 
\[
L=D_{r_1}(z_1)\sqcup \cdots \sqcup D_{r_l}(z_l),\ 
M=D_{R_1}(w_1)\sqcup \cdots \sqcup D_{R_m}(w_m)\in \frB
\]
such that $L\subset M$, define a linear map $(F^{\loc}_A)^L_M\colon F^{\loc}_A(L)\to F^{\loc}_A(M)$ as follows: 
\begin{enumerate}
\item 
If $l=m=0$, then define $(F^{\loc}_A)^L_M(1_{\bbC})\ceq 1_{\bbC}$. 

\item
If $l=0$, $m>0$, then define $(F^{\loc}_A)^L_M(1_{\bbC})\ceq \otimes_{j=1}^m1_A$. 

\item
If $l, m>0$, then define
\[
(F^{\loc}_A)^L_M(\otimes_{i=1}^la_i)\ceq \otimes_{j=1}^m
\prod_{i\in I_j}a_i, 
\]
where $I_1\sqcup \cdots \sqcup I_m=[l]$ is a unique decomposition such that $\bigsqcup_{i\in I_j}D_{r_i}(z_i)\subset D_{R_j}(w_j)$. 
\end{enumerate}
\end{dfn}

\begin{lem}
The map $F^{\loc}_A\colon \frB\to \Ob(\Lin)$ is a precosheaf by $(F^{\loc}_A)^L_M$. 
\end{lem}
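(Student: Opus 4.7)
The plan is to verify the two axioms in \cref{dfn:pcoshf}, namely $(F^{\loc}_A)^L_L = \id_{F^{\loc}_A(L)}$ and $(F^{\loc}_A)^M_N \circ (F^{\loc}_A)^L_M = (F^{\loc}_A)^L_N$ for $L \subset M \subset N$ in $\frB$. The first axiom is immediate: given $L = D_{r_1}(z_1) \sqcup \cdots \sqcup D_{r_l}(z_l)$, the unique decomposition $[l] = I_1 \sqcup \cdots \sqcup I_l$ with $D_{r_i}(z_i) \subset D_{r_j}(z_j)$ (for $i \in I_j$) must be $I_j = \{j\}$, so the formula produces $\otimes_{j=1}^l \prod_{i \in \{j\}} a_i = \otimes_{j=1}^l a_j$. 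The empty case $l = 0$ is handled by the first clause in the definition.

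For the composition axiom, write $L = \bigsqcup_{i=1}^l D_{r_i}(z_i)$, $M = \bigsqcup_{j=1}^m D_{R_j}(w_j)$, $N = \bigsqcup_{k=1}^n D_{S_k}(u_k)$. Let $[l] = \bigsqcup_{j=1}^m I_j$ be the decomposition witnessing $L \subset M$, and let $[m] = \bigsqcup_{k=1}^n J_k$ be the one witnessing $M \subset N$. Then the decomposition witnessing $L \subset N$ must be $[l] = \bigsqcup_{k=1}^n \bigl(\bigsqcup_{j \in J_k} I_j\bigr)$, by uniqueness of such a decomposition. Thus, for $\otimes_{i=1}^l a_i \in F^{\loc}_A(L)$, applying $(F^{\loc}_A)^L_M$ gives $\otimes_{j=1}^m \prod_{i \in I_j} a_i$, and then applying $(F^{\loc}_A)^M_N$ gives $\otimes_{k=1}^n \prod_{j \in J_k} \prod_{i \in I_j} a_i$, which equals $\otimes_{k=1}^n \prod_{i \in \bigsqcup_{j \in J_k} I_j} a_i = (F^{\loc}_A)^L_N(\otimes_{i=1}^l a_i)$, using commutativity and associativity of $A$.

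The remaining work is just to dispatch the degenerate cases where $l$, $m$, or $n$ is zero. If $l = 0$ (so $L = \varnothing$) and $m = n = 0$, both sides send $1_{\bbC}$ to $1_{\bbC}$. If $l = 0$ and $n > 0$, then the left composite sends $1_\bbC$ first to $\otimes_{j=1}^m 1_A$ (or to $1_\bbC$ if $m = 0$) and then to $\otimes_{k=1}^n 1_A$, which matches the direct map. If $l > 0$ but $m = 0$, there is nothing to check since $L \subset M = \varnothing$ forces $l = 0$. The only slightly delicate point is the intermediate case $m > 0$, $n > 0$: here one must observe that the formula $\prod_{i \in I_j} a_i$ with $I_j = \varnothing$ must be interpreted as $1_A$, so that the product $\prod_{j \in J_k} \prod_{i \in I_j} a_i$ over an index set $J_k$ containing empty $I_j$'s still collapses correctly; this is consistent with the map in case (2) of the definition. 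I expect no substantive obstacle beyond bookkeeping; the commutativity of $A$ is what makes each product $\prod_{i \in I_j} a_i$ well-defined independent of the ordering of $I_j$, and associativity allows regrouping in the composition step.
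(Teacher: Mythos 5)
Your proof is correct and follows the same approach as the paper: both verify the identity axiom directly and establish the composition axiom by observing that the decomposition of $[l]$ witnessing $L \subset N$ is obtained by composing (via unions over the blocks $J_k$) the decompositions witnessing $L \subset M$ and $M \subset N$. You are somewhat more explicit than the paper about the degenerate cases involving the empty set and empty index sets $I_j$, but the substance of the argument is identical.
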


\begin{proof}
It is clear that $(F^{\loc}_A)^L_L=\id_{F^{\loc}_A(L)}$ for all $L\in \frB$. Let $L, M, N\in \frB$ such that $L\subset M\subset N$, and write 
\[
L=D_{r_1}(z_1)\sqcup \cdots \sqcup D_{r_l}(z_l), \quad
M=D_{s_1}(w_1)\sqcup \cdots \sqcup D_{s_m}(w_m), \quad
N=D_{R_1}(\zeta_1)\sqcup \cdots \sqcup D_{R_n}(\zeta_n). 
\]
Take $a=a_1\otimes \cdots \otimes a_l\in F^{\loc}_A(L)$. Then, 
\[
(F^{\loc}_A)^M_N(F^{\loc}_A)^L_M(a)=
\otimes_{k=1}^n\prod_{j\in J_k}\prod_{i\in I_j}a_i, 
\]
where $I_1\sqcup \cdots \sqcup I_m=[l]$ and $J_1\sqcup \cdots \sqcup J_n=[m]$ are the decompositions satisfying
\[
\bigsqcup_{i\in I_j}D_{r_i}(z_i)\subset D_{s_j}(w_j), \quad
\bigsqcup_{j\in J_k}D_{s_j}(w_j)\subset D_{R_k}(\zeta_k). 
\]
On the other hand, 
\[
(F^{\loc}_A)^L_N(a)=\otimes_{k=1}^n\prod_{i\in I_k'}a_i,
\]
where $I_1'\sqcup \cdots \sqcup I_n'=[l]$ is the decomposition satisfying 
\[
\bigsqcup_{i\in I_k'}D_{r_i}(z_i)\subset D_{R_k}(\zeta_k). 
\]
Since $\bigsqcup_{j\in J_k}I_j=I_k'$ for each $k\in[n]$, we have $(F^{\loc}_A)^M_N(F^{\loc}_A)^L_M=(F^{\loc}_A)^L_N$. 
\end{proof}

\begin{lem}\label{lem:sqcuptensor}
For $L_1, L_2, M_1, M_2\in \frB$ such that $L_1\subset M_1$, $L_2\subset M_2$ and $M_1\cap M_2=\varnothing$, we have
\[
(F^{\loc}_A)^{L_1\sqcup L_2}_{M_1\sqcup M_2}=
(F^{\loc}_A)^{L_1}_{M_1}\otimes (F^{\loc}_A)^{L_2}_{M_2}. 
\]
\end{lem}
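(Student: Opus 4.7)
The plan is to unwind both sides using the definition of $(F^{\loc}_A)^L_M$, which reduces everything to a statement about how the partitioning of disks behaves under disjoint union of the source and target. The key observation is that since $M_1 \cap M_2 = \varnothing$, every small disk appearing in $L_1$ (which is contained in $M_1$) is disjoint from every disk of $M_2$, and similarly for $L_2$; hence in the canonical decomposition of the index set of $L_1 \sqcup L_2$ indexed by the disks of $M_1 \sqcup M_2$, no mixing between the two halves can occur.

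Concretely, I would write
\[
L_1 = \bigsqcup_{i=1}^{l_1} D_{r_i}(z_i), \quad L_2 = \bigsqcup_{i=l_1+1}^{l_1+l_2} D_{r_i}(z_i),
\]
\[
M_1 = \bigsqcup_{j=1}^{m_1} D_{R_j}(w_j), \quad M_2 = \bigsqcup_{j=m_1+1}^{m_1+m_2} D_{R_j}(w_j),
\]
and take an element $a = a_1 \otimes \cdots \otimes a_{l_1+l_2} \in F^{\loc}_A(L_1 \sqcup L_2)$. By definition, $(F^{\loc}_A)^{L_1 \sqcup L_2}_{M_1 \sqcup M_2}(a) = \bigotimes_{j=1}^{m_1+m_2} \prod_{i \in I_j} a_i$ where $I_1 \sqcup \cdots \sqcup I_{m_1+m_2} = [l_1+l_2]$ is the unique partition with $\bigsqcup_{i \in I_j} D_{r_i}(z_i) \subset D_{R_j}(w_j)$. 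Since $D_{r_i}(z_i) \subset L_1 \subset M_1$ for $i \le l_1$ and the disks of $M_1, M_2$ are pairwise disjoint, uniqueness forces $I_j \subset [l_1]$ for $j \le m_1$ and $I_j \subset [l_1+1, l_1+l_2]$ for $j > m_1$. Therefore the partition $\{I_j\}_{j \le m_1}$ of $[l_1]$ is exactly the one defining $(F^{\loc}_A)^{L_1}_{M_1}$, and similarly for $L_2, M_2$.

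Splitting the big tensor product at $j = m_1$ yields
\[
(F^{\loc}_A)^{L_1 \sqcup L_2}_{M_1 \sqcup M_2}(a_1 \otimes \cdots \otimes a_{l_1+l_2}) = \bigl((F^{\loc}_A)^{L_1}_{M_1}(a_1 \otimes \cdots \otimes a_{l_1})\bigr) \otimes \bigl((F^{\loc}_A)^{L_2}_{M_2}(a_{l_1+1} \otimes \cdots \otimes a_{l_1+l_2})\bigr),
\]
which is exactly the claim on pure tensors and hence on all of $F^{\loc}_A(L_1 \sqcup L_2)$ by linearity. The remaining work is to handle the degenerate cases $l_1 = 0$, $l_2 = 0$, $m_1 = 0$, or $m_2 = 0$ separately using the definition, which is straightforward since in these cases the corresponding factor involves inserting units $1_A$ or $1_\bbC$, and the identifications $\bbC \otimes V \cong V$ make the assertion immediate.

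The main (and only) obstacle is notational bookkeeping of the index sets and the degenerate cases; there is no conceptual difficulty since the disjointness of $M_1$ and $M_2$ makes the canonical partition separate cleanly into the two pieces.
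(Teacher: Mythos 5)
Your proof is correct and follows essentially the same route as the paper: unwind the definition of $(F^{\loc}_A)^L_M$ on pure tensors, observe that disjointness of $M_1$ and $M_2$ forces the canonical index decomposition of $[l_1+l_2]$ to respect the split at $l_1$, and split the tensor product accordingly. The only cosmetic difference is direction: the paper constructs the joint partition from the two separate ones and checks it is the canonical one, while you start from the joint partition and argue by uniqueness that it restricts to the two separate ones; these are the same argument.
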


\begin{proof}
Write
\begin{align*}
&L_1=D_{r_{1, 1}}(z_{1, 1})\sqcup \cdots \sqcup D_{r_{1, l_1}}(z_{1, l_1}), \hspace{31pt}
L_2=D_{r_{2, 1}}(z_{2, 1})\sqcup \cdots \sqcup D_{r_{2, l_2}}(z_{2, l_2}), \\
&M_1=D_{R_{1, 1}}(w_{1, 1})\sqcup \cdots \sqcup D_{R_{1, m_1}}(w_{1, m_1}), \quad
M_2=D_{R_{2, 1}}(w_{2, 1})\sqcup \cdots \sqcup D_{R_{2, m_2}}(w_{2, m_2}), 
\end{align*}
and denote by 
\[
I_{1, 1}\sqcup \cdots \sqcup I_{1, m_1}=[l_1], \quad
I_{2, 1}\sqcup \cdots \sqcup I_{2, m_2}=[l_2]
\]
the decompositions satisfying 
\[
\bigsqcup_{i\in I_{1, j}}D_{r_{1, i}}(z_{1, i})\subset D_{R_{1, j}}(w_{1, j}), \quad
\bigsqcup_{i\in I_{2, j}}D_{r_{2, i}}(z_{2, i})\subset D_{R_{2, j}}(w_{2, j}). 
\]
Take $a^1=a_{1, 1}\otimes \cdots \otimes a_{1, l_1}\in F^{\loc}_A(L_1)$ and $a^2=a_{2, 1}\otimes \cdots \otimes a_{2, l_2}\in F^{\loc}_A(L_2)$. Then, 
\[
(F^{\loc}_A)^{L_1}_{M_1}(a^1)\otimes (F^{\loc}_A)^{L_2}_{M_2}(a^2)=
\bigl(\prod_{i\in I_{1, 1}}a_{1, i}\bigr)\otimes \cdots \otimes \bigl(\prod_{i\in I_{1, m_1}}a_{1, i}\bigr)\otimes 
\bigl(\prod_{i\in I_{2, 1}}a_{2, i}\bigr)\otimes \cdots \otimes \bigl(\prod_{i\in I_{2, m_2}}a_{2, i}\bigr). 
\]
Now, set for the symbol $x\in\{a, r, z\}$ and $y\in\{R, w\}$, 
\[
x_i\ceq 
\begin{cases}
x_{1, i} & (1\le i\le l_1) \\
x_{2, i-l_1} & (l_1<i\le l_1+l_2), 
\end{cases}
\quad
y_j\ceq 
\begin{cases}
y_{1, j} & (1\le j\le m_1)\\
y_{2, j-m_1} & (m_1<j\le m_1+m_2), 
\end{cases}
\]
and denote
\[
I_j\ceq 
\begin{cases}
I_{1, j} & (1\le j\le m_1) \\
l_1+I_{2, j-m_1} & (m_1<j\le m_1+m_2). 
\end{cases}
\]
Then, we find that $I_1\sqcup \cdots \sqcup I_{m_1+m_2}=[l_1+l_2]$ is the decomposition satisfying 
\[
\bigsqcup_{i\in I_j}D_{r_i}(z_i)\subset D_{R_j}(w_j). 
\]
Thus, we have
\[
(F^{\loc}_A)^{L_1\sqcup L_2}_{M_1\sqcup M_2}(a^1\otimes a^2)=
\bigl(\prod_{i\in I_1}a_i\bigr)\otimes\cdots \otimes \bigl(\prod_{i\in I_{m_1+m_2}}a_i\bigr)=
(F^{\loc}_A)^{L_1}_{M_1}(a^1)\otimes (F^{\loc}_A)^{L_2}_{M_2}(a^2). 
\qedhere
\]
\end{proof}

\begin{lem}\label{lem:FlocPFA}
The precosheaf $F^{\mathrm{loc}}_A\colon \frB\to \Lin$ is a prefactorization algebra by the multiplication 
\[
(F^{\loc}_A)^{L, M}_N\ceq (F^{\loc}_A)^{L\sqcup M}_N\colon 
F^{\loc}_A(L)\otimes F^{\loc}_A(M)\to F^{\loc}_A(N) \quad
(L, M, N\in \frB, L\sqcup M\subset N)
\]
and the unit $\id_{\bbC}\colon \bbC\to F^{\loc}_A(\varnothing)$. 
\end{lem}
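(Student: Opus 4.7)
The plan is to verify the four axioms of \cref{dfn:PFA} for $F^{\loc}_A$ with the prescribed multiplication and unit, leveraging three ingredients already in place: the precosheaf composition law established just above, the tensor factorization
\[
(F^{\loc}_A)^{L_1\sqcup L_2}_{M_1\sqcup M_2}=(F^{\loc}_A)^{L_1}_{M_1}\otimes (F^{\loc}_A)^{L_2}_{M_2}
\]
proved in \cref{lem:sqcuptensor}, and the commutativity/associativity of $A$.

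First I would dispatch axiom (i). Since the multiplication is defined by $(F^{\loc}_A)^{L,M}_N=(F^{\loc}_A)^{L\sqcup M}_N$, the identity $\clF^V_W\circ \clF^{U_1,U_2}_V=\clF^{U_1,U_2}_W$ is a direct transcription of the precosheaf composition law applied to $U_1\sqcup U_2\subset V\subset W$. For the second half of (i), I would invoke \cref{lem:sqcuptensor} to rewrite $(F^{\loc}_A)^{U_1}_{V_1}\otimes (F^{\loc}_A)^{U_2}_{V_2}$ as $(F^{\loc}_A)^{U_1\sqcup U_2}_{V_1\sqcup V_2}$, whereupon the precosheaf law applied to $U_1\sqcup U_2\subset V_1\sqcup V_2\subset W$ closes the argument. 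The unit axiom (iv) is essentially tautological: $\eta=\id_{\bbC}$, $\varnothing\sqcup U=U$, and under the canonical identifications $F^{\loc}_A(\varnothing)\otimes F^{\loc}_A(U)=\bbC\otimes F^{\loc}_A(U)$ and $F^{\loc}_A(\varnothing\sqcup U)=F^{\loc}_A(U)$, the required composite collapses to the left unitor.

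Axioms (ii) and (iii) are where the algebra structure on $A$ intervenes. For commutativity, I would evaluate both $(F^{\loc}_A)^{U_1,U_2}_V$ and $(F^{\loc}_A)^{U_2,U_1}_V\circ\tau$ on elementary tensors and unfold the definition of $(F^{\loc}_A)^L_M$: the two expressions produce tensor factors of the form $\prod_{i\in I_j}a_i$ indexed by the \emph{same} decomposition subordinate to the disks forming $V$, differing only in the order of multiplication within each $I_j$, so commutativity of $A$ yields equality. Associativity (iii) follows in the same spirit: both paths around the diagram collapse to $(F^{\loc}_A)^{U_1\sqcup U_2\sqcup U_3}_W$, which on elementary tensors groups the $a_i$'s according to the common decomposition dictated by the disks of $W$, and associativity together with commutativity of $A$ forces the two paths to coincide.

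The main bookkeeping obstacle throughout is keeping track of the decompositions $I_1\sqcup\cdots\sqcup I_m=[l]$ implicit in $(F^{\loc}_A)^L_M$ whenever intermediate open sets are inserted and whenever one splits or merges tensor factors. However, that combinatorics has already been absorbed in the proofs of the precosheaf property and of \cref{lem:sqcuptensor}; once those are cited, each prefactorization axiom reduces to an almost tautological identity in $A$, and no further nontrivial step is required.
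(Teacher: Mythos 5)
Your proposal is correct and follows essentially the same route as the paper: axiom (i) via the precosheaf composition law plus \cref{lem:sqcuptensor}, axiom (iii) by collapsing both legs of the diagram through \cref{lem:sqcuptensor} and the precosheaf law to $(F^{\loc}_A)^{L_1\sqcup L_2\sqcup L_3}_N$, and axiom (iv) from the definitions. For axiom (ii) you are somewhat more explicit than the paper about where commutativity of $A$ enters (the reordering of factors inside each $\prod_{i\in I_j}a_i$); the paper buries this in the unordered-product notation, so your spelled-out version is, if anything, a small improvement in clarity while remaining the same argument.
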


\begin{proof}
We check the conditions (i)-(iv) in \cref{dfn:PFA}. 
\begin{clist}
\item 
The first part is clear by the definition of $(F^{\loc}_A)^{L, M}_N$. Also, the second part follows from \cref{lem:sqcuptensor}. 

\item 
Let $L_1, L_2, M\in \frB$ such that $L_1\sqcup L_2\subset M$, and write
\[
L_1=D_{r_{1, 1}}(z_{1, 1})\sqcup \cdots \sqcup D_{r_{1, l_1}}(z_{1, l_1}), \quad
L_2=D_{r_{2, 1}}(z_{2, 1})\sqcup \cdots \sqcup D_{r_{2, l_2}}(z_{2, l_2}).
\]
Take $a^1=a_{1, 1}\otimes \cdots \otimes a_{1, l_1}\in F^{\loc}_A(L_1)$ and $a^2=a_{2, 1}\otimes \cdots \otimes a_{2, l_2}\in F^{\loc}_A(L_2)$. For the symbol $x\in \{a, r, z\}$, set
\[
x_i\ceq
\begin{cases}
x_{1, i} & (1\le i\le l_1) \\
x_{2, i-l_1} & (l_1<i\le l_1+l_2),
\end{cases}
\]
then
\[
(F_A^{\loc})^{L_2, L_1}_M(a^2\otimes a^1)=
(F_A^{\loc})^{L_1\sqcup L_2}_M(a_1\otimes \cdots \otimes a_{l_1+l_2})=
(F_A^{\loc})^{L_1, L_2}_M(a^1\otimes a^2). 
\]

\item
This follows from \cref{lem:sqcuptensor} and the commutative diagram
\[
\begin{tikzcd}[row sep=huge]
F_A^{\loc}(L_1\sqcup L_2\sqcup L_3) \arrow[rr, equal] \arrow[d, "(F_A^{\loc})^{L_1\sqcup L_2\sqcup L_3}_{M_1\sqcup L_3}"']&
&
F_A^{\loc}(L_1\sqcup L_2\sqcup L_3)\arrow[d, "(F_A^{\loc})^{L_1\sqcup L_2\sqcup L_3}_{L_1\sqcup M_2}"] \\
F_A^{\loc}(M_1\sqcup L_3) \arrow[rd, "(F_A^{\loc})^{M_1\sqcup L_3}_N"']&
&
F_A^{\loc}(L_1\sqcup M_2) \arrow[ld, "(F_A^{\loc})^{L_1\sqcup M_2}_N"]\\
&
F_A^{\loc}(N)&
\end{tikzcd}
\]

\item 
This follows from the definition of $(F_A^{\loc})^{L, M}_N$. 
\qedhere
\end{clist}
\end{proof}

By \cref{lem:FlocPFA} and the construction in \cref{ss:exten}, we have a prefactorization algebra
\[
\bfF^{\loc}_A\ceq \clE_{F_A^{\loc}}\colon \frU_{\bbC}\to \Lin. 
\]
Recall that for an open subset $U\subset \bbC$,  
\[
\bfF_A^{\loc}(U)=
\varinjlim_{L\in \frB(U)}F^{\loc}_A(L),
\]
and that the canonical morphism is denoted by
\[
(\wt{F}_A^{\loc})^L_U\colon F_A^{\loc}(L)\to \bfF_A^{\loc}(U) \quad
(L\in \frB(U)). 
\]

\begin{prp}
For a commutative algebra $A$, the prefactorization algebra $\bfF_A^{\loc}\colon \frU_{\bbC}\to \Lin$ is a locally constant factorization algebra. 
\end{prp}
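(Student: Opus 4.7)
The plan is to check three conditions separately: local constancy in the sense of \cref{dfn:LPFA}, the disjoint-union isomorphism (\cref{dfn:FA} (ii)), and the Weiss cover coequalizer axiom (\cref{dfn:FA} (i)). The prefactorization algebra structure is already provided by the extension construction of \cref{ss:exten}.

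For local constancy, any single disk $D_r(z)$ lies in $\frB$, so $(\wt{F}_A^{\loc})^{D_r(z)}_{D_r(z)}$ is an isomorphism identifying $\bfF_A^{\loc}(D_r(z))$ with $A$. When $D_r(z)\subset D_R(w)$ with both disks in $\frB$, the basis structure map $(F_A^{\loc})^{D_r(z)}_{D_R(w)}$ is the identity on $A$: the unique decomposition of $[1]$ is $I_1=\{1\}$, so the map sends $a\mapsto a$. Hence the induced map $\bfF_A^{\loc}(D_r(z))\to\bfF_A^{\loc}(D_R(w))$ is an isomorphism. The case $R=\infty$ will then follow from \cref{lem:locFA} once the factorization algebra axioms are established.

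For the disjoint-union isomorphism, I apply \cref{prp:extmultisom} to $F_A^{\loc}$. The hypothesis that $F_A^{\loc}(L)\otimes F_A^{\loc}(M)\to F_A^{\loc}(L\sqcup M)$ is an isomorphism for disjoint $L, M\in\frB$ holds by construction: this multiplication is the canonical identification of tensor products under the definition $F_A^{\loc}(L\sqcup M)=F_A^{\loc}(L)\otimes F_A^{\loc}(M)$. The second hypothesis, $\frB(U\sqcup V)=\{L\sqcup M\mid L\in\frB(U), M\in\frB(V)\}$ for disjoint opens $U, V$, holds because every disk in a basis element of $U\sqcup V$ is connected and thus entirely contained in one of $U$ or $V$.

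The main step, and principal obstacle, is the Weiss coequalizer. Given a Weiss cover $\{U_i\}_{i\in I}$ of an open $U\subset\bbC$ and a family $\{f_i\colon\bfF_A^{\loc}(U_i)\to X\}$ that coequalizes the two arrows from the intersections, the plan is to build a unique $f\colon\bfF_A^{\loc}(U)\to X$ using the colimit presentation $\bfF_A^{\loc}(U)=\varinjlim_{L\in\frB(U)}F_A^{\loc}(L)$. For $L=\bigsqcup_{k=1}^{l}D_{r_k}(z_k)\in\frB(U)$, the Weiss property yields some $i\in I$ with $\{z_1,\ldots,z_l\}\subset U_i$; shrinking the radii, one finds $L'=\bigsqcup_k D_{\varepsilon}(z_k)\in\frB(U_i)$ with $L'\subset L$ and $(F_A^{\loc})^{L'}_L=\id_{A^{\otimes l}}$. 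Set $f_L\ceq f_i\circ(\wt{F}_A^{\loc})^{L'}_{U_i}$ under this identification. The hard part is checking that $\{f_L\}_{L\in\frB(U)}$ defines a cocone on the diagram $F_A^{\loc}$: independence from the choice of $(i,\varepsilon)$ follows by shrinking further into $U_i\cap U_j$ and invoking the cocone condition on intersections, while compatibility with the basis structure maps $(F_A^{\loc})^{L_1}_{L_2}$ reduces, after shrinking both refinements into a common $U_i$ containing all centers of $L_2$, to the fact that $F_A^{\loc}$ is itself a prefactorization algebra on $\frB$. Uniqueness of $f$ is then forced by the equation $f\circ(\bfF_A^{\loc})^{U_i}_U=f_i$ applied to the generators $(\wt{F}_A^{\loc})^{L'}_{U_i}(a)$.
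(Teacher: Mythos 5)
Your proposal is correct and follows essentially the same route as the paper: invoke \cref{prp:extmultisom} for the disjoint-union axiom, verify the Weiss coequalizer by using the Weiss property to shrink the disks of each $L\in\frB(U)$ into a single $U_i$ and then checking well-definedness and cocone compatibility, and obtain local constancy from the fact that $(F_A^{\loc})^{D_r(z)}_{D_R(w)}$ is the identity together with \cref{lem:locFA}. The only minor wording quibble is that the compatibility $f_{L_2}\circ(F_A^{\loc})^{L_1}_{L_2}=f_{L_1}$ ultimately reduces to the cocone property of the colimit $\bfF_A^{\loc}(U_i)=\varinjlim_{L\in\frB(U_i)}F_A^{\loc}(L)$ (a precosheaf fact) rather than to the prefactorization structure on $\frB$ as such, but this does not affect correctness.
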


\begin{proof}
First, we prove that $\bfF_A^{\loc}$ is a factorization algebra. By \cref{prp:extmultisom}, we only need to check the condition (i) in \cref{dfn:FA}. Let $U\subset \bbC$ be an open subset, and $\{U_i\}_{i\in I}$ a Weiss cover of $U$. We use the notations $p, q, \pi$ defined in \eqref{eq:coeq} for $\bfF_A^{\loc}$. Fix a linear space $E$ and a linear map $f\colon \bigoplus_{i\in I}\bfF_A^{\loc}(U_i)\to E$ such that $f\circ p=f\circ q$. For $L=D_{r_1}(z_1)\sqcup \cdots \sqcup D_{r_l}(z_l)\in \frB(U)$, define a linear map $f_L\colon F_A^{\loc}(L)\to E$ as the composition 
\[
\begin{tikzcd}
F^{\loc}_A(L) \arrow[r, equal] &
F^{\loc}_A(D_r(z_1)\sqcup \cdots \sqcup D_r(z_l)) \arrow[r, "(\wt{F}_A^{\loc})^{D_r(z_1)\sqcup \cdots \sqcup D_r(z_l)}_{U_i}"] & [2.2cm]
\bfF_A^{\loc}(U_i) \arrow[r, hookrightarrow] &
\bigoplus_{i\in I}\bfF_A^{\loc}(U_i) \arrow[r, "f"] &
E,
\end{tikzcd}
\]
where we take $i\in I$ and $r\in \bbR_{>0}$ satisfying
\[
z_1, \ldots, z_l\in U_i, \quad
D_r(z_1)\sqcup \cdots \sqcup D_r(z_l)\subset U_i. 
\]
Notice that the condition $f\circ p=f\circ q$ and the commutative diagram
\[
\begin{tikzcd}[row sep=1cm, column sep=huge]
\bfF_A^{\loc}(U_i\cap U_j) \arrow[r, "(\bfF^{\loc}_A)^{U_i\cap U_j}_{U_i}"] \arrow[d, hookrightarrow]& 
\bfF_A^{\loc}(U_i) \arrow[d, hookrightarrow]&
\\
\bigoplus_{i, j\in I}\bfF_A^{\loc}(U_i\cap U_j) \arrow[r, shift left=.75ex,"p"] \arrow[r, shift right=.75ex,swap,"q"] 
& \bigoplus_{i\in I}\bfF_A^{\loc}(U_i) \arrow[r, "f"]
& E\\
\bfF_A^{\loc}(U_i\cap U_j) \arrow[u, hookrightarrow] \arrow[r, "(\bfF_A^{\loc})^{U_i\cap U_j}_{U_j}"'] & 
\bfF_A^{\loc}(U_j) \arrow[u, hookrightarrow]&
\end{tikzcd}
\]
guarantee that the definition of $f_L$ is independent of the choice of $i\in I$ and $r\in \bbR_{>0}$. Since $f_M\circ (F_A^{\loc})^L_M=f_L$ holds for $L, M\in \frB(U)$ such that $L\subset M$, the universality of $\bfF_A^{\loc}(U)=\varinjlim_{L\in \frB(U)}F_A^{\loc}(L)$ yields a unique linear map $\wt{f}\colon \bfF_A^{\loc}(U)\to E$ satisfying $\wt{f}\circ (\wt{F}_A^{\loc})^L_U=f_L$ for all $L\in \frB(U)$. We find that $\wt{f}$ is a unique linear map satisfying $\wt{f}\circ \pi=f$. Thus $(\bfF_A^{\loc}(U), \pi)$ is a coequalizer of $(p, q)$. 

It is now clear that $\bfF_A^{\loc}$ is locally constant. In fact, by \cref{lem:locFA}, it is enough to show that the linear map $(\bfF_A^{\loc})^{D_r(z)}_{D_R(w)}$ is an isomorphism for $0<r<R<\infty$ and $z, w\in \bbC$ such that $D_r(z)\subset D_R(w)$, but this is obvious since $(F_A^{\loc})^{D_r(z)}_{D_R(w)}$ is an isomorphism. 
\end{proof}

For a morphism $f\colon A\to B$ of commutative algebras and $L=D_{r_1}(z_1)\sqcup \cdots \sqcup D_{r_l}(z_l)\in \frB$, define
\[
F^{\loc}_{f, L}\colon 
F_A^{\loc}(L)\to F_B^{\loc}(L), \quad
a_1\otimes \cdots \otimes a_l\mapsto f(a_1)\otimes \cdots \otimes f(a_l)
\]
with the convention 
\[
F^{\loc}_{f, \varnothing}\ceq \id_{\bbC}\colon 
F_A^{\loc}(\varnothing)\to F_B^{\loc}(\varnothing). 
\]

\begin{lem}\label{lemfunVAtoPFA}
For a morphism $f\colon A\to B$ of commutative algebras, 
\[
F_f^{\loc}\ceq \{F_{f, L}^{\loc}\}_{L\in \frB}\colon F_A^{\loc}\to F_B^{\loc}
\]
is a morphism of prefactorization algebras. 
\end{lem}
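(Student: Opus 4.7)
The plan is to verify the three defining conditions of a morphism of prefactorization algebras (cf.\ \cref{dfn:morPFA}) for the family $F^{\loc}_f$: namely, that it is a morphism of precosheaves, that it intertwines the prefactorization multiplications, and that it preserves the units. Each verification will amount to unwinding the definition of $(F^{\loc}_A)^L_M$ on pure tensors and invoking the fact that $f$ preserves products and the unit.

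First I would check the precosheaf compatibility
\[
(F^{\loc}_B)^L_M \circ F^{\loc}_{f,L} = F^{\loc}_{f,M} \circ (F^{\loc}_A)^L_M
\]
for every $L \subset M$ in $\frB$. Writing $L = \bigsqcup_{i=1}^{l} D_{r_i}(z_i)$ and $M = \bigsqcup_{j=1}^{m} D_{R_j}(w_j)$, and letting $I_1 \sqcup \cdots \sqcup I_m = [l]$ be the unique decomposition with $\bigsqcup_{i \in I_j} D_{r_i}(z_i) \subset D_{R_j}(w_j)$, both compositions send $a_1 \otimes \cdots \otimes a_l$ to $\bigotimes_{j=1}^{m} \prod_{i \in I_j} f(a_i)$; on the right-hand side this uses the multiplicativity $f(\prod_i a_i) = \prod_i f(a_i)$. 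The boundary cases $l = 0$ or $m = 0$ reduce to $f(1_A) = 1_B$ and $F^{\loc}_{f,\varnothing}(1_\bbC) = 1_\bbC$.

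Next, condition (i) of \cref{dfn:morPFA} requires
\[
F^{\loc}_{f,N} \circ (F^{\loc}_A)^{L,M}_N = (F^{\loc}_B)^{L,M}_N \circ (F^{\loc}_{f,L} \otimes F^{\loc}_{f,M})
\]
whenever $L \sqcup M \subset N$ in $\frB$. By the definition of the multiplication in \cref{lem:FlocPFA}, $(F^{\loc}_A)^{L,M}_N = (F^{\loc}_A)^{L \sqcup M}_N$ and likewise for $B$, while a direct expansion on pure tensors identifies $F^{\loc}_{f,L} \otimes F^{\loc}_{f,M}$ with $F^{\loc}_{f, L \sqcup M}$. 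Hence this reduces to the precosheaf compatibility already verified, applied to the inclusion $L \sqcup M \subset N$. Condition (ii), $F^{\loc}_{f,\varnothing} \circ \id_\bbC = \id_\bbC$, is immediate since $F^{\loc}_{f,\varnothing} = \id_\bbC$ by definition.

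I do not anticipate any real obstacle: the argument is essentially a diagram chase recording how $f$ acts diagonally on tensor factors and recombines using $f(ab) = f(a)f(b)$ and $f(1_A) = 1_B$. The only mild bookkeeping concerns the three degenerate cases in the piecewise definition of $(F^{\loc}_A)^L_M$, but each of these reduces to the unit compatibility of $f$.
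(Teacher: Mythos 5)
Your proof is correct and follows essentially the same route as the paper's: verify precosheaf compatibility on pure tensors via the decomposition $I_1\sqcup\cdots\sqcup I_m=[l]$ and multiplicativity of $f$, then reduce the multiplication-compatibility to the identity $F^{\loc}_{f,L}\otimes F^{\loc}_{f,M}=F^{\loc}_{f,L\sqcup M}$ together with the already-proved precosheaf statement, and observe that unit compatibility is immediate from $F^{\loc}_{f,\varnothing}=\id_{\bbC}$. The only cosmetic difference is that you spell out the degenerate cases $l=0$ or $m=0$, which the paper leaves implicit.
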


\begin{proof}
We proceed the following steps: 
\begin{itemize}
\item
$F_f^{\loc}\colon F_A^{\loc}\to F_B^{\loc}$ is a morphism of precosheaves: 
Let $L, M\in \frB$ such that $L\subset M$. Write
\[
L=D_{r_1}(z_1)\sqcup \cdots \sqcup D_{r_l}(z_l), \quad
M=D_{R_1}(w_1)\sqcup \cdots \sqcup D_{R_m}(w_m), 
\]
and denote by $I_1\sqcup \cdots \sqcup I_m=[l]$ the decomposition satisfying 
\[
\bigsqcup_{i\in I_j}D_{r_i}(z_i)\subset D_{R_j}(w_j). 
\]
Then, for $a=a_1\otimes \cdots \otimes a_l\in F_A^{\loc}(L)$, 
\[
F_{f, M}^{\loc}(F_A^{\loc})^L_M(a)=
\otimes_{j=1}^mf\bigl(\prod_{i\in I_j}a_i\bigr)=
\otimes_{j=1}^m\prod_{i\in I_j}f(a_i)=
(F_B^{\loc})^L_MF_{f, L}^{\loc}(a). 
\]

\item
$F_f^{\loc}\colon F_A^{\loc}\to F_B^{\loc}$ is a morphism of prefactorization algebras: The condition (i) in \cref{dfn:morPFA} follows from $F_{f, L}^{\loc}\otimes F_{f, M}^{\loc}=F_{f, L\sqcup M}^{\loc}$ ($L, M\in\frB$, $L\cap M=\varnothing$). Also, the condition (ii) in \cref{dfn:morPFA} is clear by the definition $F_{f, \varnothing}^{\loc}=\id_{\bbC}$. 
\qedhere
\end{itemize}
\end{proof}

For a morphism $f\colon A\to B$ of commutative algebras, by \cref{lemfunVAtoPFA} and \cref{ss:exten}, we have a morphism 
\[
\bfF^{\loc}_f\ceq
\clE_{F^{\loc}_f}\colon \bfF_A^{\loc}\to \bfF_B^{\loc}
\]
of factorization algebras. Thus, the map $A\mapsto \bfF_A^{\loc}$ gives rise to the functor
\[
\bfF^{\loc}\colon \CAlg_{\bbC}\to \LFA. 
\]

\subsection{Adjointness and categorical equivalence}
\label{ss:adjcomalgLCPFA}

Finally, we prove that $\bfF^{\loc}\colon \CAlg_{\bbC}\to \LPFA$ is a fully faithful functor which is the left adjoint of $(-)(\bbC)\colon \LPFA\to \CAlg_{\bbC}$, and that the essential image of $\bfF^{\loc}$ consists of locally constant factorization algebras. 

\begin{dfn}\label{dfn:adjtheta}
Let $A\in \Ob(\CAlg_{\bbC})$ and $\clF\in \Ob(\LPFA)$. For a morphism $\varphi\colon \bfF_A^{\loc}\to \clF$ in $\LPFA$, define a linear map $\theta_{A, \clF}(\varphi)$ as the composition
\[
\begin{tikzcd}[column sep=huge]
A \arrow[r] \arrow[r, "(\wt{F}_A^{\loc})^{D_R(z)}_{\bbC}"]&
\bfF_A^{\loc}(\bbC) \arrow[r, "\varphi_{\bbC}"] &
\clF(\bbC).
\end{tikzcd}
\]
Here we take $R\in \bbR_{>0}$ and $z\in \bbC$, but the definition of $\theta_{A, \clF}(\varphi)$ is independent of the choice of $R$ and $z$. 
\end{dfn}

\begin{lem}\label{lem:unitisom}
Let $A\in \Ob(\CAlg_{\bbC})$. For $R\in \bbR_{>0}$ and $z\in \bbC$, the linear map $(\wt{F}^{\loc}_A)^{D_R(z)}_{D_R(z)}\colon A\to \bfF^{\loc}_A(D_R(z))$ is an isomorphism in $\CAlg_{\bbC}$. (See \cref{rmk:clFDRcalg} for the commutative algebra structure on $\bfF^{\loc}_A(D_R(0))$.)
\end{lem}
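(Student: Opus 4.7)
The plan is to establish the claim in two stages: first that $(\wt{F}^{\loc}_A)^{D_R(z)}_{D_R(z)}$ is a linear isomorphism, and then that it is multiplicative and unital with respect to the commutative algebra structure of \cref{rmk:clFDRcalg}.

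For the linear isomorphism statement, I would simply invoke the general property of the extension construction recorded after the definition of $\clE_F$ in \cref{ss:exten}: for every $L\in\frB$ the canonical map $\wt{F}^L_L\colon F(L)\to \clE_F(L)$ is an isomorphism. Applying this to $F=F^{\loc}_A$ and $L=D_R(z)$ gives that the underlying linear map is an isomorphism, using $F^{\loc}_A(D_R(z))=A$.

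For the multiplicative compatibility, fix $a,b\in A$ and choose $r\in\bbR_{>0}$ and $z_1,z_2\in\bbC$ such that $D_r(z_1)\sqcup D_r(z_2)\subset D_R(z)$. By the explicit formula of \cref{rmk:clFDRcalg} applied to the images $x=(\wt{F}^{\loc}_A)^{D_R(z)}_{D_R(z)}(a)$ and $y=(\wt{F}^{\loc}_A)^{D_R(z)}_{D_R(z)}(b)$, and using that the precosheaf maps $(\bfF^{\loc}_A)^{D_r(z_i)}_{D_R(z)}$ are invertible (factorization algebra / local constancy), one verifies that $((\bfF^{\loc}_A)^{D_r(z_i)}_{D_R(z)})^{-1}(x)=(\wt{F}^{\loc}_A)^{D_r(z_i)}_{D_r(z_i)}(a)$ (and analogously for $b$), because applying $(\bfF^{\loc}_A)^{D_r(z_i)}_{D_R(z)}$ to the right-hand side reproduces $(\wt{F}^{\loc}_A)^{D_R(z)}_{D_R(z)}(a)$ via the compatibility $(\wt{F}^{\loc}_A)^M_U\circ (F^{\loc}_A)^L_M=(\wt{F}^{\loc}_A)^L_U$ and the fact that $(F^{\loc}_A)^{D_r(z_i)}_{D_R(z)}=\id_A$. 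Then the defining compatibility of the multiplication of $\bfF^{\loc}_A=\clE_{F^{\loc}_A}$ in \cref{ss:exten} gives
\[
x\cdot y=(\wt{F}^{\loc}_A)^{D_r(z_1)\sqcup D_r(z_2)}_{D_R(z)}\bigl((F^{\loc}_A)^{D_r(z_1), D_r(z_2)}_{D_r(z_1)\sqcup D_r(z_2)}(a\otimes b)\bigr)=(\wt{F}^{\loc}_A)^{D_r(z_1)\sqcup D_r(z_2)}_{D_R(z)}(a\otimes b).
\]
On the other hand, the direct definition of $F^{\loc}_A$ on the inclusion $D_r(z_1)\sqcup D_r(z_2)\subset D_R(z)$ yields $(F^{\loc}_A)^{D_r(z_1)\sqcup D_r(z_2)}_{D_R(z)}(a\otimes b)=ab$, and the functoriality of the colimit cone gives $(\wt{F}^{\loc}_A)^{D_r(z_1)\sqcup D_r(z_2)}_{D_R(z)}(a\otimes b)=(\wt{F}^{\loc}_A)^{D_R(z)}_{D_R(z)}(ab)$, proving multiplicativity.

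For the unit, I would trace the unit of $\bfF^{\loc}_A(D_R(z))$, namely $(\bfF^{\loc}_A)^{\varnothing}_{D_R(z)}\circ(\wt{F}^{\loc}_A)^{\varnothing}_{\varnothing}(1)$, through the same colimit compatibility: this equals $(\wt{F}^{\loc}_A)^{\varnothing}_{D_R(z)}(1)=(\wt{F}^{\loc}_A)^{D_R(z)}_{D_R(z)}\circ (F^{\loc}_A)^{\varnothing}_{D_R(z)}(1)=(\wt{F}^{\loc}_A)^{D_R(z)}_{D_R(z)}(1_A)$, using that the definition of $(F^{\loc}_A)^{\varnothing}_{D_R(z)}$ in the $l=0$, $m=1$ case sends $1\in\bbC$ to $1_A$. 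The only delicate point is bookkeeping through the colimit cone; there is no genuinely hard step, since all the ingredients have already been arranged in \cref{ss:exten} and \cref{rmk:clFDRcalg}.
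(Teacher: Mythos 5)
Your proof is correct and mirrors the paper's own argument essentially step for step: both verify multiplicativity by unpacking the formula of \cref{rmk:clFDRcalg}, identifying $((\bfF^{\loc}_A)^{D_r(z_i)}_{D_R(z)})^{-1}(\wt{F}^{\loc}_A)^{D_R(z)}_{D_R(z)}(a)$ with $(\wt{F}^{\loc}_A)^{D_r(z_i)}_{D_r(z_i)}(a)$, then applying the defining compatibility of the extended multiplication and the colimit cone relations, and both trace the unit through $(\wt{F}^{\loc}_A)^{\varnothing}_{\varnothing}$ and $(F^{\loc}_A)^{\varnothing}_{D_R(z)}$ in the same way. The only cosmetic difference is that you spell out the linear-isomorphism statement $\wt{F}^L_L\colon F(L)\lsto \clE_F(L)$ up front, which the paper leaves implicit.
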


\begin{proof}
It is enough to show that $(\wt{F}^{\loc}_A)^{D_R(z)}_{D_R(z)}$ is a morphism of commutative algebras. 
\begin{itemize}
\item
$(\wt{F}_A^{\loc})^{D_R(z)}_{D_R(z)}$ preserves the multiplications: For $a, b\in A$, take $r\in \bbR_{>0}$ and $z_1, z_2\in\bbC$ such that $D_r(z_1)\sqcup D_r(z_2)\subset D_R(z)$, then
\begin{align*}
&(\wt{F}^{\loc}_A)^{D_R(z)}_{D_R(z)}(a)\cdot (\wt{F}^{\loc}_A)^{D_R(z)}_{D_R(z)}(b)\\
&=
(\bfF^{\loc}_A)^{D_r(z_1), D_r(z_2)}_{D_R(z)}
\bigl(\bigl((\bfF^{\loc}_A)^{D_r(z_1)}_{D_R(z)}\bigr)^{-1}(\wt{F}^{\loc}_A)^{D_R(z)}_{D_R(z)}(a)\otimes \bigl((\bfF^{\loc}_A)^{D_r(z_2)}_{D_R(z)}\bigr)^{-1}(\wt{F}^{\loc}_A)^{D_R(z)}_{D_R(z)}(b)\bigr)\\
&=
(\bfF^{\loc}_A)^{D_r(z_1), D_r(z_2)}_{D_R(z)}\bigl((\wt{F}^{\loc}_A)^{D_r(z_1)}_{D_r(z_1)}(a)\otimes (\wt{F}^{\loc}_A)^{D_r(z_2)}_{D_r(z_2)}(b)\bigr)\\
&=
(\wt{F}^{\loc}_A)^{D_r(z_1)\sqcup D_r(z_2)}_{D_R(z)}(F^{\loc}_A)^{D_r(z_1), D_r(z_2)}_{D_r(z_1)\sqcup D_r(z_2)}(a\otimes b)=
(\wt{F}^{\loc}_A)^{D_r(z_1)\sqcup D_r(z_2)}_{D_R(z)}(a\otimes b)\\
&=
(\wt{F}^{\loc}_A)^{D_R(z)}_{D_R(z)}(F^{\loc}_A)^{D_r(z_1)\sqcup D_r(z_2)}_{D_R(z)}(a\otimes b)=
(\wt{F}^{\loc}_A)^{D_R(z)}_{D_R(z)}(ab). 
\end{align*}

\item
$(\wt{F}^{\loc}_A)^{D_R(z)}_{D_R(z)}$ preserves the units: 
We have
\[
(\bfF^{\loc}_A)^{\varnothing}_{D_R(z)}(1_{\bfF^{\loc}_A})=
(\bfF^{\loc}_A)^{\varnothing}_{D_R(z)}(\wt{F}^{\loc}_A)^{\varnothing}_{\varnothing}(1_{\bbC})=
(\wt{F}^{\loc}_A)^{D_R(z)}_{D_R(z)}(F^{\loc}_A)^{\varnothing}_{D_R(z)}(1_{\bbC})=
(\wt{F}^{\loc}_A)^{D_R(z)}_{D_R(z)}(1_A). 
\qedhere
\]
\end{itemize}
\end{proof}

\begin{lem}
Let $A\in \Ob(\CAlg_{\bbC})$ and $\clF\in \Ob(\LPFA)$. For a morphism $\varphi\colon \bfF_A^{\loc}\to \clF$ in $\LPFA$, the linear map
\[
\theta_{A, \clF}(\varphi)\colon A\to \clF(\bbC)
\]
is a morphism in $\CAlg_{\bbC}$. 
\end{lem}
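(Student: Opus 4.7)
The plan is to factor $\theta_{A, \clF}(\varphi)$ as
\[
A \xrightarrow{(\wt{F}^{\loc}_A)^{D_R(z)}_{D_R(z)}} \bfF^{\loc}_A(D_R(z)) \xrightarrow{(\bfF^{\loc}_A)^{D_R(z)}_{\bbC}} \bfF^{\loc}_A(\bbC) \xrightarrow{\varphi_{\bbC}} \clF(\bbC),
\]
and to verify that each of the three factors is a morphism of commutative algebras. The first factor is an algebra isomorphism by the immediately preceding \cref{lem:unitisom}. The second factor is an algebra isomorphism tautologically, because \cref{rmk:clFDRcalg} \emph{defines} the algebra structure on $\bfF^{\loc}_A(D_R(z))$ as the one transported from $\bfF^{\loc}_A(\bbC)$ via the linear isomorphism $(\bfF^{\loc}_A)^{D_R(z)}_{\bbC}$.

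The substance of the proof is thus to show that $\varphi_{\bbC} \colon \bfF^{\loc}_A(\bbC) \to \clF(\bbC)$ is a morphism in $\CAlg_{\bbC}$. For preservation of the multiplication, I would pick $a, b \in \bfF^{\loc}_A(\bbC)$, choose $R \in \bbR_{>0}$ and $z_1, z_2 \in \bbC$ with $D_R(z_1) \cap D_R(z_2) = \varnothing$, and unfold the definition of $\mu_2$ from \cref{dfn:commult} on both sides. The compatibility $\varphi_{\bbC} \circ (\bfF^{\loc}_A)^{D_R(z_1), D_R(z_2)}_{\bbC} = \clF^{D_R(z_1), D_R(z_2)}_{\bbC} \circ (\varphi_{D_R(z_1)} \otimes \varphi_{D_R(z_2)})$ from \cref{dfn:morPFA}(i), together with the precosheaf naturality $\varphi_{\bbC} \circ (\bfF^{\loc}_A)^{D_R(z_i)}_{\bbC} = \clF^{D_R(z_i)}_{\bbC} \circ \varphi_{D_R(z_i)}$, lets one rewrite everything on the $\clF$-side. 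Using that both $(\bfF^{\loc}_A)^{D_R(z_i)}_{\bbC}$ and $\clF^{D_R(z_i)}_{\bbC}$ are invertible (by local constancy of both prefactorization algebras), naturality yields $\varphi_{D_R(z_i)} \circ ((\bfF^{\loc}_A)^{D_R(z_i)}_{\bbC})^{-1} = (\clF^{D_R(z_i)}_{\bbC})^{-1} \circ \varphi_{\bbC}$, which is exactly what moves $\varphi$ past the inverses appearing in $\mu_2$. Preservation of the unit is immediate from \cref{dfn:morPFA}(ii) combined with precosheaf naturality along $\varnothing \subset \bbC$.

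No genuine obstacle is anticipated: the argument reduces to bookkeeping of the naturality axioms of a morphism of prefactorization algebras, with the real content already packaged into \cref{lem:unitisom} and \cref{rmk:clFDRcalg}, so that $\theta_{A,\clF}(\varphi)$ is exhibited as a composition of maps whose algebra-morphism property is either tautological or an immediate consequence of $\varphi$ being a morphism in $\LPFA$.
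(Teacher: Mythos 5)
Your proposal is correct and takes essentially the same route as the paper: factor $\theta_{A,\clF}(\varphi)=\varphi_{\bbC}\circ(\wt{F}^{\loc}_A)^{D_R(z)}_{\bbC}$, reduce the first factor to \cref{lem:unitisom} (via \cref{rmk:clFDRcalg}), and use that $\varphi_{\bbC}$ is an algebra morphism. The only difference is that the paper does not re-derive the last fact, since it was already proved in \cref{ss:constcomalg} as the functoriality of $(-)(\bbC)$; your computation simply reproduces that earlier proof.
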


\begin{proof}
It is enough to show that $(\wt{F}^{\loc}_A)^{D_R(z)}_{\bbC}\colon A\to \bfF^{\loc}_A(\bbC)$ is a morphism of commutative algebras for $R\in \bbR_{>0}$ and $z\in \bbC$, but since $(\wt{F}^{\loc}_A)^{D_R(z)}_{\bbC}=(\bfF^{\loc}_A)^{D_R(z)}_{\bbC}\circ (\wt{F}^{\loc}_A)^{D_R(z)}_{D_R(z)}$, this follows from \cref{lem:unitisom}. 
\end{proof}

\begin{prp}\label{prp:adj}
The map
\[
\theta_{A, \clF}\colon \Hom_{\LPFA}(\bfF_A^{\loc}, \clF)\to \Hom_{\CAlg_{\bbC}}(A, \clF(\bbC)), \quad
\varphi\mapsto \theta_{A, \clF}(\varphi)
\]
is a bijection which is natural in $A\in \Ob(\CAlg_{\bbC})$ and $\clF\in \Ob(\LPFA)$. Thus, the functor $\bfF^{\loc}\colon \CAlg_{\bbC}\to \LPFA$ is the left adjoint of $(-)(\bbC)\colon \LPFA\to \CAlg_{\bbC}$. 
\end{prp}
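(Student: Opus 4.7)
The plan is to construct an explicit inverse $\psi_{A,\clF}$ to $\theta_{A,\clF}$ and verify naturality. Given a morphism $f\colon A\to\clF(\bbC)$ of commutative algebras, I will first define $\psi_{A,\clF}(f)=\varphi$ on the open basis $\frB$ and then extend to all of $\frU_{\bbC}$ via the universal property of $\bfF^{\loc}_A=\clE_{F^{\loc}_A}$ from \cref{ss:exten}. For $L=D_{r_1}(z_1)\sqcup\cdots\sqcup D_{r_l}(z_l)\in\frB$ with $l\ge 1$, set
\[
\varphi_L(a_1\otimes\cdots\otimes a_l)
\ceq
\clF^{D_{r_1}(z_1),\ldots,D_{r_l}(z_l)}_L\!\Bigl(\otimes_{i=1}^{l}\bigl(\clF^{D_{r_i}(z_i)}_{\bbC}\bigr)^{-1}\!f(a_i)\Bigr),
\]
together with $\varphi_{\varnothing}\ceq\eta_\clF\colon \bbC\to\clF(\varnothing)$. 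Then define $\varphi_U\colon\bfF^{\loc}_A(U)\to\clF(U)$ on an arbitrary open $U\subset\bbC$ via the universal property, using the family $\clF^L_U\circ\varphi_L\colon F^{\loc}_A(L)\to\clF(U)$ indexed by $L\in\frB(U)$.

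The heart of the argument is to check that the family $\{\varphi_L\}_{L\in\frB}$ is a morphism of prefactorization algebras on $\frB$. Compatibility with the precosheaf structure requires $\clF^L_M\circ\varphi_L=\varphi_M\circ(F^{\loc}_A)^L_M$ for $L\subset M$ in $\frB$; writing $M=D_{R_1}(w_1)\sqcup\cdots\sqcup D_{R_m}(w_m)$ and $I_1\sqcup\cdots\sqcup I_m=[l]$ for the associated decomposition, the right-hand side expands using that $f$ is multiplicative into a product $\prod_{i\in I_j}f(a_i)$ in $\clF(\bbC)$. By \cref{rmk:ntermop} and \cref{rmk:clFDRcalg} this product, once pulled back by $(\clF^{D_{R_j}(w_j)}_{\bbC})^{-1}$, coincides with $\clF^{\{D_{r_i}(z_i)\}_{i\in I_j}}_{D_{R_j}(w_j)}$ applied to $\otimes_{i\in I_j}(\clF^{D_{r_i}(z_i)}_{\bbC})^{-1}f(a_i)$; then the associativity clause (second part of \cref{dfn:PFA}(i)) collapses the nested multiplications into $\clF^{\{D_{r_i}(z_i)\}_{i=1}^l}_M$, matching the left-hand side. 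Compatibility with the multiplication $(F^{\loc}_A)^{L,M}_{L\sqcup M}$ and the unit reduces immediately to the same associativity and unit axioms. Once $\varphi$ is constructed on $\frB$, the functor $\clE$ of \cref{ss:exten} promotes it to a morphism $\bfF^{\loc}_A\to\clF$ of prefactorization algebras.

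Next, I verify $\theta_{A,\clF}\circ\psi_{A,\clF}=\id$ and $\psi_{A,\clF}\circ\theta_{A,\clF}=\id$. The first identity is immediate from the definition: taking $L=D_R(z)$ with $l=1$, $\varphi_{D_R(z)}(a)=f(a)$ (after identifying via $(\wt F^{\loc}_A)^{D_R(z)}_{D_R(z)}$, which is an iso by \cref{lem:unitisom}), so $\theta_{A,\clF}(\varphi)=f$. For the second, given $\varphi\colon\bfF^{\loc}_A\to\clF$ and $f=\theta_{A,\clF}(\varphi)$, one must show $\psi_{A,\clF}(f)=\varphi$; by the uniqueness in the inductive limit presentation it suffices to check agreement on each $F^{\loc}_A(L)$ for $L\in\frB$, which follows from the morphism of prefactorization algebras property of $\varphi$ applied to the decomposition $L=D_{r_1}(z_1)\sqcup\cdots\sqcup D_{r_l}(z_l)$, combined again with the translation of products in $A$ to iterated multiplications via $\clF^{D_{r_1}(z_1),\ldots,D_{r_l}(z_l)}_L$.

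Finally, naturality in $A$ and $\clF$ is a straightforward diagram chase: for $g\colon A'\to A$ and $\psi\colon\clF\to\clG$, both $\Hom_{\LPFA}(\bfF^{\loc}_{(-)},-)$ and $\Hom_{\CAlg_{\bbC}}(-,(-)(\bbC))$ are functorial in the expected sense, and $\theta$ commutes with pre/post-composition by the compatibility $\theta_{A',\clG}(\psi\circ\varphi\circ\bfF^{\loc}_g)=\psi_{\bbC}\circ\theta_{A,\clF}(\varphi)\circ g$, which reduces to the unit $(\wt F^{\loc}_A)^{D_R(z)}_{\bbC}$ being natural in $A$ and the definition of the morphism $\bfF^{\loc}_g$. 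The main obstacle is the compatibility of $\varphi_L$ with the precosheaf maps $(F^{\loc}_A)^L_M$, since this is where the algebra structure on $A$ meaningfully interacts with the factorization multiplications of $\clF$; everything else is bookkeeping through the inductive-limit construction of $\bfF^{\loc}_A$.
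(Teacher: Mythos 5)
Your proposal is correct and follows essentially the same route as the paper's proof (Lemmas \ref{lem:thetaLM}--\ref{lem:theta'isinv}): the paper defines $\theta'_{A,\clF}(f)^L_U$ with target $\clF(U)$ directly, whereas you define $\varphi_L$ with target $\clF(L)$ and compose with $\clF^L_U$, which gives the identical map; the compatibility check over $\frB(U)$, the promotion via the universal property of $\varinjlim_{L\in\frB(U)}F^{\loc}_A(L)$, the two inverse identities, and the naturality diagram chase all match the paper's argument step for step.
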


We prove \cref{prp:adj}. It is easy to see that $\theta_{A, \clF}$ is natural in $A$ and $\clF$. For $A\in \Ob(\CAlg_{\bbC})$ and $\clF\in \Ob(\LPFA)$, we construct the inverse map of $\theta_{A, \clF}$. 

Let $f\colon A\to \clF(\bbC)$ be a morphism in $\CAlg_{\bbC}$. For an open subset $U\subset \bbC$ and $L=D_{r_1}(z_1)\sqcup \cdots \sqcup D_{r_l}(z_l)\in \frB(U)$, define a linear map $\theta'_{A, \clF}(f)^L_U$ by
\[
\theta'_{A, \clF}(f)^L_U\colon 
F_A^{\loc}(L)\to \clF(U), \quad
\otimes_{i=1}^la_i\mapsto \clF^{\{D_{r_i}(z_i)\}_{i=1}^l}_U\bigl(\otimes_{i=1}^l(\clF^{D_{r_i}(z_i)}_{\bbC})^{-1}f(a_i)\bigr)
\]
with the convention 
\[
\theta'_{A, \clF}(f)^{\varnothing}_{U}\colon F_A^{\loc}(\varnothing)=\bbC\to \clF(U), \quad
1_{\bbC}\mapsto \clF^{\varnothing}_U(1_{\clF})
\]
for $l=0$. 

\begin{lem}\label{lem:thetaLM}
Let $f\colon A\to \clF(\bbC)$ be a morphism in $\CAlg_{\bbC}$. For an open subset $U\subset \bbC$ and $L, M\in \frB(U)$ such that $L\subset M$, we have
\[
\theta'_{A, \clF}(f)^M_U\circ (F_A^{\loc})^L_M=
\theta'_{A, \clF}(f)^L_U. 
\]
\end{lem}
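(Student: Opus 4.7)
The plan is to unpack both sides on generators and reduce the identity to the many-ary associativity of the prefactorization structure of $\clF$. Write
\[
L = D_{r_1}(z_1) \sqcup \cdots \sqcup D_{r_l}(z_l), \qquad
M = D_{R_1}(w_1) \sqcup \cdots \sqcup D_{R_m}(w_m),
\]
and let $I_1 \sqcup \cdots \sqcup I_m = [l]$ be the partition with $\bigsqcup_{i \in I_j} D_{r_i}(z_i) \subset D_{R_j}(w_j)$ that governs the definition of $(F_A^{\loc})^L_M$. For $a = a_1 \otimes \cdots \otimes a_l \in F_A^{\loc}(L)$, the left-hand side evaluated at $a$ is, by definition, $\theta'_{A, \clF}(f)^M_U\bigl(\otimes_{j=1}^m \prod_{i \in I_j} a_i\bigr)$, and since $f$ is a morphism of commutative algebras this equals
\[
\clF^{\{D_{R_j}(w_j)\}_{j=1}^m}_U\Bigl(\otimes_{j=1}^m (\clF^{D_{R_j}(w_j)}_{\bbC})^{-1}\prod_{i \in I_j} f(a_i)\Bigr).
\]

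Next, I would compute each factor $\prod_{i \in I_j} f(a_i)$ using \cref{rmk:ntermop} together with \cref{lem:fLindep}: the latter allows me to use the pairwise disjoint disks $\{D_{r_i}(z_i)\}_{i \in I_j}$ (rather than an arbitrary family of equal-radius disks) to represent the product, so that
\[
\prod_{i \in I_j} f(a_i) = \clF^{\{D_{r_i}(z_i)\}_{i \in I_j}}_{\bbC}\bigl(\otimes_{i \in I_j}(\clF^{D_{r_i}(z_i)}_{\bbC})^{-1}f(a_i)\bigr).
\]
Applying the first part of \cref{dfn:PFA}~(i) (in the $n$-ary form from \cref{rmk:PFA}) to the inclusions $\bigsqcup_{i \in I_j} D_{r_i}(z_i) \subset D_{R_j}(w_j) \subset \bbC$, I can factor $\clF^{\{D_{r_i}(z_i)\}_{i \in I_j}}_{\bbC} = \clF^{D_{R_j}(w_j)}_{\bbC} \circ \clF^{\{D_{r_i}(z_i)\}_{i \in I_j}}_{D_{R_j}(w_j)}$ and invert $\clF^{D_{R_j}(w_j)}_{\bbC}$ on both sides, obtaining
\[
(\clF^{D_{R_j}(w_j)}_{\bbC})^{-1}\prod_{i \in I_j} f(a_i) = \clF^{\{D_{r_i}(z_i)\}_{i \in I_j}}_{D_{R_j}(w_j)}\bigl(\otimes_{i \in I_j}(\clF^{D_{r_i}(z_i)}_{\bbC})^{-1}f(a_i)\bigr).
\]

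Substituting back, the left-hand side becomes
\[
\clF^{\{D_{R_j}(w_j)\}_{j=1}^m}_U \circ \Bigl(\otimes_{j=1}^m \clF^{\{D_{r_i}(z_i)\}_{i \in I_j}}_{D_{R_j}(w_j)}\Bigr)\bigl(\otimes_{i=1}^l (\clF^{D_{r_i}(z_i)}_{\bbC})^{-1} f(a_i)\bigr),
\]
and a second application of the $n$-ary compatibility from \cref{rmk:PFA} (iterating \cref{dfn:PFA}~(i) and (iii)) collapses the two-level composition to $\clF^{\{D_{r_i}(z_i)\}_{i=1}^l}_U$, which yields precisely $\theta'_{A, \clF}(f)^L_U(a)$. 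The degenerate case $L = \varnothing$ reduces to the unit axiom, and if some $I_j$ is empty one inserts $\clF^{\varnothing}_{D_{R_j}(w_j)}(1_{\clF})$ and appeals to \cref{dfn:PFA}~(iv).

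The main obstacle is purely the bookkeeping of indices and making sure the two levels of factorization are collapsed in the correct order; conceptually this is just associativity of the operadic structure on $\clF$, with \cref{lem:fLindep} invoked to justify representing the commutative-algebra multiplication on $\clF(\bbC)$ using the specific disks that already appear in $L$.
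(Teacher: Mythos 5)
Your proposal is correct and follows essentially the same argument as the paper: both reduce the identity to the commuting square relating $\clF^{\{D_{r_i}(z_i)\}_{i\in I_j}}_{D_{R_j}(w_j)}$, $\clF^{D_{R_j}(w_j)}_{\bbC}$, and $\mu_{I_j}$ (justified by \cref{lem:fLindep} and \cref{rmk:ntermop}), together with the $n$-ary compatibility in the prefactorization structure and the fact that $f$ preserves products. You simply run the computation from the composed side rather than starting from $\theta'_{A,\clF}(f)^L_U(a)$, which is a cosmetic difference.
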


\begin{proof}
Write
\[
L=D_{r_1}(z_1)\sqcup \cdots \sqcup D_{r_l}(z_l), \quad
M=D_{R_1}(w_1)\sqcup \cdots \sqcup D_{R_m}(w_m), 
\]
and denote by $I_1\sqcup \cdots \sqcup I_m=[l]$ the decomposition such that 
\[
\bigsqcup_{i\in I_j}D_{r_i}(z_i)\subset D_{R_j}(w_j). 
\]
Let $a=a_1\otimes \cdots \otimes a_l\in F_A^{\loc}(L)$. Then
\begin{align*}
\theta'_{A, \clF}(f)^L_U(a)
&=
\clF^{\{D_{r_i}(z_i)\}_{i=1}^l}_U
\bigl(
\otimes_{i=1}^l(\clF^{D_{r_i}(z_i)}_{\bbC})^{-1}f(a_i)
\bigr)\\
&=
\clF^{\{D_{R_j}(w_j)\}_{j=1}^m}_U
\bigl(
\otimes_{j=1}^m\clF^{\{D_{r_i}(z_i)\}_{i\in I_j}}_{D_{R_j}(w_j)}
	\bigl(
	\otimes_{i\in I_j}(\clF^{D_{r_i}(z_i)}_{\bbC})^{-1}f(a_i)
	\bigr)
\bigr). 
\end{align*}
Since the diagram
\[
\begin{tikzcd}[row sep=huge, column sep=2.2cm]
\otimes_{i\in I_j}\clF(D_{r_i}(z_i)) \arrow[r, "\clF^{\{D_{r_i}(z_i)\}_{i\in I_j}}_{D_{R_j}(w_j)}"] \arrow[d, "\otimes_{i\in I_j}\clF^{D_{r_i}(z_i)}_{\bbC}"'] \arrow[rd, "\clF^{\{D_{r_i}(z_i)\}_{i\in I_j}}_{\bbC}" description] &
\clF(D_{R_j}(w_j)) \arrow[d, "\clF^{D_{R_j}(w_j)}_{\bbC}"] \\
\clF(\bbC)^{\otimes I_j} \arrow[r, "\mu_{I_j}"'] &
\clF(\bbC)
\end{tikzcd}
\]
commutes for each $j\in [m]$, where $\mu_{I_j}$ denotes the linear map defined in \cref{dfn:commult}, we have by \cref{rmk:ntermop}, 
\[
\theta'_{A, \clF}(f)^L_U(a)=
\clF^{\{D_{R_j}(w_j)\}_{j=1}^m}_U
\bigl(
\otimes_{j=1}^m(\clF^{D_{R_j}(w_j)}_{\bbC})^{-1}(\prod_{i\in I_j}f(a_i))
\bigr). 
\]
On the other hand, we have
\[
\theta_{A, \clF}'(f)^M_U(F_A^{\loc})^L_M(a)=
\clF^{\{D_{R_j}(w_j)\}_{j=1}^m}_U
\bigl(
\otimes_{j=1}^m(\clF^{D_{R_j}(w_j)}_{\bbC})^{-1}f(\prod_{i\in I_j}a_i)
\bigr).
\]
Thus, the lemma follows. 
\end{proof}

Let $f\colon A\to \clF(\bbC)$ be a morphism in $\CAlg_{\bbC}$. By \cref{lem:thetaLM}, for an open subset $U\subset \bbC$, the universality of $\bfF_A^{\loc}(U)=\varinjlim_{L\in \frB(U)}F_A^{\loc}(L)$ yields a unique linear map $\theta'_{A, \clF}(f)_U\colon \bfF_A^{\loc}(U)\to \clF(U)$ which satisfies
\[
\theta'_{A, \clF}(f)_U\circ (\wt{F}^{\loc}_A)^L_U=\theta'_{A, \clF}(f)^L_U
\]
for all $L\in \frB(U)$. 

\begin{lem}\label{lem:theta'inPFA}
For a morphism $f\colon A\to \clF(\bbC)$ in $\CAlg_{\bbC}$, 
\[
\theta'_{A, \clF}(f)\ceq 
\{\theta'_{A, \clF}(f)_U\}_{U\in \frU_{\bbC}}\colon \bfF_A^{\loc}\to \clF
\]
is a morphism in $\LPFA$. 
\end{lem}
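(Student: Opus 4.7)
The plan is to exploit the universality of the inductive limit
\[
\bfF^{\loc}_A(U)=\varinjlim_{L\in \frB(U)}F^{\loc}_A(L)
\]
to reduce each of the three compatibility conditions defining a morphism in $\LPFA$ (precosheaf compatibility, multiplication compatibility, and unit compatibility, cf.\ \cref{dfn:morPFA}) to a check involving only the basic opens $L\in \frB$. In each case, the equality on the basis then follows from the prefactorization algebra axioms for $\clF$ combined with the explicit description of $\theta'_{A,\clF}(f)^L_U$.

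First, I verify that $\theta'_{A,\clF}(f)$ is a morphism of precosheaves. For open subsets $U\subset V\subset \bbC$ and $L=D_{r_1}(z_1)\sqcup\cdots\sqcup D_{r_l}(z_l)\in \frB(U)\subset \frB(V)$, applying $\clF^U_V$ to the definition of $\theta'_{A,\clF}(f)^L_U$ and using the precosheaf compatibility $\clF^U_V\circ \clF^{\{D_{r_i}(z_i)\}_{i=1}^l}_U=\clF^{\{D_{r_i}(z_i)\}_{i=1}^l}_V$ (which is an instance of the first part of \cref{dfn:PFA}(i)) yields
\[
\clF^U_V\circ \theta'_{A,\clF}(f)^L_U=\theta'_{A,\clF}(f)^L_V.
\]
By the universality of $\bfF^{\loc}_A(U)$, this implies $\clF^U_V\circ \theta'_{A,\clF}(f)_U=\theta'_{A,\clF}(f)_V\circ (\bfF^{\loc}_A)^U_V$.

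Second, for the multiplication compatibility, fix open $U,V,W\subset \bbC$ with $U\sqcup V\subset W$. By the universality of $\bfF^{\loc}_A(U)\otimes \bfF^{\loc}_A(V)=\varinjlim_{L\in\frB(U),\,M\in\frB(V)}F^{\loc}_A(L)\otimes F^{\loc}_A(M)$, it suffices to verify the equality after precomposing with $(\wt{F}^{\loc}_A)^L_U\otimes (\wt{F}^{\loc}_A)^M_V$ for $L\in \frB(U)$, $M\in \frB(V)$. By the construction of $(\bfF^{\loc}_A)^{U,V}_W$ described before \cref{prp:extmultisom}, this precomposition turns the left-hand side into $\theta'_{A,\clF}(f)^{L\sqcup M}_W\circ (F^{\loc}_A)^{L,M}_{L\sqcup M}$, while on the right-hand side it becomes $\clF^{U,V}_W\circ (\theta'_{A,\clF}(f)^L_U\otimes \theta'_{A,\clF}(f)^M_V)$. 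Writing $L=\bigsqcup_iD_{r_i}(z_i)$ and $M=\bigsqcup_jD_{s_j}(w_j)$, both sides then evaluate on $a\otimes b\in F^{\loc}_A(L)\otimes F^{\loc}_A(M)$ to
\[
\clF^{\{D_{r_i}(z_i)\}_i,\,\{D_{s_j}(w_j)\}_j}_W\bigl(\bigl(\otimes_i(\clF^{D_{r_i}(z_i)}_{\bbC})^{-1}f(a_i)\bigr)\otimes \bigl(\otimes_j(\clF^{D_{s_j}(w_j)}_{\bbC})^{-1}f(b_j)\bigr)\bigr),
\]
using that $f$ is multiplicative on the left and the second part of \cref{dfn:PFA}(i) on the right.

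Third, for the unit compatibility, the unit of $\bfF^{\loc}_A$ is $(\wt{F}^{\loc}_A)^{\varnothing}_{\varnothing}\colon \bbC\to \bfF^{\loc}_A(\varnothing)$, and directly from the definition
\[
\theta'_{A,\clF}(f)_{\varnothing}\circ(\wt{F}^{\loc}_A)^{\varnothing}_{\varnothing}=\theta'_{A,\clF}(f)^{\varnothing}_{\varnothing}\colon 1_{\bbC}\longmapsto \clF^{\varnothing}_{\varnothing}(1_{\clF})=1_{\clF},
\]
as required. I expect the main obstacle to be the bookkeeping in the multiplication step: one must carefully combine the construction of the extended multiplication $(\bfF^{\loc}_A)^{U,V}_W$ on the inductive limit with the multi-operation compatibility in $\clF$ (in the form of \cref{rmk:ntermop}, which identifies iterated binary multiplications with the multi-term operation $\clF^{\{D_{r_i}(z_i)\}}_{-}$). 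Once these identifications are in place, the verification becomes a straightforward computation analogous to the one carried out in the proof of \cref{lem:thetaLM}.
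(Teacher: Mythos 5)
Your proof is correct and takes essentially the same approach as the paper: in both cases one reduces the precosheaf, multiplication, and unit compatibilities to checks on the basic opens $L\in\frB$, and the paper's direct element-wise computation on $(\wt{F}^{\loc}_A)^L_U(a)$ is just the explicit form of the universality argument you state. The only slip is in the multiplication step, where you claim to use that $f$ is multiplicative: since $(F^{\loc}_A)^{L,M}_{L\sqcup M}=(F^{\loc}_A)^{L\sqcup M}_{L\sqcup M}=\id$, the element $a\otimes b$ is transported verbatim to $F^{\loc}_A(L\sqcup M)$ and $\theta'_{A,\clF}(f)^{L\sqcup M}_W$ simply applies $f$ componentwise, so no products in $A$ are formed; the multiplicativity of $f$ enters only in \cref{lem:thetaLM}, which is already invoked to produce the well-defined map $\theta'_{A,\clF}(f)_U$ and need not be re-used here.
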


\begin{proof}
We proceed the following steps: 
\begin{itemize}
\item
$\theta'_{A, \clF}(f)$ is a morphism of precosheaves: 
Let $U, V\subset \bbC$ be open subsets such that $U\subset V$. For $L=D_{r_1(z_1)}\sqcup \cdots \sqcup D_{r_l}(z_l)\in \frB(U)$ and $a=a_1\otimes \cdots \otimes a_l\in F_A^{\loc}(L)$, 
\begin{align*}
\clF^U_V\theta'_{A, \clF}(f)_U(\wt{F}_A^{\loc})^L_U(a)
&=
\clF^{\{D_{r_i}(z_i)\}_{i=1}^l}_V
\bigl(
\otimes_{i=1}^l(\clF^{D_{r_i}(z_i)}_{\bbC})^{-1}f(a_i)
\bigr)\\
&=
\theta'_{A, \clF}(f)^L_U(a)\\
&=
\theta'_{A, \clF}(f)_V(\bfF_A^{\loc})^U_V(\wt{F}_A^{\loc})^L_U(a). 
\end{align*}
Thus, we have $\clF^U_V\circ\theta'_{A, \clF}(f)_U=\theta'_{A, \clF}(f)_V\circ (\bfF_A^{\loc})^U_V$. 

\item
$\theta'_{A, \clF}(f)$ is a morphism of prefactorization algebras: 
Let $U, V, W\subset \bbC$ be open subsets such that $U\sqcup V\subset W$. For $L=D_{r_1}(z_1)\sqcup \cdots \sqcup D_{r_l}(z_l)\in \frB(U)$, $M=D_{s_1}(w_1)\sqcup \cdots \sqcup D_{s_m}(w_m)\in \frB(V)$ and $a=a_1\otimes \cdots \otimes a_l\in F_A^{\loc}(L)$, $b=b_1\otimes \cdots \otimes b_m\in F_A^{\loc}(M)$, 
\begin{align*}
&\clF^{U, V}_W(\theta'_{A, \clF}(f)_U\otimes \theta'_{A, \clF}(f)_V)((\wt{F}_A^{\loc})^L_U\otimes (\wt{F}_A^{\loc})^M_V)(a\otimes b)\\
&=
\clF^{\{D_{r_i}(z_i), D_{s_j}(w_j)\}_{i=1, j=1}^{l, m}}_W
\bigl(
\otimes_{i=1}^l(\clF^{D_{r_i}(z_i)}_{\bbC})^{-1}(a_i)\otimes \otimes_{j=1}^m(\clF^{D_{s_j}(w_j)}_{\bbC})^{-1}(b_j)
\bigr)\\
&=
\theta'_{A, \clF}(f)^{L\sqcup M}_W(a\otimes b)\\
&=
\theta'_{A, \clF}(f)_W(\bfF_A^{\loc})^{U, V}_W((\wt{F}_A^{\loc})^L_U\otimes (\wt{F}_A^{\loc})^M_V)(a\otimes b). 
\end{align*}
Thus, we have $\clF^{U, V}_W\circ (\theta'_{A, \clF}(f)_U\otimes \theta'_{A, \clF}(f)_V)=\theta'_{A, \clF}(f)_W\circ (\bfF^{\loc}_A)^{U, V}_W$. Also, we have
\[
\theta'_{A, \clF}(f)_{\varnothing}(1_{\bfF_A^{\loc}})=
\theta'_{A, \clF}(f)^{\varnothing}_{\varnothing}(1_{\bbC})=
1_{\clF}. 
\qedhere
\]
\end{itemize}
\end{proof}

By \cref{lem:theta'inPFA}, for $A\in \Ob(\CAlg_{\bbC})$ and $\clF\in \Ob(\LPFA)$, we have a map
\[
\theta_{A, \clF}'\colon \Hom_{\CAlg_{\bbC}}(A, \clF(\bbC))\to \Hom_{\LPFA}(\bfF_A^{\loc}, \clF), \quad
f\mapsto \theta_{A, \clF}'(f). 
\]

\begin{lem}\label{lem:theta'isinv}
For $A\in \Ob(\CAlg_{\bbC})$ and $\clF\in \Ob(\LPFA)$, the map $\theta'_{A, \clF}$ is the inverse of $\theta_{A, \clF}$. 
\end{lem}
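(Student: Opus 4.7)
The plan is to verify the two composites $\theta_{A, \clF} \circ \theta'_{A, \clF}$ and $\theta'_{A, \clF} \circ \theta_{A, \clF}$ are the identity on their respective $\Hom$-sets.

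For the composite $\theta_{A, \clF} \circ \theta'_{A, \clF}$, fix a morphism $f \colon A \to \clF(\bbC)$ in $\CAlg_{\bbC}$. By \cref{dfn:adjtheta}, $\theta_{A, \clF}(\theta'_{A, \clF}(f))$ is computed by choosing any $R \in \bbR_{>0}$ and $z \in \bbC$ and forming $\theta'_{A, \clF}(f)_{\bbC} \circ (\wt{F}_A^{\loc})^{D_R(z)}_{\bbC}$. The defining property of $\theta'_{A, \clF}(f)_{\bbC}$ via the universal property of $\bfF_A^{\loc}(\bbC)$ reduces this to $\theta'_{A, \clF}(f)^{D_R(z)}_{\bbC}$, which by definition sends $a \in A$ to $\clF^{D_R(z)}_{\bbC}\bigl((\clF^{D_R(z)}_{\bbC})^{-1} f(a)\bigr) = f(a)$. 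Hence $\theta_{A, \clF}(\theta'_{A, \clF}(f)) = f$.

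For the harder composite $\theta'_{A, \clF} \circ \theta_{A, \clF}$, fix $\varphi \colon \bfF_A^{\loc} \to \clF$ in $\LPFA$ and set $f \ceq \theta_{A, \clF}(\varphi)$. To show $\theta'_{A, \clF}(f) = \varphi$, it suffices by \cref{prp:FUindlim} (or equivalently the construction of $\bfF_A^{\loc}(U) = \varinjlim_{L \in \frB(U)} F_A^{\loc}(L)$) to verify, for every open $U \subset \bbC$ and every $L = D_{r_1}(z_1) \sqcup \cdots \sqcup D_{r_l}(z_l) \in \frB(U)$, the equality
\[
\theta'_{A, \clF}(f)_U \circ (\wt{F}_A^{\loc})^L_U = \varphi_U \circ (\wt{F}_A^{\loc})^L_U.
\]
The left-hand side is by construction equal to $\theta'_{A, \clF}(f)^L_U$, which on a tensor $\otimes_i a_i \in F_A^{\loc}(L)$ is $\clF^{\{D_{r_i}(z_i)\}_{i=1}^l}_U\bigl(\otimes_i (\clF^{D_{r_i}(z_i)}_{\bbC})^{-1} f(a_i)\bigr)$.

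For the right-hand side, the idea is to decompose $(\wt{F}_A^{\loc})^L_U$ through the multiplication maps of $\bfF_A^{\loc}$. Iterating the defining identity of the extension construction in \cref{ss:exten}, namely $(\clE_F)^{U, V}_W \circ (\wt{F}^L_U \otimes \wt{F}^M_V) = \wt{F}^{L \sqcup M}_W \circ F^{L, M}_{L \sqcup M}$, and using that $(F_A^{\loc})^{\{D_{r_i}(z_i)\}}_L = \id_{A^{\otimes l}}$, we get
\[
(\wt{F}_A^{\loc})^L_U = (\bfF_A^{\loc})^{\{D_{r_i}(z_i)\}_{i=1}^l}_U \circ \bigotimes_i (\wt{F}_A^{\loc})^{D_{r_i}(z_i)}_{D_{r_i}(z_i)}.
\]
Since $\varphi$ is a morphism of prefactorization algebras, pushing $\varphi_U$ across the multiplication gives $\varphi_U \circ (\bfF_A^{\loc})^{\{D_{r_i}(z_i)\}}_U = \clF^{\{D_{r_i}(z_i)\}}_U \circ \bigotimes_i \varphi_{D_{r_i}(z_i)}$. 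Finally, applying $(\clF^{D_{r_i}(z_i)}_{\bbC})^{-1} \circ \clF^{D_{r_i}(z_i)}_{\bbC} = \id$ on each factor and using the naturality of $\varphi$ together with $\varphi_{\bbC} \circ (\wt{F}_A^{\loc})^{D_{r_i}(z_i)}_{\bbC} = f$ (which is the definition of $f = \theta_{A, \clF}(\varphi)$, together with $(\wt{F}_A^{\loc})^{D_{r_i}(z_i)}_{\bbC} = (\bfF_A^{\loc})^{D_{r_i}(z_i)}_{\bbC} \circ (\wt{F}_A^{\loc})^{D_{r_i}(z_i)}_{D_{r_i}(z_i)}$), we recover $\theta'_{A, \clF}(f)^L_U$.

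The main obstacle is purely bookkeeping: keeping track of the multiplications, the costalk-like inverses $(\clF^{D_{r_i}(z_i)}_{\bbC})^{-1}$, and the identifications $F_A^{\loc}(D_r(z)) \cong \bfF_A^{\loc}(D_r(z)) \cong A$. No new ideas are required beyond invoking the universal property of the inductive limit defining $\bfF_A^{\loc}(U)$, the compatibility of $\varphi$ with multiplications in \cref{dfn:morPFA}, and the defining property of the extension functor $\clE$ recalled in \cref{ss:exten}.
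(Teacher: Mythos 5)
Your proof is correct and matches the paper's argument in substance; the paper runs the verification that $\theta'_{A,\clF}\theta_{A,\clF}(\varphi)=\varphi$ as a forward chain of equalities starting from $\theta'_{A,\clF}(\theta_{A,\clF}(\varphi))_U(\wt{F}^{\loc}_A)^L_U(a)$, whereas you run the same chain in reverse starting from $\varphi_U\circ(\wt{F}^{\loc}_A)^L_U$, but the individual steps (decomposing $(\wt{F}^{\loc}_A)^L_U$ through the multiplication, pushing $\varphi$ across it via \cref{dfn:morPFA}, then using naturality of $\varphi$ and $(\bfF^{\loc}_A)^{D_{r_i}(z_i)}_{\bbC}$ being inverse to the isomorphism $(\wt{F}^{\loc}_A)^{D_{r_i}(z_i)}_{D_{r_i}(z_i)}$ composed with $(\wt{F}^{\loc}_A)^{D_{r_i}(z_i)}_{\bbC}$) are identical.
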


\begin{proof}
Let $\varphi\colon \bfF_A^{\loc}\to \clF$ be a morphism in $\LPFA$. For an open subset $U\subset \bbC$, $L=D_{r_1}(z_1)\sqcup \cdots \sqcup D_{r_l}(z_l)\in \frB(U)$ and $a=a_1\otimes \cdots \otimes a_l\in F_A^{\loc}(L)$, 
\begin{align*}
\theta'_{A, \clF}(\theta_{A, \clF}(\varphi))_U(\wt{F}_A^{\loc})^L_U(a)
&=
\clF^{\{D_{r_i}(z_i)\}_{i=1}^l}_U
\bigl(
\otimes_{i=1}^l(\clF^{D_{r_i}(z_i)}_{\bbC})^{-1}\varphi_{\bbC}(\wt{F}_A^{\loc})^{D_{r_i}(z_i)}_{\bbC}(a_i)
\bigr)\\
&=
\clF^{\{D_{r_i}(z_i)\}_{i=1}^l}_U
\bigl(
\otimes_{i=1}^l\varphi_{D_{r_i}(z_i)}((\bfF_A^{\loc})^{D_{r_i}(z_i)}_{\bbC})^{-1}(\wt{F}_A^{\loc})^{D_{r_i}(z_i)}_{\bbC}(a_i)
\bigr)\\
&=
\clF^{\{D_{r_i}(z_i)\}_{i=1}^l}_U
\bigl(
\otimes_{i=1}^l\varphi_{D_{r_i}(z_i)}(\wt{F}_A^{\loc})^{D_{r_i}(z_i)}_{D_{r_i}(z_i)}(a_i)
\bigr)\\
&=
\varphi_U(\bfF_A^{\loc})^{\{D_{r_i}(z_i)\}_{i=1}^l}_U
\bigl(
\otimes_{i=1}^l(\wt{F}_A^{\loc})^{D_{r_i}(z_i)}_{D_{r_i}(z_i)}(a_i)
\bigr)\\
&=
\varphi_U(\wt{F}_A^{\loc})^L_U(a). 
\end{align*}
Thus, we have $\theta'_{A, \clF}\theta_{A, \clF}(\varphi)=\varphi$. 

Let $f\colon A\to \clF(\bbC)$ be a morphism in $\CAlg_{\bbC}$. For $a\in A$, take some $R\in \bbR_{>0}$, then
\[
\theta_{A, \clF}(\theta'_{A, \clF}(f))(a)=
\theta'_{A, \clF}(f)_{\bbC}(\wt{F}_A^{\loc})^{D_R(0)}_{\bbC}(a)=
\clF^{D_R(0)}_{\bbC}=f(a). 
\]
Thus, we have $\theta_{A, \clF}\theta'_{A, \clF}(f)=f$. 
\end{proof}

By \cref{lem:theta'isinv}, the proof of \cref{prp:adj} is complete. 

\medskip

Now, let us denote by
\[
\eta_A\colon A\to \bfF_A^{\loc}(\bbC) \quad (A\in \Ob(\CAlg_{\bbC})), \quad
\epsilon_{\clF}\colon \bfF_{\clF(\bbC)}^{\loc}\to \clF \quad (\clF\in \Ob(\LPFA))
\]
the unit and counit of the adjoint functors $\bfF^{\loc}\colon \CAlg_{\bbC}\rightleftarrows \LPFA\colon (-)(\bbC)$. 

\begin{prp}\label{prp:Flocfaifull}
The functor $\bfF^{\loc}\colon \CAlg_{\bbC}\to \LPFA$ is fully faithful. 
\end{prp}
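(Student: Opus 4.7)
The plan is to use the standard fact that a left adjoint is fully faithful if and only if the unit of the adjunction is a natural isomorphism. Since we have already established the adjunction $\bfF^{\loc}\dashv (-)(\bbC)$ in \cref{prp:adj}, it suffices to show that the unit
\[
\eta_A\colon A\to \bfF_A^{\loc}(\bbC)
\]
is an isomorphism of commutative algebras for every $A\in \Ob(\CAlg_{\bbC})$.

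First I would identify $\eta_A$ explicitly. By the standard formula, $\eta_A$ corresponds under $\theta_{A, \bfF_A^{\loc}}$ to $\id_{\bfF_A^{\loc}}$, so by \cref{dfn:adjtheta}, for any choice of $R\in \bbR_{>0}$ and $z\in \bbC$,
\[
\eta_A=(\wt{F}_A^{\loc})^{D_R(z)}_{\bbC}\colon A\to \bfF_A^{\loc}(\bbC).
\]
Next, I would factor this map through $\bfF_A^{\loc}(D_R(z))$ as
\[
\eta_A=(\bfF_A^{\loc})^{D_R(z)}_{\bbC}\circ (\wt{F}_A^{\loc})^{D_R(z)}_{D_R(z)},
\]
which follows from the compatibility of the canonical morphisms to the inductive limit with the precosheaf structure of $\bfF_A^{\loc}$ (as stated in \cref{ss:exten}).

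Now both factors are isomorphisms: the second factor $(\wt{F}_A^{\loc})^{D_R(z)}_{D_R(z)}$ is an isomorphism by \cref{lem:unitisom}, and the first factor $(\bfF_A^{\loc})^{D_R(z)}_{\bbC}$ is an isomorphism because $\bfF_A^{\loc}$ is locally constant (it is easy to exhibit its inverse via $D_R(z)\subset D_{R'}(z)$ for all $R'>R$ together with \cref{lem:locFA}, or directly from \cref{dfn:LPFA} with the convention $D_\infty(z)=\bbC$). Hence $\eta_A$ is an isomorphism, and by the adjunction $\bfF^{\loc}$ is fully faithful.

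There is no real obstacle here; the only subtlety is to recognize that $\eta_A$ is literally the composition of the canonical map $A\to \bfF_A^{\loc}(D_R(z))$ with the structure map $(\bfF_A^{\loc})^{D_R(z)}_{\bbC}$, after which both halves of the argument have already been done earlier in this section.
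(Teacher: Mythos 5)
Your proof is correct and follows essentially the same route as the paper: identify the unit $\eta_A$ with $(\wt{F}_A^{\loc})^{D_R(z)}_{\bbC}$, factor it as $(\bfF^{\loc}_A)^{D_R(z)}_{\bbC}\circ (\wt{F}^{\loc}_A)^{D_R(z)}_{D_R(z)}$, and invoke \cref{lem:unitisom} together with local constancy. The paper leaves the last observation (that the first factor is an isomorphism because $\bfF_A^{\loc}$ is locally constant and $\bbC=D_\infty(z)$) implicit, whereas you spell it out, but the argument is the same.
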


\begin{proof}
For $A\in \Ob(\CAlg_{\bbC})$, the morphism $\eta_A$ is equal to $(\wt{F}_A^{\loc})^{D_R(z)}_{\bbC}=(\bfF^{\loc}_A)^{D_R(z)}_\bbC\circ (\wt{F}^{\loc}_A)^{D_R(z)}_{D_R(z)}$. Thus, the proposition follows from \cref{lem:unitisom}. 
\end{proof}

\begin{lem}\label{lem:epsisom}
If $\clF\in \Ob(\LPFA)$ is a factorization algebra, then the counit $\epsilon_{\clF}\colon \bfF^{\loc}_{\clF(\bbC)}\to \clF$ is an isomorphism in $\LPFA$. 
\end{lem}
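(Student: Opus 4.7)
The plan is to use the adjunction from \cref{prp:adj} to obtain an explicit formula for $\epsilon_\clF$, then apply \cref{prp:FUindlim} to reduce checking the isomorphism claim to basic opens $L \in \frB$, where the statement will follow directly from the factorization axiom and local constancy of $\clF$ together with \cref{lem:unitisom}.

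First I would note that, since $\epsilon_\clF$ is the counit of the adjunction $\bfF^{\loc} \dashv (-)(\bbC)$, it corresponds under $\theta_{\clF(\bbC), \clF}$ to $\id_{\clF(\bbC)}$, so $\epsilon_\clF = \theta'_{\clF(\bbC), \clF}(\id_{\clF(\bbC)})$. Unwinding the definition of $\theta'$ gives, for any open $U \subset \bbC$ and any $L = D_{r_1}(z_1) \sqcup \cdots \sqcup D_{r_l}(z_l) \in \frB(U)$,
\[
(\epsilon_\clF)_U \circ (\wt{F}^{\loc}_{\clF(\bbC)})^L_U(a_1 \otimes \cdots \otimes a_l) = \clF^{\{D_{r_i}(z_i)\}_{i=1}^l}_U\bigl(\otimes_{i=1}^l (\clF^{D_{r_i}(z_i)}_{\bbC})^{-1}(a_i)\bigr).
\]

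Next I would reduce to basic opens. Since both $\clF$ and $\bfF^{\loc}_{\clF(\bbC)}$ are factorization algebras (the latter by the proposition in \cref{ss:constLCFA}), and since $\frB(U)$ is a Weiss cover of $U$ for the basis $\frB$ defined in \eqref{eq:basisofC}, \cref{prp:FUindlim} identifies both $\bfF^{\loc}_{\clF(\bbC)}(U)$ and $\clF(U)$ with $\varinjlim_{L \in \frB(U)}$ of their values on $\frB(U)$. Because $\epsilon_\clF$ is a morphism of precosheaves, $(\epsilon_\clF)_U$ is the map induced between these inductive limits by the compatible family $\{(\epsilon_\clF)_L\}_{L \in \frB(U)}$; hence it suffices to prove that each $(\epsilon_\clF)_L$ is an isomorphism.

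Finally, taking $U = L$ in the explicit formula and using $\clF^L_L = \id_{\clF(L)}$, the composition $(\epsilon_\clF)_L \circ (\wt{F}^{\loc}_{\clF(\bbC)})^L_L$ factors as $\clF^{\{D_{r_i}(z_i)\}_{i=1}^l}_L \circ \bigl(\otimes_{i=1}^l (\clF^{D_{r_i}(z_i)}_{\bbC})^{-1}\bigr)$. The right factor is an isomorphism because $\clF$ is locally constant; the left factor is an iterated composition of the factorization multiplications $\clF^{M, N}_{M \sqcup N}$ for disjoint opens, each of which is an isomorphism by condition (ii) of \cref{dfn:FA}; and $(\wt{F}^{\loc}_{\clF(\bbC)})^L_L$ is an isomorphism by \cref{lem:unitisom}. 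Composing, $(\epsilon_\clF)_L$ is an isomorphism, and the proof is complete. I do not anticipate a genuine obstacle; the only item requiring a little care is verifying by induction on $l$ (using the associativity diagram in \cref{dfn:PFA} (iii)) that the iterated multiplication $\clF^{\{D_{r_i}(z_i)\}_{i=1}^l}_L$ is indeed an isomorphism, and that the equation displayed above holds once $U$ is specialised to $L$.
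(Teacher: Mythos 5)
Your proof is correct but takes a genuinely different route from the paper's. The paper bootstraps in three stages: first it shows $\epsilon_{\clF, U}$ is an isomorphism for connected domains $U$ (using \cref{lem:locFAdomain}), then for opens with finitely many connected components, and finally for arbitrary $U$ by constructing an explicit inverse through the coequalizer diagram, checking the compatibility $f \circ p = f \circ q$ by hand because intersections $L \cap M$ of basic opens need not lie in $\frB$ (they only have finitely many connected components). Your route instead applies \cref{prp:FUindlim} to both $\clF$ and $\bfF^{\loc}_{\clF(\bbC)}$ to identify $(\epsilon_\clF)_U$ with the map induced on colimits over $\frB(U)$, reducing the whole claim to showing $(\epsilon_\clF)_L$ is an isomorphism for each $L\in\frB$, which is immediate from \cref{lem:unitisom}, local constancy, and axiom (ii) of \cref{dfn:FA} applied iteratively via the associativity relation in \cref{rmk:PFA}. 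This packages the Weiss-cover combinatorics, including the $L \cap M$ subtlety, entirely into \cref{prp:FUindlim} and dispenses with \cref{lem:locFAdomain}; the gain is a shorter argument that only ever touches disjoint unions of disks. Two small points worth spelling out: the identification of $(\epsilon_\clF)_U$ with the colimit-induced map requires noting (via the universal property) that $\epsilon_\clF$ being a morphism of precosheaves is exactly what makes $\{(\epsilon_\clF)_L\}$ a natural transformation of the restricted diagrams; and in the $l=0$ case $(\epsilon_\clF)_\varnothing$ is the unit $\bbC \to \clF(\varnothing)$, which is an isomorphism by combining the unit axiom with condition (ii) of \cref{dfn:FA} for $\varnothing\sqcup\varnothing$ (assuming $\clF$ is not the zero factorization algebra, a degeneracy the paper's proof also leaves implicit).
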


\begin{proof}
It is enough to show that $\epsilon_{\clF, U}\colon \bfF^{\loc}_{\clF(\bbC)}\to \clF(U)$ is a linear isomorphism for all open subsets $U\subset \bbC$. We proceed the following steps: 
\begin{itemize}
\item
For a connected open subset $U\subset \bbC$, the linear map $\epsilon_{\clF, U}$ is an isomorphism: 
Take $R\in \bbR_{>0}$ and $z\in \bbC$ such that $D_R(z)\subset U$, then we have 
\[
\epsilon_{\clF, U}\circ (\wt{F}^{\loc}_{\clF(\bbC)})^{D_R(z)}_U=
\theta'_{\clF(\bbC), \clF}(\id_{\clF(\bbC)})^{D_R(z)}_U
\]
Thus, it is sufficient to check that both $(\wt{F}^{\loc}_{\clF(\bbC)})^{D_R(z)}_U$ and $\theta'_{\clF(\bbC), \clF}(\id_{\clF(\bbC)})^{D_R(z)}_U$ are isomorphisms, but this follows from \cref{lem:locFAdomain}. 

\item
For an open subset $U\subset \bbC$ which has finite many connected components, the linear map $\epsilon_{\clF, U}$ is an isomorphism: In fact, if $U=U_1\sqcup \cdots \sqcup U_l$ is the decomposition into connected components, then 
\[
\epsilon_{\clF, U}\circ (\bfF^{\loc}_{\clF(\bbC)})^{\{U_i\}_{i=1}^l}_U=
\clF^{\{U_i\}_{i=1}^l}_U\circ (\otimes_{i=1}^l\epsilon_{\clF, U_i}). 
\]

\item
For arbitrary open subset $U\subset \bbC$, the linear map $\epsilon_{\clF, U}$ is an isomorphism: First, note that $\frB(U)$ is a Weiss cover of $U$. To construct the inverse map of $\epsilon_{\clF, U}$, we use the coequalizer diagram
\[
\begin{tikzcd}
\displaystyle\bigoplus_{L, M\in \frB(U)}\clF(L\cap M) \arrow[r, shift left=.75ex,"p"] \arrow[r, shift right=.75ex,swap,"q"] 
& \displaystyle\bigoplus_{L\in \frB(U)}\clF(L) \arrow[r, "\pi"]
& \clF(U),
\end{tikzcd}
\]
where $p, q, \pi$ are linear maps defined as in \eqref{eq:coeq}. Let $f\colon \bigoplus_{L\in \frB(U)}\clF(L)\to \bfF_{\clF(\bbC)}^{\loc}(U)$ be the linear map which commutes 
\[
\begin{tikzcd}[row sep=huge, column sep=huge]
\clF(L) \arrow[r, hookrightarrow] \arrow[d, "\epsilon_{\clF, L}^{-1}"']& 
\bigoplus_{L\in \frB(U)}\clF(L) \arrow[d, "f"] \\
\bfF^{\loc}_{\clF(\bbC)}(L) \arrow[r, "(\bfF_{\clF(\bbC)}^{\loc})^L_U"'] &
\bfF^{\loc}_{\clF(\bbC)}(U)
\end{tikzcd}
\]
then one can check $f\circ p=f\circ q$ since for $L, M\in \frB(U)$, the intersection $L\cap M$ has finite many connected components. Thus, by the universality of the coequalizer $\clF(U)$, there exists a unique linear map $\wt{f}\colon \clF(U)\to \bfF_{\clF(\bbC)}^{\loc}(U)$ such that $\wt{f}\circ\pi=f$. We find that $\wt{f}$ is the inverse map of $\epsilon_{\clF, U}$. 
\qedhere
\end{itemize}
\end{proof}

\begin{prp}\label{prp:catequiv}
The functors
\[
\bfF^{\loc}\colon \CAlg_{\bbC}\rightleftarrows \LFA\colon (-)(\bbC)
\]
give categorical equivalences. 
\end{prp}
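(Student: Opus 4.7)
The plan is to combine the adjunction established in \cref{prp:adj} with the two key isomorphism results that precede the proposition. Since the construction in \cref{ss:constLCFA} already produces factorization algebras, the functor $\bfF^{\loc}\colon \CAlg_{\bbC}\to \LPFA$ factors through the full subcategory $\LFA\subset \LPFA$. Moreover, the functor $(-)(\bbC)\colon \LPFA\to \CAlg_{\bbC}$ restricts to $\LFA$, and because $\LFA$ is a full subcategory of $\LPFA$, the bijection $\theta_{A, \clF}$ of \cref{prp:adj} restricts to a natural bijection
\[
\Hom_{\LFA}(\bfF^{\loc}_A, \clF)\cong \Hom_{\CAlg_{\bbC}}(A, \clF(\bbC))
\]
for $A\in \Ob(\CAlg_{\bbC})$ and $\clF\in\Ob(\LFA)$. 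Hence we have an adjunction $\bfF^{\loc}\dashv (-)(\bbC)$ between $\CAlg_{\bbC}$ and $\LFA$, with the same unit $\eta$ and counit $\epsilon$ as in the larger adjunction.

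To promote this adjunction to a categorical equivalence, it suffices to check that both the unit and the counit are natural isomorphisms. For the unit, this is exactly \cref{prp:Flocfaifull}: for every $A\in \Ob(\CAlg_{\bbC})$, the morphism $\eta_A\colon A\to \bfF^{\loc}_A(\bbC)$ coincides with the isomorphism $(\bfF^{\loc}_A)^{D_R(z)}_{\bbC}\circ (\wt{F}^{\loc}_A)^{D_R(z)}_{D_R(z)}$ and is therefore invertible in $\CAlg_{\bbC}$. For the counit, this is exactly \cref{lem:epsisom}: since by assumption $\clF\in \Ob(\LFA)$ is a factorization algebra, the morphism $\epsilon_{\clF}\colon \bfF^{\loc}_{\clF(\bbC)}\to \clF$ is an isomorphism in $\LPFA$, hence in the full subcategory $\LFA$.

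The only subtle point, which is really a bookkeeping observation rather than an obstacle, is that \cref{lem:epsisom} truly needs the factorization axiom on $\clF$ (both the gluing via Weiss covers and the multiplicativity on disjoint unions), which is why the equivalence holds for $\LFA$ but not for the full $\LPFA$. With the unit and counit both invertible, a standard categorical fact yields the equivalence
\[
\bfF^{\loc}\colon \CAlg_{\bbC}\rightleftarrows \LFA \colon (-)(\bbC),
\]
completing the proof.
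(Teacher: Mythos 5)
Your proof is correct and follows essentially the same route as the paper, which simply cites \cref{prp:adj}, \cref{prp:Flocfaifull}, and \cref{lem:epsisom}. You spell out the restriction of the adjunction to the full subcategory $\LFA$ and the invertibility of the unit and counit, which is precisely the content the paper leaves implicit.
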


\begin{proof}
This follows from \cref{prp:adj}, \cref{prp:Flocfaifull} and \cref{lem:epsisom}. 
\end{proof}

\section{Constructions of vertex algebras and factorization algebras}\label{s:constVALFA}

We continue to work over the field $\bbC$ of complex numbers, so that all linear spaces and linear maps defined over $\bbC$ unless otherwise stated. 

\subsection{Preliminaries on vertex algebras}\label{ss:VA}

In this subsection we recall the definition of vertex algebras. We refer to \cite{FBZ} for details on the theory of vertex algebras. 

Let $V$ be a linear space. A formal series 
\[
a(z)=\sum_{n\in \bbZ}z^{n}a_{n}\in (\End_{\bbC}V)\dbr{z^{\pm1}}
\]
is called a field on $V$ if $a(z)v\in V\dpr{z}$ for each $v\in V$. We denote by $\Fld V$ the linear space consisting of all fields on $V$. 

\begin{dfn}[{\cite[\S1.3.1]{FBZ}}]\label{dfn:VA}
A \emph{vertex algebra} is a tuple $(V, \vac, T, Y)$ consisting of
\begin{itemize}
\item 
a linear space $V$, called the state space, 

\item 
an element $\vac\in V$, called the vacuum, 

\item 
a linear map $T\colon V\to V$, called the translation operator, 

\item 
a linear map 
\[
Y\colon V\to \Fld V, \quad 
a\mapsto Y(a, z)=\sum_{z\in \bbZ}z^{-n-1}a_{(n)}, 
\]
called the state-field correspondence, 
\end{itemize}
which satisfies following axioms: 
\begin{clist}
\item (Vacuum axiom) 
For $a\in V$, we have $Y(\vac, z)=\id_{V}$. Also, for $a\in V$, we have $Y(a, z)\vac\in V\dbr{z}$ and $Y(a, z)\vac|_{z=0}=a$. 

\item (Translation invariance) 
We have $T\vac=0$ and $[T, Y(a, z)]=\pdd_zY(a, z)$ for $a\in V$. 

\item (Locality)
For $a, b\in V$, there exists $N\in \bbN$ such that
\[
(z-w)^N[Y(a, z), Y(b, w)]=0
\]
in $(\End_{\bbC}V)\dbr{z^{\pm1}, w^{\pm1}}$. 
\end{clist}
\end{dfn}

\begin{dfn}[{\cite[Definition 1.3.4]{FBZ}}]
Let $V$ and $W$ be vertex algebras. A linear map $f\colon V\to W$ is called a \emph{morphism of vertex algebras} if  
\[
f(\vac_V)=\vac_W, \quad
f\circ T_V=T_W\circ f, \quad
f(Y_V(a, z)b)=Y_W(f(a), z)f(b) \quad (a, b\in V). 
\]
\end{dfn}

\begin{eg}[{\cite[\S1.4]{FBZ}}]\label{eg:comVA}
A vertex algebra $V$ is called commutative if $[Y(a, z), Y(b, w)]=0$ for all $a, b\in V$. This condition is equivalent to $a_{(n)}=0$ (as an element of $\End_{\bbC}V$) for all $a\in V$ and $n\in \bbN$. 

For a commutative vertex algebra $V$, 
\[
\mu\colon V\times V\to V, \quad (a, b)\mapsto a_{(-1)}b
\]
is a commutative and associative multiplication, and the translation operator $T$ is a derivation with respect to $\mu$. Hence we obtain a commutative differential algebra $(V, \mu, T)$ with the unit $\vac$. 
Conversely, for a commutative differential algebra $(A, \cdot, \pdd)$ with the unit $1_A$, let 
\[
Y\colon A\otimes A\to A\dbr{z}, \quad
a\otimes b\mapsto 
Y(a, z)b\ceq \sum_{n\ge 0}\frac{z^n}{n!}(\pdd^na)\cdot b, 
\]
then $(A, 1_A, \pdd, Y)$ is a commutative vertex algebra. Thus, the notion of commutative vertex algebras is equivalent to that of commutative differential algebras. In what follows, we does not distinguish commutative vertex algebras and commutative differential algebras. 
\end{eg}

\begin{dfn}\label{dfn:ZgrVA}
A vertex algebra $V=\bigoplus_{\Delta\in \bbZ}V_\Delta$ with a direct sum decomposition as a linear space is called a \emph{$\bbZ$-graded vertex algebra} if the following conditions hold: 
\begin{clist}
\item 
$\vac\in V_0$. 

\item 
For $a\in V_{\Delta}$, $b\in V_{\Delta'}$ and $n\in \bbZ$, we have $a_{(n)}b\in V_{\Delta+\Delta'-n-1}$. 
\end{clist}
\end{dfn}

Let $V=\bigoplus_{\Delta\in \bbZ}V_\Delta$ be a $\bbZ$-graded vertex algebras. For $a\in V_\Delta$, we say that $a$ is a homogeneous element of $V$, and denote $\Delta(a)\ceq \Delta$. 

\begin{dfn}
Let $V=\bigoplus_{\Delta\in \bbZ}V_\Delta$ and $W=\bigoplus_{\Delta\in \bbZ} W_\Delta$ be $\bbZ$-graded vertex algebras. A morphism $f\colon V\to W$ of vertex algebras is called a \emph{morphism of $\bbZ$-graded vertex algebras} if $f(V_\Delta)\subset W_\Delta$ for all $\Delta\in \bbZ$. 
\end{dfn}

We denote by $\cbgrVA$ the category whose objects are commutative $\bbZ$-graded vertex algebras $V=\bigoplus_{\Delta\in \bbZ}V_\Delta$ such that $V_\Delta=0$ for $\Delta\ll0$, and morphisms are the morphisms of $\bbZ$-graded vertex algebras. Also, we denote by $\fcbgrVA$ the full subcategory of $\cbgrVA$ consisting of $V=\bigoplus_{\Delta\in \bbZ}V_\Delta\in \Ob(\cbgrVA)$ such that $\dim_{\bbC}V_\Delta<\infty$ for all $\Delta\in \bbZ$. 

\begin{eg}
It is known that the forgetful functor $\cbgrVA\to \CAlg$ has the left adjoint $\clJ\colon \CAlg\to \cbgrVA$. For a commutative algebra $A$, the differential algebra $\clJ\!A$ is called the jet algebra of $A$. 

For example, let us describe the construction of the jet algebra of a polynomial ring $A=\bbC[x_\lambda]_{\lambda\in \Lambda}$. As a commutative algebra, the jet algebra is just a polynomial ring $\clJ\!A\ceq \bbC[x_\lambda^{(m)}]_{\lambda\in\Lambda, m\in \bbN}$ of infinite many variables. Define a derivation $\pdd\colon \clJ\!A\to \clJ\!A$ by
\[
\pdd(x_{\lambda_1}^{(m_1)}\cdots x_{\lambda_l}^{(m_l)})\ceq 
\sum_{i=1}^lx_{\lambda_1}^{(m_1)}\cdots x_{\lambda_i}^{(m_i+1)}\cdots x_{\lambda_l}^{(m_l)}. 
\]
Also, for $\Delta\in \bbN$, define
\[
(\clJ\!A)_\Delta\ceq
\Spn_{\bbC}\bigl\{x_{\lambda_1}^{(m_1)}\cdots x_{\lambda_l}^{(m_l)}\mid l\in \bbN, \lambda_i\in \Lambda, m_i\in\bbN,\, \sum_{i=1}^l(m_i+1)=\Delta\bigr\}. 
\]
Then, we have $\clJ\!A=\bigoplus_{\Delta\in\bbN}(\clJ\!A)_\Delta\in \Ob(\cbgrVA)$. Notice that if $A$ is a polynomial ring of finite many variables, then $\clJ\!A\in \Ob(\fcbgrVA)$. 
This holds in general, i.e., we have $\clJ\!A\in \Ob(\fcbgrVA)$ for any finitely generated commutative algebra $A$. 
\end{eg}

\subsection{A general construction of vertex algebras from prefactorization algebras}
\label{ss:constVA}

In this subsection we construct a vertex algebra from a prefactorization algebra on the complex number plane $\bbC$  called holomorphic (\cref{thm:PFAVA}, see also \cref{dfn:holoPFA}). We use the following notations: 

\begin{itemize}
\item 
We denote by $S^1\ceq \{q\in \bbC\mid |q|=1\}$ the unit circle, and consider the natural action of the group $S^1\ltimes \bbC$ of isometric affine transformations on the plane $\bbC$. 

\item 
We regard $S^1\subset S^1\ltimes \bbC$ by $q\mapsto (q, 0)$. 
\end{itemize}

\vspace{4pt}

We will consider factorization algebras with values in the category $\LCS$ of locally convex spaces over $\bbC$, whose morphisms are continuous linear maps, so we first remark on the categorical properties of  $\LCS$. 

\begin{itemize}
\item 
$\LCS$ admits projective limits: 
For a projective system $(\{X_\lambda\}_{\lambda\in \Lambda}, \{f_{\lambda, \mu}\colon X_\mu\to X_\lambda\}_{\lambda\le \mu})$ in $\LCS$, the projective limit $\varprojlim X_\lambda$ in $\Lin$ equipped with the initial topology induced by the canonical morphisms $\varprojlim_{\lambda\in \lambda}X_\lambda\to X_\lambda$ is a locally convex space. Thus, the linear space $\varprojlim_{\lambda\in \Lambda}X_\lambda$ gives rise to the projective limit in $\LCS$. 

\item 
$\LCS$ admits kernels: 
For a continuous linear map $f\colon X\to Y$, the linear space $\Ker f=\{x\in X\mid f(x)=0\}\subset X$ equipped with the subspace topology is a locally convex space. Thus, the linear space $\Ker f$ gives rise to the kernel in $\LCS$. 

\item 
$\LCS$ admits inductive limits: 
For a inductive system $(\{X_\lambda\}_{\lambda\in \Lambda}, \{f_{\lambda, \mu}\colon X_\lambda\to X_\mu\}_{\lambda\le \mu})$ in $\LCS$, the inductive limit $\varinjlim_{\lambda\in \Lambda}X_\lambda$ in $\Lin$ has the structure of a locally convex space as follows: Let $\frB$ be the set consisting of balanced absorbent convex set $A\subset \varinjlim_{\lambda\in \Lambda}X_\lambda$ such that $\iota_\lambda^{-1}(A)\subset X_\lambda$ is a neighborhood of $0\in X_\lambda$ for all $\lambda\in \Lambda$. Here $\iota_\lambda\colon X_\lambda\to \varinjlim_{\lambda\in \Lambda}X_\lambda$ denotes the canonical morphism of the inductive limit. Then, there exists a unique locally convex topology on $\varinjlim_{\lambda\in \Lambda}X_\lambda$ such that $\frB$ is a neighborhood basis at $0\in \varinjlim_{\lambda\in \Lambda}X_\lambda$. The linear space $\varinjlim_{\lambda\in \Lambda}X_\lambda$ equipped with this locally convex topology gives the inductive limit in $\LCS$. 

\item 
$\LCS$ admits cokernels: 
For a continuous linear map $f\colon X\to Y$, the linear space $\Cok f=Y/f(X)$ equipped with the quotient topology is a locally convex space. Thus, the linear space $\Cok f$ gives rise to the cokernel in $\LCS$. 

\item 
$\LCS$ admits a symmetric monoidal structure: 
For locally convex spaces $X$ and $Y$, the algebraic tensor product $X\otimes Y$ has the structure of a locally convex space as follows: Let $\frB$ be the set consisting of balanced absorbent convex set $A\subset X\otimes Y$ such that $\otimes^{-1}(A)\subset X\times Y$ is a neighborhood of $(0, 0)\in X\times Y$. Here $\otimes \colon X\times Y\to X\otimes Y$ denotes the canonical bilinear map. Then, there exists a unique locally convex topology on $X\otimes Y$ such that $\frB$ is a neighborhood basis at $0\in X\otimes Y$. The linear space $X\otimes Y$ equipped with this topology is called the projective tensor product, and denote by $X\otimes_{\pi}Y$. The topology of $X\otimes_\pi Y$ is the finest locally convex topology on $X\otimes Y$ such that $\otimes \colon X\times Y\to X\otimes Y$ is continuous. 
The category $\LCS$ has the structure of a symmetric monoidal category by letting $\otimes_{\pi}$ be the tensor product and $\bbC$ be the unit. In this note, we does not use any other locally convex topologies on $X\otimes Y$, so we just denote $\otimes=\otimes_{\pi}$. 
\end{itemize}

In particular, the category $\LCS$ is a complete and cocomplete symmetric monoidal category. 

\begin{rmk}
For locally convex spaces $X$ and $Y$, there is the finest locally convex topology on $X\otimes Y$ such that $\otimes \colon X\times Y\to X\otimes Y$ is separately continuous. The linear space $X\otimes Y$ equipped with this topology is called the inductive tensor product, and denote by $X\otimes_\iota Y$. The inductive tensor product also give $\LCS$ a symmetric monoidal structure. This \cref{ss:constVA} and next \cref{ss:constcomVA} are valid if we adopt the inductive tensor product as a symmetric monoidal structure on $\LCS$. 
\end{rmk}

Let $\clF\colon \frU_{\bbC}\to \LCS$ be an $S^1\ltimes \bbC$-equivariant prefactorization algebra. We denote its structure morphism by
\[
\sigma_{(q, z), U}\colon \clF(U)\to \clF((q, z)U) \quad ((q, z)\in S^1\ltimes \bbC, U\in\frU_{\bbC}). 
\]
Notice that the costalk $\clF^z$ at $z\in \bbC$ is given by
\[
\clF^z=
\Bigl\{
(a_R)_{R\in \bbR_{>0}}\in \prod_{R\in \bbR_{>0}}\clF(D_R(z))\mid  \forall r, R\in \bbR_{>0},\ r<R\,\Longrightarrow\, \clF^{D_r(z)}_{D_R(z)}(a_r)=a_R
\Bigr\} 
\]
since $\{D_R(z)\}_{R\in \bbR_{>0}}$ is an open neighborhood basis of $z$ which is closed under finite intersections. 

\begin{dfn}\label{dfn:muzR}
For $R\in \bbR_{>0}$ and $(z_1, \ldots, z_l)\in \Conf_l(D_R(0))$, we define a morphism $\mu_{z_1, \ldots, z_l}^R$ of locally convex spaces as the composition 
\[
\begin{tikzcd}[row sep=huge, column sep=huge]
(\clF^0)^{\otimes l} \arrow[r, "\otimes_{i=1}^l\clF^0_{D_r(0)}"] & [0.7cm]
\bigotimes_{i=1}^l\clF(D_r(0)) \arrow[r, "\otimes_{i=1}^l\sigma_{(1, z_i), D_r(0)}"] & [0.8cm]
\bigotimes_{i=1}^l\clF(D_r(z_i)) \arrow[r, "\clF^{D_r(z_1), \ldots, D_r(z_l)}_{D_R(0)}"] & [1cm] 
\clF(D_R(0))
\end{tikzcd}
\]
where $r$ is any positive real number satisfying 
\[
D_r(z_1)\sqcup \cdots \sqcup D_r(z_l)\subset D_R(0).
\]
Notice that the definition of $\mu_{z_1, \ldots, z_l}^R$ is independent of the choice of $r$ because of the second part of \cref{dfn:PFA} (i) and the equation $\clF^{D_r(z)}_{D_s(z)}\circ \clF^z_{D_r(z)}=\clF^z_{D_s(z)}$ for $0<r<s$. 
\end{dfn}

\begin{rmk}\label{rmk:expremuzR}
\ 
\begin{enumerate}
\item 
When $l=1$, the morphism $\mu_z^R\colon \clF^0\to \clF(D_R(0))$ is defined for $z\in D_R(0)$, and we have, for $a\in \clF^0$, 
\[
\mu_z^R(a)=
\clF^{D_r(z)}_{D_R(0)}\sigma_{(1, z), D_r(0)}\clF^0_{D_r(0)}(a), 
\]
where $r$ is any positive real number satisfying $|z|<R-r$. 

\item 
When $l=2$, the morphism $\mu_{z, 0}^{R}\colon \clF^0\otimes \clF^0\to \clF(D_R(0))$ is defined for $z\in D_R^\times(0)\ceq D_R(0)\bs\{0\}$, and we have, for $a, b\in \clF^0$, 
\[
\mu_{z, 0}^R(a\otimes b)=
\clF^{D_r(z), D_r(0)}_{D_R(0)}
\bigl(\sigma_{(1, z), D_r(0)}\clF^0_{D_r(0)}(a)\otimes \clF^0_{D_r(0)}(b)
\bigr), 
\]
where $r$ is any positive real number satisfying $2r<|z|<R-r$. 
\end{enumerate}
\end{rmk}

\begin{dfn}
For $\Delta\in \bbZ$, define
\begin{align*}
&\clF(D_R(0))_{\Delta}\ceq 
\{a\in \clF(D_R(0))\mid \forall q\in S^1,\, \sigma_{q, D_R(0)}a=q^\Delta a\} \quad (R\in \bbR_{>0}), \\
&\clF^0_\Delta\ceq
 \Bigl\{
(a_R)_{R\in \bbR_{>0}}\in \prod_{R\in \bbR_{>0}}\clF(D_R(0))_\Delta\mid \forall r, R\in \bbR_{>0},\ r<R\,\Longrightarrow\, \clF^{D_r(0)}_{D_R(0)}(a_r)=a_R
\Bigr\}. 
\end{align*}
Note that $\sum_{\Delta\in \bbZ}\clF(D_r(0))_\Delta\subset \clF(D_r(0))$ and $\sum_{\Delta\in \bbZ}\clF^0_\Delta\subset \clF^0$ are direct sums. For $a\in \clF^0_\Delta$, we say that $a$ is a homogeneous element of weight $\Delta$, and denote $\Delta(a)\ceq \Delta$. 
\end{dfn}

\begin{rmk}
Let $0<r<R$ and $\Delta\in \bbZ$. If $a\in \clF(D_r(0))_\Delta$, then $\clF^{D_r(0)}_{D_R(0)}(a)\in \clF(D_R(0))_\Delta$, so we have a linear map
\[
\clF(D_r(0))_\Delta\to \clF(D_R(0))_\Delta, \quad a\mapsto \clF^{D_r(0)}_{D_R(0)}(a). 
\]
We abusively denote this linear map by $\clF^{D_r(0)}_{D_R(0)}\colon \clF(D_r(0))_\Delta\to \clF(D_R(0))_\Delta$. 
\end{rmk}

\begin{lem}\label{lem:sigmamu}
Let $R\in \bbR_{>0}$ and $(z_1, \ldots, z_l)\in \Conf_l(D_R(0))$. For $q\in S^1$ and homogeneous $a_1, \ldots, a_l\in \clF^0$, we have
\[
\sigma_{q, D_R(0)}\mu_{z_1, \ldots, z_l}^R(a_1\otimes \cdots \otimes a_l)=
q^{\Delta(a_1)+\cdots +\Delta(a_l)}\mu_{qz_1, \ldots, qz_l}^R(a_1\otimes \cdots \otimes a_l). 
\] 
\end{lem}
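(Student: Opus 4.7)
The plan is to unwind the definition of $\mu^R_{z_1, \ldots, z_l}$ and push $\sigma_{q, D_R(0)}$ through each factor of the composition, using the axioms of an $S^1 \ltimes \bbC$-equivariant prefactorization algebra together with the group law in $S^1 \ltimes \bbC$.

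First I would fix $r \in \bbR_{>0}$ small enough so that $D_r(z_1) \sqcup \cdots \sqcup D_r(z_l) \subset D_R(0)$, so that in particular $D_r(qz_1) \sqcup \cdots \sqcup D_r(qz_l) \subset D_R(0)$ as well (since $|q| = 1$ and $qD_R(0) = D_R(0)$, $qD_r(z_i) = D_r(qz_i)$). Applying $\sigma_{q, D_R(0)}$ to $\mu^R_{z_1,\ldots,z_l}(a_1\otimes \cdots \otimes a_l)$ and using compatibility of the equivariant structure with the factorization product (condition (iii) of \cref{dfn:equivPFA}) yields
\[
\sigma_{q, D_R(0)} \circ \clF^{D_r(z_1),\ldots, D_r(z_l)}_{D_R(0)} = \clF^{D_r(qz_1), \ldots, D_r(qz_l)}_{D_R(0)} \circ \bigl(\otimes_{i=1}^l \sigma_{q, D_r(z_i)}\bigr).
\]

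Next I would rewrite $\sigma_{q, D_r(z_i)} \circ \sigma_{(1, z_i), D_r(0)}$. By the cocycle condition (i) of \cref{dfn:equivPFA}, this equals $\sigma_{(q,0)(1,z_i), D_r(0)}$. The multiplication in $S^1 \ltimes \bbC$ gives $(q,0)(1,z_i) = (q, qz_i) = (1, qz_i)(q, 0)$, so by the cocycle condition again this equals $\sigma_{(1, qz_i), D_r(0)} \circ \sigma_{q, D_r(0)}$.

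Finally, for each $i$, the morphism $\sigma_{q, D_r(0)} \circ \clF^0_{D_r(0)}$ applied to the homogeneous element $a_i$ produces $q^{\Delta(a_i)}\clF^0_{D_r(0)}(a_i)$: indeed, if $a_i = (a_{i,s})_{s\in\bbR_{>0}} \in \clF^0_{\Delta(a_i)}$, then $\clF^0_{D_r(0)}(a_i) = a_{i,r}$ is an element of $\clF(D_r(0))_{\Delta(a_i)}$ by definition, so $\sigma_{q, D_r(0)} a_{i,r} = q^{\Delta(a_i)} a_{i,r}$. Pulling the scalars $q^{\Delta(a_i)}$ out of the tensor product and recombining the remaining factors $\clF^{D_r(qz_1), \ldots, D_r(qz_l)}_{D_R(0)} \circ \bigl(\otimes_i \sigma_{(1, qz_i), D_r(0)} \circ \clF^0_{D_r(0)}\bigr)$, which is exactly $\mu^R_{qz_1, \ldots, qz_l}$ by \cref{dfn:muzR}, gives the claim.

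No step is a genuine obstacle; the only point that needs care is the second, where one must note that the subgroup-conjugation identity $(q,0)(1,z) = (1,qz)(q,0)$ in $S^1 \ltimes \bbC$ is what converts the equivariance of the translation generators into the appearance of $qz_i$ on the right-hand side.
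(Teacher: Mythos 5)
Your proof is correct and takes essentially the same approach as the paper: push $\sigma_{q,D_R(0)}$ through the definition of $\mu^R_{z_1,\ldots,z_l}$ using equivariance with the multiplication, then use the identity $(q,0)(1,z_i)=(1,qz_i)(q,0)$ in $S^1\ltimes\bbC$ together with $qD_r(0)=D_r(0)$ to move $\sigma_{q,D_r(0)}$ next to $\clF^0_{D_r(0)}(a_i)$, where it contributes the scalar $q^{\Delta(a_i)}$. Your write-up is a bit more explicit about the cocycle condition and the weight-space membership of $\clF^0_{D_r(0)}(a_i)$, but the logical content matches the paper's proof step for step.
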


\begin{proof}
Take $r\in \bbR_{>0}$ such that 
\[
D_r(z_1)\sqcup \cdots \sqcup D_r(z_l)\subset D_R(0), 
\]
then 
\[
\sigma_{q, D_R(0)}\mu_{z_1, \ldots, z_l}^R(a_1\otimes \cdots \otimes a_l)=
\clF^{\{D_r(qz_i)\}_{i=1}^l}_{D_R(0)}
\bigl(
\otimes_{i=1}^l\sigma_{q(1, z_i), D_r(0)}\clF^0_{D_r(0)}(a_i)
\bigr).
\]
Since $q(1, z)=(1, qz)q$ in $S^1\ltimes \bbC$ and $qD_r(0)=D_r(0)$, we have
\begin{align*}
\sigma_{q, D_R(0)}\mu_{z_1, \ldots, z_l}^R(a_1\otimes \cdots \otimes a_l)
&=
\clF^{\{D_r(qz_i)\}_{i=1}^l}_{D_R(0)}
\bigl(
\otimes_{i=1}^l\sigma_{(1, qz_i), D_r(0)}\sigma_{q, D_r(0)}\clF^0_{D_r(0)}(a_i)
\bigr)\\
&=
q^{\Delta(a_1)+\cdots +\Delta(a_l)}\mu_{qz_1, \ldots, qz_l}(a_1\otimes \cdots \otimes a_l). 
\qedhere
\end{align*}
\end{proof}

The following definition is motivated by the assumptions in \cite[Theorem 5.3.3]{CG1}. Notice that the discreteness condition (see condition (iii) in \cite[Theorem 5.3.3]{CG1} and the first paragraph of \cite[\S1.3]{B}) on the weight spaces $\clF(D_R(0))_\Delta$ does not required. 

\begin{dfn}\label{dfn:holoPFA}
An $S^1\ltimes \bbC$-equivalent prefactorization algebra $\clF\colon \frU_{\bbC}\to \LCS$ is called a holomorphic prefactorization algebra if it satisfies the following conditions: 
\begin{clist}
\item 
For $0<r<R$ and $\Delta\in \bbZ$, the map $\clF^{D_r(0)}_{D_R(0)}\colon \clF(D_r(0))_\Delta\to \clF(D_R(0))_\Delta$ is a linear isomorphism. 

\item 
For $R\in \bbR_{>0}$, we have $\clF(D_R(0))_\Delta=0$ ($\Delta\ll0$). 

\item 
For $R\in \bbR_{>0}$, the locally convex space $\clF(D_R(0))$ is quasi-complete and Hausdorff. 

\item 
For $R\in \bbR_{>0}$ and homogeneous $a_1, \ldots, a_l\in \clF^0$, the function
\[
\Conf_l(D_R(0))\to \clF(D_R(0)), \quad 
(z_1, \ldots, z_l)\mapsto \mu^R_{z_1, \ldots, z_l}(a_1\otimes \cdots \otimes a_l)
\]
is holomorphic in the sense of \cref{dfn:holofunc}. 
\end{clist}
\end{dfn}

We denote by $\HolPFA$ the category whose objects are holomorphic prefactorization algebras, and morphisms are the morphisms of $S^1\ltimes \bbC$-equivariant prefactorization algebras. Also, we denote by $\HolFA$ the full sub-category of $\HolPFA$ consisting of holomorphic factorization algebras. 

\begin{rmk}\label{rmk:clF0DRiso}
Let $\clF\colon \frU_{\bbC}\to \LCS$ be a holomorphic prefactorization algebra. 
For $R\in \bbR_{>0}$ and $\Delta\in \bbZ$, notice that
\[
\clF^0_{D_R(0)}\colon \clF^0_\Delta\to \clF(D_R(0))_\Delta
\]
is a linear isomorphism because of (i) in \cref{dfn:holoPFA}. 
\end{rmk}

In the remaining of this subsection, let $\clF\colon \frU_{\bbC}\to \LCS$ be a holomorphic prefactorization algebra. We will construct the structure of a vertex algebra on $\bfV(\clF)\ceq \bigoplus_{\Delta\in \bbZ}\clF^0_\Delta$. 

First, we define
\begin{equation}\label{eq:defvac}
\vac=\vac_{\clF}\ceq \{\clF^{\varnothing}_{D_R(0)}(1_\clF)\}_{R\in \bbR_{>0}}\in \clF^0, 
\end{equation}
where $\bbC\to\clF(\varnothing)$, $1\mapsto 1_{\clF}$ denotes the unit. Since 
\[
\sigma_{q, D_R(0)}\clF^{\varnothing}_{D_R(0)}(1_{\clF})=
\clF^{\varnothing}_{D_R(0)}\sigma_{q, \varnothing}(1_{\clF})=
\clF^{\varnothing}_{D_R(0)}(1_{\clF})
\]
for all $q\in S^1$ and $r\in \bbR_{>0}$, we have $\Delta(\vac)=0$. In particular $\vac\in \bfV(\clF)$. 

Next, we define a linear map $T\colon \bfV(\clF)\to \bfV(\clF)$: 
Let $a\in \clF^0$ be homogeneous. For $R\in \bbR_{>0}$, by \cref{dfn:holoPFA} (iv), the function
\[
D_R(0)\to \clF(D_R(0)), \quad
z\mapsto \mu_z^R(a).
\]
is holomorphic. 
Thus, we have
\[
(Ta)_R\ceq \pdd_z\mu_{z}^R(a)|_{z=0}\in \clF(D_R(0)). 
\]

\begin{lem}\label{lem:defTa}
Let $a\in \clF^0$ be homogeneous.
\begin{enumerate}
\item
For $R\in \bbR_{>0}$, we have $(Ta)_R\in \clF(D_R(0))_{\Delta(a)+1}$. 

\item 
For $0<r<R$, we have $\clF^{D_r(0)}_{D_R(0)}(Ta)_r=(Ta)_R$. 
 \end{enumerate}
\end{lem}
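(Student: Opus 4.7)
The plan is to verify both parts by differentiating, at $z=0$, identities already established for the family $\{\mu_z^R(a)\}_z$, using the general principle that a continuous linear map between locally convex spaces commutes with the complex derivative of a holomorphic function with values in such a space (a fact proved in \cref{s:App}).

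For part (1), I would start from \cref{lem:sigmamu} with $l=1$, which reads $\sigma_{q, D_R(0)}\mu_z^R(a)=q^{\Delta(a)}\mu_{qz}^R(a)$ for all $q\in S^1$ and $z\in D_R(0)$. Both sides are, as functions of $z\in D_R(0)$, holomorphic with values in $\clF(D_R(0))$: the right hand side is holomorphic by \cref{dfn:holoPFA}~(iv), and the left hand side is holomorphic because $\sigma_{q, D_R(0)}$ is a continuous linear map and continuous linear maps preserve holomorphicity (and commute with $\pdd_z$). Differentiating in $z$, evaluating at $z=0$, and applying the chain rule on the right gives $\sigma_{q, D_R(0)}(Ta)_R=q^{\Delta(a)+1}(Ta)_R$ for every $q\in S^1$, which is exactly the statement that $(Ta)_R\in \clF(D_R(0))_{\Delta(a)+1}$.

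For part (2), I would first show that $\clF^{D_r(0)}_{D_R(0)}\mu_z^r(a)=\mu_z^R(a)$ for $z$ in a neighborhood of $0$. Pick $r'\in \bbR_{>0}$ small enough so that $D_{r'}(z)\subset D_r(0)$ for all $z$ in some small disk around $0$; then by \cref{rmk:expremuzR}~(1) both sides equal $\clF^{D_{r'}(z)}_{D_R(0)}\sigma_{(1,z), D_{r'}(0)}\clF^0_{D_{r'}(0)}(a)$, using $\clF^{D_r(0)}_{D_R(0)}\circ \clF^{D_{r'}(z)}_{D_r(0)}=\clF^{D_{r'}(z)}_{D_R(0)}$ from \cref{dfn:pcoshf}~(i). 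Since $\clF^{D_r(0)}_{D_R(0)}$ is continuous, it commutes with $\pdd_z|_{z=0}$, and differentiating the identity yields $\clF^{D_r(0)}_{D_R(0)}(Ta)_r=(Ta)_R$.

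The only non-routine step is the commutation of continuous linear maps with $\pdd_z$ of locally convex space-valued holomorphic functions; this is precisely the kind of fact developed in \cref{s:App}, and once it is in hand both assertions reduce to one-line computations from previously established identities.
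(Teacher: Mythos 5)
Your proposal is correct and matches the paper's own proof essentially step for step: part (1) uses \cref{lem:sigmamu} with $l=1$ followed by differentiation at $z=0$ and the chain rule (\cref{lem:compder}), and part (2) reduces to the identity $\clF^{D_r(0)}_{D_R(0)}\mu_z^r(a)=\mu_z^R(a)$ and the fact that the continuous linear map $\clF^{D_r(0)}_{D_R(0)}$ commutes with $\pdd_z$. The paper states both of these reductions just as you do, merely leaving the ``continuous linear maps commute with the derivative'' step implicit where you spell it out.
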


\begin{proof}
(1) For $q\in S^1$, we have
\[
\sigma_{q, D_R(0)}(Ta)_R=
\pdd_z\sigma_{q, D_R(0)}\mu_z^R(a)|_{z=0}=
q^{\Delta(a)}\pdd_z\mu_{qz}^R(a)|_{z=0}=
q^{\Delta(a)+1}(Ta)_R, 
\]
where we used \cref{lem:sigmamu} in the second equality and \cref{lem:compder} in the third equality. 

(2) For $z\in D_r(0)$, since $\clF^{D_r(0)}_{D_R(0)}\mu_z^r(a)=\mu_z^R(a)$, we have
\[
(Ta)_R=
\pdd_z\mu_z^R(a)|_{z=0}=
\clF^{D_r(0)}_{D_R(0)}\pdd_z\mu_z^r(a)|_{z=0}=
\clF^{D_r(0)}_{D_R(0)}(Ta)_r. 
\qedhere
\]
\end{proof}

For homogeneous $a\in \clF^0$, by \cref{lem:defTa}, one can define
\[
Ta\ceq \{(Ta)_R\}_{R\in \bbR_{>0}}\in \clF^0_{\Delta(a)+1}.
\]
Now, we have a linear map
\begin{equation}\label{eq:deftrans}
T\colon \bfV(\clF)\to \bfV(\clF), \quad a\mapsto Ta.
\end{equation}

Finally, we define a linear map $Y\colon \bfV(\clF)\otimes \bfV(\clF)\to \bfV(\clF)\dpr{z}$: 
Let $a, b\in \clF^0$ be homogeneous. 
For $R\in \bbR_{>0}$, we have a holomorphic function 
\[
D_R(0)^{\times}\to \clF(D_R(0)), \quad 
z\mapsto \mu_{z, 0}^R(a\otimes b).
\]
Thus, by \cref{thm:Laurent}, the function $\mu_{z, 0}^R(a\otimes b)$ can be expanded in the form of Laurent series 
\[
\mu_{z, 0}^R(a\otimes b)=
\sum_{n\in \bbZ}z^{-n-1}(a_{(n)}b)_R \quad (z\in D_R(0)^{\times}), \quad
(a_{(n)}b)_R\in \clF(D_R(0)). 
\]

\begin{lem}\label{lem:defnpro}
Let $a, b\in \clF^0$ be homogeneous, and $n\in \bbZ$. 
\begin{enumerate}
\item 
For $R\in \bbR_{>0}$, we have $(a_{(n)}b)_R\in \clF(D_R(0))_{\Delta(a)+\Delta(b)-n-1}$. 

\item 
For $0<r<R$, we have $\clF^{D_r(0)}_{D_R(0)}(a_{(n)}b)_r=(a_{(n)}b)_R$. 
\end{enumerate}
\end{lem}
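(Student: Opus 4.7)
The plan is to exploit uniqueness of Laurent coefficients for functions valued in a quasi-complete Hausdorff locally convex space (delivered by \cref{thm:Laurent} in \cref{s:App}) together with two compatibilities of $\mu^R_{z,0}(a\otimes b)$: its $S^1$-equivariance, which will give (1), and its naturality in $R$, which will give (2). Throughout, continuity of the relevant structure maps together with convergence of the Laurent series in the quasi-complete Hausdorff space $\clF(D_R(0))$ allows linear operators to be interchanged with the sum.

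For (1), I would apply $\sigma_{q, D_R(0)}$ to the expansion $\mu^R_{z,0}(a\otimes b) = \sum_{n\in \bbZ} z^{-n-1}(a_{(n)}b)_R$, pulling it inside to obtain $\sum_n z^{-n-1}\sigma_{q, D_R(0)}(a_{(n)}b)_R$. On the other hand, since $q$ acts by rotation and $q\cdot(z,0) = (qz,0)\in \Conf_2(D_R(0))$, \cref{lem:sigmamu} rewrites the same element as $q^{\Delta(a)+\Delta(b)}\mu^R_{qz,0}(a\otimes b)$; substituting $qz$ for the variable in the Laurent expansion of $\mu^R_{\bullet,0}(a\otimes b)$ gives $\sum_n z^{-n-1}q^{\Delta(a)+\Delta(b)-n-1}(a_{(n)}b)_R$. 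Uniqueness of Laurent coefficients then forces $\sigma_{q, D_R(0)}(a_{(n)}b)_R = q^{\Delta(a)+\Delta(b)-n-1}(a_{(n)}b)_R$ for every $q\in S^1$, which is precisely the assertion $(a_{(n)}b)_R \in \clF(D_R(0))_{\Delta(a)+\Delta(b)-n-1}$.

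For (2), I would first verify the identity $\clF^{D_r(0)}_{D_R(0)}\circ \mu^r_{z,0} = \mu^R_{z,0}$ on $D_r^\times(0)$, which is immediate from the first part of \cref{dfn:PFA}(i): unfolding both sides with a common $r'$ satisfying $D_{r'}(z)\sqcup D_{r'}(0)\subset D_r(0)$ reduces both expressions to $\clF^{D_{r'}(z), D_{r'}(0)}_{D_R(0)}$ applied to the same element of $\clF(D_{r'}(z))\otimes \clF(D_{r'}(0))$. Applying this identity to $a\otimes b$ and commuting the continuous map $\clF^{D_r(0)}_{D_R(0)}$ past the convergent Laurent series yields the equality of series $\sum_n z^{-n-1}\clF^{D_r(0)}_{D_R(0)}(a_{(n)}b)_r = \sum_n z^{-n-1}(a_{(n)}b)_R$ on the punctured disk $D_r^\times(0)$; uniqueness of Laurent coefficients on this annulus then gives $\clF^{D_r(0)}_{D_R(0)}(a_{(n)}b)_r = (a_{(n)}b)_R$ for each $n$.

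The main obstacle is purely formal: one must confirm that the appendix's Laurent theorem supplies not merely the existence but also the uniqueness of coefficients for holomorphic functions into a quasi-complete Hausdorff locally convex space, so that equality of two convergent Laurent expansions on a common annulus forces coefficient-wise equality. Both this and the interchange of continuous linear maps with convergent vector-valued series are standard in the quasi-complete Hausdorff setting and should be immediate consequences of the material developed in \cref{s:App}.
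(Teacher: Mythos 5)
Your proof of part~(2) is exactly the paper's. For part~(1), you take a genuinely different route: where the paper uses the contour-integral representation $(a_{(n)}b)_R=\frac{1}{2\pi\sqrt{-1}}\int_{\pdd D_{R/2}(0)}z^n\mu^R_{z,0}(a\otimes b)\,dz$ from \cref{thm:Laurent}, applies $\sigma_{q,D_R(0)}$ under the integral sign, invokes \cref{lem:sigmamu}, and then finishes with a change of variable $z\mapsto q^{-1}z$ on the circle, you instead push $\sigma_{q,D_R(0)}$ term-by-term through the convergent Laurent series and compare it with the series obtained from $\mu^R_{qz,0}(a\otimes b)$ by direct substitution, concluding by uniqueness of the Laurent coefficients. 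Both interchanges you need (continuous linear maps with convergent vector-valued series, and continuous linear maps with Riemann integrals in the paper's version) are standard in the quasi-complete Hausdorff setting and are not explicitly isolated as lemmas in the appendix, so neither approach has a rigor advantage; your version is arguably a little more self-contained because it only needs the \emph{uniqueness} clause of \cref{thm:Laurent} rather than the explicit coefficient formula, at the small cost of managing the two one-sided sums in the Laurent expansion separately. The proposal is correct.
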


\begin{proof}
(1) Let $C=\pdd D_{R/2}(0)$ be the positivity oriented circle of radius $R/2$, then
\[
(a_{(n)}b)_R=
\frac{1}{2\pi\sqrt{-1}}\int_C z^n\mu_{z, 0}^R(a\otimes b)\,dz. 
\]
Thus, for $q\in S^1$, we have
\begin{align*}
\sigma_{q, D_R(0)}(a_{(n)}b)_R
&=
\frac{1}{2\pi\sqrt{-1}}\int_C z^n\sigma_{q, D_R(0)}\mu_{z, 0}^R(a\otimes b)\,dz=
\frac{1}{2\pi\sqrt{-1}}\int_C z^nq^{\Delta(a)+\Delta(b)}\mu_{qz, 0}^R(a\otimes b)\,dz\\
&=
\frac{1}{2\pi\sqrt{-1}}\int_C z^nq^{\Delta(a)+\Delta(b)-n-1}\mu_{z, 0}^R(a\otimes b)\,dz=
q^{\Delta(a)+\Delta(b)-n-1}(a_{(n)}b)_R, 
\end{align*}
where we used \cref{lem:sigmamu} in the second equality. 

(2) For $z\in D_r(0)$, since $\clF^{D_r(0)}_{D_R(0)}\mu^r_{z, 0}(a\otimes b)=\mu_{z, 0}^R(a\otimes b)$, we have
\[
\sum_{m\in \bbZ}z^{-m-1}\clF^{D_r(0)}_{D_R(0)}(a_{(m)}b)_r=
\sum_{m\in \bbZ}z^{-m-1}(a_{(m)}b)_R. 
\]
Thus, the claim follows. 
\end{proof}

For homogeneous $a, b\in \clF^0$ and $n\in \bbZ$, by \cref{lem:defnpro}, one can define
\[
a_{(n)}b\ceq \{(a_{(n)}b)_R\}\in \clF^0_{\Delta(a)+\Delta(b)-n-1}. 
\]
Note that \cref{dfn:holoPFA} (ii) implies $a_{(n)}b=0$ for $n\gg 0$. Thus, we have a linear map
\begin{equation}\label{eq:defvertop}
Y\colon \bfV(\clF)\otimes \bfV(\clF)\to \bfV(\clF)\dpr{z}, \quad
a\otimes b\mapsto Y(a, z)b\ceq\sum_{n\in \bbZ}z^{-n-1}a_{(n)}b. 
\end{equation}

\begin{thm}\label{thm:PFAVA}
For a holomorphic prefactorization algebra $\clF\colon \frU_{\bbC}\to \LCS$, the linear space 
\[
\bfV(\clF)=\bigoplus_{\Delta\in \bbZ}\clF^0_\Delta
\] 
has the structure of a $\bbZ$-graded vertex algebra as follows: 
\begin{itemize}
\item 
The vacuum is $\vac$ defined in \cref{eq:defvac}. 

\item
The translation operator is $T\colon \bfV(\clF)\to \bfV(\clF)$ defined in \cref{eq:deftrans}. 

\item 
The state-field correspondence is $Y\colon \bfV(\clF)\otimes \bfV(\clF)\to \bfV(\clF)\dpr{z}$ defined in \eqref{eq:defvertop}
\end{itemize}
\end{thm}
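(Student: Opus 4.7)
Most of the weight-tracking axioms are already in place: the vacuum $\vac$ has weight $0$ (shown right after \eqref{eq:defvac}), the translation operator raises weight by $1$ (\cref{lem:defTa}), and $a_{(n)}b\in\clF^0_{\Delta(a)+\Delta(b)-n-1}$ (\cref{lem:defnpro}). Combined with \cref{dfn:holoPFA}(ii), which bounds weights from below, this forces $a_{(n)}b=0$ for $n\gg 0$, so $Y(a,z)b\in\bfV(\clF)\dpr{z}$ and $Y(a,\cdot)$ takes values in fields. What remains is the vacuum axiom, translation invariance, and locality.

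For the vacuum axiom, the strategy is to collapse one of the two factors in $\mu^R_{z,0}(-\otimes-)$ using \cref{dfn:PFA}(i), (iv) and the equivariance of the unit \cref{dfn:equivPFA}(iv). Writing $\clF^0_{D_r(0)}(\vac)=\clF^{\varnothing}_{D_r(0)}(1_\clF)$ and using $\sigma_{(1,z)}\clF^{\varnothing}_{D_r(0)}(1_\clF)=\clF^{\varnothing}_{D_r(z)}(1_\clF)$, one obtains $\mu^R_{z,0}(\vac\otimes b)=\clF^0_{D_R(0)}(b)=b_R$, independent of $z$; the Laurent expansion then gives $\vac_{(n)}b=\delta_{n,-1}b$, i.e., $Y(\vac,z)=\id$. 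Symmetrically, $\mu^R_{z,0}(a\otimes\vac)=\mu^R_z(a)$, which by \cref{dfn:holoPFA}(iv) is holomorphic on the full disk $D_R(0)$ and evaluates at $z=0$ to $a_R$, so $Y(a,z)\vac\in\bfV(\clF)\dbr{z}$ and $Y(a,z)\vac|_{z=0}=a$. The same argument applied to $\mu^R_z(\vac)$ shows it equals $\clF^{\varnothing}_{D_R(0)}(1_\clF)$, constant in $z$, hence $T\vac=0$.

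For the identity $[T,Y(a,z)]=\pdd_zY(a,z)$, my plan is to exploit the translation identity
\[
\mu^R_{z,w}(a\otimes b)
=\clF^{D_{R'}(w)}_{D_R(0)}\circ\sigma_{(1,w),D_{R'}(0)}\circ \mu^{R'}_{z-w,0}(a\otimes b),
\]
valid for $R'<R-|w|$ and $|z-w|<R'$, which follows from the equivariance axiom \cref{dfn:equivPFA}(iii) applied to the factorization product defining $\mu^{R'}_{z-w,0}$, together with $\sigma_{(1,w)}\sigma_{(1,z-w)}=\sigma_{(1,z)}$. Differentiating in $w$ at $w=0$, the chain rule gives on the left $\pdd_w|_{w=0}\mu^R_{z,w}(a\otimes b)=\mu^R_{z,0}(a\otimes Tb)$ (by the very definition of $T$ on $b$) and on the right $-\pdd_z\mu^R_{z,0}(a\otimes b)+T(\mu^R_{z,0}(a\otimes b))$, where the infinitesimal generator of $\sigma_{(1,w)}$ extends $T$ to the germs. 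Matching Laurent coefficients in $z$ converts this into $T(a_{(n)}b)-a_{(n)}(Tb)=-n\,a_{(n-1)}b$, which is equivalent to $[T,Y(a,z)]=\pdd_zY(a,z)$.

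The main obstacle is locality. The plan is to use the three-point function $F(z,w)\ceq\mu^R_{z,w,0}(a\otimes b\otimes c)$, which is holomorphic on $\Conf_3(D_R(0))$ by \cref{dfn:holoPFA}(iv). In the annular region $0<|w|<R''<|z|<R$, the associativity axiom \cref{dfn:PFA}(iii) applied to the inclusions $D_r(w)\sqcup D_r(0)\subset D_{R''}(0)$ and $D_r(z)\sqcup D_{R''}(0)\subset D_R(0)$ rewrites $F(z,w)$ as the outer factorization product of $\sigma_{(1,z)}\clF^0_{D_r(0)}(a)$ with $\mu^{R''}_{w,0}(b\otimes c)\in\clF(D_{R''}(0))$; expanding the latter as $\sum_n w^{-n-1}\clF^0_{D_{R''}(0)}(b_{(n)}c)$ and identifying $\clF^0_{D_{R''}(0)}(b_{(n)}c)$ via \cref{rmk:clF0DRiso} with the costalk image, then taking the Laurent expansion in $z$, yields the coefficients of $Y(a,z)Y(b,w)c$ applied to $c$. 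In the complementary region $|z|<|w|<R$, the braided symmetry \cref{dfn:PFA}(ii) together with the symmetric associativity gives the expansion $Y(b,w)Y(a,z)c$ of the same $F$. Since both are expansions of one holomorphic function on disjoint annular regions, the bound on weights (\cref{dfn:holoPFA}(ii)) restricts, for each fixed total weight, the range of $n$ in $a_{(n)}\!\cdot$ and of $m$ in $b_{(m)}\!\cdot$ occurring in the coefficient, so the order of the pole along $\{z=w\}$ is finite; multiplying by a sufficiently large $(z-w)^N$ kills the commutator in the formal sense, yielding locality. The hardest part of the argument is the bookkeeping of the annular domain conditions and the verification that the refactored expressions match $Y(a,z)Y(b,w)c$ term by term after projecting to the costalk.
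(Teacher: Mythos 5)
Your overall architecture matches the paper's: the vacuum axiom via collapsing one slot of $\mu^R_{z,0}$ when it carries the unit (this is exactly \cref{lem:vac}), translation invariance via a translation identity for $\mu^R_{z,w}$ followed by differentiation at $w=0$ (this is \cref{lem:transop}), and locality via the three-point function $\mu^R_{z,w,0}(a\otimes b\otimes c)$ and its expansions in complementary annular regions (this is \cref{lem:loc} and the proof of locality). The vacuum argument is correct. The translation identity you write down is the paper's identity $\mu^R_{z+w,w}(a\otimes b)=\clF^{D_s(w)}_{D_R(0)}\sigma_{(1,w),D_s(0)}\mu^s_{z,0}(a\otimes b)$ after substituting $z\to z-w$, but the step "the derivative of the right-hand side is $-\pdd_z\mu^R_{z,0}(a\otimes b)+T(\mu^R_{z,0}(a\otimes b))$" requires care: $T$ is defined on the costalk $\clF^0$ and does not act directly on elements of $\clF(D_R(0))$ such as $\mu^{R'}_{z,0}(a\otimes b)$. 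The paper gets around this by first Laurent-expanding $\mu^{R'}_{z,0}(a\otimes b)=\sum_n z^{-n-1}(a_{(n)}b)_{R'}$ and differentiating term by term, yielding $\sum_n z^{-n-1}(T(a_{(n)}b))_R$; your shorthand conceals this step.

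The locality argument has a genuine gap. You claim that "the bound on weights \cref{dfn:holoPFA}(ii) restricts, for each fixed total weight, the range of $n$ in $a_{(n)}\!\cdot$ and of $m$ in $b_{(m)}\!\cdot$ occurring in the coefficient, so the order of the pole along $\{z=w\}$ is finite." This inference is incorrect. For $\Delta(a_{(m)}b_{(n)}c)=\Delta_0$ fixed, one has $m+n$ fixed; the weight bound gives $n\le K$ so that $b_{(n)}c\ne 0$, but $n$ can be arbitrarily negative (with $m=k-n$ correspondingly large), and nothing prevents $a_{(m)}b_{(n)}c$ from being nonzero on this infinite family. So the weight bound does not yield a finite family of $(m,n)$ for fixed output weight, and holomorphicity on $\Conf_3(D_R(0))$ by itself does not rule out an essential singularity along $z=w$. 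The paper's \cref{lem:loc} supplies the missing step: expand $\mu^R_{z,w,0}(a\otimes b\otimes c)$ as a Laurent series in $(w-z)$ near $w=z$, obtaining $\sum_n(w-z)^{-n-1}\mu^R_{z,0}(b_{(n)}a\otimes c)$, and then use $b_{(n)}a=0$ for $n\gg 0$ (which does follow from the weight bound) to conclude the singularity along $z=w$ is a pole of order $\le N$. Without this OPE-type expansion at $z=w$, your conclusion "multiplying by $(z-w)^N$ kills the commutator" is not justified; and the actual derivation of the Borcherds commutator identity from the removable singularity goes via residue calculus, $\Res_{z=0}\Res_{w=0}=\Res_{w=0}\Res_{z=0}$ applied to $(z-w)^N\mu^R_{z,w,0}$, which the paper carries out explicitly and you only gesture at.
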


We prove \cref{thm:PFAVA}. 
The conditions (i), (ii) in \cref{dfn:ZgrVA} have already been proved, so it remains to show that the above datum satisfy the axiom of vertex algebras. 

\begin{lem}\label{lem:vac}
Let $R\in \bbR_{>0}$. For $z\in D_R^{\times}(0)$ and $a\in \bfV(\clF)$, we have
\[
\mu_{z, 0}^R(\vac\otimes a)=\clF^0_{D_R(0)}(a), \quad
\mu_{z, 0}^R(a\otimes \vac)=\mu_z^R(a). 
\]
\end{lem}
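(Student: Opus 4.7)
The plan is to unfold the definition of $\mu_{z,0}^R$ from \cref{dfn:muzR} (cf.\ \cref{rmk:expremuzR}(2)) and then repeatedly invoke the unit axiom of the prefactorization algebra $\clF$ together with the compatibility $\sigma_{(1,z),\varnothing}(1_\clF)=1_\clF$ coming from condition (iv) of \cref{dfn:equivPFA}. Concretely, for $r\in\bbR_{>0}$ with $2r<|z|<R-r$, write
\[
\mu_{z,0}^R(\vac\otimes a)=
\clF^{D_r(z),D_r(0)}_{D_R(0)}\bigl(\sigma_{(1,z),D_r(0)}\clF^0_{D_r(0)}(\vac)\otimes \clF^0_{D_r(0)}(a)\bigr).
\]
Using the definition $\clF^0_{D_r(0)}(\vac)=\clF^{\varnothing}_{D_r(0)}(1_\clF)$ and the fact that $\sigma_{(1,z),-}$ is a morphism of precosheaves, one gets $\sigma_{(1,z),D_r(0)}\clF^{\varnothing}_{D_r(0)}(1_\clF)=\clF^{\varnothing}_{D_r(z)}(1_\clF)$.

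The first identity then follows by factoring $\clF^{D_r(z),D_r(0)}_{D_R(0)}\circ(\clF^{\varnothing}_{D_r(z)}\otimes \id)=\clF^{\varnothing,D_r(0)}_{D_R(0)}$ via the second part of \cref{dfn:PFA}(i), applying the unit axiom \cref{dfn:PFA}(iv) to identify $\clF^{\varnothing,D_r(0)}_{D_r(0)}(1_\clF\otimes y)=y$, and combining it with the first part of \cref{dfn:PFA}(i) to get $\clF^{D_r(0)}_{D_R(0)}\circ \clF^0_{D_r(0)}=\clF^0_{D_R(0)}$. The second identity is proved analogously: absorb the vacuum into $\clF^{D_r(z),\varnothing}_{D_R(0)}$ through the second part of \cref{dfn:PFA}(i), swap factors using the symmetry axiom \cref{dfn:PFA}(ii), and then apply the unit axiom to conclude
\[
\mu_{z,0}^R(a\otimes\vac)=\clF^{D_r(z)}_{D_R(0)}\bigl(\sigma_{(1,z),D_r(0)}\clF^0_{D_r(0)}(a)\bigr)=\mu_z^R(a).
\]

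Both identities reduce to purely algebraic manipulations inside the unital commutative monoid of axioms for $(\clF,\{\clF^{U,V}_W\},\eta)$, so there is no real obstacle; the only bookkeeping point is making sure each substitution of the unit $1_\clF$ lands on the correct factor so that the braiding in axiom (ii) is applied only when needed. Since $\Delta(\vac)=0$, the resulting expansions in $z$ match the expected weight structure and, combined with \cref{thm:Laurent}, this step later feeds into the verification of the vacuum axiom for the vertex algebra $\bfV(\clF)$.
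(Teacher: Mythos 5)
Your proof is correct and follows the same route as the paper: unfold $\mu^R_{z,0}$ via \cref{rmk:expremuzR}, move the unit through $\sigma_{(1,z)}$ using the precosheaf naturality and \cref{dfn:equivPFA}(iv), then absorb it with the second part of \cref{dfn:PFA}(i), the unit axiom (iv), and the first part of (i). The paper only writes out the first identity explicitly and dismisses the second with ``similarly,'' so your remark that the second identity additionally invokes the braiding axiom (ii) to move $1_\clF$ to the left factor before applying the (left-)unit axiom is a correct and slightly more complete accounting; you also cite \cref{dfn:equivPFA}(iv) directly, whereas the paper's in-text citation is a typo pointing to \cref{dfn:morequivPFA}.
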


\begin{proof}
Take $r\in \bbR_{>0}$ satisfying $2r<|z|<R-r$, then
\begin{align*}
\mu_{z, 0}^R(\vac\otimes a)
&=
\clF^{D_r(z), D_r(0)}_{D_R(0)}(\sigma_{(1, z), D_r(0)}\clF^{\varnothing}_{D_r(0)}(1_{\clF})\otimes \clF^0_{D_r(0)}(a))\\
&=
\clF^{D_r(z), D_r(0)}_{D_R(0)}(\clF^{\varnothing}_{D_r(z)}\sigma_{(1, z), \varnothing}(1_\clF)\otimes \clF^0_{D_r(0)}(a))\\
&=
\clF^{D_r(z), D_r(0)}_{D_R(0)}(\clF^{\varnothing}_{D_r(z)}(1_\clF)\otimes \clF^0_{D_r(0)}(a))\\
&=
\clF^{\varnothing, D_r(0)}_{D_R(0)}(1_\clF\otimes \clF^0_{D_r(0)}(a))\\
&=
\clF^{D_r(0)}_{D_R(0)}\clF^0_{D_r(0)}(a)\\
&=
\clF^0_{D_R(0)}(a). 
\end{align*}
Here we used the fact that $\sigma_{(1, z)}\colon \clF\to (1, z)\clF$ is a morphism of precosheaves in the second equality, \cref{dfn:morequivPFA} (iv) in the third equality, \cref{dfn:pcoshf} (ii) and the second part of \cref{dfn:PFA} (i) in the forth equality, the first part of \cref{dfn:PFA} (i) and \cref{dfn:PFA} (iv) in the fifth equality. Similarly, one can prove $\mu_{z, 0}^R(a\otimes \vac)=\mu_z^R(a)$.  
\end{proof}

\begin{proof}[Proof of vacuum axiom]
For $R\in \bbR_{>0}$ and $a\in \bfV(\clF)$, the first equation in \cref{lem:vac} yields that 
\[
(\vac_{(n)}a)_R=
\begin{cases}
\clF^0_{D_R(0)}(a) & (n=-1) \\
0 & (n\neq-1). 
\end{cases}
\]
Thus, we have $Y(\vac, z)=\id_{\bfV(\clF)}$. Also, for $R\in \bbR_{>0}$ and $a\in \bfV(\clF)$, the second equation in \cref{lem:vac} yields that 
\[
(a_{(n)}\vac)_R=
\begin{cases}
0 & (n\ge 0)\\
\mu_0^R(a) & (n=-1)
\end{cases}
=
\begin{cases}
0 & (n\ge 0)\\
\clF^0_{D_R(0)}(a) & (n=-1).
\end{cases}
\]
Thus, we have $Y(a, z)\vac\in \bfV(\clF)\dbr{z}$ and $Y(a, z)\vac|_{z=0}=a$ for each $a\in \bfV(\clF)$. 
\end{proof}

\begin{lem}\label{lem:transop}
Let $R\in \bbR_{>0}$. For $z\in D_R^{\times}(0)$ and $a, b\in \bfV(\clF)$, we have
\[
\sum_{n\in \bbZ}z^{-n-1}(T(a_{(n)}b))_R=
\pdd_z\mu_{z, 0}^R(a\otimes b)+\mu_{z, 0}^R(a\otimes Tb). 
\]
\end{lem}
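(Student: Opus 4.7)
The plan is to first establish the two-variable integrated identity
\[
\mu_{z+w, w}^R(a\otimes b) = \sum_{n\in \bbZ} z^{-n-1}\mu_w^R(a_{(n)}b),
\]
valid for $|w|$ small and $z$ in a suitable annulus around $0$, and then to differentiate in $w$ at $w=0$. To obtain the identity, start from the Laurent-coefficient formula
\[
(a_{(n)}b)_r = \frac{1}{2\pi\sqrt{-1}}\oint_{|z|=\rho}z^n\mu_{z,0}^r(a\otimes b)\,dz \qquad (\rho<r<R-|w|),
\]
apply the continuous linear map $\clF^{D_r(w)}_{D_R(0)}\circ\sigma_{(1,w),D_r(0)}$ and pull it inside the contour integral (valid by the $\LCS$-valued calculus of \cref{s:App}). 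Unpacking $\mu_{z,0}^r(a\otimes b)$ and using the equivariance axiom \cref{dfn:equivPFA}(iii) to commute $\sigma_{(1,w)}$ past the multiplication, combined with \cref{dfn:equivPFA}(i) (giving $\sigma_{(1,w)}\circ\sigma_{(1,z)}=\sigma_{(1,z+w)}$) and the first part of \cref{dfn:PFA}(i) (to collapse $\clF^{D_r(w)}_{D_R(0)}\circ\clF^{D_s(z+w),D_s(w)}_{D_r(w)}$ into $\clF^{D_s(z+w),D_s(w)}_{D_R(0)}$), the integrand is identified with $z^n\mu_{z+w,w}^R(a\otimes b)$. Hence $\mu_w^R(a_{(n)}b)$ is the $n$-th Laurent coefficient in $z$ of $\mu_{z+w,w}^R(a\otimes b)$, and the integrated identity follows from the uniqueness of Laurent expansion (\cref{thm:Laurent}).

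Next, differentiate both sides in $w$ at $w=0$. On the right, termwise differentiation (pulling $\partial_w$ inside the contour integrals defining each coefficient) together with the defining relation $(Tc)_R=\partial_w\mu_w^R(c)|_{w=0}$ produces $\sum_n z^{-n-1}(T(a_{(n)}b))_R$. On the left, write $F(z_1,z_2)\ceq\mu_{z_1,z_2}^R(a\otimes b)$, which is holomorphic on $\Conf_2(D_R(0))$ by \cref{dfn:holoPFA}(iv); the chain rule for $\LCS$-valued holomorphic functions then gives
\[
\partial_w F(z+w,w)\big|_{w=0}=\partial_{z_1}F(z,0)+\partial_{z_2}F(z,0),
\]
and the first summand equals $\partial_z\mu_{z,0}^R(a\otimes b)$ by definition.

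The main step is to identify $\partial_{z_2}F(z,0)=\mu_{z,0}^R(a\otimes Tb)$. For $|z_2|$ small and $s'>s+|z_2|$ with $D_s(z)\cap D_{s'}(0)=\varnothing$ and $D_s(z), D_{s'}(0)\subset D_R(0)$, the second part of \cref{dfn:PFA}(i) lets me refactor
\[
\mu_{z,z_2}^R(a\otimes b)=\clF^{D_s(z),D_{s'}(0)}_{D_R(0)}\bigl(\sigma_{(1,z),D_s(0)}\clF^0_{D_s(0)}(a)\otimes\mu_{z_2}^{s'}(b)\bigr).
\]
Pulling $\partial_{z_2}|_{z_2=0}$ through the continuous bilinear multiplication, the second tensor factor becomes $\partial_{z_2}\mu_{z_2}^{s'}(b)|_{z_2=0}=(Tb)_{s'}=\clF^0_{D_{s'}(0)}(Tb)$. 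A further application of the second part of \cref{dfn:PFA}(i) together with $\clF^0_{D_{s'}(0)}=\clF^{D_s(0)}_{D_{s'}(0)}\circ\clF^0_{D_s(0)}$ collapses $D_{s'}(0)$ back to $D_s(0)$, producing $\mu_{z,0}^R(a\otimes Tb)$. Assembling the pieces yields the desired identity.

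The main obstacle is the careful bookkeeping of radii throughout the factorization identities, ensuring that the required disks are disjoint and contained in $D_R(0)$ so that \cref{dfn:PFA}(i) and \cref{dfn:equivPFA}(iii) apply in the required form; the interchange of complex derivatives and contour integrals with continuous linear and bilinear maps valued in $\LCS$ is routine given the calculus developed in \cref{s:App}.
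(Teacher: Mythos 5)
Your proof is correct and follows the same overall strategy as the paper: establish the two-variable identity $\mu_{z+w,w}^R(a\otimes b)=\sum_{n}z^{-n-1}\mu_w^R(a_{(n)}b)$ for small $w$, differentiate in $w$ at $0$, apply the chain rule (\cref{lem:compder}) to the left-hand side, and identify the second partial derivative with $\mu_{z,0}^R(a\otimes Tb)$. Your route to the two-variable identity via contour-integral coefficient formulas and your refactoring of $\mu_{z,z_2}^R$ using an auxiliary larger disk are only cosmetic departures from the paper's direct Laurent-series manipulation and its computation of $\clF^{D_s(z),D_s(0)}_{D_R(0)}(\cdots\otimes\mu_w^s(b))=\mu_{z,w}^R(a\otimes b)$.
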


\begin{proof}
Take $r, s\in \bbR_{>0}$ satisfying 
\[
s<R, \quad 2r<|z|<s-r,
\]
then for $w\in \bbC$ such that $|w|<R-s$, 
\begin{align*}
\sum_{n\in \bbZ}z^{-n-1}\mu_w^R(a_{(n)}b)
&=
\sum_{n\in \bbZ}z^{-n-1}\clF^{D_s(w)}_{D_R(0)}\sigma_{(1, w), D_s(0)}(a_{(n)}b)_s=
\clF^{D_s(w)}_{D_R(0)}\sigma_{(1, w), D_s(0)}\mu_{z, 0}^s(a\otimes b)\\
&=
\clF^{D_s(w)}_{D_R(0)}\sigma_{(1, w), D_s(0)}\clF^{D_r(z), D_r(0)}_{D_s(0)}(\sigma_{(1, z), D_r(0)}\clF^0_{D_r(0)}(a)\otimes \clF^0_{D_r(0)}(b))\\
&=
\clF^{D_s(w)}_{D_R(0)}\clF^{D_r(z+w), D_r(w)}_{D_s(w)}(\sigma_{(1, z+w), D_r(0)}\clF^0_{D_r(0)}(a)\otimes\sigma_{(1, w), D_r(0)}\clF^0_{D_r(0)}(b))\\
&=
\mu_{z+w, w}^R(a\otimes b). 
\end{align*}
Thus, we have
\[
\sum_{n\in \bbZ}z^{-n-1}(T(a_{(n)}b))_R=
\pdd_w\mu_{z+w, w}^R(a\otimes b)|_{w=0}=
\pdd_z\mu_{z, 0}^R(a\otimes b)+\pdd_w\mu_{z, w}^R(a\otimes b)|_{w=0}, 
\]
where we used \cref{lem:compder} in the second equality. 

Next, we calculate $\mu_{z, 0}^R(a\otimes Tb)$. Take $s\in \bbR_{>0}$ satisfying $2s<|z|<R-s$, then
\begin{align*}
\mu_{z, 0}^R(a\otimes Tb)
&=
\clF^{D_s(z), D_s(0)}_{D_R(0)}(\sigma_{(1, z), D_s(0)}\clF^0_{D_s(0)}(a)\otimes (Tb)_s)\\
&=
\clF^{D_s(z), D_s(0)}_{D_R(0)}(\sigma_{(1, z), D_s(0)}\clF^0_{D_s(0)}(a)\otimes \pdd_w\mu^s_w(b)|_{w=0}). 
\end{align*}
Now, take $0<r<s$, then for $w\in \bbC$ such that $|w|<s-r$, 
\begin{align*}
&\clF^{D_s(z), D_s(0)}_{D_R(0)}(\sigma_{(1, z), D_s(0)}\clF^0_{D_s(0)}(a)\otimes \mu_w^s(b))\\
&=
\clF^{D_s(z), D_s(0)}_{D_R(0)}(\sigma_{(1, z), D_s(0)}\clF^0_{D_s(0)}(a)\otimes \clF^{D_r(w)}_{D_s(0)}\sigma_{(1, w), D_r(0)}\clF^0_{D_r(0)}(b))\\
&=
\clF^{D_s(z), D_s(0)}_{D_R(0)}(\sigma_{(1, z), D_s(0)}\clF^{D_r(0)}_{D_s(0)}\clF^0_{D_r(0)}(a)\otimes \clF^{D_r(w)}_{D_s(0)}\sigma_{(1, w), D_r(0)}\clF^0_{D_r(0)}(b))\\
&=
\clF^{D_s(z), D_s(0)}_{D_R(0)}(\clF^{D_r(z)}_{D_s(z)}\sigma_{(1, z), D_r(0)}\clF^0_{D_r(0)}(a)\otimes \clF^{D_r(w)}_{D_s(0)}\sigma_{(1, w), D_r(0)}\clF^0_{D_r(0)}(b))\\
&=
\mu^R_{z, w}(a\otimes b). 
\end{align*}
Thus, we have
\[
\mu_{z, 0}^R(a\otimes Tb)=\pdd_w\mu_{z, w}^R(a\otimes b)|_{w=0}, 
\]
which completes the proof. 
\end{proof}

\begin{proof}[Proof of translation axiom]
For $R\in \bbR_{>0}$ by \cref{lem:vac}, 
\[
\mu_z^R(\vac)=
\mu_{z, 0}^R(\vac\otimes \vac)=\clF^0_{D_R(0)}(\vac), 
\]
which implies $(T\vac)_R=0$. Thus, we have $T\vac=0$. Also, for $R\in \bbR_{>0}$ and $a, b\in \bfV(\clF)$, \cref{lem:transop} yields that
\[
(T(a_{(n)}b))_R=-n(a_{(n-1)}b)_R+(a_{(n)}Tb)_R.
\]
Thus, we have $[T, Y(a, z)]=\pdd_zY(a, z)$ for each $a\in \bfV(\clF)$. 
\end{proof}

\begin{lem}\label{lem:loc}
Let $a, b\in \bfV(\clF)$. There exists $N\in \bbN$ such that the function 
\[
D_R^{\times}(0)^2\bs\{z=w\}\to \clF(D_R(0)), \quad
(z, w)\mapsto (z-w)^N\mu_{z, w, 0}^R(a\otimes b\otimes c)
\]
for each $c\in \bfV(\clF)$ extends to the separately holomorphic function on $D_R^{\times}(0)^2$. 
\end{lem}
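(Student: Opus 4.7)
The plan is to pin down the singularity of $\mu^R_{z,w,0}(a\otimes b\otimes c)$ at the diagonal $\{z=w\}$ by combining the associativity of the factorization product with the Laurent expansion from \cref{thm:Laurent}. By decomposing $a,b$ into finitely many homogeneous pieces and taking maxima, I may assume $a,b$ are homogeneous. Since $a_{(n)}b\in\clF^0_{\Delta(a)+\Delta(b)-n-1}$ (as in \cref{lem:defnpro}) and condition (ii) of \cref{dfn:holoPFA} together with \cref{rmk:clF0DRiso} ensures $\clF^0_\Delta=0$ for $\Delta\ll0$, there exists $N\in\bbN$ with $a_{(n)}b=0$ for every $n\ge N$. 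This $N$ depends only on $a,b$ and not on $c$, and is the required integer.

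The key step is an associativity identity near the diagonal. Fix $w\in D_R^\times(0)$ and pick $s>0$ with $s<\min(|w|/2,\,R-|w|)$, so that $D_s(w)\subset D_R(0)\bs\{0\}$. Then for $z$ with $0<|z-w|<s$ and $r>0$ sufficiently small, $D_r(z)\sqcup D_r(w)\subset D_s(w)$ while $D_r(0)$ is disjoint from $D_s(w)$. Applying the first part of \cref{dfn:PFA} (i) together with the identity $\sigma_{(1,w)}\sigma_{(1,z-w)}=\sigma_{(1,z)}$ and condition (iii) of \cref{dfn:equivPFA}, I derive
\[
\mu^R_{z,w,0}(a\otimes b\otimes c)
=\clF^{D_s(w),D_r(0)}_{D_R(0)}\bigl(\sigma_{(1,w),D_s(0)}\,\mu^s_{z-w,0}(a\otimes b)\otimes\clF^0_{D_r(0)}(c)\bigr).
\]

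Now \cref{thm:Laurent} applied to the holomorphic $\clF(D_s(0))$-valued function $\zeta\mapsto\mu^s_{\zeta,0}(a\otimes b)$ on $D_s^\times(0)$ yields $\mu^s_{\zeta,0}(a\otimes b)=\sum_{n\in\bbZ}\zeta^{-n-1}(a_{(n)}b)_s$, and the tail $n\ge N$ vanishes by the choice of $N$. Hence $(z-w)^N\mu^s_{z-w,0}(a\otimes b)=\sum_{n<N}(z-w)^{N-n-1}(a_{(n)}b)_s$ is a convergent power series in $z-w$ with only non-negative exponents, so it extends holomorphically across $z=w$. Post-composing with the continuous linear map in the displayed identity preserves holomorphicity (the LCS-valued holomorphic calculus developed in \cref{s:App} is stable under continuous linear maps), and outside a neighborhood of the diagonal the holomorphicity is provided directly by condition (iv) of \cref{dfn:holoPFA}. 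This establishes separate holomorphicity in $z$. For the $w$-variable I invoke commutativity of the factorization product (\cref{dfn:PFA} (ii), extended to three factors as in \cref{rmk:PFA}) to write $\mu^R_{z,w,0}(a\otimes b\otimes c)=\mu^R_{w,z,0}(b\otimes a\otimes c)$ and then run the same argument with the roles of $(a,z)$ and $(b,w)$ swapped; the bound $N$ remains valid because $\Delta(a)+\Delta(b)$ is symmetric. The main obstacle is the bookkeeping of the associativity identity — ensuring that all disk-size constraints can be simultaneously satisfied and that the equivariance axioms are invoked in the correct order.
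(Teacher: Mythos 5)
Your proof is correct and takes essentially the same route as the paper's: your associativity--equivariance identity is precisely the ``direct calculation'' the paper invokes without writing out (with the roles of $z$ and $w$ exchanged), and truncating the Laurent expansion of $\mu^s_{\zeta,0}(a\otimes b)$ is the same mechanism the paper packages via \cref{thm:remov}. The paper chooses $N$ up front so that both $a_{(n)}b$ and $b_{(n)}a$ vanish for $n\ge N$, which is slightly cleaner than your after-the-fact appeal to the symmetry of $\Delta(a)+\Delta(b)$, but this is cosmetic.
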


\begin{proof}
Take $N\in \bbN$ such that $a_{(n)}b=0$ and $b_{(n)}a=0$ for $n\ge N$. 
Fix $z\in D_R^{\times}(0)$. Take $r\in \bbR_{>0}$ such that $2r<|z|<R-r$, then for $w\in D_r^{\times}(z)$, a direct calculation shows that 
\[
\sum_{n\in \bbZ}(w-z)^{-n-1}\mu_{z, 0}^R(b_{(n)}a\otimes c)=
\mu_{z, w, 0}^R(a\otimes b\otimes c). 
\]
Thus, by \cref{thm:remov}, the point $w=z$ is removable singularity of the function $D_r^{\times}(z)\to \clF(D_R(0))$, $w\mapsto (z-w)^N\mu_{z, w, 0}^R(a\otimes b\otimes c)$. Similarly, one can show that for fixed $w\in D_R^{\times}(0)$, the point $z=w$ is removable singularity of the function $z\mapsto (z-w)^N\mu_{z, w, 0}^R(a\otimes b\otimes c)$. 
\end{proof}

\begin{proof}[Proof of locality]
Let $a, b\in \bfV(\clF)$, and $N\in \bbN$ be as in \cref{lem:loc}. It is enough to show that 
\begin{equation}\label{eq:proofloc}
\sum_{k=0}^N(-1)^k\binom{N}{k}a_{(m+N-k)}(b_{(n+k)}c)=
\sum_{k=0}^N(-1)^k\binom{N}{k}b_{(n+k)}(a_{(m+N-k)}c).
\end{equation}
for $m, n\in \bbZ$ and $c\in \bfV(\clF)$. First, a direct calculation shows the following (1) and (2): 
\begin{enumerate}
\item
For $R\in \bbR_{>0}$ and $z\in D_R^{\times}(0)$, take $r\in \bbR_{>0}$ such that $2r<|z|<R-r$. Then for $w\in D_r^{\times}(0)$, 
\[
\sum_{n\in \bbZ}w^{-n-1}\mu_{z, 0}^R(a\otimes b_{(n)}c)=
\mu_{z, w, 0}^R(a\otimes b\otimes c). 
\]

\item 
For $R\in \bbR_{>0}$ and $w\in D_R^{\times}(0)$, take $r\in \bbR_{>0}$ such that $2r<|w|<R-r$, then for $z\in D_r^{\times}(0)$, 
\[
\sum_{m\in \bbZ}z^{-m-1}\mu_{w, 0}^R(b\otimes a_{(m)}c)=
\mu_{z, w, 0}^R(a\otimes b\otimes c). 
\]
\end{enumerate}

For $R\in \bbR_{>0}$, by \cref{lem:loc}, we have
\[
\Res_{z=0}\Res_{w=0}z^mw^n(z-w)^N\mu_{z, w, 0}^R(a\otimes b\otimes c)=
\Res_{w=0}\Res_{z=0}z^mw^n(z-w)^N\mu_{z, w, 0}^R(a\otimes b\otimes c), 
\]
which together with above (1) and (2) implies that 
\[
\sum_{k=0}^N(-1)^k\binom{N}{k}(a_{(m+N-k)}(b_{(n+k)}c))_R=
\sum_{k=0}^N(-1)^k\binom{N}{k}(b_{(n+k)}(a_{(m+N-k)}c))_R.
\]
Thus, \eqref{eq:proofloc} follows. 
\end{proof}

Now the proof of \cref{thm:PFAVA} is complete. 

\medskip

Let $\clF, \clG\colon \frU_{\bbC}\to \LCS$ be holomorphic prefactorization algebras. For a morphism $\varphi\colon \clF\to \clG$ of holomorphic prefactorization algebras, we have a continuous linear map $\varphi^0\colon \clF^0\to \clG^0$. Since $\varphi^{0}(\clF^0_\Delta)\subset \clG^0_\Delta$ for all $\Delta\in \bbZ$, we get a linear map
\[
\bfV(\varphi)\ceq \varphi^0|_{\bfV(\clF)}\colon \bfV(\clF)\to \bfV(\clG). 
\]

\begin{prp}
For a morphism $\varphi\colon \clF\to \clG$ of holomorphic prefactorization algebras, the linear map $\bfV(\varphi)\colon \bfV(\clF)\to \bfV(\clG)$ is a morphism of $\bbZ$-graded vertex algebras. Hence, the map $\clF\mapsto \bfV(\clF)$ gives rise to the functor
\[
\bfV\colon \HolPFA\to \bgrVA. 
\]
\end{prp}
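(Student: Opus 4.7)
The plan is to first verify a single compatibility lemma at the level of the auxiliary maps $\mu^R_{z_1,\ldots,z_l}$, and then read off the three vertex algebra conditions (vacuum, translation, state-field) from it. Concretely, fix $R\in \bbR_{>0}$ and $(z_1,\ldots,z_l)\in \Conf_l(D_R(0))$; I would establish the key equality
\[
\varphi_{D_R(0)}\circ \mu^{R,\clF}_{z_1,\ldots,z_l}=\mu^{R,\clG}_{z_1,\ldots,z_l}\circ (\varphi^0)^{\otimes l}
\]
by unwinding \cref{dfn:muzR}: the morphism $\varphi$ is compatible with the multiplications $\clF^{D_r(z_i),\ldots}_{D_R(0)}$ by \cref{dfn:morPFA}~(i), with the translation structure maps $\sigma_{(1,z_i),D_r(0)}$ by \cref{dfn:morequivPFA}, and with the costalk morphisms $\clF^0_{D_r(0)}$ via the defining relation $\clG^0_U\circ \varphi^0=\varphi_U\circ \clF^0_U$. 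Stringing these three compatibilities together in the defining composition of $\mu^R_{z_1,\ldots,z_l}$ gives the displayed identity.

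Given this, the vacuum condition follows immediately: applying $\varphi^0$ componentwise to $\vac_\clF=\{\clF^\varnothing_{D_R(0)}(1_\clF)\}_R$ and using $\varphi_{D_R(0)}\circ \clF^\varnothing_{D_R(0)}=\clG^\varnothing_{D_R(0)}\circ \varphi_\varnothing$ together with $\varphi_\varnothing(1_\clF)=1_\clG$ (\cref{dfn:morPFA}~(ii)) yields $\bfV(\varphi)(\vac_\clF)=\vac_\clG$. For the translation operator, the $l=1$ case of the key equality gives $\varphi_{D_R(0)}\mu^{R,\clF}_z(a)=\mu^{R,\clG}_z(\varphi^0 a)$ as a function of $z\in D_R(0)$; differentiating at $z=0$ and using that the continuous linear map $\varphi_{D_R(0)}$ commutes with the derivative of a holomorphic $\LCS$-valued function (this is the standard fact developed in \cref{s:App}), we obtain $(\varphi_{D_R(0)}(T_\clF a)_R)=(T_\clG \varphi^0 a)_R$ for each $R$, so $\bfV(\varphi)\circ T_\clF=T_\clG\circ \bfV(\varphi)$.

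For the state-field correspondence, apply $\varphi_{D_R(0)}$ to the Laurent expansion
\[
\mu^{R,\clF}_{z,0}(a\otimes b)=\sum_{n\in \bbZ}z^{-n-1}(a_{(n)}b)_R.
\]
The left-hand side equals $\mu^{R,\clG}_{z,0}(\varphi^0 a\otimes \varphi^0 b)$ by the $l=2$ case of the key equality, and on the right-hand side continuity lets us commute $\varphi_{D_R(0)}$ past the Laurent coefficients (each coefficient is a contour integral, which commutes with continuous linear maps by the same appendix results). Uniqueness of Laurent expansion (\cref{thm:Laurent}) then forces $\varphi_{D_R(0)}(a_{(n)}b)_R=((\varphi^0 a)_{(n)}(\varphi^0 b))_R$ for every $R$ and $n$, i.e.\ $\bfV(\varphi)(a_{(n)}b)=\bfV(\varphi)(a)_{(n)}\bfV(\varphi)(b)$, which is the required identity $\bfV(\varphi)(Y_\clF(a,z)b)=Y_\clG(\bfV(\varphi)(a),z)\bfV(\varphi)(b)$. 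Finally, the grading is preserved because $\varphi$ intertwines the $S^1$-actions, so $\varphi^0(\clF^0_\Delta)\subset \clG^0_\Delta$; functoriality $\bfV(\psi\circ \varphi)=\bfV(\psi)\circ \bfV(\varphi)$ and $\bfV(\id)=\id$ are obvious from the pointwise formula $\bfV(\varphi)=\varphi^0|_{\bfV(\clF)}$.

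The main obstacle I anticipate is the careful justification that the continuous linear map $\varphi_{D_R(0)}$ commutes both with complex differentiation and with the contour-integral extraction of Laurent coefficients for $\LCS$-valued holomorphic functions; these are precisely the statements that the appendix \cref{s:App} is designed to provide, so the proof should reduce to citing those results once the key commutation lemma with $\mu^R_{z_1,\ldots,z_l}$ has been verified.
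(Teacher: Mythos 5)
Your proposal matches the paper's proof: the paper likewise reduces everything to the compatibility $\varphi_{D_R(0)}\circ\mu^{R,\clF}_{z}(a)=\mu^{R,\clG}_{z}(\varphi^0 a)$ (and its two-point version), then verifies the vacuum, translation, and state-field conditions exactly as you describe, using continuity of $\varphi_{D_R(0)}$ to commute it past the derivative and the Laurent coefficients, with uniqueness of the Laurent expansion closing the state-field step. You have merely been more explicit in isolating the general $l$-point compatibility as a standalone lemma, which the paper leaves to "a direct calculation."
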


\begin{proof}
It is enough to show that $\bfV(\varphi)$ is a morphism of vertex algebras. 
\begin{itemize}
\item 
$\bfV(\varphi)$ preserves the vacuums: 
we have
\[
\bfV(\varphi)(\vac_{\clF})=
\{\varphi_{D_R(z)}\clF^{\varnothing}_{D_R(0)}(1_\clF)\}_{R\in \bbR_{>0}}=
\{\clG^{\varnothing}_{D_R(0)}\varphi_{\varnothing}(1_\clF)\}_{R\in \bbR_{>0}}=
\{\clG^{\varnothing}_{D_R(0)}(1_{\clG})\}=
\vac_{\clG}. 
\]

\item 
$\bfV(\varphi)$ commutes with the translation operators: Let $a\in \bfV(\clF)$. For $R\in \bbR_{>0}$, since a direct calculation yields that 
\[
\varphi_{D_R(0)}\mu^R_z(a)=\mu^R_z(\varphi^0(a)) \quad (z\in D_R(0)), 
\]
we have 
\[
\varphi_{D_R(0)}(Ta)_R=
\pdd_z\varphi_{D_R(0)}\mu^R_z(a)|_{z=0}=
\pdd_z\mu_z^R(\varphi^0(a))=(T\varphi^0(a))_R. 
\]
Thus $\bfV(\varphi)(Ta)=T\bfV(\varphi)(a)$ follows. 

\item 
$\bfV(\varphi)$ commutes with the state-field correspondences: Let $a, b\in \bfV(\varphi)$. For $R\in \bbR_{>0}$, since a direct calculation yields that 
\begin{align*}
\sum_{n\in \bbZ}z^{-n-1}\varphi_{D_R(0)}(a_{(n)}b)_R
&=
\varphi_{D_R(0)}\mu^R_{z, 0}(a\otimes b)=
\mu^R_{z, 0}(\varphi^0(a)\otimes \varphi^0(b))\\
&=
\sum_{n\in \bbZ}z^{-n-1}(\varphi^0(a)_{(n)}\varphi^0(b))_R \quad (z\in D_R^{\times}(0)),
\end{align*}
we have 
\[
\varphi_{D_R(0)}(a_{(n)}b)_R=
(\varphi^0(a)_{(n)}\varphi^0(b))_R \quad (n\in \bbZ). 
\]
Thus $\bfV(\varphi)(a_{(n)}b)=\bfV(\varphi)(a)_{(n)}\bfV(\varphi)(b)$ follows for all $n\in \bbZ$. 
\qedhere
\end{itemize}
\end{proof}

\subsection{Commutative vertex algebras from locally constant prefactorization algebras}\label{ss:constcomVA}

In this subsection we show that if the holomorphic prefactorization algebra $\clF\colon \frU_{\bbC}\to \LCS$ is locally constant (\cref{dfn:LPFA}), then the vertex algebra $\bfV(\clF)$ is commutative. 

Let $\clF\colon \frU_{\bbC}\to \LCS$ be a locally constant holomorphic prefactorization algebra. By applying the functor $(-)(\bbC)\colon \LPFA\to \CAlg_{\bbC}$ constructed in \cref{ss:constcomalg}, we have a commutative algebra $\clF(\bbC)$. For $R\in \bbR_{>0}$, since $\clF^{D_R(0)}_{\bbC}\colon \clF(D_R(0))\to \clF(\bbC)$ is a linear isomorphism, the linear space $\clF(D_R(0))$ inherits the structure of a commutative algebra from $\clF(\bbC)$. (See \cref{rmk:clFDRcalg}.) 

\begin{lem}\label{lem:muz0muz}
Let $R\in \bbR_{>0}$. For $z\in D_R(0)^{\times}$ and $a, b\in \clF^0$, we have 
\[
\mu^R_{z, 0}(a\otimes b)=
\mu^R_z(a)\cdot \clF^0_{D_R(0)}(b). 
\]
\end{lem}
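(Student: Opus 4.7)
The plan is to unravel both sides of the identity using the explicit formula for the multiplication on $\clF(D_R(0))$ given in \cref{rmk:clFDRcalg}, noting first that the derivation of that formula carries over from $\Lin$ to $\LCS$ without change, since it invokes only the isomorphisms provided by local constancy.

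First I will choose $r\in \bbR_{>0}$ satisfying $2r<|z|<R-r$, so that $D_r(z)\sqcup D_r(0)\subset D_R(0)$ and the representations of $\mu^R_z(a)$ and $\mu^R_{z,0}(a\otimes b)$ in \cref{rmk:expremuzR} both apply with this common $r$. Local constancy then guarantees that $\clF^{D_r(z)}_{D_R(0)}$ and $\clF^{D_r(0)}_{D_R(0)}$ are isomorphisms in $\LCS$, and the multiplication on $\clF(D_R(0))$ transported from $\clF(\bbC)$ through $\clF^{D_R(0)}_{\bbC}$ admits the description
\[
x\cdot y=\clF^{D_r(z),D_r(0)}_{D_R(0)}\bigl((\clF^{D_r(z)}_{D_R(0)})^{-1}(x)\otimes (\clF^{D_r(0)}_{D_R(0)})^{-1}(y)\bigr).
\]

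Next I will substitute $x=\mu^R_z(a)$ and $y=\clF^0_{D_R(0)}(b)$ and simplify each tensor factor. From the formula $\mu^R_z(a)=\clF^{D_r(z)}_{D_R(0)}\sigma_{(1,z),D_r(0)}\clF^0_{D_r(0)}(a)$ in \cref{rmk:expremuzR}(1), the first factor collapses to $\sigma_{(1,z),D_r(0)}\clF^0_{D_r(0)}(a)$. The transitivity $\clF^0_{D_R(0)}=\clF^{D_r(0)}_{D_R(0)}\circ\clF^0_{D_r(0)}$ of the costalk projections collapses the second factor to $\clF^0_{D_r(0)}(b)$. Substituting back, the right-hand side becomes $\clF^{D_r(z),D_r(0)}_{D_R(0)}(\sigma_{(1,z),D_r(0)}\clF^0_{D_r(0)}(a)\otimes \clF^0_{D_r(0)}(b))$, which by \cref{rmk:expremuzR}(2) is exactly $\mu^R_{z,0}(a\otimes b)$.

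The argument is essentially bookkeeping through definitions and presents no serious obstacle. The only point that merits a second look is the validity of \cref{rmk:clFDRcalg} in the $\LCS$ setting, but since its proof depends only on the existence of inverses from local constancy and not on any specific feature of $\Lin$, the transfer is immediate.
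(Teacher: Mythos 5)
Your proof is correct and takes essentially the same approach as the paper's: both choose $r$ with $2r<|z|<R-r$, expand $\mu^R_{z,0}(a\otimes b)$ via \cref{rmk:expremuzR}, and match it against the multiplication formula of \cref{rmk:clFDRcalg} using $\mu^R_z(a)=\clF^{D_r(z)}_{D_R(0)}\sigma_{(1,z),D_r(0)}\clF^0_{D_r(0)}(a)$ and the costalk transitivity $\clF^0_{D_R(0)}=\clF^{D_r(0)}_{D_R(0)}\clF^0_{D_r(0)}$. You merely read the chain of equalities from the right-hand side to the left, whereas the paper reads it from left to right; this is a cosmetic difference, and your remark that \cref{rmk:clFDRcalg} transfers to the $\LCS$ setting (since the multiplication is defined purely from the linear isomorphisms supplied by local constancy) is the correct disposal of the one small point worth checking.
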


\begin{proof}
Take $r\in \bbR_{>0}$ such that $2r<|z|<R-r$, then 
\begin{align*}
\mu^R_{z, 0}(a\otimes b)
&=
\clF^{D_r(z), D_r(0)}_{D_R(0)}\bigl(\sigma_{(1, z), D_r(0)}\clF^0_{D_r(0)}(a)\otimes \clF^0_{D_r(0)}(b)\bigr)\\
&=
\clF^{D_r(z), D_r(0)}_{D_R(0)}\bigl((\clF^{D_r(z)}_{D_R(0)})^{-1}\mu^R_z(a)\otimes (\clF^{D_r(0)}_{D_R(0)})^{-1}\clF^0_{D_R(0)}(b)\bigr)\\
&=
\mu^R_z(a)\cdot \clF^0_{D_R(0)}(b). 
\qedhere
\end{align*}
\end{proof}

\begin{prp}\label{prp:LPFAcomVA}
If $\clF\colon \frU_{\bbC}\to \LCS$ is a locally constant holomorphic prefactorization algebra, then the vertex algebra $\bfV(\clF)$ is commutative. 
\end{prp}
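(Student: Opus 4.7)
The plan is to show, in view of \cref{eg:comVA}, that $a_{(n)}b=0$ for all $a,b\in\bfV(\clF)$ and $n\ge 0$, equivalently that $Y(a,z)b\in\bfV(\clF)\dbr{z}$. Since these assertions are linear, I may fix homogeneous $a,b\in\clF^0$ and $R\in\bbR_{>0}$. By the very construction of $a_{(n)}b$ as a Laurent coefficient of the function $z\mapsto \mu^R_{z,0}(a\otimes b)$ on $D_R^{\times}(0)$, it suffices to show that this function admits a holomorphic extension across the origin to the full disk $D_R(0)$; then the coefficients of negative powers of $z$ automatically vanish, giving $(a_{(n)}b)_R=0$ for $n\ge 0$.

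To produce such an extension I would invoke the just-proved \cref{lem:muz0muz}, which gives
\[
\mu^R_{z,0}(a\otimes b)=\mu^R_z(a)\cdot\clF^0_{D_R(0)}(b)\quad(z\in D_R^{\times}(0)),
\]
where the product is the commutative multiplication on $\clF(D_R(0))$ inherited from $\clF(\bbC)$ as described in \cref{rmk:clFDRcalg}. The function $z\mapsto\mu^R_z(a)$ is holomorphic on the entire disk $D_R(0)$ by \cref{dfn:holoPFA}(iv) applied with $l=1$, since $\Conf_1(D_R(0))=D_R(0)$ already contains the origin. Moreover, because $\clF$ is locally constant with values in $\LCS$, the structure maps $\clF^{D_r(z_i)}_{D_R(0)}$ appearing in the formula for the inherited product are isomorphisms in $\LCS$, hence bicontinuous; composing their continuous inverses with the continuous multiplication $\clF^{D_r(z_1),D_r(z_2)}_{D_R(0)}$ shows that the inherited product is a continuous bilinear map on $\clF(D_R(0))$. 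Consequently, right-multiplication by the fixed element $\clF^0_{D_R(0)}(b)$ is a continuous linear endomorphism, so the right-hand side above is holomorphic in $z$ on the full disk $D_R(0)$, producing the desired extension.

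The remaining step is routine: by uniqueness of Laurent coefficients (\cref{thm:Laurent}) together with Cauchy's theorem applied to the holomorphically extended function,
\[
(a_{(n)}b)_R=\frac{1}{2\pi\sqrt{-1}}\int_C z^n\mu^R_{z,0}(a\otimes b)\,dz=0\quad(n\ge 0),
\]
where $C$ is any positively oriented circle in $D_R^{\times}(0)$. Since $R\in\bbR_{>0}$ is arbitrary and the family $\{(a_{(n)}b)_R\}_R$ determines $a_{(n)}b\in\clF^0$, we conclude $a_{(n)}b=0$ for all $n\ge 0$, so $\bfV(\clF)$ is a commutative vertex algebra.

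The only subtle point is the continuity of the inherited multiplication on $\clF(D_R(0))$, which is not automatic for a general $\LCS$-valued prefactorization algebra but holds here because isomorphisms in $\LCS$ are bicontinuous; beyond this, the argument reduces to an application of \cref{lem:muz0muz} combined with the Laurent-series uniqueness developed in \cref{s:App}, so no essential obstacle is anticipated.
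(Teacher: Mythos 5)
Your proof is correct and follows essentially the same route as the paper: factor $\mu^R_{z,0}(a\otimes b)=\mu^R_z(a)\cdot\clF^0_{D_R(0)}(b)$ via \cref{lem:muz0muz}, observe that $z\mapsto\mu^R_z(a)$ is holomorphic across the origin by \cref{dfn:holoPFA}(iv) with $l=1$, and conclude that the negative Laurent coefficients vanish. The paper compresses the last step into a citation of \cref{thm:remov} (the removable-singularity criterion), whereas you invoke \cref{thm:Laurent} plus Cauchy's theorem, but these are functionally the same argument; you also usefully make explicit the continuity of the inherited multiplication on $\clF(D_R(0))$ (coming from local constancy being an isomorphism condition in $\LCS$ and the universal property of $\otimes_\pi$), a point the paper leaves implicit but which is genuinely needed to pass the limit through the product.
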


\begin{proof}
It is enough to show that $a_{(n)}b=0$ for $a, b\in \bfV(\clF)$ and $n\in \bbN$, but this follows from \cref{lem:muz0muz} and \cref{thm:remov}. 
\end{proof}

\begin{rmk}\label{rmk:bfVcalgstr}
Let $\clF\colon \frU_{\bbC}\to \LCS$ be a locally constant holomorphic prefactorization algebra. Notice that by \cref{lem:muz0muz}, the multiplication of the commutative vertex algebra $\bfV(\clF)$ is equal to
\[
a\cdot b=\{(a_{(-1)}b)_R\}_{R\in \bbR_{>0}}=
\bigl\{\clF^0_{D_R(0)}(a)\cdot \clF^0_{D_R(0)}(b)\bigr\}_{R\in \bbR_{>0}}. 
\]
\end{rmk}

\subsection{Locally constant factorization algebras from commutative vertex algebras}\label{ss:constLocFA}

In this subsection we construct a locally constant holomorphic factorization algebra from a commutative vertex algebra (\cref{prp:constLHFA}, \cref{prp:VbfVisom}), and discuss the relationship with the jet algebras of commutative algebras (\cref{prp:bfFjet}). 
We continue to use the notation $\frB$ defined in \eqref{eq:basisofC}. Note that $\frB$ is an open basis of $\bbC$ satisfying: 
\begin{clist}
\item 
$\varnothing\in \frB$. 

\item 
For $L, M\in \frB$ such that $L\cap M=\varnothing$, we have $L\sqcup M\in \frB$. 

\item 
For $a\in S^1\ltimes \bbC$ and $L\in \frB$, we have $aL\in \frB$. 
\end{clist}

\medskip

Let $V=\bigoplus_{\Delta\in \bbZ}V_\Delta$ be a commutative $\bbZ$-graded vertex algebra such that 
$V_\Delta=0$ for $\Delta\ll0$. 
We denote $\ol{V}\ceq \prod_{\Delta\in \bbZ}V_\Delta$ and by $\pi_\Delta\colon \ol{V}\to V_\Delta$ the projection. We naturally regard $V\subset \ol{V}$. 

As we explained in \cref{eg:comVA}, the commutative vertex algebra $V$ is a commutative $\bbC$-algebra by the multiplication 
\[
V\times V\to V, \quad (a, b)\mapsto ab\ceq a_{(-1)}b
\]
and the unit $\vac$. 
The condition $V_\Delta=0$ ($\Delta\ll 0$) guarantees that one can define a multiplication on $\ol{V}$ by
\[
\ol{V}\times \ol{V}\to \ol{V}, \quad
(a, b)\mapsto
ab\ceq \bigl\{\sum_{\Delta'\in \bbZ}\pi_{\Delta'}(a)\pi_{\Delta-\Delta'}(b)\bigr\}_{\Delta\in \bbZ}. 
\]
The linear space $\ol{V}$ has the structure of a commutative $\bbC$-algebra by this multiplication and the unit $\vac$. Thus, by applying the functor $\bfF^{\loc}\colon \CAlg_{\bbC}\to \LFA$ constructed in \cref{ss:constLCFA}, we have a locally constant factorization algebra $\bfF_{\ol{V}}^{\loc}\colon \frU_{\bbC}\to \Lin$. 

To define an $S^1\ltimes \bbC$-equivariant structure on $\bfF_{\ol{V}}^{\loc}$, we use the translation operator $T\colon V\to V$. Recall that $T$ is a derivation with respect to the multiplication on $V$. 

For $q\in S^1$, we have a linear map
\[
q^{L_0}\colon \ol{V}\to \ol{V}, \quad
a\mapsto \{q^\Delta \pi_\Delta(a)\}_{\Delta\in \bbZ}. 
\]
Also, for $z\in \bbC$, we have a linear map
\[
e^{zT}\colon \ol{V}\to \ol{V}, \quad 
a\mapsto
\bigl\{\sum_{n\in \bbN}\frac{z^n}{n!}T^n\pi_{\Delta-n}(a)\bigr\}_{\Delta\in \bbZ}. 
\]
Note that both $q^{L_0}$ and $e^{zT}$ are morphisms of commutative $\bbC$-algebras. 

\begin{dfn}
Let $(q, z)\in S^1\ltimes \bbC$. For $L=D_{r_1}(z_1)\sqcup \cdots \sqcup D_{r_l}(z_l)\in \frB$, define
\[
\sigma_{(q, z), L}\colon F_{\ol{V}}^{\loc}(L)\to F_{\ol{V}}^{\loc}((q, z)L), \quad
a_1\otimes \cdots \otimes a_l\mapsto
e^{zT}q^{L_0}a_1\otimes \cdots \otimes e^{zT}q^{L_0}a_l, 
\]
with the convention 
\[
\sigma_{(q, z), \varnothing}\ceq \id_{\bbC}\colon F_{\ol{V}}^{\loc}(\varnothing)\to F_{\ol{V}}^{\loc}((q, z)\varnothing). 
\]
for $l=0$. 
\end{dfn}

\begin{lem}\label{lem:FolVequiv}
The prefactorization algebra $F_{\ol{V}}^{\loc}\colon \frB\to \Lin$ is an $S^1\ltimes \bbC$-equivariant prefactorization algebra by the structure morphism $\sigma_{(q, z)}\ceq \{\sigma_{(q, z), L}\}_{L\in \frB}\colon F_{\ol{V}}^{\loc}\to (q, z)F_{\ol{V}}^{\loc}$. 
\end{lem}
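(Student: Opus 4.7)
The plan is to verify the four axioms of \cref{dfn:equivPFA} together with the precosheaf compatibility, all of which will reduce to two algebraic facts: that each $e^{zT}q^{L_0}\colon \ol{V}\to \ol{V}$ is a morphism of commutative $\bbC$-algebras, and that the group law $(q_1,z_1)(q_2,z_2)=(q_1q_2,z_1+q_1z_2)$ in $S^1\ltimes\bbC$ is mirrored by the identity
\[
(e^{z_1T}q_1^{L_0})\circ(e^{z_2T}q_2^{L_0})=e^{(z_1+q_1z_2)T}(q_1q_2)^{L_0}
\]
on $\ol{V}$. To prove this latter identity I would verify $q^{L_0}e^{zT}=e^{qzT}q^{L_0}$ on each graded piece $V_\Delta$ by the direct computation $q^{L_0}\sum_n \frac{z^n}{n!}T^na=\sum_n\frac{z^n}{n!}q^{\Delta+n}T^na=q^\Delta\sum_n\frac{(qz)^n}{n!}T^na$, using that $T$ raises weight by one.

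First I would check that $\sigma_{(q,z)}\colon F_{\ol V}^{\loc}\to (q,z)F_{\ol V}^{\loc}$ is a morphism of precosheaves. Given $L,M\in\frB$ with $L\subset M$, writing $L=\bigsqcup_i D_{r_i}(z_i)$ and $M=\bigsqcup_j D_{R_j}(w_j)$ with decomposition $I_1\sqcup\cdots\sqcup I_m=[l]$ controlling the restriction, note that $(q,z)L\subset (q,z)M$ is governed by exactly the same index decomposition. Because $e^{zT}q^{L_0}$ is a morphism of commutative algebras on $\ol{V}$, it commutes with the products $\prod_{i\in I_j}(-)$ appearing in the definition of $(F_{\ol V}^{\loc})^L_M$, so the required square commutes. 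The empty-set cases, where $\sigma_{(q,z),\varnothing}=\id_\bbC$ and $(F_{\ol V}^{\loc})^{\varnothing}_M$ sends $1$ to $\otimes 1_{\ol V}$, are handled by the fact that $e^{zT}q^{L_0}(1_{\ol V})=1_{\ol V}$.

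Next I would verify the four axioms (i)--(iv) of \cref{dfn:equivPFA}. Axiom (i) follows directly from the group-law identity above applied factorwise on each tensor slot of $F_{\ol V}^{\loc}(L)=\ol V^{\otimes l}$; axiom (ii) is immediate since $e^{0\cdot T}\cdot 1^{L_0}=\id_{\ol V}$. For axiom (iii), the multiplication $(F_{\ol V}^{\loc})^{L,M}_W=(F_{\ol V}^{\loc})^{L\sqcup M}_W$ is a special case of the restriction map on the precosheaf together with the tautological identification $\sigma_{(q,z),L\sqcup M}=\sigma_{(q,z),L}\otimes\sigma_{(q,z),M}$ (which holds because the endomorphism $e^{zT}q^{L_0}$ is applied separately to each factor); so (iii) follows from the precosheaf compatibility already established. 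Axiom (iv) is trivial by $\sigma_{(q,z),\varnothing}=\id_\bbC$ and $\eta=\id_\bbC$.

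The only nontrivial point is bookkeeping for the group law and the interaction of $q^{L_0}$ with $e^{zT}$; every other step is a direct check that either reduces to the fact that $e^{zT}q^{L_0}$ is an algebra homomorphism of $\ol{V}$, or is tautological from the definition $\sigma_{(q,z),L}=(e^{zT}q^{L_0})^{\otimes l}$ and the convention at $\varnothing$. No obstacle beyond this is expected.
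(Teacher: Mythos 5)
Your proposal is correct and follows essentially the same route as the paper: you reduce the precosheaf compatibility and axiom (iii) to the fact that $e^{zT}q^{L_0}$ is a commutative-algebra endomorphism of $\ol{V}$, and you derive axiom (i) from the commutation $q^{L_0}e^{zT}=e^{qzT}q^{L_0}$, which is exactly the step the paper performs in the displayed chain of equalities. The only difference is that you spell out the graded-piece verification of this commutation relation, which the paper leaves implicit.
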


\begin{proof}
First, for $(q, z)\in S^1\ltimes \bbC$, it is easy to see that $\sigma_{(q, z)}\colon F_{\ol{V}}^{\loc}\to (q, z)F_{\ol{V}}^{\loc}$ is a morphism of precosheaves since both $q^{L_0}, e^{zT}\colon \ol{V}\to \ol{V}$ are morphisms of commutative $\bbC$-algebras. We check the conditions (i)-(iv) in \cref{dfn:equivPFA}. 
\begin{clist}
\item
Let $(q, z), (q', z')\in S^1\ltimes \bbC$ and $L=D_{r_1}(z_1)\sqcup \cdots \sqcup D_{r_l}(z_l)\in \frB$. For $a=a_1\otimes \cdots \otimes a_l\in F_{\ol{V}}^{\loc}(L)$, 
\begin{align*}
\sigma_{(q, z), (q', z')L}\circ \sigma_{(q', z'), L}(a)
&=
e^{zT}q^{L_0}e^{z'T}q'^{L_0}a_1\otimes \cdots \otimes e^{zT}q^{L_0}e^{z'T}q'^{L_0}a_l\\
&=
e^{zT}e^{qz'T}q^{L_0}q'^{L_0}a_1\otimes \cdots \otimes e^{zT}e^{qz'T}q^{L_0}q'^{L_0}a_l\\
&=
\sigma_{(q, z)(q', z'), L}(a). 
\end{align*}

\item
This is clear by the definition of $\sigma_{(q, z), L}$. 

\item 
For $(q, z)\in S^1\ltimes \bbC$ and $L, M, N\in \frB$ such that $L\sqcup M\subset N$, since $\sigma_{(q, z)}$ is a morphism of precosheaves, 
\begin{align*}
\sigma_{(q, z), N}\circ (F_{\ol{V}}^{\loc})^{L, M}_N
&=
\sigma_{(q, z), N}\circ (F_{\ol{V}}^{\loc})^{L\sqcup M}_N=
(F_{\ol{V}}^{\loc})^{(q, z)(L\sqcup M)}_{(q, z)N}\circ \sigma_{(q, z), L\sqcup M}\\
&=
(F_{\ol{V}}^{\loc})^{(q, z)L, (q, z)M}_{(q, z)N}\circ (\sigma_{(q, z), L}\otimes \sigma_{(q, z), M}). 
\end{align*}

\item 
Clear by the convention $\sigma_{(q, z), \varnothing}=\id_{\bbC}$. 
\qedhere
\end{clist}
\end{proof}

By \cref{lem:FolVequiv} and the construction in \cref{ss:exten}, the factorization algebra $\bfF_{\ol{V}}^{\loc}$ has the $S^1\ltimes \bbC$-equivariant structure. We denote it by $\wt{\sigma}_{(q, z)}\colon \bfF_{\ol{V}}^{\loc}\to (q, z)\bfF_{\ol{V}}^{\loc}$. 

\begin{lem}\label{lem:isoVDbfFD}
For $R\in \bbR_{>0}$ and $\Delta\in \bbZ$, we have $(\wt{F}^{\loc}_{\ol{V}})^{D_R(0)}_{D_R(0)}(V_\Delta)=\bfF^{\loc}_{\ol{V}}(D_R(0))_\Delta$. 
\end{lem}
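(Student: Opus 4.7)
The plan is to reduce the statement to an identification at the level of $\ol{V}$ itself, by exploiting two facts. First, the canonical map $(\wt{F}^{\loc}_{\ol{V}})^{D_R(0)}_{D_R(0)}\colon F^{\loc}_{\ol{V}}(D_R(0))=\ol{V}\to \bfF^{\loc}_{\ol{V}}(D_R(0))$ is a linear isomorphism, because $D_R(0)\in \frB$ and, as noted in \cref{ss:exten}, the morphism $\wt{F}^L_L\colon F(L)\to \clE_F(L)$ is an isomorphism whenever $L$ belongs to the open basis. Second, this isomorphism is $S^1$-equivariant: by the construction of $\wt{\sigma}_{(q,z),U}$ in \cref{ss:exten} applied to the basis element $L=D_R(0)$, we have the commuting square
\[
\wt{\sigma}_{(q,0),D_R(0)}\circ (\wt{F}^{\loc}_{\ol{V}})^{D_R(0)}_{D_R(0)}
=(\wt{F}^{\loc}_{\ol{V}})^{qD_R(0)}_{qD_R(0)}\circ \sigma_{(q,0),D_R(0)}
=(\wt{F}^{\loc}_{\ol{V}})^{D_R(0)}_{D_R(0)}\circ q^{L_0}
\]
for every $q\in S^1$, using $qD_R(0)=D_R(0)$ and the definition $\sigma_{(q,0),D_R(0)}=q^{L_0}$ on the one-disk component.

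Given these, it suffices to compute the weight $\Delta$ subspace of $\ol{V}$ under $q^{L_0}$. An element $a=\{\pi_{\Delta'}(a)\}_{\Delta'\in\bbZ}\in \ol{V}$ satisfies $q^{L_0}a=q^\Delta a$ for every $q\in S^1$ if and only if $q^{\Delta'}\pi_{\Delta'}(a)=q^\Delta \pi_{\Delta'}(a)$ for all $\Delta'\in \bbZ$ and all $q\in S^1$, which forces $\pi_{\Delta'}(a)=0$ for $\Delta'\neq \Delta$. Hence the weight $\Delta$ subspace of $\ol{V}$ is exactly $V_\Delta$.

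Combining the two observations, the $S^1$-equivariant isomorphism $(\wt{F}^{\loc}_{\ol{V}})^{D_R(0)}_{D_R(0)}$ sends the weight $\Delta$ subspace $V_\Delta$ of $\ol{V}$ bijectively onto the weight $\Delta$ subspace $\bfF^{\loc}_{\ol{V}}(D_R(0))_\Delta$ of the target, proving the equality. There is no real obstacle here: the only point requiring care is checking $S^1$-equivariance of $(\wt{F}^{\loc}_{\ol{V}})^{D_R(0)}_{D_R(0)}$, which follows directly from the defining universal property of $\wt{\sigma}$ in \cref{ss:exten} applied to the basis element $L=D_R(0)$.
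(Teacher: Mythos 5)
Your proof is correct and takes essentially the same approach as the paper: commute the isomorphism $(\wt{F}^{\loc}_{\ol{V}})^{D_R(0)}_{D_R(0)}$ with the $S^1$-action via the defining relation for $\wt{\sigma}$, then identify the weight-$\Delta$ eigenspace of $q^{L_0}$ acting on $\ol{V}$ with $V_\Delta$. You simply spell out the eigenspace computation that the paper leaves implicit.
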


\begin{proof}
For $a\in \ol{V}$, we have
\[
\wt{\sigma}_{q, \bbC}(\wt{F}^{\loc}_{\ol{V}})^{D_R(0)}_{D_R(0)}(a)=
(\wt{F}^{\loc}_{\ol{V}})^{D_R(0)}_{D_R(0)}\sigma_{q, \bbC}(a) \quad (q\in S^1). 
\]
Thus, the lemma is clear since $V_\Delta=\{a\in\ol{V}\mid \forall q\in S^1,\,\sigma_{q, D_R(0)}(a)=q^\Delta a\}$. 
\end{proof}

Let $V=\bigoplus_{\Delta\in \bbZ}V_\Delta$ (resp. $W=\bigoplus_{\Delta\in \bbZ}W_\Delta$) be a commutative $\bbZ$-graded vertex algebra such that $V_\Delta=0$ (resp. $W_\Delta=0$) for $\Delta\ll0$. For a morphism $f\colon V\to W$ of commutative $\bbZ$-graded vertex algebras, define 
\[
\ol{f}\colon \ol{V}\to \ol{W}, \quad a\mapsto \{f(\pi_\Delta(a))\}_{\Delta\in \bbZ}, 
\]
then $\ol{f}$ is a morphism of commutative $\bbC$-algebras. Thus, we have a morphism $\bfF^{\loc}_{\ol{f}}\colon \bfF^{\loc}_{\ol{V}}\to \bfF^{\loc}_{\ol{W}}$ of factorization algebras. 

\begin{lem}\label{lem:Flocolfequiv}
For a morphism $f\colon V\to W$ of commutative $\bbZ$-graded vertex algebras, the morphism $F^{\loc}_{\ol{f}}\colon F^{\loc}_{\ol{V}}\to F^{\loc}_{\ol{W}}$ is a morphism of $S^1\ltimes \bbC$-equivariant prefactorization algebras. 
\end{lem}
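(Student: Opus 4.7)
The plan is to reduce the equivariance condition to the fact that $\ol{f}\colon \ol{V}\to\ol{W}$ commutes with the two operators $q^{L_0}$ and $e^{zT}$ on $\ol{V}$ and $\ol{W}$. Once this is established, tensor-factoring gives the required compatibility at once. Note that $F^{\loc}_{\ol{f}}$ is already known to be a morphism of prefactorization algebras by \cref{lemfunVAtoPFA}, so only the extra condition in \cref{dfn:morequivPFA} needs to be checked, namely
\[
\sigma^{\ol{W}}_{(q,z),(q,z)L}\circ F^{\loc}_{\ol{f},L}
=
F^{\loc}_{\ol{f},(q,z)L}\circ \sigma^{\ol{V}}_{(q,z),L}
\qquad
((q,z)\in S^1\ltimes\bbC,\ L\in\frB).
\]

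First I would record the two elementary compatibilities on $\ol{V}$. For the grading operator, the condition $f(V_\Delta)\subset W_\Delta$ implies $\pi_\Delta\circ \ol{f}=f\circ \pi_\Delta$, so that for $a\in\ol{V}$,
\[
\ol{f}(q^{L_0}a)
=\{f(q^\Delta\pi_\Delta(a))\}_{\Delta\in\bbZ}
=\{q^\Delta\pi_\Delta(\ol{f}(a))\}_{\Delta\in\bbZ}
=q^{L_0}\ol{f}(a).
\]
For the translation operator, the axiom $f\circ T_V=T_W\circ f$ for a morphism of vertex algebras gives $f\circ T_V^n=T_W^n\circ f$ for all $n\in\bbN$, hence
\[
\ol{f}(e^{zT}a)
=\Bigl\{\sum_{n\in\bbN}\frac{z^n}{n!}f(T_V^n\pi_{\Delta-n}(a))\Bigr\}_{\Delta\in\bbZ}
=\Bigl\{\sum_{n\in\bbN}\frac{z^n}{n!}T_W^n\pi_{\Delta-n}(\ol{f}(a))\Bigr\}_{\Delta\in\bbZ}
=e^{zT}\ol{f}(a).
\]
Consequently $\ol{f}\circ (e^{zT}q^{L_0})=(e^{zT}q^{L_0})\circ \ol{f}$ as maps $\ol{V}\to\ol{W}$.

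Next I would check the desired identity directly on $F^{\loc}_{\ol{V}}(L)$ for $L=D_{r_1}(z_1)\sqcup\cdots\sqcup D_{r_l}(z_l)\in\frB$. For $a_1\otimes\cdots\otimes a_l\in F^{\loc}_{\ol{V}}(L)$ with $l>0$, the two compositions evaluate to
\[
\bigotimes_{i=1}^{l}\ol{f}(e^{zT}q^{L_0}a_i)
\quad\text{and}\quad
\bigotimes_{i=1}^{l}e^{zT}q^{L_0}\ol{f}(a_i),
\]
which agree by the displayed compatibility. The empty case $L=\varnothing$ is immediate from the convention $\sigma_{(q,z),\varnothing}=\id_{\bbC}$ and $F^{\loc}_{\ol{f},\varnothing}=\id_{\bbC}$. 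There is no genuine obstacle here: the only content is the observation that $\ol{f}$ is built functorially from $f$, so grading preservation and translation equivariance of $f$ lift directly to $\ol{f}$, and the equivariance structure on $F^{\loc}_{\ol{V}}$ is defined factor-wise so it commutes with any such lift.
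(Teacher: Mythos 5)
Your proof is correct and follows the same route as the paper: reduce to the identity $\ol{f}\circ(e^{zT}q^{L_0})=(e^{zT}q^{L_0})\circ\ol{f}$ and apply it factor-wise on $F^{\loc}_{\ol{V}}(L)=\ol{V}^{\otimes l}$. The paper's proof states this identity without justification, whereas you spell out that grading preservation gives $\pi_\Delta\circ\ol{f}=f\circ\pi_\Delta$ (hence commutation with $q^{L_0}$) and translation equivariance $f\circ T_V=T_W\circ f$ gives commutation with $e^{zT}$; this is a welcome clarification but not a different argument.
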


\begin{proof}
Let $(q, z)\in S^1\ltimes \bbC$ and $L=D_{r_1}(z_1)\sqcup\cdots \sqcup D_{r_l}(z_l)\in \frB$. We abuse the notation $\sigma_{(q, z), L}$ for $F^{\loc}_{\ol{V}}$ and $F^{\loc}_{\ol{W}}$. Then, for $a=a_1\otimes \cdots \otimes a_l\in F_{\ol{V}}^{\loc}(L)$, 
\begin{align*}
F^{\loc}_{\ol{f}, (q, z)L}\sigma_{(q, z), L}(a)
&=
\ol{f}(e^{zT}q^{L_0}a_1)\otimes \cdots \otimes \ol{f}(e^{zT}q^{L_0}a_l)\\
&=
e^{zT}q^{L_0}\ol{f}(a_1)\otimes \cdots \otimes e^{zT}q^{L_0}\ol{f}(a_l)\\
&=
\sigma_{(q, z), L}F^{\loc}_{\ol{f}, L}(a). 
\qedhere
\end{align*}
\end{proof}

For a morphism $f\colon V\to W$ of commutative $\bbZ$-graded vertex algebras, by \cref{lem:Flocolfequiv}, the morphism $\bfF^{\loc}_{\ol{f}}\colon \bfF^{\loc}_{\ol{V}}\to \bfF^{\loc}_{\ol{W}}$ is a morphism of $S^1\ltimes \bbC$-equivariant factorization algebras. The map $V\mapsto \bfF^{\loc}_{\ol{V}}$ gives rise to the functor
\[
\bfF^{\loc}_{\ol{(\,\cdot\,)}}\colon \cbgrVA\to \FA^{\mathsf{loc}}_{S^1\ltimes \bbC}(\bbC, \Lin), 
\]
where $\FA^{\mathsf{loc}}_{S^1\ltimes \bbC}(\bbC, \Lin)$ denotes the full subcategory of $\PFA_{S^1\ltimes \bbC}(\bbC, \Lin)$ whose objects are locally constant $S^1\ltimes \bbC$-equivariant factorization algebras. 

\medskip

To make $\bfF^{\loc}_{\ol{V}}$ a holomorphic factorization algebra (\cref{dfn:holoPFA}), we need to introduce a locally convex topology on $\bfF^{\loc}_{\ol{V}}(U)$. 
In order to do this, suppose that $V=\bigoplus_{\Delta\in \bbZ}V_\Delta$ is a commutative $\bbZ$-graded vertex algebra which satisfies (i) $V_\Delta=0$ for $\Delta\ll0$ and (ii) $\dim_{\bbC}V_\Delta<\infty$ for all $\Delta\in \bbZ$. 

For each $\Delta\in \bbZ$, the finite-dimensional linear space $V_\Delta$ has a unique topology which makes $V_\Delta$ a Hausdorff topological vector space. This topology is actually defined by a single norm $\|\cdot\|_\Delta$, and hence is a locally convex topology. Since the topological vector space $V_\Delta$ is complete and Hausdorff, the linear space $\ol{V}=\prod_{\Delta\in \bbZ}V_\Delta$ equipped with the product topology is a complete Hausdorff locally convex space. Note that this locally convex topology on $\ol{V}$ is defined by the set of seminorms 
\[
\Gamma=\{\|\cdot\|_\Delta\circ \pi_\Delta\mid \Delta\in \bbZ\}. 
\]
In particular, $\ol{V}$ is a Fr\'{e}chet space. 

\begin{lem}\label{lem:olVmultcont}
\ 
\begin{enumerate}
\item
The multiplication $\ol{V}\times \ol{V}\to \ol{V}$ is continuous. 

\item 
For $q\in S^1$, the linear map $q^{L_0}\colon \ol{V}\to \ol{V}$ is continuous. 

\item 
For $z\in \bbC$, the linear map $e^{zT}\colon \ol{V}\to \ol{V}$ is continuous. 
\end{enumerate}
\end{lem}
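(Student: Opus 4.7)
The plan is to exploit the universal property of the product topology on $\ol{V}=\prod_{\Delta\in\bbZ}V_\Delta$: a linear map $f\colon X\to \ol{V}$ (from any locally convex space $X$) is continuous if and only if $\pi_\Delta\circ f$ is continuous for every $\Delta\in\bbZ$, and similarly a bilinear map $\ol{V}\times\ol{V}\to\ol{V}$ is continuous iff its composition with each $\pi_\Delta$ is. After this reduction, every coordinate map will be a finite $\bbC$-linear or $\bbC$-bilinear operation between the finite-dimensional Hausdorff spaces $V_{\Delta'}$, which is automatically continuous.

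First, for (1), I would compute $\pi_\Delta(ab)=\sum_{\Delta'\in\bbZ}\pi_{\Delta'}(a)\,\pi_{\Delta-\Delta'}(b)$ and use the assumption $V_\Delta=0$ for $\Delta\ll 0$ to observe that this sum has only finitely many nonzero summands (the index $\Delta'$ runs over a bounded interval depending on $\Delta$). Each summand factors as a bilinear map
\[
\ol{V}\times\ol{V}\xrightarrow{\pi_{\Delta'}\times \pi_{\Delta-\Delta'}} V_{\Delta'}\times V_{\Delta-\Delta'}\xrightarrow{\mu_V} V_\Delta,
\]
where the first arrow is continuous by definition of the product topology and the second is continuous because it is bilinear between finite-dimensional Hausdorff locally convex spaces. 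A finite sum of continuous bilinear maps is continuous, and then continuity of $\mu$ into $\ol{V}$ follows from the product criterion.

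For (2), the computation $\pi_\Delta\circ q^{L_0}=q^\Delta\,\pi_\Delta$ shows that each coordinate of $q^{L_0}$ is a scalar multiple of a continuous projection, so $q^{L_0}$ is continuous. For (3), I would note $\pi_\Delta\circ e^{zT}=\sum_{n\in\bbN}\tfrac{z^n}{n!}\,T^n\circ \pi_{\Delta-n}$, which is again a finite sum by the lower-boundedness hypothesis: only finitely many $n\in\bbN$ satisfy $V_{\Delta-n}\neq 0$. Each term is $\tfrac{z^n}{n!}$ times the composition of the continuous projection $\pi_{\Delta-n}\colon\ol{V}\to V_{\Delta-n}$ with the linear map $T^n\colon V_{\Delta-n}\to V_\Delta$ between finite-dimensional spaces, which is continuous. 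A finite sum of continuous linear maps is continuous, and again the product criterion gives continuity of $e^{zT}$ into $\ol{V}$.

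There is no real obstacle here; the whole point of imposing $\dim_\bbC V_\Delta<\infty$ together with the lower bound $V_\Delta=0$ for $\Delta\ll 0$ is precisely to collapse every coordinate of each of the three maps to a finite operation among finite-dimensional spaces. The only subtlety worth flagging is the bilinear case: I rely on the fact that bilinearity on finite-dimensional locally convex spaces automatically yields joint continuity, so no separate argument about the projective tensor topology is required.
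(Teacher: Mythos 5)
Your proof of parts (2) and (3) is essentially identical to the paper's. For part (1), however, you take a genuinely different and in fact more elementary route. The paper first invokes the theorem that a separately continuous bilinear map between Fr\'echet spaces is automatically jointly continuous, which reduces the problem to separate continuity; it then fixes one argument, takes a sequence in the other converging to $0$, and checks coordinatewise convergence using the finiteness of the sum $\pi_\Delta(ab_n)=\sum_{\Delta'}\pi_{\Delta'}(a)\pi_{\Delta-\Delta'}(b_n)$ and continuity of the finite-dimensional multiplications. You instead establish joint continuity directly: since $\ol{V}$ carries the initial topology with respect to the projections $\pi_\Delta$, it suffices to show each $\pi_\Delta\circ\mu\colon\ol{V}\times\ol{V}\to V_\Delta$ is jointly continuous, and this is a finite sum of maps each of which factors through $\pi_{\Delta'}\times\pi_{\Delta-\Delta'}$ followed by a bilinear map between finite-dimensional Hausdorff spaces, hence is jointly continuous. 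Your approach buys simplicity (it does not rely on the nontrivial separate-implies-joint theorem for Fr\'echet spaces, only on the universal property of the product topology and the fact that polynomial maps between finite-dimensional spaces are continuous), while the paper's approach is the one more commonly seen when one wants to phrase the argument in terms of nets or sequences. Both are correct, and both rely on exactly the two hypotheses you flag — the lower bound $V_\Delta=0$ for $\Delta\ll 0$ to make the convolution sum finite, and $\dim_{\bbC}V_\Delta<\infty$ to get continuity of the graded multiplication and of $T^n\colon V_{\Delta-n}\to V_\Delta$.
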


\begin{proof}
(1) Since $\ol{V}$ is a Fr\'{e}chet space, it is enough to show that $\ol{V}\times \ol{V}\to \ol{V}$ is separately continuous. Fix $a\in \ol{V}$, and take a sequence $\{b_n\}_{n\in \bbN}$ in $\ol{V}$ which converges to $0\in\ol{V}$. For $\Delta\in \bbZ$, note that 
\[
\pi_\Delta(ab_n)=
\sum_{\Delta'\in \bbZ}\pi_{\Delta'}(a)\pi_{\Delta-\Delta'}(b_n) 
\]
is a finite sum. 
Also, note that the bilinear map $V_{\Delta_1}\times V_{\Delta_2}\to V_{\Delta_1+\Delta_2}$, $(x, y)\mapsto xy$ is continuous because $\dim_{\bbC}V_\Delta<\infty$. Thus, we have
\[
\lim_{n\to \infty}\pi_\Delta(ab_n)=
\sum_{\Delta'\in \bbZ}\lim_{n\to \infty}\pi_{\Delta'}(a)\pi_{\Delta-\Delta'}(b_n)=
\sum_{\Delta'\in \bbZ}\pi_{\Delta'}(a)\lim_{n\to \infty}\pi_{\Delta-\Delta'}(b_n)=0, 
\]
which shows the claim. 

(2) It is enough to show that $\pi_\Delta\circ q^{L_0}\colon \ol{V}\to V_\Delta$, $a\mapsto q^\Delta\pi_\Delta(a)$ is continuous for each $\Delta\in \bbZ$, but this is obvious. 

(3) It is enough to show that $\pi_\Delta\circ e^{zT}\colon \ol{V}\to V_\Delta$, $a\mapsto \sum_{n\in \bbN}(z^n/n!)T^n\pi_{\Delta-n}(a)$ is continuous for each $\Delta\in \bbZ$. This is clear since the condition $\dim_{\bbC}V_{\Delta-n}<\infty$ yields that $T^n\colon V_{\Delta-n}\to V_\Delta$ is continuous. 
\end{proof}

By \cref{lem:olVmultcont}, the $S^1\ltimes \bbC$-equivariant prefactorization algebra $F_{\ol{V}}^{\loc}\colon \frB\to \Lin$ gives rise to an $S^1\ltimes \bbC$-equivariant prefactorization algebra $F_{\ol{V}}^{\loc}\colon \frB\to \LCS$. Thus, we have an $S^1\ltimes \bbC$-equivarint factorization algebra $\bfF^{\loc}_{\ol{V}}\colon \frU_{\bbC}\to \LCS$. 

\begin{prp}\label{prp:constLHFA}
For a commutative $\bbZ$-graded vertex algebra $V=\bigoplus_{\Delta\in \bbZ}V_\Delta$ such that $V_\Delta=0$ ($\Delta\ll 0$) and $\dim_{\bbC}V_\Delta<\infty$ ($\Delta\in \bbZ$), the $S^1\ltimes \bbC$-equivariant factorization algebra $\bfF^{\loc}_{\ol{V}}\colon \frU_{\bbC}\to \LCS$ is holomorphic. 
\end{prp}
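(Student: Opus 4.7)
The plan is to verify each of the four conditions in \cref{dfn:holoPFA} after making the factorization algebra explicit. Since $D_R(0)\in \frB$ is maximal in $\frB(D_R(0))$, the canonical map $(\wt{F}^{\loc}_{\ol{V}})^{D_R(0)}_{D_R(0)}\colon F^{\loc}_{\ol{V}}(D_R(0))=\ol{V}\sto \bfF^{\loc}_{\ol{V}}(D_R(0))$ is an isomorphism of locally convex spaces. Moreover, $\{D_R(0)\}_{R>0}$ is cofinal in the neighborhood system of $0$, and for $0<r<R$ the structure morphism $(\bfF^{\loc}_{\ol{V}})^{D_r(0)}_{D_R(0)}$ corresponds to $\id_{\ol{V}}$ under these identifications. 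Consequently, the costalk $(\bfF^{\loc}_{\ol{V}})^0$ is canonically isomorphic to $\ol{V}$, and by \cref{lem:isoVDbfFD} its weight space $(\bfF^{\loc}_{\ol{V}})^0_\Delta$ corresponds to $V_\Delta$.

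Conditions (i)--(iii) are then quick consequences. Condition (i) holds because $(\bfF^{\loc}_{\ol{V}})^{D_r(0)}_{D_R(0)}$ is an isomorphism by local constancy, and under the above identifications its restriction to each weight space is the identity $V_\Delta\to V_\Delta$. Condition (ii) follows directly from the hypothesis $V_\Delta=0$ for $\Delta\ll 0$. For (iii), $\ol{V}=\prod_{\Delta\in \bbZ}V_\Delta$ with each $V_\Delta$ finite-dimensional is a Fréchet space, hence in particular quasi-complete and Hausdorff.

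The essential content lies in verifying (iv). Unwinding \cref{dfn:muzR} under the above identifications, the projection $(\bfF^{\loc}_{\ol{V}})^0_{D_r(0)}$ is the identity on $\ol{V}$, the equivariance $\wt{\sigma}_{(1, z_i), D_r(0)}$ is $e^{z_i T}$, and the multi-multiplication $(\bfF^{\loc}_{\ol{V}})^{D_r(z_1), \ldots, D_r(z_l)}_{D_R(0)}$ coincides with the $l$-fold product in $\ol{V}$, yielding
\[
\mu^R_{z_1, \ldots, z_l}(a_1\otimes \cdots \otimes a_l)=\prod_{i=1}^l e^{z_i T}a_i.
\]
For homogeneous $a_i\in V_{\Delta(a_i)}$ and each $\Delta\in \bbZ$, the weight-$\Delta$ projection
\[
\pi_\Delta\Bigl(\prod_{i=1}^l e^{z_i T}a_i\Bigr)=\sum_{\substack{n_1,\ldots,n_l\ge 0\\ n_1+\cdots+n_l=\Delta-\sum_{i=1}^l\Delta(a_i)}}\prod_{i=1}^l\frac{z_i^{n_i}}{n_i!}T^{n_i}a_i
\]
is a finite polynomial in $(z_1,\ldots,z_l)$, so the function $(z_1,\ldots,z_l)\mapsto \mu^R_{z_1, \ldots, z_l}(a_1\otimes \cdots \otimes a_l)$ is componentwise polynomial as a map $\bbC^l\to \ol{V}$, hence entire, and in particular holomorphic on $\Conf_l(D_R(0))$ in the sense of the appendix.

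The main obstacle is establishing the explicit description of $\mu^R_{z_1,\ldots,z_l}$ displayed above: one must verify that the universal maps constructed in \cref{ss:exten} respect the identifications $\bfF^{\loc}_{\ol{V}}(L)\cong F^{\loc}_{\ol{V}}(L)$ for $L\in \frB$ and reduce there to the structure morphisms of $F^{\loc}_{\ol{V}}$. This is straightforward since all the relevant open sets $D_r(z_i)$, $D_R(0)$, and their finite disjoint unions lie in $\frB$, so the inductive limits defining $\bfF^{\loc}_{\ol{V}}$ on them are stabilized by their maximal elements.
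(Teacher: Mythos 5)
Your proof is correct and follows essentially the same route as the paper: you identify $\bfF^{\loc}_{\ol{V}}(D_R(0))$ with $\ol{V}$, dispose of (i)--(iii) by local constancy, \cref{lem:isoVDbfFD}, and the Fr\'echet structure of $\ol{V}$, and verify (iv) by reducing $\mu^R_{z_1,\ldots,z_l}$ to $\prod_{i=1}^l e^{z_iT}a_i$ and observing that each weight component is a polynomial in $(z_1,\ldots,z_l)$. This matches the paper's proof step for step, with slightly more commentary about why the identifications hold.
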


\begin{proof}
We check the conditions (i)-(iv) in \cref{dfn:holoPFA}: 
\begin{clist}
\item 
This is clear since $\bfF^{\loc}_{\ol{V}}$ is locally constant. 

\item 
This follows from \cref{lem:isoVDbfFD}. 

\item
For $R\in \bbR_{>0}$, by the definition, the locally convex space $F^{\loc}_{\ol{V}}(D_R(0))=\ol{V}$ is complete and Hausdorff. Since $(\wt{F}^{\loc}_{\ol{V}})\colon F^{\loc}_{\ol{V}}(D_R(0))\to (\bfF^{\loc}_{\ol{V}})(D_R(0))$ is an isomorphism of locally convex spaces, the condition holds. 

\item
For $a_1, \ldots, a_l\in (\bfF^{\loc}_{\ol{V}})^0$, we have
\[
((\wt{F}^{\loc}_{\ol{V}})^{D_R(0)}_{D_R(0)})^{-1}\mu^R_{z_1, \ldots, z_l}(a_1\otimes \cdots \otimes a_l)=
\prod_{i=1}^le^{z_iT}((\wt{F}^{\loc}_{\ol{V}})^{D_r(0)}_{D_r(0)})^{-1}(\bfF^{\loc}_{\ol{V}})^0_{D_r(0)}(a_i).
\]
Thus, it is enough to show that for $a_1, \ldots, a_l\in V$, the function
\[
\Conf_l(D_R(0))\to \ol{V}, \quad 
(z_1, \ldots, z_l)\mapsto \prod_{i=1}^le^{z_iT}a_i
\]
is holomorphic, or equivalently, 
\[
\Conf_l(D_R(0))\to V_\Delta, \quad 
(z_1, \ldots, z_l)\mapsto \pi_\Delta\bigl(\prod_{i=1}^le^{z_iT}a_i\bigr)
\]
is holomorphic for each $\Delta\in \bbZ$, but this is clear since
\begin{align*}
\pi_\Delta\bigl(\prod_{i=1}^le^{z_iT}a_i\bigr)
&=
\sum_{\substack{\Delta_1, \ldots, \Delta_l\in \bbZ\\ \Delta_1+\cdots+\Delta_l=\Delta}}\prod_{i=1}^l\pi_{\Delta_i}(e^{z_iT}a_i)=
\sum_{\substack{\Delta_1, \ldots, \Delta_l\in \bbZ\\ \Delta_1+\cdots+\Delta_l=\Delta}}\prod_{i=1}^l\Bigl(\sum_{m_i\in\bbN}\frac{z_i^{m_i}}{m_i!}T^{m_i}\pi_{\Delta_i-m_i}(a_i)\Bigr)\\
&=
\sum_{\substack{\Delta_1, \ldots, \Delta_l\in \bbZ\\ \Delta_1+\cdots+\Delta_l=\Delta}}\sum_{m_1, \ldots, m_l\in\bbN}\,\prod_{i=1}^l\Bigl(\frac{z_i^{m_i}}{m_i!}T^{m_i}\pi_{\Delta_i-m_i}(a_i)\Bigr). 
\qedhere
\end{align*}
\end{clist}
\end{proof}

Let $V=\bigoplus_{\Delta\in \bbZ}V_\Delta$ (resp. $W=\bigoplus_{\Delta\in \bbZ}W_\Delta$) be a commutative $\bbZ$-graded vertex algebra such that $V_\Delta=0$ (resp. $W_\Delta=0$) for $\Delta\ll0$ and $\dim_{\bbC}V_\Delta<\infty$ (resp. $\dim_{\bbC}W_\Delta<\infty$) for all $\Delta\in \bbZ$. For a morphism $f\colon V\to W$ of $\bbZ$-graded vertex algebras, since the linear map $\ol{f}\colon \ol{V}\to \ol{W}$ is continuous, the morphism $\bfF^{\loc}_{\ol{f}}\colon \bfF^{\loc}_{\ol{V}}\to \bfF^{\loc}_{\ol{W}}$ is a morphism of holomorphic factorization algebras. 
Thus, the map $V\mapsto \bfF^{\loc}_{\ol{V}}$ gives rise to the functor
\[
\bfF^{\loc}_{\ol{(\,\cdot\,)}}\colon \fcbgrVA\to \LHFA,
\]
where $\LHFA$ denotes the full subcategory of $\HolFA$ consisting of locally constant holomorphic factorization algebras. 

\begin{rmk}
Let $V\in \Ob(\fcbgrVA)$. For $R\in \bbR_{>0}$: 
\begin{enumerate}
\item 
By \cref{lem:isoVDbfFD}, the linear isomorphism $(\wt{F}^{\loc}_{\ol{V}})^{D_R(0)}_{D_R(0)}\colon \ol{V}\to \bfF^{\loc}_{\ol{V}}(D_R(0))$ gives a linear isomorphism $(\wt{F}^{\loc}_{\ol{V}})^{D_R(0)}_{D_R(0)}\colon V\to \bigoplus_{\Delta\in \bbZ}\bfF^{\loc}_{\ol{V}}(D_R(0))_\Delta$. 

\item 
By\cref{rmk:clF0DRiso}, the linear map $(\bfF^{\loc}_{\ol{V}})^0_{D_R(0)}\colon (\bfF^{\loc}_{\ol{V}})^0\to \bfF^{\loc}_{\ol{V}}(D_R(0))$ gives a linear isomorphism $(\bfF^{\loc}_{\ol{V}})^0_{D_R(0)}\colon \bfV\bfF^{\loc}_{\ol{V}}\to \bigoplus_{\Delta\in \bbZ}\bfF^{\loc}_{\ol{V}}(D_R(0))_\Delta$. 
\end{enumerate}
\end{rmk}

\begin{dfn}
For $V\in \Ob(\fcbgrVA)$, define a linear isomorphism $\eta_V$ as the composition
\[
\begin{tikzcd}[column sep=huge]
V \arrow[r, "(\wt{F}^{\loc}_{\ol{V}})^{D_R(0)}_{D_R(0)}"] & [1cm]
\displaystyle\bigoplus_{\Delta\in \bbZ}\bfF^{\loc}_{\ol{V}}(D_R(0))_\Delta \arrow[r, "((\bfF^{\loc}_{\ol{V}})^0_{D_R(0)})^{-1}"] & [1cm]
\bfV\bfF^{\loc}_{\ol{V}}
\end{tikzcd}
\]
Here we take $R\in \bbR_{>0}$, but the definition of $\eta_V$ is independent of the choice of $R$. 
\end{dfn}

\begin{prp}\label{prp:VbfVisom}
The linear isomorphism 
\[
\eta_V\colon V\to \bfV\bfF^{\loc}_{\ol{V}}
\]
is an isomorphism in $\fcbgrVA$ which is natural in $V\in \Ob(\fcbgrVA)$. 
\end{prp}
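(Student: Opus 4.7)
The plan is to show, in order: (1) $\eta_V$ is well-defined independently of the auxiliary $R$; (2) it respects the $\bbZ$-grading; (3) it intertwines the commutative algebra structures; (4) it intertwines the translation/derivation; (5) it is natural in $V$. Since both $V$ and $\bfV\bfF^{\loc}_{\ol{V}}$ are commutative $\bbZ$-graded vertex algebras (the latter by \cref{prp:LPFAcomVA}), by the equivalence between commutative vertex algebras and commutative differential algebras recalled in \cref{eg:comVA} it is enough to check that $\eta_V$ preserves the vacuum, the multiplication and the translation, together with grading and naturality. The key observation throughout is that on the basis $\frB$ the transition $(F^{\loc}_{\ol{V}})^{D_R(0)}_{D_{R'}(0)}$ between two single disks equals $\id_{\ol{V}}$; combined with the ind-limit relation $(\wt{F}^{\loc}_{\ol{V}})^{D_{R'}(0)}_{D_{R'}(0)}\circ(F^{\loc}_{\ol{V}})^{D_R(0)}_{D_{R'}(0)}=(\wt{F}^{\loc}_{\ol{V}})^{D_R(0)}_{D_{R'}(0)}$ and the precosheaf relation $(\bfF^{\loc}_{\ol{V}})^{D_R(0)}_{D_{R'}(0)}\circ(\wt{F}^{\loc}_{\ol{V}})^{D_R(0)}_{D_R(0)}=(\wt{F}^{\loc}_{\ol{V}})^{D_R(0)}_{D_{R'}(0)}$, this identifies $(\bfF^{\loc}_{\ol{V}})^{D_R(0)}_{D_{R'}(0)}\circ(\wt{F}^{\loc}_{\ol{V}})^{D_R(0)}_{D_R(0)}=(\wt{F}^{\loc}_{\ol{V}})^{D_{R'}(0)}_{D_{R'}(0)}$, which, together with the projective system defining $(\bfF^{\loc}_{\ol{V}})^0$, yields independence of $\eta_V$ from $R$. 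Grading preservation is then immediate from \cref{lem:isoVDbfFD} (which implies $(\wt{F}^{\loc}_{\ol{V}})^{D_R(0)}_{D_R(0)}$ sends $V_\Delta$ isomorphically onto $\bfF^{\loc}_{\ol{V}}(D_R(0))_\Delta$) and \cref{rmk:clF0DRiso}.

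For the vacuum and multiplication I would invoke \cref{lem:unitisom}, by which $(\wt{F}^{\loc}_{\ol{V}})^{D_R(0)}_{D_R(0)}\colon\ol{V}\to\bfF^{\loc}_{\ol{V}}(D_R(0))$ is an isomorphism of commutative algebras. By \cref{rmk:bfVcalgstr} the multiplication on $\bfV\bfF^{\loc}_{\ol{V}}$ is determined component-wise by that on $\bfF^{\loc}_{\ol{V}}(D_R(0))$. Hence the $R$-component of $\eta_V(u)\cdot\eta_V(v)$ equals $(\wt{F}^{\loc}_{\ol{V}})^{D_R(0)}_{D_R(0)}(u)\cdot(\wt{F}^{\loc}_{\ol{V}})^{D_R(0)}_{D_R(0)}(v)=(\wt{F}^{\loc}_{\ol{V}})^{D_R(0)}_{D_R(0)}(uv)$, the $R$-component of $\eta_V(uv)$, and similarly $\eta_V(\vac)=\vac_{\bfV\bfF^{\loc}_{\ol{V}}}$ since both equal $\{(\bfF^{\loc}_{\ol{V}})^\varnothing_{D_R(0)}(1_{\bfF^{\loc}_{\ol{V}}})\}_R$.

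For the translation the essential identity I need is
\[
\mu^R_z(\eta_V(v))=(\wt{F}^{\loc}_{\ol{V}})^{D_R(0)}_{D_R(0)}(e^{zT}v)
\qquad(v\in V,\ z\in D_R(0)).
\]
To derive this, note that $\wt{\sigma}_{(1,z),D_r(0)}\circ(\wt{F}^{\loc}_{\ol{V}})^{D_r(0)}_{D_r(0)}=(\wt{F}^{\loc}_{\ol{V}})^{D_r(z)}_{D_r(z)}\circ\sigma_{(1,z),D_r(0)}$ and that $\sigma_{(1,z),D_r(0)}$ acts as $e^{zT}$ on $\ol{V}$ by the definition of the equivariance on $F^{\loc}_{\ol{V}}$; applying $(\bfF^{\loc}_{\ol{V}})^{D_r(z)}_{D_R(0)}$ and using once more that $(F^{\loc}_{\ol{V}})^{D_r(z)}_{D_R(0)}=\id_{\ol{V}}$ gives the identity. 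Since the canonical map $(\wt{F}^{\loc}_{\ol{V}})^{D_R(0)}_{D_R(0)}$ is continuous and linear (as the ind-limit in $\LCS$ produces continuous canonical morphisms), differentiating in $z$ at $0$ yields $(T\eta_V(v))_R=(\wt{F}^{\loc}_{\ol{V}})^{D_R(0)}_{D_R(0)}(Tv)=(\eta_V(Tv))_R$, proving $T\circ\eta_V=\eta_V\circ T$.

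Finally, for naturality, given a morphism $f\colon V\to W$ in $\fcbgrVA$, the defining compatibility of $\bfF^{\loc}_{\ol{f}}$ with $(\wt{F}^{\loc})^{D_R(0)}_{D_R(0)}$ combined with $\ol{f}|_V=f$ yields $\bfV\bfF^{\loc}_{\ol{f}}\circ\eta_V=\eta_W\circ f$ by componentwise comparison at each $R\in\bbR_{>0}$. The main obstacle is the translation computation: one must keep careful track of the interplay between the equivariant structure $\wt{\sigma}$ on $\bfF^{\loc}_{\ol{V}}$ (coming from the extension construction of \cref{ss:exten}) and the ind-limit canonical morphisms, whereas everything else reduces cleanly to the fact from the categorical equivalence in \cref{prp:catequiv} that $(\wt{F}^{\loc}_{\ol{V}})^{D_R(0)}_{D_R(0)}$ is an isomorphism of commutative algebras.
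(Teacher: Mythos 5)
Your proposal is correct and follows essentially the same route as the paper: the multiplication and vacuum are handled via \cref{lem:unitisom} and \cref{rmk:bfVcalgstr}, the translation via the identity $\mu^R_z(\eta_V(v))=(\wt{F}^{\loc}_{\ol{V}})^{D_R(0)}_{D_R(0)}(e^{zT}v)$ followed by differentiation at $z=0$, and naturality by componentwise comparison. The only cosmetic difference is that you make the $R$-independence and grading checks explicit (which the paper leaves implicit) and attribute the algebra-isomorphism fact to \cref{prp:catequiv} where the paper cites \cref{lem:unitisom} directly; these are refinements, not a different argument.
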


\begin{proof}
The naturality of $\eta_V$ is easy to see. It is enough to show that $\eta_V$ is a morphism of commutative vertex algebras. 
\begin{itemize}
\item
$\eta_V$ is a morphism of commutative algebras: For $a, b\in V$, we have
\begin{align*}
\eta_V(a)\cdot \eta_V(b)
&=
\bigl\{(\bfF^{\loc}_{\ol{V}})^0_{D_R(0)}\eta_V(a)\cdot (\bfF^{\loc}_{\ol{V}})^0_{D_R(0)}f(b)\bigr\}_{R\in \bbR_{>0}}\\
&=
\bigl\{(\wt{F}^{\loc}_{\ol{V}})^{D_R(0)}_{D_R(0)}(a)\cdot (\wt{F}^{\loc}_{\ol{V}})^{D_R(0)}_{D_R(0)}(b)\bigr\}_{R\in \bbR_{>0}}\\
&=
\bigl\{(\wt{F}^{\loc}_{\ol{V}})^{D_R(0)}_{D_R(0)}(ab)\bigr\}_{R\in \bbR_{>0}}=
\eta_V(ab), 
\end{align*}
where we used \cref{rmk:bfVcalgstr} in the first equality, and \cref{lem:unitisom} in the third equality. Also, since the vacuum $\vac$ of $\bfV\bfF^{\loc}_{\ol{V}}$ is defined by $\vac=\bigl\{(\bfF^{\loc}_{\ol{V}})^{\varnothing}_{D_R(0)}(1_{\bfF^{\loc}_{\ol{V}}})\bigr\}_{R\in \bbR_{>0}}$, we have
\begin{align*}
\vac
&=
\bigl\{(\wt{F}^{\loc}_{\ol{V}})^{\varnothing}_{D_R(0)}(1_{\bbC})\bigr\}_{R\in \bbR_{>0}}=
\bigl\{(\wt{F}^{\loc}_{\ol{V}})^{D_R(0)}_{D_R(0)}(F^{\loc}_{\ol{V}})^{\varnothing}_{D_R(0)}(1_{\bbC})\bigr\}_{R\in \bbR_{>0}}\\
&=
\bigl\{(\wt{F}^{\loc}_{\ol{V}})^{D_R(0)}_{D_R(0)}(\vac_V)\bigr\}=
f(\vac_V). 
\end{align*}

\item 
$\eta_V$ commutes with the translation operators: Recall the definition of the translation operator $T$ of $\bfV\bfF^{\loc}_{\ol{V}}$. Then, for $a\in V$, we have
\[
Tf(a)=
\{(Tf(a))_R\}_{R\in \bbR_{>0}}=
\{\pdd_z\mu^R_z(f(a))|_{z=0}\}_{R\in \bbR_{>0}}. 
\]
Here we used the notation $\mu_z^R$ defined in \cref{dfn:muzR} for $\bfF^{\loc}_{\ol{V}}$. Denote by 
\[
\sigma_{(q, z)}\colon F^{\loc}_{\ol{V}}\to (q, z)F^{\loc}_{\ol{V}}, \quad
\wt{\sigma}_{(q, z)}\colon \bfF^{\loc}_{\ol{V}}\to (q, z)\bfF^{\loc}_{\ol{V}}
\]
the $S^1\ltimes \bbC$-equivariant structures. For $R\in \bbR_{>0}$ and $z\in D_R(0)$, take $r\in \bbR_{>0}$ such that $|z|<R-r$, then
\begin{align*}
\mu_z^R(f(a))
&=
(\bfF^{\loc}_{\ol{V}})^{D_r(z)}_{D_R(0)}\wt{\sigma}_{(1, z), D_r(0)}(\bfF^{\loc}_{\ol{V}})^0_{D_r(0)}(f(a))=
(\bfF^{\loc}_{\ol{V}})^{D_r(z)}_{D_R(0)}\wt{\sigma}_{(1, z), D_r(0)}(\wt{F}^{\loc}_{\ol{V}})^{D_r(0)}_{D_r(0)}(a)\\
&=
(\bfF^{\loc}_{\ol{V}})^{D_r(z)}_{D_R(0)}(\wt{F}^{\loc}_{\ol{V}})^{D_r(z)}_{D_r(z)}\sigma_{(1, z), D_r(0)}(a)=
(\wt{F}^{\loc}_{\ol{V}})^{D_r(z)}_{D_R(0)}e^{zT_Va}=
(\wt{F}^{\loc}_{\ol{V}})^{D_R(0)}_{D_R(0)}e^{zT_Va}\\
&=
\sum_{n=0}^{\infty}\frac{z^n}{n!}(\wt{F}^{\loc}_{\ol{V}})^{D_R(0)}_{D_R(0)}(T^n_Va).
\end{align*}
Thus, we get
\[
\pdd_z\mu_z^R(a)|_{z=0}=
(\wt{F}^{\loc}_{\ol{V}})^{D_R(0)}_{D_R(0)}T_Va, 
\]
and hence
\[
Tf(a)=\bigl\{(\wt{F}^{\loc}_{\ol{V}})^{D_R(0)}_{D_R(0)}T_Va\bigr\}=f(T_Va). 
\qedhere
\]
\end{itemize}
\end{proof}

\begin{prp}\label{prp:bfFfullfaith}
The functor $\bfF^{\loc}_{\ol{(\,\cdot\,)}}\colon \fcbgrVA\to \LHFA$ is fully faithfull. 
\end{prp}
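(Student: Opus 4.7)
The plan is to construct a two-sided inverse to the map
\[
\Phi_{V,W} \colon \Hom_{\fcbgrVA}(V, W)\to \Hom_{\LHFA}(\bfF^{\loc}_{\ol{V}}, \bfF^{\loc}_{\ol{W}}), \quad f\mapsto \bfF^{\loc}_{\ol{f}}
\]
using the functor $\bfV\colon \HolPFA\to \bgrVA$ and the natural isomorphism $\eta$ from \cref{prp:VbfVisom}. Set
\[
\Psi(\varphi)\ceq \eta_W^{-1}\circ \bfV(\varphi)\circ \eta_V \colon V\to W;
\]
this is a morphism in $\fcbgrVA$ because $\bfV\bfF^{\loc}_{\ol{V}}$ and $\bfV\bfF^{\loc}_{\ol{W}}$ are commutative (by \cref{prp:LPFAcomVA}) and lie in $\fcbgrVA$ via $\eta$. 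Faithfulness of $\Phi$ is immediate: for $f\colon V\to W$, naturality of $\eta$ gives $\bfV(\bfF^{\loc}_{\ol{f}})\circ \eta_V=\eta_W\circ f$, whence $\Psi(\Phi(f))=f$.

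For fullness, fix $\varphi\in \Hom_{\LHFA}(\bfF^{\loc}_{\ol{V}},\bfF^{\loc}_{\ol{W}})$, set $f\ceq \Psi(\varphi)$, and prove $\bfF^{\loc}_{\ol{f}}=\varphi$. First I check the equality at a single disk $D_R(0)$. Unwinding $\eta_V=((\bfF^{\loc}_{\ol{V}})^0_{D_R(0)})^{-1}\circ (\wt{F}^{\loc}_{\ol{V}})^{D_R(0)}_{D_R(0)}|_V$ (and analogously for $\eta_W$), together with $\bfV(\varphi)=\varphi^0|_{\bfV\bfF^{\loc}_{\ol{V}}}$ and the relation $(\bfF^{\loc}_{\ol{W}})^0_{D_R(0)}\circ \varphi^0=\varphi_{D_R(0)}\circ (\bfF^{\loc}_{\ol{V}})^0_{D_R(0)}$ valid for any morphism of precosheaves, one obtains
\[
(\wt{F}^{\loc}_{\ol{W}})^{D_R(0)}_{D_R(0)}\circ f = \varphi_{D_R(0)}\circ (\wt{F}^{\loc}_{\ol{V}})^{D_R(0)}_{D_R(0)}|_V.
\]
On the other hand, the extension construction of $\bfF^{\loc}_{\ol{f}}$ from the open basis $\frB$ yields
\[
\bfF^{\loc}_{\ol{f}, D_R(0)}\circ (\wt{F}^{\loc}_{\ol{V}})^{D_R(0)}_{D_R(0)}=(\wt{F}^{\loc}_{\ol{W}})^{D_R(0)}_{D_R(0)}\circ \ol{f}, \quad \ol{f}|_V=f.
\]
Thus $\varphi_{D_R(0)}$ and $\bfF^{\loc}_{\ol{f},D_R(0)}$ coincide on the subspace $(\wt{F}^{\loc}_{\ol{V}})^{D_R(0)}_{D_R(0)}(V)\subset \bfF^{\loc}_{\ol{V}}(D_R(0))$. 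Because $D_R(0)$ is the maximum of $\frB(D_R(0))$, the map $(\wt{F}^{\loc}_{\ol{V}})^{D_R(0)}_{D_R(0)}$ is an isomorphism of locally convex spaces identifying $\bfF^{\loc}_{\ol{V}}(D_R(0))$ with the Fr\'{e}chet space $\ol{V}$, inside which $V$ is dense. Continuity of both $\varphi_{D_R(0)}$ and $\bfF^{\loc}_{\ol{f},D_R(0)}$ together with the Hausdorff property of the target then forces $\varphi_{D_R(0)}=\bfF^{\loc}_{\ol{f}, D_R(0)}$.

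It remains to propagate this equality to every open subset. Translation invariance spreads it to every disk $D_R(z)$, and local constancy to disks of arbitrary radius. The multiplicativity of a morphism of factorization algebras combined with \cref{prp:extmultisom} (which ensures the structure maps on disjoint unions are isomorphisms) extends the equality to every $L\in \frB$. Finally the colimit presentation $\bfF^{\loc}_{\ol{V}}(U)=\varinjlim_{L\in \frB(U)}F^{\loc}_{\ol{V}}(L)$ yields $\varphi_U=\bfF^{\loc}_{\ol{f}, U}$ for all open $U\subset \bbC$. The main obstacle is precisely the single-disk check: one must pass from a computation that naively lives only on the graded piece $V$ to a continuous identity on the completed locally convex space $\ol{V}\cong \bfF^{\loc}_{\ol{V}}(D_R(0))$, which forces the combined use of the topological identification stated in the proof of \cref{prp:constLHFA}, density of $V\subset \ol{V}$, and continuity of all maps involved.
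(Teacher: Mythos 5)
Your proof is correct and takes essentially the same route as the paper's: define $f\ceq\eta_W^{-1}\circ\bfV(\varphi)\circ\eta_V$, verify $\bfF^{\loc}_{\ol{f},D_R(0)}=\varphi_{D_R(0)}$ by observing agreement on the dense subspace $V\subset\ol{V}$ and invoking continuity, then use $S^1\ltimes\bbC$-equivariance to propagate. Your write-up merely spells out some steps the paper leaves implicit (the unwinding of $\eta$ and $\bfV$, and the propagation from disks centred at $0$ to arbitrary opens).
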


\begin{proof}
The faithfulness follows from \cref{prp:VbfVisom}. We prove that $\bfF^{\loc}_{\ol{(\,\cdot\,)}}$ is full. 

Let $V, W\in \Ob(\fcbgrVA)$, and $\varphi\colon \bfF^{\loc}_{\ol{V}}\to \bfF^{\loc}_{\ol{W}}$ be a morphism in $\LHFA$. Define a morphism $f\colon V\to W$ in $\fcbgrVA$ by $f\ceq \eta_W^{-1}\circ \bfV(\varphi)\circ \eta_V$. To prove that $\bfF^{\loc}_{\ol{f}}=\varphi$, it is enough to show that $\bfF^{\loc}_{\ol{f}, D_R(0)}=\varphi_{D_R(0)}$ for all $R\in \bbR_{>0}$ since $\bfF^{\loc}_{\ol{V}}$ and $\bfF^{\loc}_{\ol{W}}$ are $S^1\ltimes \bbC$-equivariant factorization algebras. By the definition of $f$, we have
\[
\bfF^{\loc}_{\ol{f}, D_R(0)}\circ (\wt{F}^{\loc}_{\ol{V}})^{D_R(0)}_{D_R(0)}|_V=
(\wt{F}^{\loc}_{\ol{W}})^{D_R(0)}_{D_R(0)}\circ f=
\varphi_{D_R(0)}\circ (\wt{F}^{\loc}_{\ol{V}})^{D_R(0)}_{D_R(0)}|_V. 
\]
Since $V\subset \ol{V}$ is dence subspace, we get 
\[
\bfF^{\loc}_{\ol{f}, D_R(0)}\circ (\wt{F}^{\loc}_{\ol{V}})^{D_R(0)}_{D_R(0)}=\varphi_{D_R(0)}\circ (\wt{F}^{\loc}_{\ol{V}})^{D_R(0)}_{D_R(0)},
\]
or equivalently $\bfF^{\loc}_{\ol{f}, D_R(0)}=\varphi_{D_R(0)}$. 
\end{proof}

\begin{prp}\label{prp:bfFjet}
Let $A$ be a finitely generated commutative algebra. For $V\in \Ob(\fcbgrVA)$ and a morphism $\varphi\colon \bfF^{\loc}_A\to \bfF^{\loc}_V$ in $\LFA$, there exists a unique morphism $\wt{\varphi}\colon \bfF^{\loc}_{\ol{\clJ\!A}}\to \bfF^{\loc}_{\ol{V}}$ in $\LHFA$ which commutes
\[
\begin{tikzcd}[row sep=huge, column sep=huge]
\bfF^{\loc}_A \arrow[r] \arrow[d, "\varphi"'] & 
\bfF^{\loc}_{\ol{\clJ\!A}} \arrow[d, "\wt{\varphi}"]\\
\bfF^{\loc}_{V} \arrow[r] & 
\bfF^{\loc}_{\ol{V}}
\end{tikzcd}
\]
Here horizontal arrows denote the morphism in $\LFA$ induced from the canonical injections $A\inj \clJ\!A\inj \ol{\clJ\!A}$ and $V\inj\ol{V}$. 
\end{prp}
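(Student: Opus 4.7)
The plan is to reduce the statement to the universal property of the jet algebra $\clJ$, using the two fully faithful functors established earlier in this subsection: $\bfF^{\loc}\colon \CAlg_{\bbC}\to \LFA$ from \cref{prp:Flocfaifull} (in fact an equivalence by \cref{prp:catequiv}) and $\bfF^{\loc}_{\ol{(\,\cdot\,)}}\colon \fcbgrVA\to \LHFA$ from \cref{prp:bfFfullfaith}. Nothing analytic is required beyond what has already been developed; the entire argument is a diagram-chase between four categories.

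First I would apply \cref{prp:catequiv} to translate $\varphi\colon \bfF^{\loc}_A\to \bfF^{\loc}_V$ into a unique morphism $f\colon A\to V$ in $\CAlg_{\bbC}$, where $V$ is viewed as a commutative algebra through the identification of commutative vertex algebras with commutative differential algebras recalled in \cref{eg:comVA}. Since $A$ is finitely generated, $\clJ\!A\in \Ob(\fcbgrVA)$, and the adjunction between $\clJ\colon \CAlg_{\bbC}\to \cbgrVA$ and the forgetful functor furnishes a unique morphism $\wt{f}\colon \clJ\!A\to V$ in $\fcbgrVA$ with $\wt{f}|_A=f$.

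Next, extending $\wt{f}$ componentwise yields a continuous morphism $\ol{\wt{f}}\colon \ol{\clJ\!A}\to \ol{V}$ in $\CAlg_{\bbC}$, and I set $\wt{\varphi}\ceq \bfF^{\loc}_{\ol{\wt{f}}}$, which is a morphism in $\LHFA$ by functoriality of $\bfF^{\loc}_{\ol{(\,\cdot\,)}}$. Commutativity of the required square reduces, via functoriality of $\bfF^{\loc}$, to commutativity of the underlying square $A\to \ol{V}$ in $\CAlg_{\bbC}$: both composites send $a\in A$ to $f(a)\in V\subset \ol{V}$, since $\wt{f}|_A=f$.

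For uniqueness, suppose $\wt{\varphi}'$ is another morphism in $\LHFA$ making the diagram commute. By \cref{prp:bfFfullfaith} we may write $\wt{\varphi}'=\bfF^{\loc}_{\ol{g}}$ for a unique $g\colon \clJ\!A\to V$ in $\fcbgrVA$. The commutativity hypothesis, together with the faithfulness of $\bfF^{\loc}\colon \CAlg_{\bbC}\to \LFA$ from \cref{prp:Flocfaifull}, forces $\ol{g}\circ (A\inj \ol{\clJ\!A}) = (V\inj \ol{V})\circ f$ as morphisms in $\CAlg_{\bbC}$, equivalently $g|_A=f$; the universal property of $\clJ\!A$ then gives $g=\wt{f}$, whence $\wt{\varphi}'=\wt{\varphi}$. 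The main obstacle I anticipate is purely bookkeeping—keeping straight which fully faithful functor is invoked in each translation step between $\CAlg_{\bbC}$, $\fcbgrVA$, $\LFA$, and $\LHFA$, and verifying that the relevant factorization-algebra morphisms, once written as $\bfF^{\loc}$ or $\bfF^{\loc}_{\ol{(\,\cdot\,)}}$ applied to algebra morphisms, satisfy the adjunction identities witnessed by $A\inj \clJ\!A \inj \ol{\clJ\!A}$.
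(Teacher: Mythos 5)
Your proposal is correct and follows essentially the same route as the paper: use the equivalence $\bfF^{\loc}\colon \CAlg_{\bbC}\simeq \LFA$ to extract $f_0\colon A\to V$, lift it through the jet adjunction to $\clJ\!A\to V$, apply $\bfF^{\loc}_{\ol{(\,\cdot\,)}}$, and establish uniqueness via the fully faithful functor of \cref{prp:bfFfullfaith}. If anything, your uniqueness step is a touch more explicit than the paper's (you spell out how commutativity plus faithfulness of $\bfF^{\loc}$ forces $g|_A=f_0$ before invoking the jet universal property), but the argument is the same.
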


\begin{proof}
Since $\bfF^{\loc}\colon \CAlg\to \LFA$ is a categorical equivalence (\cref{prp:catequiv}), there is a unique morphism $f_0\colon A\to V$ in $\CAlg$ such that $\varphi=\bfF^{\loc}_{f_0}$. Also, by the universality of jet algebra, there is a unique morphism $f\colon\clJ\!A\to V$ in $\fcbgrVA$ such that $f|_A=f_0$. The morphism $\wt{\varphi}\ceq \bfF^{\loc}_{\ol{f}}\colon \bfF^{\loc}_{\ol{\clJ\!A}}\to \bfF^{\loc}_{\ol{V}}$ in $\LHFA$ commutes the diagram in the proposition. To prove the uniqueness, let $\psi\colon \bfF^{\loc}_{\ol{\clJ A}}\to \bfF^{\loc}_{\ol{V}}$ be a morphism in $\LHFA$ which commutes the diagram. Then, by the fullness of $\bfF^{\loc}_{\ol{(\,\cdot\,)}}$, there exists a morphism $g\colon \clJ\!A\to V$ in $\fcbgrVA$ such that $\psi=\bfF^{\loc}_{\ol{g}}$. Also, the faithfulness of $\bfF^{\loc}_{\ol{(\,\cdot\,)}}$ and uniqueness of $f$ imply $g=f$. Thus, we get $\psi=\wt{\varphi}$. 
\end{proof}

\appendix

\section{Calculus of functions with values in a locally convex space}\label{s:App}

In this appendix we develop the calculus of functions with values in a locally convex space to ensure the completeness of this note and to make content more accessible to algebraists. For a locally convex space $E$, we use the following notations: 

\begin{itemize}
\item
The set $\Gamma$ of semi-norms on $E$ that defines the topology of $E$ is called a fundamental system of semi-norms for $E$. 

\item 
For a semi-norm $p\colon E\to \bbR$ and $t\in \bbR_{>0}$, we denote by $B(p, t)\ceq\{x\in E\mid p(x)<t\}$ the open ball. 

\item 
For a subset $A\subset E$, we denote by $\conv A$ the convex hull. 
\end{itemize}

\subsection{Holomorphic functions}\label{ss:holofunc}

In this subsection we define holomorphic functions with values in a locally convex space. Fix a Hausdorff locally convex space $E$ over $\bbC$. 

Let $U\subset \bbC^n$ be an open subset. For $a\in U$, the set $U\bs\{a\}$ is 
directed by letting
\[
z\preceq w \rarr
|z-a|\ge |w-a| \quad (z, w\in U\bs\{a\}). 
\]

\begin{lem}\label{lem:netUbsa}
Take a fundamental system $\Gamma$ of semi-norms for $E$. Let $U\subset \bbC^n$ be an open subset and $a\in U$. For a net $\{\xi_z\}_{z\in U\bs\{a\}}\subset E$ and $\xi\in E$, the following (i) and (ii) are equivalent: 
\begin{clist}
\item 
The net $\{\xi_z\}_{z\in U\bs\{a\}}$ converges to $\xi$. 

\item 
For $p\in \Gamma$ and $\ve\in \bbR_{>0}$, there is $\delta\in \bbR_{>0}$ such that 
\[
z\in U\bs\{a\},\, |z-a|<\delta\, \Longrightarrow\, p(\xi_z-\xi)<\ve. 
\]
\end{clist}
\end{lem}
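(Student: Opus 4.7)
The plan is to verify both implications directly from the two definitions involved. The starting point is to unfold what net convergence $\xi_z \to \xi$ along the directed set $(U\bs\{a\}, \preceq)$ means: for every open neighborhood $V$ of $\xi$ in $E$, there exists $z_0 \in U\bs\{a\}$ such that $\xi_w \in V$ for all $w \succeq z_0$, i.e., for all $w \in U\bs\{a\}$ with $|w-a| \le |z_0-a|$. Thus ``far enough in the net'' is synonymous with ``close enough to $a$ in modulus'', and this translation is really the whole content of the lemma.

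For (i)$\Rightarrow$(ii) I would fix $p \in \Gamma$ and $\ve > 0$ and apply the hypothesis to the open neighborhood $V \ceq \xi + B(p, \ve)$ of $\xi$ to obtain $z_0 \in U\bs\{a\}$ witnessing convergence into $V$. Setting $\delta \ceq |z_0 - a| > 0$, any $z \in U\bs\{a\}$ with $|z-a| < \delta$ satisfies $z \succeq z_0$, hence $\xi_z \in V$, i.e., $p(\xi_z - \xi) < \ve$.

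For (ii)$\Rightarrow$(i) I would take an arbitrary open neighborhood $V$ of $\xi$ in $E$. Since $\Gamma$ is a fundamental system of semi-norms for $E$, a base of neighborhoods of $\xi$ is formed by finite intersections of balls $\xi + B(p, t)$ with $p \in \Gamma$ and $t \in \bbR_{>0}$, so I can find finitely many $p_1, \ldots, p_k \in \Gamma$ and $\ve_1, \ldots, \ve_k \in \bbR_{>0}$ with $\bigcap_{i=1}^k (\xi + B(p_i, \ve_i)) \subset V$. Applying (ii) to each pair $(p_i, \ve_i)$ yields a $\delta_i > 0$; put $\delta \ceq \min_{i} \delta_i$. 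Since $a \in U$ and $U$ is open, I can pick some $z_0 \in U\bs\{a\}$ with $|z_0 - a| < \delta$, and then for any $w \succeq z_0$ one has $|w - a| \le |z_0 - a| < \delta_i$, so $p_i(\xi_w - \xi) < \ve_i$ for every $i$, and hence $\xi_w \in V$.

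There is no serious obstacle here: the lemma is a routine translation between the net formulation of ``limit at the point $a$'' and the usual $\ve$-$\delta$ formulation, together with the standard description of the locally convex topology via finite intersections of semi-norm balls. The only point that warrants a moment of care is producing the witness $z_0 \in U\bs\{a\}$ in the direction (ii)$\Rightarrow$(i), which is immediate from the openness of $U$ at $a$.
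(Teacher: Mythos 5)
Your proof is correct and takes essentially the same route as the paper: both directions unwind the directed order $\preceq$ into the condition $|z-a|<\delta$ and use semi-norm balls $B(p,\ve)$ as a neighborhood base of $0$. The one small difference is in (ii)$\Rightarrow$(i): you take $\delta=\min_i\delta_i$ to produce a single witness $z_0$ for the finite intersection $\bigcap_i(\xi+B(p_i,\ve_i))$ at once, whereas the paper reduces to one ball $B(p,\ve)$ at a time and implicitly relies on directedness of $(U\bs\{a\},\preceq)$ to combine them; your presentation is slightly more self-contained but the content is the same.
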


\begin{proof}
(i) $\Longrightarrow$ (ii) is clear since $B(p, \ve)\subset E$ is a neighborhood of $0\in E$. Let us prove the converse. Since $\{B(p_1, \ve_1)\cap \cdots \cap B(p_l, \ve_l)\mid l\in \bbZ_{>0}, p_i\in \Gamma, \ve_i\in \bbR_{>0}\}$ is a neighborhood basis of $0\in E$, it is enough to show that for $p\in \Gamma$ and $\ve\in \bbR_{>0}$, there exists $z_0\in U\bs\{a\}$ such that 
\[
z\in U\bs\{a\},\, z\succeq z_0\, \Longrightarrow\, 
\xi_z-\xi\in B(p, \ve). 
\]
By the condition (ii), there is some $\delta\in \bbR_{>0}$ satisfying $z\in U\bs\{a\}$, $|z-a|<\delta$ $\Longrightarrow$ $p(\xi_z-\xi)<\ve$. Also, since $U\subset \bbC$ is open, there is $\delta'\in \bbR_{>0}$ satisfying $D_{\delta'}(a)\subset U$. Take $z_0\in (D_\delta(a)\cap D_{\delta'}(a))\bs\{a\}$, then $z_0\in U\bs\{a\}$, and $z_0$ satisfies that $z\in U\bs\{a\}$, $z\succeq z_0 $ $\Longrightarrow$ $\xi_z-\xi\in B(p, \ve)$. 
\end{proof}

\begin{rmk}
Let $U\subset \bbC^n$ be an open subset. By \cref{lem:netUbsa}, a function $f\colon U\to E$ is continuous at $a\in U$ if and only if the net $\{f(z)\}_{z\in U\bs\{a\}}$ converges to $f(a)$. 
\end{rmk}

\begin{dfn}\label{dfn:holofunc}
Let $U\subset \bbC^n$ be an open subset, and $f\colon U\to E$ a function. 
\begin{enumerate}
\item 
For $a\in U$, we say that $f$ is complex differentiable at $a$, if there exists $\xi\in E^n$ such that the net 
\[
U\bs\{a\}\to E, \quad
z\mapsto \frac{f(z)-f(a)-(z-a)\cdot{}^t\xi}{|z-a|}
\] 
converges to $0\in E$. Here we used the matrix notation 
\[
z\cdot{}^t\xi=\sum_{i=1}^nz_i\xi_i\in E
\]
for $z=(z_1, \ldots, z_n)\in \bbC^n$ and $\xi=(\xi_1, \ldots, \xi_n)\in E^n$. 

\item 
We say that $f$ is holomorphic on $U$ if $f$ is complex differentiable for all $z\in U$. 
\end{enumerate}
\end{dfn}

\begin{lem}
Take a fundamental system $\Gamma$ of semi-norms for $E$, and let $U\subset \bbC^n$ be an open subset. 
A function $f\colon U\to E$ is complex differentiable at $a\in U$ if and only if there exists $\xi \in E^n$ satisfying the following condition: 
For $p\in \Gamma$ and $\ve\in \bbR_{>0}$, there exists $\delta\in \bbR_{>0}$ such that 
\[
z\in U\bs\{a\},\, |z-a|<\delta\, \Longrightarrow\, p\Bigl(\frac{f(z)-f(a)-(z-a)\cdot{}^t\xi}{|z-a|}\Bigr)<\ve.
\]
\end{lem}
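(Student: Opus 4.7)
The plan is to deduce this lemma directly from \cref{lem:netUbsa}, which already characterizes convergence of nets indexed by $U\bs\{a\}$ via the fundamental system of semi-norms. No new analytic content is required beyond recognizing that the condition stated in the lemma is exactly the $\Gamma$-characterization of convergence to $0\in E$ for a specific net.

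More concretely, I would unfold the definitions as follows. By \cref{dfn:holofunc} (1), the function $f\colon U\to E$ is complex differentiable at $a\in U$ if and only if there exists $\xi\in E^n$ such that the net
\[
\xi_z\ceq \frac{f(z)-f(a)-(z-a)\cdot{}^t\xi}{|z-a|} \qquad (z\in U\bs\{a\})
\]
converges to $0\in E$ in the sense of convergence of nets in the topological space $E$. Now apply \cref{lem:netUbsa} to this net $\{\xi_z\}_{z\in U\bs\{a\}}$ with limit $\xi=0$: the equivalence (i)$\siff$(ii) there says that convergence of $\{\xi_z\}$ to $0$ is equivalent to the statement that for every $p\in \Gamma$ and every $\ve\in \bbR_{>0}$ there exists $\delta\in \bbR_{>0}$ with
\[
z\in U\bs\{a\},\ |z-a|<\delta\ \Longrightarrow\ p(\xi_z-0)=p\Bigl(\frac{f(z)-f(a)-(z-a)\cdot{}^t\xi}{|z-a|}\Bigr)<\ve.
\]
This is precisely the condition in the statement of the lemma, so the equivalence follows.

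Since the lemma is an essentially immediate reformulation via the preceding lemma, there is no real obstacle. The only thing to be slightly careful about is that \cref{lem:netUbsa} is stated for arbitrary nets $\{\xi_z\}_{z\in U\bs\{a\}}$ and an arbitrary limit, so specializing to the difference-quotient net and the limit $0$ is completely safe; no separate treatment of the case $n>1$ is needed because both \cref{dfn:holofunc} and \cref{lem:netUbsa} are written in the same multivariable generality.
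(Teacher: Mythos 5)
Your proof is correct and matches the paper's approach exactly: the paper's proof is the one-line statement ``This follows from \cref{lem:netUbsa}.'', and you have simply spelled out the unfolding of \cref{dfn:holofunc} and the specialization of \cref{lem:netUbsa} to the difference-quotient net with limit $0$.
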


\begin{proof}
This follows from \cref{lem:netUbsa}. 
\end{proof}

\begin{lem}
Let $U\subset \bbC^n$ be an open subset. If a function $f\colon U\to E$ is complex differentiable at $a\in U$, then an element $\xi\in E^n$ satisfying the condition in \cref{dfn:holofunc} is unique. 
\end{lem}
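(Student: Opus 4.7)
The plan is to suppose $\xi, \xi' \in E^n$ both satisfy the condition of \cref{dfn:holofunc}~(1) at $a$, set $\eta \ceq \xi - \xi'$, and show $\eta = 0$ by testing along axis-parallel real half-lines through $a$.

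First I would subtract the two limiting conditions. By linearity of the quotient expression in the vector parameter and the fact that the sum of two nets converging to $0$ converges to $0$, the net
\[
U \bs \{a\} \to E, \quad z \mapsto \frac{(z-a)\cdot{}^t\eta}{|z-a|}
\]
converges to $0 \in E$. Since $U$ is open, there is $\delta_0 \in \bbR_{>0}$ such that for each $i \in [n]$, the point $a + t e_i$ lies in $U$ whenever $0 < t < \delta_0$, where $e_i \in \bbC^n$ denotes the $i$-th standard basis vector.

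Next, I would restrict the net to the axis-parallel half-line $z = a + t e_i$, $t \in (0, \delta_0)$. Along this restriction the value of the net is the constant $\eta_i \in E$, because $(t e_i)\cdot{}^t\eta = t\eta_i$ and $|t e_i| = t$. Using \cref{lem:netUbsa}: for any continuous semi-norm $p$ on $E$ and any $\ve \in \bbR_{>0}$, choose $\delta \in \bbR_{>0}$ witnessing convergence of the full net, then any $t \in (0, \min(\delta, \delta_0))$ gives $p(\eta_i) < \ve$. Hence $p(\eta_i) = 0$ for every $p$ in a fundamental system $\Gamma$ of semi-norms on $E$.

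Finally, since $E$ is Hausdorff, the family $\Gamma$ separates points, so $\eta_i = 0$. This holds for every $i \in [n]$, so $\eta = 0$ and $\xi = \xi'$. The only subtle ingredient is that the characterization of convergence in \cref{lem:netUbsa} is purely a $\delta$-statement in terms of $|z - a|$, so restricting to a real half-line is legitimate; I do not expect any genuine obstacle here.
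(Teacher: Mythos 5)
Your proof is correct and is essentially the same argument as the paper's: both test along an axis-parallel direction $z = a + t e_i$ to isolate the $i$-th component, then use a fundamental system of semi-norms together with the Hausdorff assumption to conclude. The only cosmetic difference is that you subtract the two defining nets first to get $\tfrac{(z-a)\cdot{}^t\eta}{|z-a|}\to 0$ before restricting, whereas the paper restricts first and applies the triangle inequality to the two resulting estimates; the two manipulations are equivalent.
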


\begin{proof}
Let $\xi, \xi'\in E^n$ are elements satisfying the condition in \cref{dfn:holofunc}, and take a fundamental system $\Gamma$ of semi-norms for $E$. It is enough to show that $p(\xi_i-\xi'_i)=0$ for all $p\in \Gamma$ and $i\in[n]$. For $\ve\in \bbR_{>0}$, there exists $\delta\in \bbR_{>0}$ such that $D_\delta(a)\subset U$ and 
\[
z\in \bbC^n,\, 0<|z-a|<\delta\, \Longrightarrow\, 
p\Bigl(\frac{f(z)-f(a)-(z-a)\cdot{}^t\xi}{|z-a|}\Bigr),\, p\Bigl(\frac{f(z)-f(a)-(z-a)\cdot{}^t\xi'}{|z-a|}\Bigr)<\frac{\ve}{2}. 
\]
Set $z=(a_1, \ldots, a_i+\delta/2, \ldots, a_n)$, then we get
\[
p(f(z)-f(a)-\xi_i),\, p(f(z)-f(a)-\xi'_i)<\ve/2. 
\]
Thus, we have $p(\xi_i-\xi_i')<\ve$ for any $\ve\in \bbR_{>0}$, which shows the claim. 
\end{proof}

Let $U\subset \bbC^n$ be an open subset. If a function $f\colon U\to E$ is complex differentiable at $a\in U$, the element 
\[
f'(a)\ceq {}^t\xi, 
\]
where $\xi\in E^n$ is the element satisfying the condition in \cref{dfn:holofunc} (1), is called the derivative of $f$ at $a$. 

For a semi-norm $p$, define 
\[
p_n\colon E^n\to \bbR, \quad \xi\mapsto \Bigl(\sum_{i=1}^np(\xi_i)^2\Bigr)^{1/2}, 
\]
then $p_n$ is a semi-norm on $E^n$. Note that this semi-norm satisfies
\[
p(z\cdot{}^t\xi)\le |z|p_n(\xi)
\]
for all $z\in \bbC^n$ and $\xi\in E^n$. 

\begin{lem}
Let $U\subset \bbC^n$ be an open subset, and $f\colon U\to E$ a function. If $f$ is complex differentiable at $a\in U$, then $f$ is continuous at $a$. 
\end{lem}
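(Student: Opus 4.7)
The plan is to reduce continuity at $a$ to the standard estimate
\[
f(z) - f(a) = (z-a)\cdot {}^t\xi + |z-a|\,\eta(z),
\]
where $\xi \in E^n$ represents $f'(a)$ and $\eta(z) \to 0$ as $z \to a$. By \cref{lem:netUbsa}, continuity of $f$ at $a$ is equivalent to the statement that for every $p$ in a fundamental system of seminorms $\Gamma$ and every $\ve > 0$, there exists $\delta > 0$ such that $p(f(z) - f(a)) < \ve$ whenever $z \in U \bs \{a\}$ with $|z-a| < \delta$ (the case $z = a$ being trivial).

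Fixing $p \in \Gamma$, I would apply the seminorm inequality $p(z \cdot {}^t\xi) \le |z| p_n(\xi)$ recorded just before the statement to obtain
\[
p(f(z) - f(a)) \le |z-a|\, p_n(\xi) + |z-a|\, p(\eta(z)),
\]
where $\eta(z) \ceq (f(z) - f(a) - (z-a)\cdot {}^t\xi)/|z-a|$. Given $\ve > 0$, the characterization of complex differentiability via seminorms (proved in the lemma immediately following \cref{dfn:holofunc}) supplies $\delta_1 > 0$ with $p(\eta(z)) < 1$ for all $z$ with $0 < |z-a| < \delta_1$. Setting $\delta \ceq \min\{\delta_1,\, \ve/(p_n(\xi) + 1)\}$ then yields $p(f(z) - f(a)) < |z-a|(p_n(\xi) + 1) < \ve$, as required.

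There is no serious obstacle here: the argument is a routine transcription of the classical one-variable proof to the locally convex setting, with the only mildly delicate point being to work through the net formulation of limits supplied by \cref{lem:netUbsa} rather than sequences, since $E$ need not be metrizable. The use of $p_n(\xi) + 1$ in the denominator (as opposed to $p_n(\xi)$) is simply to avoid the harmless degeneracy $p_n(\xi) = 0$.
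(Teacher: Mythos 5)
Your proof is correct and coincides with the paper's argument essentially verbatim: both isolate the remainder term $\eta(z)$ with $p(\eta(z))<1$ for $|z-a|<\delta_0$, bound $p(f(z)-f(a))$ by $(p_n(f'(a))+1)|z-a|$ using the seminorm estimate $p(z\cdot{}^t\xi)\le|z|p_n(\xi)$, and take $\delta=\min\{\delta_0,\,\ve/(p_n(f'(a))+1)\}$. No discrepancy worth noting.
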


\begin{proof}
It is enough to show that for $p\in \Gamma$ and $\ve\in \bbR_{>0}$, there exists $\delta\in \bbR_{>0}$ satisfying  
\[
z\in U\bs\{a\},\, |z-a|<\delta\, \Longrightarrow\, p(f(z)-f(a))<\ve.
\]
Since $f$ is complex differentiable at $a$, there is some $\delta_0\in \bbR_{>0}$ such that
\[
z\in U\bs\{a\},\, |z-a|<\delta_0\, \Longrightarrow\, 
p\Bigl(\frac{f(z)-f(a)-(z-a)\cdot f'(a)}{|z-a|}\Bigr)<1. 
\]
Hence, if $z\in U\bs\{a\}$ satisfies $|z-a|<\delta_0$, we have
\[
p(f(z)-f(a))<p((z-a)\cdot f'(a))+|z-a|\le
(p_n(f'(a))+1)|z-a|. 
\]
Thus $\delta\ceq \min\{\delta_0, \ve/(p_n(f'(a))+1)\}\in \bbR_{>0}$ satisfies the claim. 
\end{proof}

\begin{dfn}
Let $U\subset \bbC^n$ be an open subset, and $f\colon U\to E$ a function. For $i\in[n]$ and $a=(a_1, \ldots, a_n)\in U$, we say that $f$ is complex partial differentiable at $a$ with respect to the $i$-coordinate if the function 
\[
U_i\mapsto E, \quad
z\mapsto f(a_1, \ldots, a_{i-1}, z, a_{i+1}, \ldots, a_n)
\]
is complex differentiable at $a_i$, where 
\[
U_i\ceq\{z\in \bbC\mid (a_1, \ldots, a_{i-1}, z, a_{i+1}, \ldots a_n)\in U\}. 
\]
In this case, we denote the derivative of this function at $a_i$ by
\[
\pdd_if(a)=\pdd_{z_i}f(z)|_{z=a}\ceq
\lim_{z\in U_i\bs\{a_i\}}\frac{f(a_1, \ldots, a_{i-1}, z, a_{i+1}, \ldots, a_n)-f(a_1, \ldots, a_n)}{z-a_i}\in E. 
\]
\end{dfn}

\begin{lem}
Let $U\subset \bbC^n$ be an open subset, and $f\colon U\to E$ a function. If $f$ is complex differentiable at $a\in U$, then $f$ is complex partial differentiable at $a$ with respect to all the $i$-coordinates. Moreover, we have
\[
f'(a)={}^t(\pdd_1f(a), \ldots, \pdd_nf(a)). 
\]
\end{lem}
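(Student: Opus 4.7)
The plan is to read off each partial derivative directly from the vector $\xi\in E^n$ appearing in the definition of complex differentiability at $a$, by specializing the general difference quotient to curves that move only along one coordinate. Concretely, write $f'(a)={}^t\xi$ with $\xi=(\xi_1,\ldots,\xi_n)$ as in \cref{dfn:holofunc}(1), and fix $i\in[n]$. I will show that $f$ is complex partial differentiable at $a$ in the $i$-coordinate with $\pdd_if(a)=\xi_i$; the formula $f'(a)={}^t(\pdd_1f(a),\ldots,\pdd_nf(a))$ is then immediate.

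To this end, choose a fundamental system $\Gamma$ of semi-norms for $E$. Since $U\subset\bbC^n$ is open, pick $\delta_0\in\bbR_{>0}$ so that the polydisk of $\ell^2$-radius $\delta_0$ around $a$ lies in $U$; in particular $U_i\supset D_{\delta_0}(a_i)$. Given $p\in\Gamma$ and $\ve\in\bbR_{>0}$, complex differentiability of $f$ at $a$ supplies some $\delta_1\in\bbR_{>0}$ with
\[
w\in U\bs\{a\},\ |w-a|<\delta_1\ \Longrightarrow\ p\Bigl(\frac{f(w)-f(a)-(w-a)\cdot{}^t\xi}{|w-a|}\Bigr)<\ve.
\]
Set $\delta\ceq\min\{\delta_0,\delta_1\}$, and for $z\in U_i\bs\{a_i\}$ with $|z-a_i|<\delta$ define $w\ceq(a_1,\ldots,a_{i-1},z,a_{i+1},\ldots,a_n)$. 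Then $w\in U\bs\{a\}$ with $|w-a|=|z-a_i|<\delta$, while $(w-a)\cdot{}^t\xi=(z-a_i)\xi_i$. Substituting into the above inequality gives
\[
p\Bigl(\frac{f(a_1,\ldots,a_{i-1},z,a_{i+1},\ldots,a_n)-f(a)-(z-a_i)\xi_i}{z-a_i}\Bigr)<\ve.
\]

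By \cref{lem:netUbsa}, the displayed estimate is precisely the assertion that the net over $z\in U_i\bs\{a_i\}$ of the scalar difference quotient $(f(a_1,\ldots,z,\ldots,a_n)-f(a))/(z-a_i)$ converges to $\xi_i$ in $E$. Hence $f$ is complex partial differentiable at $a$ in the $i$-coordinate and $\pdd_if(a)=\xi_i$. Running $i$ through $[n]$ gives $\xi={}^t(\pdd_1f(a),\ldots,\pdd_nf(a))$, so $f'(a)={}^t\xi={}^t(\pdd_1f(a),\ldots,\pdd_nf(a))$. There is no real obstacle here; the only subtle point is ensuring that the one-variable perturbation $w$ remains in $U$, which is handled by the preliminary choice of $\delta_0$.
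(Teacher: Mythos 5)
Your argument is correct and coincides with the paper's own proof: both specialize the difference quotient from the definition of complex differentiability to a one-coordinate perturbation and use the absolute homogeneity of the semi-norms to pass from division by $|z-a_i|$ to division by $z-a_i$. Your extra care in choosing $\delta_0$ so that the perturbed point stays in $U$ is a harmless refinement the paper leaves implicit.
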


\begin{proof}
Take a fundamental system $\Gamma$ of semi-norms for $E$. For $p\in \Gamma$ and $\ve\in \bbR_{>0}$, since $f$ is complex differentiable at $a$, there exists $\delta\in \bbR_{>0}$ such that
\[
z\in U\bs\{a\},\, |z-a|<\delta\, \Longrightarrow\, 
p\Bigl(\frac{f(z)-f(a)-(z-a)\cdot f'(a)}{|z-a|}\Bigr)<\ve
\]
Now, let us denote $f'(a)={}^t(\xi_1, \ldots, \xi_n)$. For $i\in[n]$, set $U_i\ceq \{z\in \bbC\mid (a_1, \ldots, a_{i-1}, z, a_{i+1}, \ldots, a_n)\in U\}$, then 
\[
z\in U_i\bs\{a_i\},\, |z-a_i|<\delta\, \Longrightarrow\, 
p\Bigl(\frac{f(a_1, \ldots, a_{i-1}, z, a_{i+1}, \ldots, a_n)-f(a_1, \ldots, a_n)}{z-a_i}-\xi_i\Bigr)<\ve, 
\]
which shows $\pdd_if(a)=\xi_i$. 
\end{proof}

\begin{lem}
Let $U\subset \bbC^n$ be an open subset. If functions  $\varphi\colon U\to \bbC$ and $f\colon U\to E$ are complex differentiable at $a\in U$, then the function
\[
\varphi f\colon U\to E, \quad z\mapsto \varphi(z)f(z)
\]
is also complex differentiable at $a$. Moreover, we have
\[
(\varphi f)(a)=\varphi'(a)f(a)+\varphi(a)f'(a), 
\]
or equivalently, 
\[
\pdd_i(\varphi f)(a)=
(\pdd_i\varphi(a))f(a)+\varphi(a)(\pdd_i f(a)). 
\]
\end{lem}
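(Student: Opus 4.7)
The plan is to reduce the proof to the familiar scalar product-rule decomposition and then estimate each term in an arbitrary seminorm of $E$, using the seminorm criterion for complex differentiability established above. Fix a fundamental system $\Gamma$ of seminorms for $E$, set
\[
\xi \ceq {}^t\bigl(\pdd_1\varphi(a) f(a) + \varphi(a) \pdd_1 f(a),\ \ldots,\ \pdd_n\varphi(a) f(a) + \varphi(a) \pdd_n f(a)\bigr) \in E^n,
\]
and aim to show that for all $p \in \Gamma$ and $\ve \in \bbR_{>0}$ there exists $\delta \in \bbR_{>0}$ with
\[
z \in U \bs \{a\},\ |z-a| < \delta \Longrightarrow p\Bigl(\frac{\varphi(z) f(z) - \varphi(a) f(a) - (z-a)\cdot {}^t\xi}{|z-a|}\Bigr) < \ve.
\]

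The key algebraic identity I would write first is
\begin{align*}
\varphi(z) f(z) - \varphi(a) f(a) - (z-a)\cdot {}^t\xi
&= \bigl(\varphi(z) - \varphi(a)\bigr)\bigl(f(z) - f(a)\bigr) \\
&\quad + \bigl[\varphi(z) - \varphi(a) - (z-a)\cdot \varphi'(a)\bigr] f(a) \\
&\quad + \varphi(a)\bigl[f(z) - f(a) - (z-a)\cdot f'(a)\bigr],
\end{align*}
obtained by expanding $(z-a)\cdot {}^t\xi = (z-a)\cdot \varphi'(a)\, f(a) + \varphi(a)\, (z-a)\cdot f'(a)$ and then adding and subtracting $\varphi(a) f(z)$. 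Applying $p$ and using absolute homogeneity, it suffices to bound each of the three summands by $(\ve/3) |z-a|$ for all sufficiently small $|z-a|$.

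For the third summand, complex differentiability of $f$ at $a$ gives some $\delta_3$ such that $|z-a| < \delta_3$ forces $p(f(z) - f(a) - (z-a)\cdot f'(a)) \le \frac{\ve}{3(|\varphi(a)|+1)} |z-a|$, whence $p$ of the third summand is at most $(\ve/3) |z-a|$. For the second summand, complex differentiability of $\varphi$ (which is the classical notion, since $E = \bbC$) gives some $\delta_2$ such that $|z-a| < \delta_2$ forces $|\varphi(z) - \varphi(a) - (z-a)\cdot \varphi'(a)| \le \frac{\ve}{3(p(f(a))+1)} |z-a|$, so that $p$ of the second summand is at most $(\ve/3) |z-a|$. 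For the first summand, use the inequality $p(f(z) - f(a)) \le (p_n(f'(a)) + 1) |z-a|$ valid for $|z-a|$ smaller than some $\delta_0$ (this is exactly the estimate established in the proof that differentiable implies continuous, just above); combining with continuity of $\varphi$ at $a$, pick $\delta_1$ so small that additionally $|\varphi(z) - \varphi(a)| < \frac{\ve}{3(p_n(f'(a))+1)}$. Taking $\delta \ceq \min\{\delta_0, \delta_1, \delta_2, \delta_3\}$ completes the verification, and the identification of partial derivatives then follows from the preceding lemma.

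The argument is entirely routine; the only point requiring care is the first summand, where one must have a uniform local Lipschitz-type bound $p(f(z) - f(a)) = O(|z-a|)$ on $f$ in the seminorm $p$. The main obstacle, such as it is, is simply to recognize that this bound is already implicit in the proof that complex differentiability implies continuity in $E$, so that no additional hypothesis on $E$ (such as completeness or metrizability) is required.
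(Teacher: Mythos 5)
Your proof is correct, and it uses the same overall strategy as the paper (product-rule decomposition into three pieces, then estimate each piece in an arbitrary seminorm), but with a different choice of decomposition. The paper writes the error as $\bigl(\varphi(z)-\varphi(a)-(z-a)\cdot\varphi'(a)\bigr)f(z)+\varphi(a)\bigl(f(z)-f(a)-(z-a)\cdot f'(a)\bigr)+(z-a)\cdot\varphi'(a)\bigl(f(z)-f(a)\bigr)$, whereas you use $\bigl(\varphi(z)-\varphi(a)\bigr)\bigl(f(z)-f(a)\bigr)+\bigl(\varphi(z)-\varphi(a)-(z-a)\cdot\varphi'(a)\bigr)f(a)+\varphi(a)\bigl(f(z)-f(a)-(z-a)\cdot f'(a)\bigr)$. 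Your version is arguably a little cleaner: because your second term carries the constant $f(a)$ rather than the variable $f(z)$, you do not need the separate continuity estimate $p(f(z))<1+p(f(a))$ that the paper invokes; instead the only nontrivial point is the local Lipschitz-type bound $p(f(z)-f(a))\le(p_n(f'(a))+1)\,|z-a|$ for your first term, which, as you correctly observe, is already extracted in the paper's proof that complex differentiability implies continuity. Both decompositions require exactly the same ingredients (differentiability of $\varphi$ and $f$ at $a$, continuity/local boundedness, no completeness or metrizability of $E$), so the choice is a matter of taste; either one is fine.
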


\begin{proof}
For $z\in U\bs\{a\}$, 
\begin{align*}
&\frac{(\varphi f)(z)-(\varphi f)(a)-(z-a)\cdot (\varphi'(a)f(a)+\varphi(a)f'(a))}{|z-a|}\\
&=
\frac{(\varphi(z)-\varphi(a)-(z-a)\cdot \varphi'(a))f(z)}{|z-a|}+
\frac{\varphi(a)(f(z)-f(a)-(z-a)\cdot f'(a))}{|z-a|}+
\frac{(z-a)\cdot \varphi'(a)(f(z)-f(a))}{|z-a|}.
\end{align*}
Denote by $F_1(z)$ (resp. $F_2(z)$, $F_3(z)$) the first (resp. second, third) term on the right hand side of this equation. It is enough to show that $\lim_{z\in U\bs\{a\}}F_1(z)=\lim_{z\in U\bs\{a\}}F_2(z)=\lim_{z\in U\bs\{a\}}F_3(z)=0$. Take a fundamental system $\Gamma$ of semi-norms for $E$. 

The equation $\lim_{z\in U\bs\{a\}}F_2(z)=0$ is clear by the differentiability of $f$. Also $\lim_{z\in U\bs\{a\}}F_3(z)=0$ follows from the continuity of $f$. In fact, for $p\in \Gamma$, we have
\[
p(F_3(z))\le p_n\bigl(\varphi'(a)(f(z)-f(a))\bigr)\le
\Bigl(\sum_{i=1}^n|\pdd_i\varphi(a)|^2p(f(z)-f(a))^2\Bigr)^{1/2}. 
\]

We prove $\lim_{z\in U\bs\{a\}}F_1(z)=0$. Let $p\in \Gamma$ and $\ve\in \bbR_{>0}$. Since $\varphi$ is differentiable at $a$, there exists some $\delta_1\in \bbR_{>0}$ such that 
\[
z\in U\bs\{a\},\, |z-a|<\delta_1\, \Longrightarrow\, 
\frac{|\varphi(z)-\varphi(a)-(z-a)\cdot \varphi'(a)|}{|z-a|}<\frac{\ve}{1+p(f(a))}. 
\]
Also, since $f$ is continuous at $a$, there exists $\delta_2\in \bbR_{>0}$ such that 
\[
z\in U,\, |z-a|<\delta_2\, \Longrightarrow \, 
p(f(z))<1+p(f(a)). 
\]
Set $\delta\ceq \{\delta_1, \delta_2\}\in \bbR_{>0}$, then for $z\in U\bs\{a\}$ satisfying $|z-a|<\delta$, we have
\[
p(F_1(z))=
\frac{|\varphi(z)-\varphi(a)-(z-a)\cdot \varphi'(a)|}{|z-a|}p(f(z))<\ve, 
\]
which shows $\lim_{z\in U\bs\{a\}}F_1(z)=0$. 
\end{proof}

\begin{lem}\label{lem:compder}
Let $U\subset \bbC^m$ and $V\subset \bbC^n$ be open subsets. For functions $\varphi\colon U\to V$ and $f\colon V\to E$, if $\varphi$ is complex differentiable at $a\in U$, and $f$ is complex differentiable at $\varphi(a)\in V$, then the function $\varphi\circ f\colon U\to E$ is complex differentiable at $a$. Moreover, we have
\[
(f\circ \varphi)'(a)=\varphi'(a)f'(\varphi(a)), 
\]
or equivalently, 
\[
\pdd_i(f\circ \varphi)(a)=
\sum_{j=1}^n\pdd_i\varphi_j(a)\cdot \pdd_jf(\varphi(a)) \quad (i\in[n]). 
\]
Here we denote $\varphi=(\varphi_1, \ldots, \varphi_n)$. 
\end{lem}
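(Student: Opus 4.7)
Write $b\ceq\varphi(a)$ and, for $z\in U$, $\Delta\ceq z-a$. The plan is to linearize $\varphi$ around $a$ and $f$ around $b$, substitute, and verify that all error terms are $o(|\Delta|)$ in every continuous seminorm on $E$. Concretely, by complex differentiability of each coordinate $\varphi_j\colon U\to\bbC$ at $a$, we may write
\[
\varphi_j(z)-\varphi_j(a)=(z-a)\cdot \varphi_j'(a)+r_j(z),\qquad \lim_{z\to a}\frac{r_j(z)}{|z-a|}=0,
\]
for $j\in[n]$. By differentiability of $f$ at $b$, the assignment
\[
\tau(y)\ceq\frac{f(y)-f(b)-(y-b)\cdot f'(b)}{|y-b|}\ (y\neq b),\qquad \tau(b)\ceq 0
\]
defines a map $\tau\colon V\to E$ which, by the very definition of differentiability and \cref{lem:netUbsa}, is continuous at $b$; equivalently, $f(y)-f(b)=(y-b)\cdot f'(b)+|y-b|\,\tau(y)$ for all $y\in V$, with $\tau(y)\to 0$ in $E$ as $y\to b$.

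Substituting $y=\varphi(z)$ and expanding using the formulas above gives
\[
f(\varphi(z))-f(\varphi(a))-\sum_{i,j}(z_i-a_i)\pdd_i\varphi_j(a)\pdd_jf(b)
=\sum_{j=1}^n r_j(z)\,\pdd_jf(b)+|\varphi(z)-b|\,\tau(\varphi(z)).
\]
It therefore suffices to show that both terms on the right, divided by $|\Delta|$, tend to $0$ in $E$ as $z\to a$. Fix any continuous seminorm $p$ on $E$. For the first term, $p\bigl(\sum_j(r_j(z)/|\Delta|)\pdd_jf(b)\bigr)\le\sum_j\bigl(|r_j(z)|/|\Delta|\bigr)\,p(\pdd_jf(b))$, which tends to $0$ since each $|r_j(z)|/|\Delta|\to 0$ by construction.

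For the second term, I first bound $|\varphi(z)-b|/|\Delta|$. Since $\varphi(z)-b=\bigl((z-a)\cdot\varphi_1'(a)+r_1(z),\ldots,(z-a)\cdot\varphi_n'(a)+r_n(z)\bigr)$, the elementary estimate $|(z-a)\cdot\varphi_j'(a)|\le|z-a|\,|\varphi_j'(a)|$ together with $r_j(z)=o(|\Delta|)$ shows the existence of $\delta>0$ and $C>0$ such that $|\varphi(z)-b|\le C|\Delta|$ whenever $0<|\Delta|<\delta$. Consequently,
\[
p\Bigl(\frac{|\varphi(z)-b|}{|\Delta|}\,\tau(\varphi(z))\Bigr)\le C\,p(\tau(\varphi(z))),
\]
and the right-hand side tends to $0$ because $\varphi$ is continuous at $a$ (being differentiable there), so $\varphi(z)\to b$, and $\tau$ is continuous at $b$ with $\tau(b)=0$. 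Applying \cref{lem:netUbsa} in the forward direction, we conclude that the net
\[
\frac{(f\circ\varphi)(z)-(f\circ\varphi)(a)-(z-a)\cdot{}^t\bigl(\varphi'(a)f'(\varphi(a))\bigr)}{|z-a|}
\]
converges to $0\in E$, which is precisely the assertion that $f\circ\varphi$ is complex differentiable at $a$ with $(f\circ\varphi)'(a)=\varphi'(a)f'(\varphi(a))$. The coordinatewise formula $\pdd_i(f\circ\varphi)(a)=\sum_j\pdd_i\varphi_j(a)\pdd_jf(\varphi(a))$ is immediate by reading off the $i$-th entry.

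The only non-routine point, and the main potential obstacle, is handling the degenerate case where $\varphi(z)=b$ along a sequence $z\to a$: this is precisely why one introduces the auxiliary map $\tau$ (with the convention $\tau(b)=0$) rather than dividing by $|\varphi(z)-b|$ directly. Everything else is a translation of the classical chain-rule argument into the seminorm language of locally convex spaces, using that each seminorm $p$ satisfies $p(\lambda v)=|\lambda|p(v)$ so scalar bounds pass through $p$ unchanged.
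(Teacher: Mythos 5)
Your proof is correct and follows essentially the same route as the paper's: your two error terms $\sum_j r_j(z)\,\pdd_jf(b)$ and $|\varphi(z)-b|\,\tau(\varphi(z))$ are exactly the numerators of the paper's $F_2$ and $F_1$ respectively, and your auxiliary map $\tau$ with the convention $\tau(b)=0$ is precisely the paper's $\wt{F}_1$ (precomposed with $\varphi$), introduced for the same reason of handling the degenerate case $\varphi(z)=\varphi(a)$. The seminorm estimates, the bound $|\varphi(z)-b|\le C|z-a|$ near $a$, and the composition-of-limits argument all match the paper's proof.
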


\begin{proof}
For $z\in U\bs\{a\}$, 
\begin{align*}
&\frac{(f\circ \varphi)(z)-(f\circ \varphi)(a)-(z-a)\cdot \varphi'(a)f'(\varphi(a))}{|z-a|}\\
&=
\frac{f(\varphi(z))-f(\varphi(a))-(\varphi(z)-\varphi(a))\cdot f'(\varphi(a))}{|z-a|}+
\frac{(\varphi(z)-\varphi(a)-(z-a)\cdot \varphi'(a))\cdot f'(\varphi(a))}{|z-a|}.
\end{align*}
Denote by $F_1(z)$ (resp. $F_2(z)$) the first (resp. second) term on the right hand side of this equation. It is enough to show that $\lim_{z\in U\bs\{a\}}F_1(z)=\lim_{z\in U\bs\{a\}}F_2(z)=0$. Take a fundamental system $\Gamma$ of semi-norms for $E$. 

The equation $\lim_{z\in U\bs\{a\}}F_2(z)=0$ follows from differentiability of $\varphi$ at $a$. In fact, for $p\in \Gamma$, we have
\[
p(F_2(z))\le \frac{|\varphi(z)-\varphi(a)-(z-a)\cdot \varphi'(a)|}{|z-a|}p_n(f'(\varphi(a))). 
\]

We prove $\lim_{z\in U\bs\{a\}}F_1(z)=0$. Define a function $\wt{F}_1$ by
\[
\wt{F}_1\colon U\to E, \quad
z\mapsto 
\begin{cases}
\displaystyle\frac{f(\varphi(z))-f(\varphi(a))-(\varphi(z)-\varphi(a))\cdot f'(\varphi(a))}{|\varphi(z)-\varphi(a)|} & (\varphi(z)\neq \varphi(a)) \\
0 & (\varphi(z)=\varphi(a)), 
\end{cases}
\]
then we have
\[
F_1(z)=\frac{|\varphi(z)-\varphi(a)|}{|z-a|}\wt{F}_1(z) \quad (z\in U\bs\{a\}). 
\]
Since $\varphi$ is differentiable at $a$, there is $\delta_1\in \bbR_{>0}$ satisfying
\[
z\in U\{a\},\, |z-a|<\delta_1\, \Longrightarrow 
\frac{|\varphi(z)-\varphi(a)|}{|z-a|}<1+|\varphi'(a)|. 
\]
Here $|\varphi'(a)|$ denotes the matrix norm defined by
\[
|A|\ceq \Bigl(\sum_{i=1}^m\sum_{j=1}^n|a_{i, j}|^2\Bigr)^{1/2} \quad (A=\{a_{i, j}\}_{i=1, j=1}^{m, n}). 
\]
Now, let $p\in \Gamma$ and $\ve\in \bbR_{>0}$. Since $f$ is differentiable at $\varphi(a)$, there exists $\delta_2\in \bbR_{>0}$ such that 
\[
w\in V\bs\{\varphi(a)\},\, |w-\varphi(a)|<\delta_2\, \Longrightarrow\, 
p\Bigl(\frac{f(w)-f(\varphi(a))-(w-\varphi(a))\cdot f'(\varphi(a))}{|w-\varphi(a)|}\Bigr)<\frac{\ve}{1+|\varphi'(a)|}. 
\]
Also, since $\varphi\colon U\to V$ is continuous at $a$, there exists $\delta_2'\in \bbR_{>0}$ such that 
\[
z\in U,\, |z-a|<\delta_2'\, \Longrightarrow\, 
|\varphi(z)-\varphi(a)|<\delta_2.
\]
Set $\delta\ceq \min\{\delta_1, \delta_2'\}\in \bbR_{>0}$, then for $z\in U\bs\{a\}$ satisfying $|z-a|<\delta$, we have
\[
p(F_1(z))\le(1+|\varphi(a)|)p(\wt{F}_1(z))<\ve, 
\]
which shows $\lim_{z\in U\bs\{a\}}F_1(z)=0$. 
\end{proof}

\subsection{Uniform convergence}
\label{ss:unifconv}

Let $X$ be a set, and $E$ a locally convex space over $\bbC$. 
For a semi-norm $p\colon E\to \bbR$ and a map $f\colon X\to E$, define
\[
p_X(f)\ceq \sup\{p(f(x))\mid x\in X\}\in [0, \infty]. 
\]

\begin{dfn}\label{dfn:unifconv}
For a net $\{f_\lambda\}_{\lambda\in\Lambda}$ consisting of maps $f_\lambda\colon X\to E$ and a map $f\colon X\to E$, we say that $\{f_\lambda\}_{\lambda\in \Lambda}$ converges uniformly to $f$ if the following condition holds: 
For any $\ve\in \bbR_{>0}$ and $p\in \Gamma$, where $\Gamma$ is a fundamental system of semi-norms for $E$, there exists $\lambda_0\in \Lambda$ such that 
\[
\lambda\in \Lambda,\ \lambda\ge \lambda_0\ \Longrightarrow\ 
q_X(f_\lambda-f)<\ve. 
\]
Note that this definition of uniform convergence is independent of the choice of $\Gamma$. 
\end{dfn}


\begin{lem}\label{lem:uniCauchy}
Let $X$ be a set, and $E$ a quasi-complete locally convex space. 
For a sequence $\{f_n\}_{n\in \bbN}$ of maps $f_n\colon X\to E$, the following conditions are equivalent: 
\begin{clist}
\item 
The sequence $\{f_n\}_{n\in \bbN}$ converges uniformly. 

\item 
For any $p\in \Gamma$ and $\ve\in \bbR_{>0}$, where $\Gamma$ is a fundamental system of semi-norms for $E$, there exists $n_0\in \bbN$ such that
\[
m, n\in \bbN,\, m, n\ge n_0\, \Longrightarrow\, p_X(f_m-f_n)<\ve. 
\]
\end{clist}
\end{lem}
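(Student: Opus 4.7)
The plan is to argue the two implications separately, with the quasi-completeness hypothesis entering only in the (ii) $\Longrightarrow$ (i) direction.

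For (i) $\Longrightarrow$ (ii) I would just run the usual triangle-inequality argument. Suppose $\{f_n\}$ converges uniformly to some $f\colon X\to E$. Given $p\in\Gamma$ and $\ve\in \bbR_{>0}$, apply \cref{dfn:unifconv} with $\ve/2$ to obtain $n_0\in \bbN$ with $p_X(f_n-f)<\ve/2$ for $n\ge n_0$. For any $m,n\ge n_0$ and $x\in X$ we have $p(f_m(x)-f_n(x))\le p(f_m(x)-f(x))+p(f(x)-f_n(x))<\ve$, whence $p_X(f_m-f_n)\le \ve$ after taking the supremum over $x$; a mild adjustment (take $\ve/3$, etc.) gives the strict inequality.

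For (ii) $\Longrightarrow$ (i) the first step is to produce the pointwise limit. Fix $x\in X$. For each $p\in\Gamma$ the Cauchy condition gives $p(f_m(x)-f_n(x))\le p_X(f_m-f_n)\to 0$, so $\{f_n(x)\}_{n\in\bbN}$ is a Cauchy sequence in $E$. Every Cauchy sequence in a locally convex space is bounded (its range together with any single term is absorbed by any neighborhood of $0$ by the Cauchy condition), and a bounded Cauchy sequence in a quasi-complete locally convex space converges. Hence we may set
\[
f(x)\ceq \lim_{n\to \infty}f_n(x)\in E.
\]
The second step is to upgrade this pointwise convergence to uniform convergence. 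Given $p\in\Gamma$ and $\ve\in \bbR_{>0}$, choose $n_0\in\bbN$ with $p_X(f_m-f_n)<\ve/2$ for all $m,n\ge n_0$. For any $x\in X$ and $n\ge n_0$, continuity of $p$ together with $f_m(x)\to f(x)$ gives
\[
p(f_n(x)-f(x))=\lim_{m\to\infty}p(f_n(x)-f_m(x))\le \ve/2,
\]
and taking the supremum over $x$ yields $p_X(f_n-f)\le\ve/2<\ve$, as required.

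The main (mild) obstacle is the invocation of quasi-completeness: one must remember that a Cauchy sequence in a locally convex space is automatically bounded, so that the definition of quasi-completeness (every bounded closed subset is complete, equivalently every bounded Cauchy net converges) actually applies to give a pointwise limit. Everything else is a straightforward semi-norm estimate.
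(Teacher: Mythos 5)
Your proof is correct and follows essentially the same route as the paper's: for (ii) $\Longrightarrow$ (i) you fix $x$, observe that the Cauchy condition plus quasi-completeness yields a pointwise limit $f(x)$, and then use continuity of $p$ to let $m\to\infty$ in the estimate $p(f_n(x)-f_m(x))<\ve/2$ before taking the supremum over $x$, which is exactly the paper's argument. The paper merely declares (i) $\Longrightarrow$ (ii) ``easy to see'' where you spell out the triangle-inequality step, so the substance is the same.
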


\begin{proof}
(i) $\Longrightarrow$ (ii) is easy to see. We prove the converse. For $x\in X$, the condition (ii) implies that $\{f_n(x)\}_{n\in \bbN}$ is a bounded Cauchy sequence in $E$. Thus, by the quasi-completeness of $E$, there exists a limit point of $\{f_n(x)\}_{n\in \bbN}$. For $x\in X$, choose a limit point $f(x)$ of $\{f_n(x)\}_{n\in \bbN}$, then we have a map $f\colon X\to E$. We claim that $\{f_n\}_{n\in \bbN}$ converges uniformly to $f$. 
Let $p\in \Gamma$ and $\ve\in \bbR_{>0}$. By the condition (ii), there exists $n_0\in \bbN$ such that
\[
m, n\in \bbN,\, m, n\ge n_0\ \Longrightarrow\, 
p(f_m(x)-f_n(x))<\ve/2 \quad (x\in X). 
\]
Since $q\colon E\to \bbR$ is continuous, we have by letting $\mu\to \infty$
\[
m\in \bbN,\, m\ge n_0\ \Longrightarrow\  
p(f_m(x)-f(x))\le \ve/2 \quad (x\in X), 
\]
which yields that 
\[
m\in \bbN,\, m\ge n_0\ \Longrightarrow\  
p_X(f_m-f)\le \ve/2<\ve. 
\qedhere
\]
\end{proof}

\begin{prp}\label{prp:Mtest}
Let $X$ be a set, $E$ a quasi-complete locally convex space, and $\{f_n\}_{n\in \bbN}$ a sequence of maps $f_n\colon X\to E$. For each $p\in \Gamma$, where $\Gamma$ is a fundamental system of semi-norms for $E$, if there exists a sequence $\{M_p(n)\}_{n\in \bbN}$ of non-negative real numbers satisfying the following conditions (i) and (ii), then $\sum_{n\in \bbN}f_n$ converges uniformly. 
\begin{clist}
\item 
For any $n\in \bbN$, we have $p_X(f_n)\le M_p(n)$. 

\item 
The series $\sum_{n\in \bbN}M_p(n)$ converges. 
\end{clist}
\end{prp}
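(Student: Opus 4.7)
The plan is to verify the uniform Cauchy criterion, namely condition (ii) of \cref{lem:uniCauchy}, for the sequence of partial sums $S_N\ceq \sum_{n=0}^N f_n\colon X\to E$, and then invoke that lemma to produce the uniform limit. Crucially, quasi-completeness of $E$ is already absorbed into \cref{lem:uniCauchy}, so I will not need to reopen that step.

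Fix a seminorm $p\in \Gamma$ and a positive real number $\ve>0$. For integers $N>M\ge 0$, one has $S_N-S_M=\sum_{n=M+1}^N f_n$, so the triangle inequality for $p$ applied pointwise gives
\[
p(S_N(x)-S_M(x))\le \sum_{n=M+1}^N p(f_n(x))\le \sum_{n=M+1}^N M_p(n) \qquad (x\in X),
\]
using hypothesis (i) in the last inequality. Taking the supremum over $x\in X$ yields $p_X(S_N-S_M)\le \sum_{n=M+1}^N M_p(n)$.

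Next, since the real series $\sum_{n\in\bbN}M_p(n)$ converges by hypothesis (ii), its partial sums form a Cauchy sequence in $\bbR$, so there exists $n_0\in \bbN$ such that $\sum_{n=M+1}^N M_p(n)<\ve$ whenever $N>M\ge n_0$. Combined with the previous estimate this gives $p_X(S_N-S_M)<\ve$ for all $M,N\ge n_0$, which is precisely condition (ii) of \cref{lem:uniCauchy}. That lemma then produces a map $S\colon X\to E$ to which $\{S_N\}_{N\in\bbN}$ converges uniformly, and by definition this map is the uniform sum $\sum_{n\in\bbN}f_n$.

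I do not anticipate any genuine obstacle: the delicate point, namely the extraction of a pointwise limit from a uniformly Cauchy sequence in the quasi-complete Hausdorff space $E$, has already been handled in \cref{lem:uniCauchy}. The remaining ingredients--the triangle inequality for a seminorm, the Cauchy criterion for convergent real series, and the definition of $p_X$ as a supremum--are entirely routine. The only place where one should be careful is to keep the choice of bounding sequence $\{M_p(n)\}$ dependent on $p$, since the hypothesis is stated $p$-by-$p$ and not uniformly in $p\in\Gamma$; this, however, fits naturally into the seminorm-by-seminorm formulation of uniform convergence in \cref{dfn:unifconv}.
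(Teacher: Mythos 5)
Your proof is correct and follows exactly the paper's argument: form the partial sums $s_N=\sum_{n\le N}f_n$, estimate $p_X(s_N-s_M)\le\sum_{n=M+1}^N M_p(n)$ by the triangle inequality and hypothesis (i), use the Cauchy criterion for the convergent real series $\sum M_p(n)$ from (ii), and conclude via condition (ii) of \cref{lem:uniCauchy}. No meaningful deviation from the paper's proof.
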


\begin{proof}
For $n\in \bbN$, set $s_n\ceq \sum_{k=0}^nf_k$. It is enough to show that $\{s_n\}_{n\in \bbN}$ satisfies the condition (ii) in \cref{lem:uniCauchy}. Let $p\in \Gamma$ and $\ve\in \bbR_{>0}$. Since $\{t_n\ceq \sum_{k=0}^nM_p(k)\}_{n\in \bbN}$ is a Cauchy sequence, there exists $n_0\in \bbN$ such that 
\[
m, n\in \bbN,\ m, n\ge n_0\ \Longrightarrow\ 
|t_m-t_n|<\ve. 
\]
Thus, if $m, n\in \bbN$, $n>m\ge n_0$, then
\[
p_X(s_m-s_n)\le \sum_{k=m+1}^np_X(f_k)\le 
\sum_{k=m+1}^nt_p(k)=
|t_m-t_n|<\ve. 
\qedhere
\]
\end{proof}

Now, fix a topological space $X$ and a locally convex space $E$. 

\begin{dfn}\label{dfn:locuniconv}
For a net $\{f_\lambda\}_{\lambda\in \Lambda}$ consisting of maps $f_\lambda\colon X\to E$ and a map $f\colon X\to E$, we say that $\{f_\lambda\}_{\lambda\in \Lambda}$ converges locally uniformly to $f$ if every point $x\in X$ has a neighborhood $N\subset X$ such that $\{f_\lambda|_N\}_{\lambda\in \Lambda}$ converges uniformly to $f|_N$. 
\end{dfn}

\begin{prp}\label{prp:unifconvcont}
Let $\{f_\lambda\}_{\lambda\in \Lambda}$ be a net consisting of continuous maps $f_\lambda\colon X\to E$, and $f\colon E\to X$ be a map. If $\{f_\lambda\}_{\lambda\in \Lambda}$ converges locally uniformly to $f$, then $f\colon X\to E$ is continuous. 
\end{prp}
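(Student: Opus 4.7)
The plan is to carry out the standard ``three-epsilon'' argument, pointwise in $X$ and seminorm by seminorm. Fix a point $x_0 \in X$; the goal is to show that $f$ is continuous at $x_0$. First I would fix a fundamental system $\Gamma$ of seminorms for $E$. Since basic neighborhoods of $f(x_0) \in E$ are finite intersections of translates of balls $B(p, \varepsilon)$ with $p \in \Gamma$ and $\varepsilon \in \bbR_{>0}$, and finite intersections of neighborhoods of $x_0$ in $X$ remain neighborhoods of $x_0$, it suffices to show: for each $p \in \Gamma$ and $\varepsilon \in \bbR_{>0}$, there exists a neighborhood $M \subset X$ of $x_0$ such that $p(f(x) - f(x_0)) < \varepsilon$ for all $x \in M$.

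The key step is to invoke the hypothesis of local uniform convergence at $x_0$ (\cref{dfn:locuniconv}) to produce a neighborhood $N \subset X$ of $x_0$ such that the restricted net $\{f_\lambda|_N\}_{\lambda \in \Lambda}$ converges uniformly to $f|_N$. Applying \cref{dfn:unifconv} with the chosen $p$ and with $\varepsilon/3$ in place of $\varepsilon$, one obtains $\lambda_0 \in \Lambda$ such that $p_N(f_{\lambda_0} - f) < \varepsilon/3$; in particular, $p(f_{\lambda_0}(y) - f(y)) < \varepsilon/3$ for every $y \in N$.

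Next, I would exploit the continuity of $f_{\lambda_0}$ at $x_0$. The set $f_{\lambda_0}(x_0) + B(p, \varepsilon/3)$ is a neighborhood of $f_{\lambda_0}(x_0)$ in $E$, so continuity produces a neighborhood $M \subset N$ of $x_0$ in $X$ such that $p(f_{\lambda_0}(x) - f_{\lambda_0}(x_0)) < \varepsilon/3$ for all $x \in M$. Then for every $x \in M \subset N$, the triangle inequality for $p$ gives
\[
p(f(x) - f(x_0)) \le p(f(x) - f_{\lambda_0}(x)) + p(f_{\lambda_0}(x) - f_{\lambda_0}(x_0)) + p(f_{\lambda_0}(x_0) - f(x_0)) < \frac{\varepsilon}{3} + \frac{\varepsilon}{3} + \frac{\varepsilon}{3} = \varepsilon,
\]
which establishes the desired estimate and hence the continuity of $f$ at $x_0$. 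Since $x_0 \in X$ was arbitrary, $f$ is continuous on $X$.

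There is no real obstacle here: the proof is the standard uniform-limit argument for continuous functions, and the generalization from normed spaces to locally convex spaces and from sequences to nets is essentially automatic once one works ``one seminorm at a time'' and unpacks \cref{dfn:unifconv} and \cref{dfn:locuniconv}. The only minor subtlety worth flagging is to ensure that the neighborhood $M$ produced from continuity of $f_{\lambda_0}$ is contained in $N$, which is arranged by intersecting with $N$ if necessary so that the uniform estimate on $N$ applies at the point $x$.
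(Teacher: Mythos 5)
Your proof is correct and follows essentially the same three-epsilon argument as the paper: use local uniform convergence to find a neighborhood where some $f_{\lambda_0}$ is uniformly close to $f$, use continuity of $f_{\lambda_0}$, and combine via the triangle inequality. The only cosmetic difference is that the paper assumes $\Gamma$ is a \emph{directed} fundamental system so that single balls $B(p,\ve)$ already form a neighborhood basis, whereas you handle the reduction to a single seminorm explicitly via finite intersections; both are fine.
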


\begin{proof}
Take a directed fundamental system $\Gamma$ of semi-norms for $X$, and fix $x\in X$. It is enough to show that for $\ve\in \bbR_{>0}$ and $p\in \Gamma$, there is a neighborhood $N\subset X$ of $x\in X$ such that $f(N)\subset f(x)+B(p, \ve)$. Since $\{f_\lambda\}_{\lambda\in \Lambda}$ converges locally uniformly to $f$, there exists a neighborhood $M\subset X$ of $x\in X$ and $\lambda_0\in \Lambda$ such that 
\[
\lambda\in \Lambda,\ \lambda\ge \lambda_0\ \Longrightarrow\ 
p_M(f_\lambda-f)<\ve/3. 
\]
Also, since $f_{\lambda_0}$ is continuous, one can find a neighborhood $M'\subset X$ of $x\in X$ such that $f_{\lambda_0}(M')\subset f_{\lambda_0}(x)+B(p, \ve/3)$. Thus, for $y\in M\cap M'$, 
\[
p(f(y)-f(x))\le p(f(y)-f_{\lambda_0}(y))+p(f_{\lambda_0}(y)-f_{\lambda_0}(x))+p(f_{\lambda_0}(x)-f(x))<\ve,
\]
which means that $f(M\cap M')\subset f(x)+B(p, \ve)$. 
\end{proof}

\subsection{Riemann integrals}

To define complex integrals of functions with values in a locally convex space, we need to expand the notion of Riemann integrals to locally convex settings. 

Fix a Hausdorff locally convex space $E$ over $\bbR$. For a bounded closed interval $I=[a, b]$ ($a, b\in \bbR$, $a<b$), we denote by
\[
D(I)\ceq 
\{(x_i)_{i=0}^l\mid l\in \bbZ_{>0},\, a=x_0<x_1<\cdots <x_l=b\}
\]
the set of partitions of $I$. For a partition $\Delta=\{x_i\}_{i=0}^l\in D(I)$, we use the following notations: 
\begin{itemize}
\item
We call $l(\Delta)\ceq l$ the length of $\Delta$. 

\item 
For $i\in[l]$, set $\Delta_i\ceq x_i-x_{i-1}$. We call $|\Delta|\ceq \max\{\Delta_i\mid i\in[l]\}$ the width of $\Delta$. 

\item 
For $i\in[l]$, set $I_i(\Delta)\ceq [x_{i-1}, x_i]$. We call an element $\{\xi_i\}_{i=1}^l\in \prod_{i=1}^lI_i(\Delta)$ a representative of $\Delta$. 
\end{itemize}

A pair $(\Delta, \xi)$ of a partition $\Delta\in D(I)$ and its representative is called a tagged partition. We denote by $\wt{D}(I)$ the set of tagged partitions. This set $\wt{D}(I)$ has the following two directed orders: 
\begin{itemize}
\item 
The directed order $\le$ defined by widths: 
\[
(\Delta, \xi)\le (\Delta', \xi')\rarr
|\Delta|\ge |\Delta'|. 
\]

\item 
The directed order $\subset$ defined by inclusions: 
\[
(\Delta, \xi)\subset (\Delta', \xi')\rarr
\Delta\subset \Delta'. 
\]
\end{itemize}

\begin{dfn}
Let $I\subset \bbR$ be a bounded closed interval, and $f\colon I\to E$ a function. 
\begin{enumerate}
\item 
For a tagged partition $(\Delta, \xi)\in \wt{D}(I)$, 
\[
s_f(\Delta, \xi)\ceq \sum_{i=1}^{l(\Delta)}\Delta_if(\xi_i)\in E
\]
is called a Riemann sum of $f$. 

\item 
We say that $f$ is (Riemann) integrable over $I$ if the net
\[
(\wt{D}(I), \le)\to E, \quad 
(\Delta, \xi)\mapsto s_f(\Delta, \xi)
\]
converges. In this case, we denote
\[
\int_If=\int_If(x)dx\ceq \lim_{(\Delta, \xi)\in \wt{D}(I)}s_f(\Delta, \xi). 
\]
\end{enumerate}
\end{dfn}

\begin{rmk}
Take a fundamental system $\Gamma$ of semi-norms for $E$, and let $I\subset \bbR$ be a bounded closed interval. A function $f\colon I\to E$ is integrable over $I$ if and only if there exists $s\in E$ satisfying the following condition: For $p\in \Gamma$ and $\ve\in \bbR_{>0}$, there is $\delta\in \bbR_{>0}$ such that 
\[
(\Delta, \xi)\in \wt{D}(I),\, |\Delta|<\delta\, \Longrightarrow\, 
p(s_f(\Delta, \xi)-s)<\ve. 
\]
\end{rmk}

\begin{lem}\label{lem:fsubboun}
Let $I\subset \bbR$ be a closed interval, and $f\colon I\to E$ a function. If the net
\[
(\wt{D}(I), \subset)\to E, \quad (\Delta, \xi)\mapsto s_f(\Delta, \xi)
\]
converges, then $f$ is a bounded function, i.e., $f(I)\subset E$ is bounded. 
\end{lem}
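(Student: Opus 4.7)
The plan is to fix a fundamental system $\Gamma$ of semi-norms for $E$ and show that $\sup_{x \in I} p(f(x)) < \infty$ for every $p \in \Gamma$, which is precisely the boundedness of $f(I)$. Since every convergent net in a Hausdorff locally convex space is Cauchy, the hypothesis yields, for each $p \in \Gamma$, a tagged partition $(\Delta_0, \xi_0) \in \wt{D}(I)$ such that $p(s_f(\Delta, \xi) - s_f(\Delta', \xi')) < 1$ whenever $\Delta, \Delta' \supset \Delta_0$.

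The crucial step is to apply this Cauchy estimate with both partitions equal to $\Delta_0$ itself but with representatives chosen to differ on exactly one subinterval. Write $\Delta_0 = \{a = x_0 < x_1 < \cdots < x_l = b\}$ and $\xi_0 = (\xi_0^1, \ldots, \xi_0^l)$. For two representatives $\xi, \xi'$ of $\Delta_0$ that coincide in all but the $j$-th coordinate, the Riemann sums differ by exactly $(x_j - x_{j-1})(f(\xi^j) - f(\xi'^j))$. Combining this with the Cauchy estimate yields $p(f(y) - f(y')) < 1/(x_j - x_{j-1})$ for all $y, y' \in [x_{j-1}, x_j]$, and in particular $p(f(x)) \le p(f(\xi_0^j)) + 1/(x_j - x_{j-1})$ for $x \in [x_{j-1}, x_j]$. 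Taking the maximum over $j \in [l]$ produces a finite upper bound for $\sup_{x \in I} p(f(x))$.

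No serious obstacle is anticipated: the argument relies only on the fact that convergent nets are Cauchy, together with the linearity of the Riemann sum in each tag entry. The one subtle point to highlight is that the order $\subset$ on $\wt{D}(I)$ depends only on the partition points rather than on the representatives, so both $(\Delta_0, \xi)$ and $(\Delta_0, \xi')$ legitimately lie above $(\Delta_0, \xi_0)$ for any choice of $\xi, \xi'$; this is exactly what makes it possible to vary the tags freely while keeping the underlying partition fixed at $\Delta_0$.
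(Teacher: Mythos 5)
Your proof is correct and follows essentially the same idea as the paper: fix a partition $\Delta^0$ determined by the convergence hypothesis, then vary the tag on the subinterval containing $x$ to isolate a bound on $p(f(x))$. The only small difference is that you anchor to a second Riemann sum via the Cauchy property of the net (so the terms on the other subintervals cancel in the difference), whereas the paper anchors to the limit point $s$ itself and bounds each term of the single Riemann sum separately by the triangle inequality.
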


\begin{proof}
Take a fundamental system $\Gamma$ of semi-norms for $E$. It is enough to show that for $p\in \Gamma$, there exists $t\in \bbR_{>0}$ such that $f(I)\subset B(p, t)$. By assumption, we have $s\in E$ and $\Delta^0\in D(I)$ satisfying
\[
(\Delta, \xi)\in \wt{D}(I),\, \Delta\supset \Delta^0\, \Longrightarrow\, 
p(s_f(\Delta, \xi)-s)<1. 
\]
Denote $\Delta^0=\{x_i\}_{i=0}^l$, and set $\delta\ceq \min\{\Delta^0_i\mid i\in[l]\}\in \bbR_{>0}$. Let us prove that 
\[
t\ceq \delta^{-1}
\bigl(1+p(s)+\sum_{i=1}^l\Delta^0_ip(f(x_i))\bigr)\in \bbR_{>0}
\]
satisfies $f(I)\subset B(p, t)$. 

Let $x\in I$, and take $j\in[l]$ such that $x\in I_j(\Delta^0)$. For each $i\in[l]$, set
\[
\xi_i\ceq 
\begin{cases}
x_i & (i\neq j) \\
x & (i=j),
\end{cases}
\]
then $\xi\ceq \{\xi_i\}_{i=1}^l$ is a representative of $\Delta^0$. Thus, we have
\[
p(s_f(\Delta^0, \xi))=
p(s_f(\Delta^0, \xi)-s+s)<
1+p(s). 
\]
Since $s_f(\Delta^0, \xi)=\Delta^0_jf(x)+\sum_{i=1, i\neq j}^l\Delta^0_if(x_i)$, we get
\[
\delta p(f(x))\le \Delta^0_jp(f(x))\le 
p(s_f(\Delta^0, \xi))+\sum_{\substack{i=1\\ i\neq j}}^l\Delta_i^0p(f(x_i))<
1+p(s)+\sum_{i=1}^l\Delta^0_ip(f(x_i)), 
\]
which proves $f(x)\in B(p, t)$. 
\end{proof}

\begin{prp}\label{prp:intetwonet}
Let $I\subset \bbR$ be a closed interval, and $f\colon I\to E$ a function. The followings are equivalent: 
\begin{clist}
\item 
The function $f$ is integrable over $I$. 

\item
The net 
\[
(\wt{D}(I), \subset)\to E, \quad
(\Delta, \xi)\mapsto s_f(\Delta, \xi)
\]
converges. 
\end{clist}
\end{prp}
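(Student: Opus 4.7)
The plan is to establish the two directions separately, with the implication \textup{(ii)} $\Longrightarrow$ \textup{(i)} being the substantive one.

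For \textup{(i)} $\Longrightarrow$ \textup{(ii)}, I will observe that if $\Delta\supset \Delta^0$, then each subinterval of $\Delta$ is contained in some subinterval of $\Delta^0$, so $|\Delta|\le |\Delta^0|$. Given a fundamental system $\Gamma$, $p\in \Gamma$ and $\ve\in \bbR_{>0}$, pick $\delta\in \bbR_{>0}$ witnessing the width-convergence in \textup{(i)}, then choose any $\Delta^0\in D(I)$ with $|\Delta^0|<\delta$. Any $(\Delta, \xi)\in \wt{D}(I)$ with $\Delta\supset \Delta^0$ will automatically satisfy $|\Delta|<\delta$, so $p(s_f(\Delta, \xi)-s)<\ve$. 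Hence the net converges in the $\subset$-order to the same limit.

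For \textup{(ii)} $\Longrightarrow$ \textup{(i)}, the idea is to compare an arbitrary tagged partition of small width with its common refinement by a fixed ``base'' partition $\Delta^0$ provided by \textup{(ii)}. First, \cref{lem:fsubboun} yields that $f(I)\subset E$ is bounded, so for each $p\in \Gamma$ there exists $M\in \bbR_{>0}$ with $p(f(x))\le M$ for all $x\in I$. Fix $p\in \Gamma$ and $\ve\in \bbR_{>0}$; take $s\in E$ and $\Delta^0\in D(I)$ so that $p(s_f(\Delta', \xi')-s)<\ve/2$ whenever $\Delta'\supset \Delta^0$. Denote $m\ceq l(\Delta^0)$, and choose $\delta\in \bbR_{>0}$ with $2(m-1)\delta M<\ve/2$.

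Now for any $(\Delta, \xi)\in \wt{D}(I)$ with $|\Delta|<\delta$, I set $\Delta'\ceq \Delta\cup \Delta^0\supset \Delta^0$. Each subinterval $I_i(\Delta)$ either contains no interior point of $\Delta^0$, in which case it appears unchanged as a subinterval of $\Delta'$, or it is split by some interior points of $\Delta^0$. Since $\Delta^0$ has at most $m-1$ interior points, the number of split subintervals is at most $m-1$. I then choose a representative $\xi'$ of $\Delta'$ which agrees with $\xi_i$ on each unchanged subinterval. On each split subinterval $I_i(\Delta)$ of width $\Delta_i$, the contribution $\Delta_i f(\xi_i)$ to $s_f(\Delta, \xi)$ and the corresponding partial sum in $s_f(\Delta', \xi')$ (whose widths sum to $\Delta_i$) differ in $p$-semi-norm by at most $2\Delta_i M\le 2|\Delta|M$. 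Summing over the at most $m-1$ split subintervals yields
\[
p(s_f(\Delta, \xi)-s_f(\Delta', \xi'))\le 2(m-1)|\Delta|M<\ve/2,
\]
and combining with $p(s_f(\Delta', \xi')-s)<\ve/2$ gives $p(s_f(\Delta, \xi)-s)<\ve$, as required.

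The main obstacle is the bookkeeping in the last step: one must verify that the refinement $\Delta\cup \Delta^0$ differs from $\Delta$ only on a controlled number of subintervals, and that a representative $\xi'$ can be chosen so that only those ``affected'' subintervals contribute to the difference of Riemann sums. Once this combinatorial observation is made precise, the boundedness from \cref{lem:fsubboun} furnishes the quantitative estimate.
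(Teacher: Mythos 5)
Your proof is correct and follows the same basic strategy as the paper: fix a base partition $\Delta^0$ given by (ii), refine an arbitrary small-width tagged partition $(\Delta, \xi)$ to $\Delta' = \Delta \cup \Delta^0$ while keeping tags on unchanged subintervals, and estimate the difference of Riemann sums using the boundedness from \cref{lem:fsubboun}. One small (genuine) simplification: the paper additionally forces $\delta < \min_i \Delta^0_i$ so that each subinterval of $\Delta$ is cut by \emph{at most one} interior point of $\Delta^0$, which is needed to define the injection $[m]\setminus M \to [l]$, $j \mapsto i_j$; your argument instead allows several cut points per subinterval and simply notes that the number of split subintervals is at most the number of interior points $l(\Delta^0)-1$, with a per-subinterval bound of $2\Delta_i M$ that is insensitive to how many pieces arise. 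This removes the constraint $\delta<\min_i\Delta^0_i$ and the associated bijective bookkeeping; the price is a slightly coarser constant ($2M$, a bound on $p\circ f$, instead of the oscillation $a=\sup_{x,y}p(f(x)-f(y))$ used in the paper), which does not matter.
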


\begin{proof}
(i) $\Longrightarrow$ (ii) is clear since $\Delta\subset \Delta'$ implies $|\Delta|\ge |\Delta'|$. Let us prove the converse. Take a fundamental system $\Gamma$ of semi-norms for $E$, and denote by $s$ the limit point of the net $(\wt{D}(I), \subset)\to E$, $(\Delta, \xi)\mapsto s_f(\Delta, \xi)$. It is enough to show that for $p\in \Gamma$ and $\ve\in \bbR_{>0}$, there exists $\delta\in \bbR_{>0}$ such that 
\[
(\Delta, \xi)\in \wt{D}(I),\, \Delta<\delta\, \Longrightarrow\, 
p(s_f(\Delta, \xi)-s)<\ve. 
\]
By assumption, there is $\Delta^0\in D(I)$ satisfying 
\[
(\Delta, \xi)\in \wt{D}(I),\, \Delta\supset \Delta^0\, \Longrightarrow\, 
p(s_f(\Delta, \xi)-s)<\ve/2.
\]
Note that by \cref{lem:fsubboun}, we have $a\ceq \sup_{x, y\in I}p(f(x)-f(y))\in \bbR_{\ge0}$. We prove that 
\[
\delta\ceq \min\{\Delta^0_i, \ve/2l(\Delta^0)(a+1)\mid i\in[l(\Delta^0)]\}\in \bbR_{>0}
\]
satisfies the condition in the claim. 

For $(\Delta, \xi)\in \wt{D}(I)$ such that $|\Delta|<\delta$, set $\Delta'\ceq \Delta^0\cup \Delta$, and denote
\[
\Delta^0=\{x_i\}_{i=0}^l, \quad \Delta=\{y_j\}_{j=0}^m, \quad
\Delta'=\{z_k\}_{k=0}^n. 
\]
For each $i\in\{0, 1, \ldots, l\}$ (resp. $j\in\{0, 1, \ldots, m\}$), there exists a unique element $p_i\in\{0, 1, \ldots, n\}$ ($q_j\in \{0, 1, \ldots, n\}$) satisfying $z_{p_i}=x_i$ (resp. $z_{q_j}=y_j$). Set
\[
M\ceq \{j\in[m]\mid \forall i\in[l],\, x_i\notin \mathring{I}_j(\Delta)\}, \quad
N\ceq \{k\in[n]\mid \exists j\in[m],\, z_k=y_j\land z_{k-1}=y_{j-1}\},
\]
where $\mathring{I}_j(\Delta)$ stands for the interior of $I_j(\Delta)\subset \bbR$, then $M\to N$, $j\mapsto q_j$ is a bijection. In particular, one can define a representative $\xi'=\{\xi'_k\}_{k=1}^n$ of $\Delta'$ by
\[
\xi'_k\ceq
\begin{cases}
\xi_j & (k\in N \text{ with } j\in M, k=q_j) \\
z_k & (k\in[n]\bs N).
\end{cases}
\]
Since $\Delta'\supset \Delta^0$, we have 
\[
p(s_f(\Delta', \xi')-s)<\ve/2. 
\]
Thus, it suffices to prove that $p(s_f(\Delta, \xi)-s_f(\Delta', \xi'))<\ve/2$. 

By the bijectively of $M\to N$, $j\mapsto q_j$ and the definition of $\xi'$, note that 
\begin{align*}
s_f(\Delta, \xi)-s_f(\Delta', \xi')
&=
\sum_{j\in M}(y_j-y_{j-1})f(\xi_j)+\sum_{j\in[m]\bs M}(y_j-y_{j-1})f(\xi_j)\\
&\quad 
-\sum_{k\in N}(z_k-z_{k-1})f(\xi_k')-\sum_{k\in[n]\bs N}(z_k-z_{k-1})f(\xi_k')\\
&=
\sum_{j\in[m]\bs M}(y_i-y_{j-1})f(\xi_j)-\sum_{k\in[n]\bs N}(z_k-z_{k-1})f(z_k). 
\end{align*}
For $j\in[m]\bs M$, since $|\Delta|<\delta\le \min\{\Delta^0_i\mid i\in[l]\}$, there exists a unique element $i_j\in[l]$ satisfying $x_{i_j}\in \mathring{I}_j(\Delta)$. Hence, we have a bijection
\[
[m]\bs M\to \{i\in[l]\mid \exists j\in[m],\, x_i\in \mathring{I}_j(\Delta)\}, \quad j\mapsto i_j. 
\]
Set
\[
N_1\ceq \{k\in[n]\mid \forall j\in[m],\, z_k\neq y_j\}, \quad
N_2\ceq \{k\in[n]\mid \exists j\in[m],\, z_k=y_j\land z_{k-1}\neq y_{j-1}\}
\]
then $[n]\bs N=N_1\sqcup N_2$, and both
\[
[m]\bs M\to N_1, \quad j\mapsto p_{i_j}, \quad 
[m]\bs M\to N_2, \quad j\mapsto q_j
\]
are bijective. Thus, 
\begin{align*}
\sum_{k\in[n]\bs N}(z_k-z_{k-1})f(z_k)
&=
\sum_{j\in[m]\bs M}(z_{p_{i_j}}-z_{p_{i_j}-1})f(z_{p_{i_j}})+\sum_{j\in[m]\bs M}(z_{q_j}-z_{q_j-1})f(z_{q_j})\\
&=
\sum_{j\in [m]\bs M}(x_{i_j}-y_{j-1})f(x_{i_j})+\sum_{j\in[m]\bs M}(y_j-x_{i_j})f(y_j), 
\end{align*}
and hence
\[
s_f(\Delta, \xi)-s_f(\Delta', \xi')=
\sum_{j\in[m]\bs M}(y_j-x_{i_j})(f(\xi_j)-f(y_j))+\sum_{j\in[m]\bs M}(x_{i_j}-y_{j-1})(f(\xi_j)-f(x_{i_j})).
\]
Therefore, by the injectively of $[m]\bs M\to [l]$, $j\mapsto i_j$ and $|\Delta|<\delta\le \ve/2l(a+1)$, we have
\[
p(s_f(\Delta, \xi)-s_f(\Delta', \xi'))\le 
\sum_{j\in[m]\bs M}(y_j-y_{j-1})a\le \#([m]\bs M)|\Delta|a<\ve/2. 
\qedhere
\]
\end{proof}

\begin{lem}\label{lem:sfboun}
Let $I\subset \bbR$ be a bounded closed interval. For a bounded function $f\colon I\to E$, the set $\{s_f(\Delta, \xi)\mid (\Delta, \xi)\in \wt{D}(I)\}\subset E$ is bounded. 
\end{lem}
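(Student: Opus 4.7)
The plan is to reduce boundedness of the set of Riemann sums in $E$ to a pointwise seminorm estimate, exploiting that boundedness in a locally convex space is tested against any fundamental system $\Gamma$ of seminorms: a subset $B\subset E$ is bounded if and only if $\sup_{x\in B}p(x)<\infty$ for every $p\in\Gamma$.

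Write $I=[a,b]$. First I would fix a fundamental system $\Gamma$ of seminorms for $E$ and, for each $p\in\Gamma$, use the hypothesis that $f(I)\subset E$ is bounded to obtain
\[
M_p\ceq \sup_{x\in I}p(f(x))<\infty.
\]
Next, for an arbitrary tagged partition $(\Delta,\xi)\in\wt{D}(I)$ with $\Delta=\{x_i\}_{i=0}^{l(\Delta)}$ and representative $\xi=\{\xi_i\}_{i=1}^{l(\Delta)}$, I would apply the triangle inequality and positive homogeneity of the seminorm $p$ to the Riemann sum:
\[
p(s_f(\Delta,\xi))
=p\Bigl(\sum_{i=1}^{l(\Delta)}\Delta_i f(\xi_i)\Bigr)
\le\sum_{i=1}^{l(\Delta)}\Delta_i\, p(f(\xi_i))
\le M_p\sum_{i=1}^{l(\Delta)}\Delta_i
= M_p(b-a).
\]

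Since the upper bound $M_p(b-a)$ depends only on $p$ and $I$ (not on the choice of $(\Delta,\xi)$), I would conclude
\[
\sup_{(\Delta,\xi)\in\wt{D}(I)}p(s_f(\Delta,\xi))\le M_p(b-a)<\infty
\]
for every $p\in\Gamma$, which is exactly the statement that $\{s_f(\Delta,\xi)\mid (\Delta,\xi)\in\wt{D}(I)\}\subset E$ is bounded. There is no serious obstacle here; the only subtlety is to remember that boundedness in $\LCS$ is characterized seminorm-wise and that the scalars $\Delta_i$ are non-negative so the triangle inequality collapses cleanly to the uniform bound $M_p$.
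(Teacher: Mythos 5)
Your proof is correct and follows essentially the same approach as the paper: both bound $p(s_f(\Delta,\xi))$ by the triangle inequality and the uniform seminorm bound $\sup_{x\in I}p(f(x))$, yielding the $(\Delta,\xi)$-independent estimate $(b-a)\sup_{x\in I}p(f(x))$.
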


\begin{proof}
Take a fundamental system $\Gamma$ of semi-norms for $E$, and let $p\in \Gamma$. For $(\Delta, \xi)\in \wt{D}(I)$, we have
\[
p(s_f(\Delta, \xi))\le 
\sum_{i=1}^{l(\Delta)}\Delta p(f(\xi_i))\le 
\bigl(\sum_{i=1}^{l(\Delta)}\Delta_i\bigr)\sup_{x\in I}p(f(x))=
(b-a)\sup_{x\in I} p(f(x)), 
\]
which proves the lemma. 
\end{proof}

The proof of the following proposition based on that of \cite[Proposition 2.2]{WYY}. 

\begin{prp}\label{prp:intequiv}
Let $E$ be a quasi-complete Hausdorff locally convex space, and take a fundamental system $\Gamma$ of semi-norms for $E$. For a bounded closed interval $I\subset \bbR$ and a function $f\colon I\to E$, the following conditions are equivalent: 
\begin{clist}
\item 
The function $f$ is integrable over $I$. 

\item 
For $p\in \Gamma$ and $\ve\in \bbR_{>0}$, there exists $\delta\in \bbR_{>0}$ such that 
\[
(\Delta, \xi), (\Delta', \xi')\in \wt{D}(I),\, |\Delta|, |\Delta'|<\delta\, \Longrightarrow\, 
p(s_f(\Delta, \xi)-s_f(\Delta', \xi'))<\ve. 
\]

\item 
For $p\in\Gamma$ and $\ve\in \bbR_{>0}$, there exists $\Delta_0\in D(I)$ such that 
\[
(\Delta, \xi), (\Delta', \xi')\in \wt{D}(I),\, \Delta, \Delta'\supset \Delta^0\, \Longrightarrow\, 
p(s_f(\Delta, \xi)-s_f(\Delta', \xi'))<\ve. 
\]

\item 
For $p\in \Gamma$ and $\ve\in \bbR_{>0}$, there exists $\Delta^0\in D(I)$ such that if $\xi$ and $\xi'$ are representatives of $\Delta^0$, then 
\[
p(s_f(\Delta^0, \xi)-s_f(\Delta^0, \xi'))<\ve. 
\]
\end{clist}
\end{prp}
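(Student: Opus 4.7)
The plan is to prove (i) $\Rightarrow$ (ii) $\Rightarrow$ (iii) $\Rightarrow$ (iv) $\Rightarrow$ (i), of which only the last implication requires real work. The implication (i) $\Rightarrow$ (ii) is a standard $\varepsilon/2$-argument from the convergence of the net $(\wt{D}(I), \le) \to E$. For (ii) $\Rightarrow$ (iii), fix any $\Delta^0$ with $|\Delta^0| < \delta$ (e.g., a uniform partition), and observe that every refinement $\Delta \supset \Delta^0$ satisfies $|\Delta| \le |\Delta^0| < \delta$, so (ii) applies. The implication (iii) $\Rightarrow$ (iv) is immediate by taking $\Delta = \Delta' = \Delta^0$.

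The main direction (iv) $\Rightarrow$ (i) splits into three steps. First I would establish that $f$ is bounded in $E$: for a fixed $p \in \Gamma$, applying (iv) with $\varepsilon = 1$ yields $\Delta^0 = \{x_i\}_{i=0}^l$; comparing two representatives that differ only in the $i$-th coordinate then gives $\Delta_i^0 \, p(f(y) - f(y')) < 1$ for all $y, y' \in I_i(\Delta^0)$. Hence $\sup_{x \in I_i(\Delta^0)} p(f(x)) < \infty$ for each $i$, and so $f$ is $p$-bounded on $I$; since $p \in \Gamma$ is arbitrary, $f$ is bounded in $E$. By \cref{lem:sfboun} the whole net of Riemann sums is then bounded.

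The crucial combinatorial step is expressing every refinement Riemann sum as a convex combination of Riemann sums on the coarse partition. Concretely, for $\Delta \supset \Delta^0$ with the subintervals of $\Delta$ lying inside $I_i(\Delta^0)$ indexed by $J_i \subset [l(\Delta)]$, expanding the product $\prod_{i=1}^l \sum_{k(i) \in J_i} (y_{k(i)} - y_{k(i)-1})/\Delta_i^0 = 1$ and reorganizing yields
\[
s_f(\Delta, \xi) = \sum_{k \in \prod_i J_i} \lambda_k \, s_f(\Delta^0, \eta(k)), \quad \lambda_k = \prod_{i=1}^l \frac{y_{k(i)} - y_{k(i)-1}}{\Delta_i^0}, \quad \eta(k) = (\xi_{k(1)}, \ldots, \xi_{k(l)}),
\]
with $\lambda_k \ge 0$ and $\sum_k \lambda_k = 1$. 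Applying (iv) with $\varepsilon/2$ in place of $\varepsilon$ to choose $\Delta^0$, any two refinements $\Delta_1, \Delta_2 \supset \Delta^0$ with representatives $\xi_1, \xi_2$ give
\[
s_f(\Delta_1, \xi_1) - s_f(\Delta_2, \xi_2) = \sum_{k, k'} \lambda_k \lambda'_{k'} \bigl(s_f(\Delta^0, \eta_1(k)) - s_f(\Delta^0, \eta_2(k'))\bigr),
\]
whose $p$-seminorm is at most $\varepsilon/2 < \varepsilon$ by sub-additivity of $p$ and the identity $\sum_{k, k'} \lambda_k \lambda'_{k'} = 1$. This is the $\subset$-Cauchy condition for the net of Riemann sums.

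Finally, a bounded $\subset$-Cauchy net in a quasi-complete Hausdorff locally convex space converges (its image lies in a bounded closed subset, which is complete), so $(\wt{D}(I), \subset) \to E$ converges, and \cref{prp:intetwonet} then upgrades this to convergence of the width-directed net, yielding (i). The main obstacle is the combinatorial identity expressing a refined Riemann sum as a convex combination of coarse Riemann sums indexed by sections $k \in \prod_i J_i$; once this identity is available, the strict inequality $< \varepsilon$ in the Cauchy estimate follows effortlessly from convexity, and the rest of the argument reduces to the quasi-completeness of $E$ and an appeal to \cref{prp:intetwonet}.
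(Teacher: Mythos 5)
Your proof is correct and takes essentially the same approach as the paper: the crucial implication (iv)\,$\Rightarrow$\,(i) is established by expressing Riemann sums on a refinement $\Delta\supset\Delta^0$ as convex combinations of Riemann sums on $\Delta^0$ (the paper phrases this as $s_f(\Delta,\xi)-s_f(\Delta^0,\xi^0)\in\conv(A-A)$, you write the explicit product decomposition over $\prod_i J_i$ — these are the same argument), and then invoking \cref{lem:sfboun}, \cref{prp:intetwonet}, and quasi-completeness. One small improvement you make over the paper's write-up: you establish the boundedness of $f$ directly from (iv) by comparing two tags of $\Delta^0$ that differ only in one coordinate, whereas the paper cites \cref{lem:sfboun} (which presupposes $f$ bounded) for the (i)\,$\Leftrightarrow$\,(iii) equivalence without explicitly flagging that boundedness must first be deduced from the Cauchy condition by an argument along the lines of \cref{lem:fsubboun} or yours.
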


\begin{proof}
(i) $\Longleftrightarrow$ (ii): The condition (ii) means that the net $(\wt{D}(I), \le)\to E$, $(\Delta, \xi)\mapsto s_f(\Delta, \xi)$ is a Cauchy net. Thus, this follows from \cref{lem:sfboun} and the quasi-completeness of $E$. 

(i) $\Longleftrightarrow$ (iii): The condition (iii) means that the net $(\wt{D}(I), \subset)\to E$, $(\Delta, \xi)\mapsto s_f(\Delta, \xi)$ is a Cauchy net. Thus, this follows from \cref{prp:intetwonet}, \cref{lem:sfboun} and the quasi-completeness of $E$. 

(iii) $\Longrightarrow$ (iv): This is trivial. 

(iv) $\Longrightarrow$ (iii): 
For $p\in \Gamma$ and $\ve\in \bbR_{>0}$, there exists $\Delta^0\in D(I)$ such that if $\xi$ and $\xi'$ are representative of $\Delta^0$, then $p(s_f(\Delta^0, \xi)-s_f(\Delta^0, \xi'))<\ve/2$. Fix a representative $\xi^0\ceq \{\xi^0_i\}_{i=1}^l$ of $\Delta^0$. It is enough to show that 
\[
(\Delta, \xi)\in \wt{D}(I),\, \Delta\supset \Delta^0\, \Longrightarrow\, 
p(s_f(\Delta, \xi)-s_f(\Delta^0, \xi^0))<\ve/2. 
\]
First, set 
\[
A\ceq \sum_{i=1}^lA_i, \quad
A_i\ceq \{\Delta^0_if(x)\mid x\in I_i(\Delta^0)\}, 
\]
then clearly, we have $p(a-b)<\ve/2$ for $a, b\in A$. Hence, it follows that $p(a)<\ve/2$ for all $a\in \conv(A-A)$. Now, take $(\Delta, \xi)\in \wt{D}(I)$ such that $\Delta\supset \Delta^0$, and denote
\[
\Delta^0=\{x_i\}_{i=0}^l, \quad \Delta=\{y_j\}_{j=0}^m. 
\]
For each $i\in[l]$, there is a unique element $j_i\in[m]$ such that $y_{j_i}=x_i$. 
Since $[m]=\bigsqcup_{i=1}^l\{j\in[m]\mid j_{i-1}<j\le j_i\}$, we have
\begin{align*}
s_f(\Delta, \xi)-s_f(\Delta^0, \xi^0)
&=
\sum_{j=1}^m\Delta_jf(\xi_j)-\sum_{i=1}^l\Delta_i^0f(\xi_i^0)=
\sum_{i=1}^l\sum_{j_{i-1}<j\le j_i}\Delta_jf(\xi_j)-\sum_{i=1}^l\Delta_i^0f(\xi_i^0)\\
&=
\sum_{i=1}^l\sum_{j_{i-1}<j\le j_i}\frac{\Delta_j}{\Delta_i^0}(\Delta_i^0f(\xi_j)-\Delta_i^0f(\xi_i^0)).
\end{align*}
Thus,
\[
s_f(\Delta, \xi)-s_f(\Delta^0, \xi^0)\in
\sum_{i=1}^l\conv(A_i-A_i)=\conv\bigl(\sum_{i=1}^l(A_i-A_i)\bigr)\subset \conv(A-A). 
\]
Now the claim is clear. 
\end{proof}

Let $I\subset \bbR$ be a bounded closed interval. For a bounded function $f\colon I\to E$, a semi-norm $p\colon E\to \bbR$ and $\Delta\in D(I)$, define
\[
a_f(p, \Delta)\ceq 
\sum_{i=1}^{l(\Delta)}\Delta_i\sup_{x, y\in I_i(\Delta)}p(f(x)-f(y))\in \bbR_{\ge0}. 
\]

\begin{lem}\label{lem:afpD}
Let $E$ be a quasi-complete Hausdorff locally convex space, and take a fundamental system $\Gamma$ of semi-norms for $E$. Also, let $I\subset \bbR$ be a bounded closed interval, and $f\colon I\to E$ a bounded function. The function $f$ is integrable over $I$ if the following condition holds: For $p\in \Gamma$ and $\ve\in \bbR_{>0}$, there exists $\Delta\in D(I)$ such that $a_f(p, \Delta)<\ve$. 
\end{lem}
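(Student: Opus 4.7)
The plan is to reduce the hypothesis directly to condition (iv) of \cref{prp:intequiv} and then invoke the equivalence stated there (which is available because $E$ is assumed quasi-complete and Hausdorff).

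So the first step is: fix $p \in \Gamma$ and $\ve \in \bbR_{>0}$, and use the hypothesis to pick a partition $\Delta^0 \in D(I)$ with $a_f(p, \Delta^0) < \ve$. The second step is to check that this $\Delta^0$ fulfils the requirement in condition (iv) of \cref{prp:intequiv}. Given two representatives $\xi = \{\xi_i\}_{i=1}^{l(\Delta^0)}$ and $\xi' = \{\xi'_i\}_{i=1}^{l(\Delta^0)}$ of $\Delta^0$, we estimate
\[
p\bigl(s_f(\Delta^0, \xi) - s_f(\Delta^0, \xi')\bigr)
= p\Bigl(\sum_{i=1}^{l(\Delta^0)} \Delta^0_i\bigl(f(\xi_i) - f(\xi'_i)\bigr)\Bigr)
\le \sum_{i=1}^{l(\Delta^0)} \Delta^0_i\, p\bigl(f(\xi_i) - f(\xi'_i)\bigr).
\]
Since $\xi_i, \xi'_i \in I_i(\Delta^0)$ for each $i$, the $i$-th summand is bounded above by $\Delta^0_i \sup_{x, y \in I_i(\Delta^0)} p(f(x) - f(y))$, so the whole sum is bounded by $a_f(p, \Delta^0) < \ve$. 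This is exactly condition (iv) of \cref{prp:intequiv}.

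The third and last step is simply to apply the equivalence (i) $\Longleftrightarrow$ (iv) of \cref{prp:intequiv} to conclude that $f$ is integrable over $I$. Note that we are using the boundedness of $f$ and the quasi-completeness of $E$ only through \cref{prp:intequiv}; no further analytic work is needed.

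There is no substantive obstacle: the argument is just a one-line seminorm estimate feeding into an already-proved Cauchy criterion. The only thing worth being careful about is that the upper-sum quantity $a_f(p, \Delta^0)$ uses the supremum $\sup_{x, y \in I_i(\Delta^0)} p(f(x) - f(y))$, which is finite precisely because $f$ is bounded (this keeps $a_f(p, \Delta^0) \in \bbR_{\ge 0}$ and makes the estimate above meaningful); otherwise the chain of inequalities is immediate from the triangle inequality for $p$.
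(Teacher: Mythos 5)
Your proof is correct and matches the paper's argument essentially verbatim: both reduce the hypothesis to condition (iv) of \cref{prp:intequiv} via the same triangle-inequality estimate $p(s_f(\Delta^0,\xi)-s_f(\Delta^0,\xi'))\le a_f(p,\Delta^0)<\ve$ and then invoke the equivalence (i) $\Longleftrightarrow$ (iv). Nothing to add.
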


\begin{proof}
For $p\in \Gamma$ and $\ve\in \bbR_{>0}$, take $\Delta\in D(I)$ such that $a_f(p, \Delta)<\ve$. Then, if $\xi$ and $\xi'$ are representative of $\Delta$, we have
\[
p(s_f(\Delta, \xi)-s_f(\Delta, \xi'))\le 
\sum_{i=1}^{l(\Delta)}\Delta_ip(f(\xi_i)-f(\xi'_i))\le 
a_f(p, \Delta)<\ve. 
\]
Thus, the condition (iv) in \cref{prp:intequiv} holds. 
\end{proof}

\begin{prp}
Let $E$ be a quasi-complete Hausdorff locally convex space, and $I\subset \bbR$ a bounded closed interval. Any continuous function $f\colon I\to E$ is integrable. 
\end{prp}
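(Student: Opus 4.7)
The plan is to apply \cref{lem:afpD}. Its hypothesis has two ingredients: that $f$ is bounded on $I$, and that for every seminorm $p$ in a fundamental system $\Gamma$ and every $\ve>0$ one can find a partition $\Delta\in D(I)$ with $a_f(p,\Delta)<\ve$. Both will follow from a standard compactness argument applied to the continuous function $f$ on the compact interval $I$.

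First, I would establish boundedness of $f$. Since $I$ is compact and $f$ is continuous, $f(I)\subset E$ is compact. In a Hausdorff locally convex space every compact subset is bounded (for each $p\in\Gamma$ the continuous function $p\circ f$ attains a finite maximum on $I$, so $\sup_{x\in I}p(f(x))<\infty$). Thus $f$ is bounded in the sense required by \cref{lem:afpD}.

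Next, I would prove a locally-convex version of uniform continuity: for every $p\in\Gamma$ and every $\ve>0$ there exists $\delta>0$ such that $x,y\in I$ with $|x-y|<\delta$ implies $p(f(x)-f(y))<\ve$. For each $x\in I$ continuity of $f$ at $x$ yields $\delta_x>0$ with $p(f(y)-f(x))<\ve/2$ whenever $|y-x|<\delta_x$. The open balls $\{y:|y-x|<\delta_x/2\}$ form an open cover of the compact set $I$, so finitely many suffice, say with centres $x_1,\ldots,x_n$ and radii $\delta_{x_1}/2,\ldots,\delta_{x_n}/2$; take $\delta\ceq\min_i\delta_{x_i}/2$. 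If $|x-y|<\delta$, pick $x_i$ with $|x-x_i|<\delta_{x_i}/2$, so $|y-x_i|\le|y-x|+|x-x_i|<\delta_{x_i}$; the triangle inequality on $p$ gives $p(f(x)-f(y))\le p(f(x)-f(x_i))+p(f(x_i)-f(y))<\ve$.

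Finally, given $p\in\Gamma$ and $\ve>0$, write $I=[a,b]$ and apply the uniform continuity statement with $\ve/(b-a+1)$ in place of $\ve$ to obtain some $\delta>0$. Choose any partition $\Delta\in D(I)$ with $|\Delta|<\delta$. For each $i\in[l(\Delta)]$ and all $x,y\in I_i(\Delta)$ we have $|x-y|\le\Delta_i<\delta$, so $p(f(x)-f(y))\le \ve/(b-a+1)$, whence $\sup_{x,y\in I_i(\Delta)}p(f(x)-f(y))\le\ve/(b-a+1)$. Summing gives
\[
a_f(p,\Delta)\le\sum_{i=1}^{l(\Delta)}\Delta_i\cdot\frac{\ve}{b-a+1}=\frac{(b-a)\ve}{b-a+1}<\ve.
\]
By \cref{lem:afpD} we conclude that $f$ is integrable over $I$. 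The only subtle step is the uniform continuity argument, but it is entirely standard; the quasi-completeness of $E$ enters only through its use in \cref{lem:afpD}, which we are invoking as a black box.
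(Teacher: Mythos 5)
Your proposal is correct and takes essentially the same approach as the paper: both invoke \cref{lem:afpD} and deduce the required smallness of $a_f(p,\Delta)$ from uniform continuity of $f$ on the compact interval $I$. The only difference is that you spell out the boundedness of $f$ and the locally-convex uniform-continuity argument in detail, whereas the paper asserts them; this is a difference of exposition, not of method.
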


\begin{proof}
Take a fundamental system $\Gamma$ of semi-norms for $E$, and denote $I=[a, b]$. Since $I\subset \bbR$ is compact and $f$ is continuous, the function $f$ is uniformly continuous. Hence, for $p\in \Gamma$ and $\ve\in \bbR_{>0}$, there exists $\delta\in \bbR_{>0}$ such that 
\[
x, y\in I,\, |x-y|<\delta\, \Longrightarrow\, 
p(f(x)-f(y))<\frac{\ve}{2(b-a)}. 
\]
Take a partition $\Delta\in D(I)$ satisfying $|\Delta|<\delta$, then
\[
a_f(p, \Delta)\le \frac{\ve}{2(b-a)}\sum_{i=1}^{l(\Delta)}\Delta_i=\frac{\ve}{2}<\ve. 
\]
Thus, the proposition follows from \cref{lem:afpD}. 
\end{proof}

\begin{lem}\label{lem:inttriaineq}
Let $I\subset \bbR$ be a bonded closed interval. For a continuous function $f\colon I\to E$ and a continuous semi-norm $p\colon E\to \bbR$, we have
\[
p\bigl(\int_If(x)dx\bigr)\le \int_Ip(f(x))dx
\]
\end{lem}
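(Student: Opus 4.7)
The plan is to obtain the desired inequality by passing to the limit of Riemann sums, where the corresponding inequality holds tautologically from subadditivity of the semi-norm. First I would observe that both integrals in the statement exist: the function $f\colon I\to E$ is continuous on a bounded closed interval, hence integrable by the preceding proposition, and the real-valued function $p\circ f\colon I\to\bbR$ is continuous as the composition of continuous maps, hence integrable.

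Next, for an arbitrary tagged partition $(\Delta,\xi)\in\wt{D}(I)$, the subadditivity and absolute homogeneity of the semi-norm $p$ yield
\[
p(s_f(\Delta,\xi))=p\Bigl(\sum_{i=1}^{l(\Delta)}\Delta_i f(\xi_i)\Bigr)\le\sum_{i=1}^{l(\Delta)}\Delta_i p(f(\xi_i))=s_{p\circ f}(\Delta,\xi),
\]
using $\Delta_i>0$. This pointwise inequality on the directed set $(\wt{D}(I),\le)$ is the heart of the proof.

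Now I would pass to the limit along $(\wt{D}(I),\le)$. By integrability of $f$, the net $\{s_f(\Delta,\xi)\}$ converges to $\int_I f(x)\,dx$ in $E$, and since $p\colon E\to\bbR$ is continuous, $\{p(s_f(\Delta,\xi))\}$ converges to $p(\int_I f(x)\,dx)$ in $\bbR$. Likewise, $\{s_{p\circ f}(\Delta,\xi)\}$ converges to $\int_I p(f(x))\,dx$. Since weak inequalities of real numbers are preserved under limits of convergent nets, the pointwise inequality above passes to the claimed inequality.

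The main obstacle is really just being careful about what kind of limit is being taken. There is no substantive difficulty once one notes that both nets in question are indexed by the same directed set $(\wt{D}(I),\le)$, so that the limit of a pointwise inequality is immediate — no subnet argument or refinement is needed. The use of continuity of $p$ (rather than mere lower semi-continuity, which would also suffice) is automatic since $p$ is assumed to be a continuous semi-norm.
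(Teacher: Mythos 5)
Your proof is correct and follows essentially the same route as the paper: establish $p(s_f(\Delta,\xi)) \le s_{p\circ f}(\Delta,\xi)$ via subadditivity and absolute homogeneity of the semi-norm, then pass to the limit along the directed set $(\wt{D}(I),\le)$ using continuity of $p$. The only difference is that you spell out the preliminary integrability observations and the preservation of weak inequalities under net limits, which the paper leaves implicit.
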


\begin{proof}
For a tagged partition $(\Delta, \xi)\in \wt{D}(I)$, notice that
\[
p(s_f(\Delta, \xi))\le s_{pf}(\Delta, \xi). 
\]
Since $p\colon E\to \bbR$ is continuous, we have
\[
p\bigl(\int_If(x)dx\bigr)=\lim_{(\Delta, \xi)\in \wt{D}(I)}p(s_f(\Delta, \xi))\le
\lim_{(\Delta, \xi)\in \wt{D}(I)}s_{pf}(\Delta, \xi)=
\int_Ip(f(x))dx. 
\qedhere
\]
\end{proof}

Let $I\subset \bbR$ be an interval, and $f\colon I\to E$ a continuous function. For $a, b\in I$, denote
\[
\int_a^bf(x)dx\ceq 
\begin{cases}
\displaystyle\int_{[a, b]}f(x)dx & (a\le b) \vspace{0.2cm}\\
-\displaystyle\int_{[b, a]}f(x)dx  & (a>b). 
\end{cases}
\]
Then, for a continuous semi-norm $p\colon E\to \bbR$, we have
\[
p\bigl(\int_a^bf(x)dx\bigr)\le \abs{\int_a^bp(f(x))dx}. 
\]

\begin{rmk}
Let $I\subset \bbR$ be an interval, and $f\colon I\to E$ a function. As in \cref{dfn:holofunc}, we can define (real) differentiability of $f$, and all results in \cref{ss:holofunc} hold with minor modifications. 
\end{rmk}

\begin{prp}
$I\subset \bbR$ be an interval and $x_0\in I$. For a continuous function $f\colon I\to E$, the function
\[
I\to E, \quad x\mapsto \int_{x_0}^xf(t)dt
\]
is differentiable, and we have
\[
\frac{d}{dx}\int_{x_0}^xf(t)dt=f(x) \quad (x\in I). 
\]
\end{prp}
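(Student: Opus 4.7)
The plan is to mimic the classical proof of the Fundamental Theorem of Calculus, using the seminorm triangle inequality from \cref{lem:inttriaineq} in place of the usual absolute value estimate. Fix $x \in I$ and a small $h \neq 0$ with $x+h \in I$; I want to show that the difference quotient
\[
\frac{1}{h}\Bigl(\int_{x_0}^{x+h} f(t)\,dt - \int_{x_0}^x f(t)\,dt\Bigr)
\]
converges to $f(x)$ in $E$ as $h\to 0$.

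The first step is to establish the additivity $\int_{x_0}^{x+h} f - \int_{x_0}^x f = \int_x^{x+h} f$. This is a routine Riemann-sum argument: for a partition $\Delta$ of $[x_0, x+h]$ (say, for $h>0$) that contains $x$ as a node, the tagged Riemann sum splits as the sum of Riemann sums on $[x_0, x]$ and $[x, x+h]$; passing to the net $(\wt D, \subset)$ using \cref{prp:intetwonet} and using the sign convention then gives additivity in full generality. I also need the trivial identity $\int_x^{x+h} f(x)\,dt = h\cdot f(x)$, which follows directly from the fact that every Riemann sum of the constant function equals $h\cdot f(x)$.

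Combining these two, one writes
\[
\frac{1}{h}\int_{x_0}^{x+h} f(t)\,dt - \frac{1}{h}\int_{x_0}^x f(t)\,dt - f(x) = \frac{1}{h}\int_x^{x+h}\bigl(f(t)-f(x)\bigr)\,dt.
\]
Let $p\colon E\to \bbR$ be any continuous seminorm in a fundamental system $\Gamma$ for $E$, and let $\ve > 0$. By continuity of $f$ at $x$, choose $\delta>0$ so that $t\in I$, $|t-x|<\delta$ implies $p(f(t)-f(x)) < \ve$. Then for $0<|h|<\delta$, \cref{lem:inttriaineq} (applied with the sign convention) yields
\[
p\Bigl(\frac{1}{h}\int_x^{x+h}(f(t)-f(x))\,dt\Bigr) \le \frac{1}{|h|}\Bigl|\int_x^{x+h} p(f(t)-f(x))\,dt\Bigr| \le \ve,
\]
using the same estimate $\int_x^{x+h} c\,dt = h\cdot c$ for the constant $c = \ve$ majorizing the integrand. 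Since $p$ and $\ve$ are arbitrary, the difference quotient converges to $f(x)$ in $E$, which is exactly the definition of $(d/dx)\int_{x_0}^x f(t)\,dt = f(x)$.

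The main obstacle is really the preliminary additivity lemma, since it has not been spelled out earlier in the appendix; I would insert a short paragraph deriving it from \cref{prp:intetwonet} by restricting to partitions containing a fixed node. After that, the proof is a direct transcription of the scalar argument, with the one subtlety being the need to use continuous seminorms (which is automatic since all seminorms in a fundamental system for $E$ are continuous) in order to apply \cref{lem:inttriaineq}.
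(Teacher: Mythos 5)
Your proof is correct and takes essentially the same approach as the paper: both transcribe the classical FTC argument, reducing to the estimate $p\bigl(\frac{\wt f(x)-\wt f(a)}{x-a}-f(a)\bigr)\le\frac{1}{|x-a|}\abs{\int_a^x p(f(t)-f(a))\,dt}<\ve$ via continuity of $f$ at $a$ and \cref{lem:inttriaineq}. The only difference is cosmetic: you explicitly isolate and sketch the additivity $\int_{x_0}^{x+h}f-\int_{x_0}^{x}f=\int_x^{x+h}f$ and the constant-integrand identity, which the paper's proof uses silently.
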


\begin{proof}
Let us denote $\wt{f}\colon I\to E$, $x\mapsto \int_{x_0}^xf(t)dt$. We prove that $\wt{f}$ is differentiable at any point $a\in I$. Take a fundamental system $\Gamma$ of semi-norms for $E$. For $p\in \Gamma$ and $\ve\in \bbR_{>0}$, since $f$ is continuous at $a$, there exists $\delta\in \bbR_{>0}$ such that 
\[
t\in I,\, |t-a|<\delta\, \Longrightarrow\, 
p(f(t)-f(a))<\ve. 
\]
Thus, if $x\in I\bs\{a\}$ satisfies $|x-a|<\delta$, then
\[
p\Bigl(\frac{\wt{f}(x)-\wt{f}(a)}{x-a}-f(a)\Bigr)\le 
\frac{1}{|x-a|}\abs{\int_a^xp(f(t)-f(a))dt}<\ve, 
\]
which shows $\wt{f}'(a)=f(a)$. 
\end{proof}

\begin{lem}
Let $I$ be an interval, and $f\colon I\to E$ a differentiable function. The followings are equivalent: 
\begin{clist}
\item 
The function $f$ is constant. 

\item 
For all $x\in I$, we have $f'(x)=0$. 
\end{clist}
\end{lem}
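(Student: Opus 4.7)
The plan is to split the equivalence into its two implications. The direction (i)~$\Rightarrow$~(ii) is immediate from the definition: if $f \equiv c$ on $I$, then the difference quotient $(f(x) - f(a))/(x - a)$ is identically zero on $I \setminus \{a\}$, so (by the real analogue of \cref{lem:netUbsa} implicit in the remark preceding the lemma) $f'(a) = 0$ for every $a \in I$.

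For the nontrivial direction (ii)~$\Rightarrow$~(i), I would adapt the classical ``Lipschitz bound via arbitrary~$\ve$'' argument, familiar from the Banach-valued case, to our locally convex setting by testing against each continuous seminorm separately. Fix a fundamental system $\Gamma$ of seminorms for $E$. Since $E$ is Hausdorff (standing assumption in \cref{s:App}), it suffices to prove $p(f(x_1) - f(x_0)) = 0$ for every pair $x_0 < x_1$ in $I$ and every $p \in \Gamma$. Fix such $x_0, x_1, p$, and an arbitrary $\ve > 0$. Set
$$S \ceq \{t \in [x_0, x_1] \mid p(f(t) - f(x_0)) \le \ve (t - x_0)\}.$$
Then $x_0 \in S$, and $S$ is closed in $[x_0, x_1]$ because $t \mapsto p(f(t) - f(x_0)) - \ve(t - x_0)$ is continuous (differentiability of $f$ implies continuity of $f$, and $p$ is a continuous seminorm). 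Let $s \ceq \sup S$, so $s \in S$ by closedness.

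Suppose, for contradiction, that $s < x_1$. Since $f'(s) = 0$, there exists $\delta > 0$ such that for all $t \in I$ with $|t - s| < \delta$ one has $p(f(t) - f(s)) \le \ve |t - s|$. For $s < t < \min(s + \delta, x_1)$, the triangle inequality yields
$$p(f(t) - f(x_0)) \le p(f(t) - f(s)) + p(f(s) - f(x_0)) \le \ve(t - s) + \ve(s - x_0) = \ve(t - x_0),$$
so $t \in S$, contradicting the maximality of $s$. Hence $s = x_1$, giving $p(f(x_1) - f(x_0)) \le \ve(x_1 - x_0)$. Since $\ve > 0$ was arbitrary, $p(f(x_1) - f(x_0)) = 0$, as required.

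The only points needing verification are the closedness of $S$, which is automatic from continuity of $f$ and $p$, and the appeal to the Hausdorff hypothesis on $E$ to upgrade vanishing of every seminorm to equality of vectors. There is no genuine obstacle here; the proof is essentially the classical mean-value-type argument, repackaged seminorm-by-seminorm so as to avoid any norm or Hahn--Banach machinery.
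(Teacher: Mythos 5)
Your proof is correct, and it takes a genuinely different route from the paper's. The paper reduces to the scalar case: it shows that the real-valued function $\wt{f}(x)\ceq p(f(x)-f(x_0))$ is differentiable at every $a\in I$ with $\wt{f}'(a)=0$, using the reverse triangle inequality $|\wt{f}(x)-\wt{f}(a)|\le p(f(x)-f(a))$ and continuity of $p$, and then invokes (implicitly, via the scalar mean value theorem) the classical fact that a real-valued function with vanishing derivative on an interval is constant. Your argument instead re-proves that classical fact directly at the level of each seminorm, via the ``creeping along the interval'' supremum argument: this avoids any appeal to the scalar mean value theorem and is thus more self-contained. The paper's approach buys a shorter proof by delegating to a known scalar result; yours buys an argument that is elementary from first principles and makes the role of the seminorm $p$ and the arbitrary $\ve>0$ completely explicit. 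Both correctly use the Hausdorff hypothesis to upgrade $p(f(x_1)-f(x_0))=0$ for all $p\in\Gamma$ to $f(x_1)=f(x_0)$.
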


\begin{proof}
(i) $\Longrightarrow$ (ii) is clear. We prove the converse. 
Take a fundamental system $\Gamma$ of semi-norms for $E$, and fix $x_0\in I$. To prove $f(x)=f(x_0)$ for $x\in I$, it is enough to show that $p(f(x)-f(x_0))=0$ for all $p\in \Gamma$. Define a function $\wt{f}$ by
\[
\wt{f}\colon I\to \bbR, \quad x\mapsto p(f(x)-f(x_0)), 
\]
then $\wt{f}$ is differentiable at any $a\in I$, and we have $\wt{f}'(a)=0$. In fact, for $x\in I$, since
\[
\abs{\frac{\wt{f}(x)-\wt{f}(a)}{x-a}}\le 
p\Bigl(\frac{f(x)-f(a)}{x-a}\Bigr), 
\]
the claim follows from the continuity of $p$ and $f'(a)=0$. Thus, the function $\wt{f}\colon I\to \bbR$ is constant, and hence
\[
p(f(x)-f(x_0))=\wt{f}(x)=\wt{f}(x_0)=0. 
\qedhere
\]
\end{proof}

Let $I\subset \bbR$ be an interval, and $f\colon I\to E$ a function. A differentiable function $F\colon I\to E$ is called a primitive function of $f$ if it satisfies $F'(x)=f(x)$ for all $x\in I$. 

\begin{prp}\label{prp:fundthm}
Let $I=[a, b]$ be an interval, and $f\colon I\to E$ a continuous function. If $F\colon I\to E$ is a primitive function of $f$, then we have
\[
\int_If=F(b)-F(a). 
\]
\end{prp}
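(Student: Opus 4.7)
The plan is to reduce this to the two results proved just before the statement: (a) that $x \mapsto \int_a^x f(t)\,dt$ is a primitive of $f$, and (b) that a differentiable function on an interval whose derivative vanishes identically must be constant.

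First I would define the function
\[
G \colon I \to E, \quad x \mapsto \int_a^x f(t)\,dt.
\]
By the penultimate proposition, since $f$ is continuous, $G$ is differentiable on $I$ with $G'(x) = f(x)$ for every $x \in I$. Thus both $F$ and $G$ are primitive functions of $f$, and so the difference $H \ceq F - G \colon I \to E$ is differentiable with $H'(x) = F'(x) - G'(x) = f(x) - f(x) = 0$ for all $x \in I$.

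Next, by the preceding lemma (that a differentiable function on an interval with identically zero derivative is constant), $H$ is constant on $I$. Evaluating at $a$ gives $H(a) = F(a) - G(a) = F(a)$, since $G(a) = \int_a^a f(t)\,dt = 0$ (this last equality follows immediately from the convention for $\int_a^b$ when $a = b$, or equivalently from the fact that every Riemann sum over the degenerate interval is zero). Hence $H(x) = F(a)$ for all $x \in I$, and in particular
\[
F(b) - G(b) = F(a), \quad \text{i.e.,} \quad \int_I f = G(b) = F(b) - F(a),
\]
which is the desired equality.

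There is no real obstacle here: the argument is the standard one-variable proof of the fundamental theorem of calculus, and both ingredients needed — the differentiability of $x \mapsto \int_a^x f$ and the rigidity lemma for derivative-zero functions — have already been established in the locally convex setting in the preceding two statements. The only minor point to be careful about is the identity $G(a) = 0$, which should either be recorded explicitly from the convention defining $\int_a^b$ for $a = b$ or be noted as the trivial case of a Riemann sum over a point.
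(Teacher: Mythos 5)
Your proof is correct and follows essentially the same route as the paper: define $G(x)=\int_a^x f$, use the preceding proposition to see $G'=f=F'$, and invoke the zero-derivative lemma to conclude $F-G$ is constant, which gives the result upon evaluating at the endpoints. The only difference is cosmetic — you spell out $G(a)=0$ explicitly, whereas the paper leaves it implicit in the step $\int_I f=\wt f(b)-\wt f(a)$.
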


\begin{proof}
Let us denote $\wt{f}\colon I\to E$, $x\mapsto \int_a^xf(t)dt$. Then, since
\[
\wt{f}'(x)=f(x)=F'(x) \quad (x\in I), 
\]
the function $\wt{f}-F\colon I\to E$ is constant. Thus, we have
\[
\int_If=\wt{f}(b)-\wt{f}(a)=F(b)-F(a). 
\qedhere
\]
\end{proof}

\begin{rmk}
Let $X$ be a set. As in \cref{dfn:unifconv}, we can define a uniform convergence of a net $\{f_\lambda\}_{\lambda\in \Lambda}$ consisting of maps $f_\lambda\colon X\to E$, and all results in \cref{ss:unifconv} hold with minor modification. 
\end{rmk}

\begin{prp}\label{prp:unifconvint}
Let $I\subset \bbR$ be a bounded closed interval. If a net $\{f_\lambda\colon I\to E\}_{\lambda\in \Lambda}$ consisting of integrable functions converges uniformly to an integrable function $f\colon I\to E$, then we have
\[
\lim_{\lambda\in \Lambda}\int_If_\lambda=\int_I f. 
\]
\end{prp}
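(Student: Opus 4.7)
The plan is to bound the difference $\int_I f_\lambda - \int_I f$ seminorm-by-seminorm, using the uniform bound on $f_\lambda - f$ to control the corresponding difference of Riemann sums, and then pass to the limit over tagged partitions with the continuity of the seminorm. Concretely, fix a fundamental system $\Gamma$ of seminorms for $E$, write $I=[a,b]$, and take $p\in\Gamma$ and $\ve\in\bbR_{>0}$. I need to produce $\lambda_0\in\Lambda$ so that $p\bigl(\int_If_\lambda-\int_If\bigr)<\ve$ for all $\lambda\ge\lambda_0$.

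The key observation is that Riemann sums are linear in the integrand: $s_{f_\lambda}(\Delta,\xi)-s_f(\Delta,\xi)=s_{f_\lambda-f}(\Delta,\xi)$ for any tagged partition $(\Delta,\xi)\in\wt{D}(I)$. Applying the seminorm $p$ and the usual triangle-type estimate gives
\[
p\bigl(s_{f_\lambda}(\Delta,\xi)-s_f(\Delta,\xi)\bigr)
\le \sum_{i=1}^{l(\Delta)}\Delta_i\,p\bigl(f_\lambda(\xi_i)-f(\xi_i)\bigr)
\le (b-a)\,p_I(f_\lambda-f),
\]
uniformly in $(\Delta,\xi)$. By the definition of uniform convergence (\cref{dfn:unifconv}) there exists $\lambda_0\in\Lambda$ such that $p_I(f_\lambda-f)<\ve/(b-a+1)$ whenever $\lambda\ge\lambda_0$, and then the right-hand side above is strictly less than $\ve$ for every tagged partition.

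Now both $f_\lambda$ and $f$ are assumed integrable, so the nets $\{s_{f_\lambda}(\Delta,\xi)\}$ and $\{s_f(\Delta,\xi)\}$ converge in $E$ to $\int_If_\lambda$ and $\int_If$, respectively. Since $p\colon E\to\bbR$ is continuous for the topology of $E$, passing to the limit in the tagged-partition net yields
\[
p\Bigl(\int_If_\lambda-\int_If\Bigr)
=\lim_{(\Delta,\xi)\in\wt{D}(I)}p\bigl(s_{f_\lambda}(\Delta,\xi)-s_f(\Delta,\xi)\bigr)\le \ve
\]
for all $\lambda\ge\lambda_0$. As $p\in\Gamma$ and $\ve$ were arbitrary, this is precisely the condition for $\int_If_\lambda\to\int_If$ in $E$.

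There is no real obstacle here; the argument is the classical one, and the only subtlety is that we avoid invoking \cref{lem:inttriaineq} (which is stated for continuous integrands) by working directly at the level of Riemann sums and then using continuity of $p$ to transfer the uniform estimate to the integrals. Linearity of the Riemann sum in the integrand is obvious from the definition of $s_f(\Delta,\xi)$, so no additional lemma is needed.
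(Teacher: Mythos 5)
Your proof is correct, and it takes a route that is genuinely a bit more careful than the paper's own. The paper's proof applies \cref{lem:inttriaineq} to the function $f_\lambda - f$, but that lemma is stated only for \emph{continuous} integrands, and its proof implicitly requires that $p\circ(f_\lambda-f)\colon I\to\bbR$ be Riemann integrable — something not immediate for a merely integrable $E$-valued function (in particular, the right-hand side $\int_I p(f_\lambda(x)-f(x))\,dx$ needs to be known to exist). Your argument sidesteps this entirely by staying at the level of Riemann sums: the uniform estimate
\[
p\bigl(s_{f_\lambda}(\Delta,\xi)-s_f(\Delta,\xi)\bigr)\le (b-a)\,p_I(f_\lambda-f)
\]
holds for every tagged partition $(\Delta,\xi)\in\wt{D}(I)$, and then continuity of $p$ transfers the bound to the integrals by passing to the limit in the tagged-partition net, using only the assumed integrability of $f_\lambda$ and $f$. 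This is the same underlying idea — uniform convergence controls each seminorm of the difference — but realized without any intermediate lemma, so the argument is both cleaner and avoids a hidden integrability assumption. One small observation: in the final display you write $\le\ve$, whereas your choice of $\lambda_0$ in fact gives the strict inequality $(b-a)\cdot\ve/(b-a+1)<\ve$; either form suffices since $\ve$ is arbitrary.
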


\begin{proof}
Take a fundamental system $\Gamma$ of semi-norms for $E$. For $p\in \Gamma$ and $\ve\in \bbR_{>0}$, since $\{f_\lambda\}_{\lambda\in \Lambda}$ converges uniformly to $f$, there exists $\lambda_0\in \Lambda$ such that 
\[
\lambda\in \Lambda,\, \lambda\ge\lambda_0\, \Longrightarrow\, 
p_I(f_\lambda-f)<\frac{\ve}{b-a}. 
\]
Thus, if $\lambda\in \Lambda$ satisfies $\lambda\ge\lambda_0$, then
\[
p\Bigl(\int_If_\lambda(x)dx-\int_If(x)dx\Bigr)\le 
\int_Ip(f_\lambda(x)-f(x))dx<\ve, 
\]
where we used \cref{lem:inttriaineq} in the first inequality. 
\end{proof}

\subsection{Cauchy-Goursat theorem and Cauchy's integral formula}

Fix a quasi-complete Hausdorff locally convex space $E$ over $\bbC$. We use the following notations about curves in $\bbC$: 
\begin{itemize}
\item 
A function $\gamma\colon I\to \bbC$ of class $C^1$, where $I\subset \bbR$ is a bounded closed interval, is called a parametrized smooth curve if  $\gamma'(t)\neq 0$ for all $t\in I$. 

\item
 A continuous function $\gamma\colon I\to \bbC$, where $I\subset \bbR$ is a bounded closed interval, is called a parametrized piecewise smooth curve if there exists a partition $\Delta\in D(I)$ such that $\gamma\colon I_i(\Delta)\to \bbC$ is a parametrized smooth curve for all $i\in[l(\Delta)]$. 
 
\item
For parametrized piecewise smooth curves $\gamma_1\colon [a_1, b_1]\to \bbC$ and $\gamma_2\colon [a_2, b_2]\to \bbC$, a reparametrization from $\gamma_1$ to $\gamma_2$ is a bijective function $\varphi\colon [a_2, b_2]\to [a_1, b_1]$ of class $C^1$ satisfying (i) $\varphi(a_2)=a_1$ and $\varphi(b_2)=b_1$, (ii) $\gamma_2=\gamma_1\circ \varphi$, (iii) $\varphi'(t)>0$ for $t\in[a_2, b_2]$. We denote $\gamma_1\sim \gamma_2$ if there exists a reparametrization from $\gamma_1$ to $\gamma_2$. This relation $\sim$ is an equivalence relation on the set of smooth parametrized curves. 

\item 
A piecewise smooth curve in $\bbC$ is an equivalence class $[\gamma]$ of a piecewise smooth parametrized curve $\gamma\colon I\to \bbC$ by the equivalence relation $\sim$. For a smooth curve $C$ in $\bbC$, a piecewise smooth parametrized curve $\gamma\colon I\to \bbC$ such that $C=[\gamma]$ is called a parametrization of $C$. Note that for a piecewise smooth curve $C$, there exists a parametrization $\gamma\colon I\to \bbC$ whose domain is $I=[0, 1]$. 

\item 
For a piecewise smooth curve $C$ in $\bbC$, the image of $C$ is $\gamma(I)\subset \bbC$, where $\gamma\colon I\to \bbC$ is a parametrization of $C$. Note that the image of $C$ is independent of the choice of parametrization. We abusively denote by $C$ the image of $C$. 

\item 
For a piecewise smooth curve $C$ in $\bbC$, the initial (resp. final) point of $C$ is $\gamma(a)$ (resp. $\gamma(b)$), where $\gamma\colon [a, b]\to \bbC$ is a parametrization of $C$. Note that both initial and final points are independent of the choice of parametrization. 

\item 
For a piecewise smooth curve $C$, the length $l(C)\in \bbR_{\ge0}$ is defined by 
\[
l(C)\ceq \sum_{i=1}^{l(\Delta)}\int_{I_i(\Delta)}|\gamma'(t)|dt.
\]
Here we take a parametrization $\gamma\colon I\to \bbC$ of $C$ and a partition $\Delta\in D(I)$ such that $\gamma\colon I_i(\Delta)\to \bbC$ is a parametrized smooth curve for all $i\in[l(\Delta)]$, but the definition of $l(C)$ is independent of the choices made. 

\item 
For piecewise smooth curves $C_1, \ldots, C_n$ such that the final point of $C_i$ is equal to the initial point of $C_{i+1}$, we define a smooth curve $C_1+\cdots +C_n$ as follows: Take a parametrization $\gamma_i\colon[0, 1]\to \bbC$ of $C_i$ for each $i\in[n]$, and define a piecewise smooth parametrized curve $\gamma\colon [0, 1]\to \bbC$ by
\[
\gamma(t)\ceq 
\gamma_i(nt-i+1)
\quad (i\in[n],\, (i-1)/n\le t<i/n).
\]
Then $C_1+\cdots +C_n\ceq [\gamma]$. Note that the definition of $C_1+\cdots +C_n$ is independent of the choices of parametrization. 

\item 
For a piecewise smooth curve $C$, we define a smooth curve $C^-$ as follows: Take a parametrization $\gamma\colon[a, b]\to \bbC$, and define a piecewise smooth parametrized curve $\gamma^-$ by
\[
\gamma^-\colon [a, b]\to \bbC, \quad 
t\mapsto \gamma(a+b-t). 
\]
Then $C^-\ceq [\gamma^-]$. Note that the definition of $C^-$ is independent of the choice of parametrization. 

\item 
For $z, w\in \bbC$, the line segment $L(z, w)$ is a piecewise smooth curve defined by the piecewise smooth parametrized curve
\[
\gamma_{z, w}\colon [0, 1]\to \bbC, \quad 
t\mapsto z+t(w-z). 
\]

\item 
A triangle $T$ is a piecewise smooth curve $T=L(z_1, z_2)+L(z_2, z_3)+L(z_3, z_1)$ where $z_1, z_2, z_3\in \bbC$ are three different points. 
\end{itemize}

\begin{dfn}
Let $U\subset \bbC$ be a domain. For a piecewise smooth curve $C\subset U$ and a continuous function $f\colon U\to E$, define
\[
\int_Cf=\int_Cf(z)dz\ceq
\sum_{i=1}^{l(\Delta)}\int_{I_i(\Delta)}\gamma'(t)(f\circ \gamma)(t) dt.
\]
Here we take a parametrization $\gamma\colon I\to \bbC$ of $C$ and a partition $\Delta\in D(I)$ such that $\gamma\colon I_i(\Delta)\to \bbC$ is a parametrized smooth curve for all $i\in[l(\Delta)]$, but the definition of $\int_Cf$ is independent of the choices made. 
\end{dfn}

\begin{lem}
Let $U\subset \bbC$ be a domain. For a piecewise smooth curve $C\subset U$, a continuous function $f\colon U\to E$, and a continuous semi-norm $p\colon E\to \bbR$, we have
\[
p\bigl(\int_Cfdz\bigr)\le \sup_{z\in C}p(f(z))\cdot l(C). 
\]
\end{lem}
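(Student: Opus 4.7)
The plan is to reduce to the real-valued case via the seminorm and then apply the triangle inequality for integrals already established in \cref{lem:inttriaineq}. First, I would fix a parametrization $\gamma\colon I\to \bbC$ of $C$ and a partition $\Delta\in D(I)$ such that $\gamma\colon I_i(\Delta)\to \bbC$ is a parametrized smooth curve for each $i\in[l(\Delta)]$. By the definition of the line integral,
\[
\int_C f\,dz=\sum_{i=1}^{l(\Delta)}\int_{I_i(\Delta)}\gamma'(t)(f\circ \gamma)(t)\,dt.
\]

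Next, I would apply the seminorm $p$ together with its subadditivity over finite sums and \cref{lem:inttriaineq} applied to the continuous function $t\mapsto \gamma'(t)(f\circ \gamma)(t)\colon I_i(\Delta)\to E$ on each subinterval. Since $p$ is a continuous seminorm, we get
\[
p\Bigl(\int_C f\,dz\Bigr)\le \sum_{i=1}^{l(\Delta)}\int_{I_i(\Delta)}p\bigl(\gamma'(t)(f\circ \gamma)(t)\bigr)dt=\sum_{i=1}^{l(\Delta)}\int_{I_i(\Delta)}|\gamma'(t)|\,p((f\circ \gamma)(t))\,dt,
\]
where in the last equality I use that $p$ is absolutely homogeneous with respect to scalars in $\bbC$.

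Finally, I would bound $p((f\circ\gamma)(t))\le \sup_{z\in C}p(f(z))$ for each $t\in I$ (this supremum is finite since $C\subset \bbC$ is compact and $p\circ f\colon U\to \bbR$ is continuous) and pull the constant outside the integral, yielding
\[
p\Bigl(\int_C f\,dz\Bigr)\le \sup_{z\in C}p(f(z))\cdot \sum_{i=1}^{l(\Delta)}\int_{I_i(\Delta)}|\gamma'(t)|\,dt=\sup_{z\in C}p(f(z))\cdot l(C),
\]
by the definition of $l(C)$. No step is genuinely difficult; the only mild subtlety is ensuring \cref{lem:inttriaineq} applies to each smooth piece, for which the $C^1$ assumption on $\gamma|_{I_i(\Delta)}$ and continuity of $f$ guarantee that the integrand is continuous on the bounded closed interval $I_i(\Delta)$, hence integrable into the quasi-complete Hausdorff space $E$.
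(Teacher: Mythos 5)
Your proof is correct and follows the same approach as the paper: split the integral over the partition into smooth pieces, apply the triangle inequality for vector-valued integrals (\cref{lem:inttriaineq}) on each piece, use absolute homogeneity of $p$ to extract $|\gamma'(t)|$, and bound by $\sup_{z\in C}p(f(z))\cdot l(C)$. The extra remark on compactness of $C$ guaranteeing finiteness of the supremum is a harmless addition not spelled out in the paper but implicitly used.
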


\begin{proof}
Take a parametrization $\gamma\colon I\to \bbC$ of $C$ and a partition $\Delta\in D(I)$ such that $\gamma\colon I_i(\Delta)\to \bbC$ is a parametrized smooth curve for all $i\in[l(\Delta)]$. Then 
\[
p\bigl(\int_Cfdz\bigr)\le 
\sum_{i=1}^{l(\Delta)}p\bigl(\int_{I_i(\Delta)}\gamma'(t)(f\circ \gamma)(t)dt\bigr)
\le \sum_{i=1}^{l(\Delta)}\int_{I_i(\Delta)}|\gamma'(t)|p((f\circ\gamma)(t))dt\le 
\sup_{z\in C}p(f(z))\cdot l(C),
\]
where we used \cref{lem:inttriaineq} in the second inequality. 
\end{proof}

\begin{prp}\label{prp:unifcompint}
Let $U\subset \bbC$ be a domain, and $C\subset U$ a piecewise smooth curve. For a net $\{f_\lambda\}_{\lambda\in \Lambda}$ consisting of continuous functions $f_\lambda\colon U\to E$ and a continuous function $f\colon U\to E$, if $\{f_\lambda\}_{\lambda\in \Lambda}$ converges uniformly to $f$ on $C$, then we have
\[
\lim_{\lambda\in \Lambda}\int_Cf_\lambda=\int_Cf. 
\]
\end{prp}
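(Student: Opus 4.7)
The plan is to reduce the claim to the corresponding statement for Riemann integrals on a bounded closed interval, namely \cref{prp:unifconvint}, by passing through a parametrization of $C$.

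First, I would fix a parametrization $\gamma \colon I \to \bbC$ of $C$, where $I = [a,b]$, together with a partition $\Delta \in D(I)$ such that $\gamma$ restricts to a parametrized smooth curve on each subinterval $I_i(\Delta)$. By definition of the line integral,
\[
\int_C f_\lambda - \int_C f = \sum_{i=1}^{l(\Delta)} \int_{I_i(\Delta)} \gamma'(t)\bigl((f_\lambda\circ\gamma)(t) - (f\circ\gamma)(t)\bigr)\, dt .
\]
Since a finite sum of convergent nets is the sum of their limits (and continuity of the addition $E \times E \to E$ lets one interchange limit with finite sum), it suffices to show that for each $i$,
\[
\lim_{\lambda\in\Lambda}\int_{I_i(\Delta)} \gamma'(t)(f_\lambda\circ\gamma)(t)\, dt = \int_{I_i(\Delta)} \gamma'(t)(f\circ\gamma)(t)\, dt.
\]

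Next, on each subinterval $I_i(\Delta)$ the function $\gamma'$ is continuous, hence bounded by some $M_i \in \bbR_{>0}$ on the compact interval $I_i(\Delta)$. Given $p \in \Gamma$ and $t \in I_i(\Delta)$, we have
\[
p\bigl(\gamma'(t)(f_\lambda\circ\gamma)(t) - \gamma'(t)(f\circ\gamma)(t)\bigr) \le M_i\, p(f_\lambda(\gamma(t)) - f(\gamma(t))) \le M_i \sup_{z \in C} p(f_\lambda(z) - f(z)).
\]
Because $\{f_\lambda\}$ converges uniformly to $f$ on $C$, the quantity $\sup_{z\in C} p(f_\lambda(z) - f(z))$ tends to $0$, so the net of functions $\gamma' \cdot (f_\lambda \circ \gamma)$ converges uniformly on $I_i(\Delta)$ to $\gamma' \cdot (f\circ\gamma)$. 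Each of these functions is continuous, hence integrable over the bounded closed interval $I_i(\Delta)$, so \cref{prp:unifconvint} yields the desired convergence of integrals.

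The main ingredients are thus (a) the pointwise estimate that turns uniform convergence of $f_\lambda$ to $f$ on $C$ into uniform convergence of $\gamma'\cdot(f_\lambda\circ\gamma)$ on each $I_i(\Delta)$, using boundedness of $\gamma'$; and (b) the already-established \cref{prp:unifconvint}. There is no genuine obstacle here: the only mild subtlety is handling the finite sum over the pieces $I_i(\Delta)$, which is resolved by continuity of addition in $E$. Hence the proof is essentially a direct reduction to the one-variable uniform-convergence theorem for Riemann integrals.
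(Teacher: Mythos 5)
Your proof is correct and follows essentially the same route as the paper: parametrize $C$, split into smooth pieces $I_i(\Delta)$, observe that $\gamma'\cdot(f_\lambda\circ\gamma)$ converges uniformly to $\gamma'\cdot(f\circ\gamma)$ on each piece, and apply \cref{prp:unifconvint}. You just spell out the boundedness-of-$\gamma'$ estimate that the paper leaves implicit.
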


\begin{proof}
Take a parametrization $\gamma\colon I\to \bbC$ of $C$ and a partition $\Delta\in D(I)$ such that $\gamma\colon I_i(\Delta)\to \bbC$ is a parametrized smooth curve for all $i\in[l(\Delta)]$. Then for $\lambda\in \Lambda$, 
\[
\int_If_\lambda=\sum_{i=1}^{l(\Delta)}\int_{I_i(\Delta)}\gamma'(t)(f_\lambda\circ\gamma)(t)dt.
\]
Since the net $\{\gamma'(f_\lambda\circ \gamma)\}_{\lambda\in \Lambda}$ converges uniformly to $\gamma'(f\circ \gamma)$ on $I_i(\Delta)$, we have
\[
\lim_{\lambda\in \Lambda}\int_If_\lambda=
\sum_{i=1}^{l(\Delta)}\lim_{\lambda\in \Lambda}\int_{I_i(\Delta)}\gamma'(t)(f_\lambda\circ \gamma)(t)dt=
\sum_{i=1}^{l(\Delta)}\int_{I_i(\Delta)}\gamma'(t)(f\circ \gamma)(t)dt=
\int_If, 
\]
where we used \cref{prp:unifconvint} in the second equality. 
\end{proof}

\begin{prp}\label{prp:primint}
Let $U\subset \bbC$ be a domain, $C\subset U$ a piecewise smooth curve, and $f\colon U\to E$ a continuous function. If $F\colon U\to E$ is a primitive function of $f$, then we have
\[
\int_Cf=F(z_1)-F(z_0), 
\]
where $z_0$, $z_1$ are the initial and final points of $C$ respectively. 
\end{prp}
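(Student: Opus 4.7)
The plan is to reduce to the fundamental theorem of calculus for $E$-valued functions on a real interval (Proposition~\ref{prp:fundthm}) by passing through a parametrization of $C$ and exploiting the chain rule (Lemma~\ref{lem:compder}). Since the identity to prove is additive in the pieces of $C$ and telescopes across the partition, we do not need to assume any more than piecewise smoothness.

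Concretely, I would start by fixing a parametrization $\gamma\colon I=[a,b]\to \bbC$ of $C$ together with a partition $\Delta=\{x_i\}_{i=0}^{l}\in D(I)$ such that $\gamma\colon I_i(\Delta)\to \bbC$ is a parametrized smooth curve for each $i\in [l]$. By the very definition of the curve integral,
\[
\int_Cf=\sum_{i=1}^{l}\int_{I_i(\Delta)}\gamma'(t)(f\circ\gamma)(t)\,dt.
\]
For each $i$, I would apply Lemma~\ref{lem:compder} to $\gamma\colon I_i(\Delta)\to U$ and $F\colon U\to E$ (using that $\gamma$ is of class $C^1$ on the closed subinterval and that $F$ is differentiable on the open set $U$, hence the chain rule applies at each $t\in I_i(\Delta)$, with the usual one-sided interpretation at the endpoints $x_{i-1}$ and $x_i$). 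This gives
\[
(F\circ \gamma)'(t)=\gamma'(t)\,F'(\gamma(t))=\gamma'(t)\,f(\gamma(t)) \quad(t\in I_i(\Delta)),
\]
so that $F\circ\gamma\colon I_i(\Delta)\to E$ is a primitive function of the continuous function $t\mapsto \gamma'(t)(f\circ\gamma)(t)$.

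Next I would invoke Proposition~\ref{prp:fundthm} on each subinterval $I_i(\Delta)$ to conclude
\[
\int_{I_i(\Delta)}\gamma'(t)(f\circ\gamma)(t)\,dt=F(\gamma(x_i))-F(\gamma(x_{i-1})).
\]
Summing over $i$ and telescoping yields $\sum_{i=1}^l\bigl(F(\gamma(x_i))-F(\gamma(x_{i-1}))\bigr)=F(\gamma(b))-F(\gamma(a))=F(z_1)-F(z_0)$, which completes the proof.

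The only delicate point in the plan is the chain rule at the endpoints of each $I_i(\Delta)$: Lemma~\ref{lem:compder} is stated for open sets, but applied to the extension of $\gamma|_{I_i(\Delta)}$ to a slightly larger open interval (which exists because $\gamma$ is of class $C^1$ across each node by the piecewise smooth assumption on some ambient parametrization, or equivalently by observing that the integral is unchanged under refinement so we may replace each $I_i(\Delta)$ by any smaller closed subinterval and then pass to the limit), we obtain differentiability of $F\circ\gamma$ on all of $I_i(\Delta)$ with the expected derivative. Once this is in hand, everything else is a routine assembly of the preceding results.
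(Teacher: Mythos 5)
Your proof follows essentially the same route as the paper: parametrize $C$, split the curve integral over the partition into smooth pieces, observe via the chain rule that $F\circ\gamma$ is a primitive of $t\mapsto \gamma'(t)(f\circ\gamma)(t)$ on each subinterval, apply \cref{prp:fundthm} piecewise, and telescope. The extra care you take with the endpoint differentiability is a reasonable refinement that the paper leaves implicit, but the overall argument is identical.
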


\begin{proof}
Take a parametrization $\gamma\colon I\to \bbC$ of $C$ and a partition $\Delta\in D(I)$ such that $\gamma\colon I_i(\Delta)\to \bbC$ is a parametrized smooth curve for all $i\in[l(\Delta)]$. Denote $I_i(\Delta)=[a_{i-1}, a_i]$ for each $i\in[l(\Delta)]$, then
\[
\int_Cf=\sum_{i=1}^{l(\Delta)}\int_{a_{i-1}}^{a_i}\gamma'(t)(f\circ \gamma)(t)dt. 
\]
Since $F\circ \gamma\colon I_i(\Delta)\to E$ is a primitive function of $I_i(\Delta)\to E$, $t\mapsto \gamma'(t)(f\circ \gamma)(t)$, by \cref{prp:fundthm}, we have
\[
\int_Cf=
\sum_{i=1}^{l(\Delta)}\bigl((F\circ \gamma)(a_i)-(F\circ \gamma)(a_{i-1})\bigr)=
F(z_1)-f(z_0). 
\qedhere
\]
\end{proof}

\begin{thm}\label{thm:Goursat}
Let $U\subset \bbC$ be a domain, and $T\subset U$ be a triangle whose interior is in $U$. For a holomorphic function $f\colon U\to E$,we have
\[
\int_Tf=0. 
\]
\end{thm}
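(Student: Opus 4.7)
The plan is to adapt the classical Goursat subdivision argument to the locally convex setting, working with a fundamental system $\Gamma$ of continuous seminorms on $E$ in place of the absolute value. Since $E$ is Hausdorff, it suffices to prove $p(\int_T f)=0$ for every $p\in\Gamma$.

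First I would set up the subdivision. Writing $T=L(z_1,z_2)+L(z_2,z_3)+L(z_3,z_1)$ and connecting the midpoints of the three sides, one obtains four compatibly oriented triangles $T^{(1)}_1,T^{(1)}_2,T^{(1)}_3,T^{(1)}_4$, each similar to $T$ with diameter and length halved. The interior edges appear twice with opposite orientations, so using additivity $\int_{C_1+C_2}f=\int_{C_1}f+\int_{C_2}f$ together with $\int_{C^-}f=-\int_C f$ (both immediate from the definition of $\int_C f$ via parametrization) one obtains $\int_T f=\sum_{i=1}^4\int_{T^{(1)}_i}f$. Applying $p$ and the triangle inequality, some $T^{(1)}_{i_0}$ satisfies $p(\int_{T^{(1)}_{i_0}}f)\ge\tfrac14 p(\int_T f)$. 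Call this $T_1$ and iterate. The result is a sequence of triangles $T=T_0,T_1,T_2,\ldots$ whose associated closed triangular regions are nested decreasing in $U$, and which satisfy
\[
p\Bigl(\int_{T_n}f\Bigr)\ge 4^{-n}\,p\Bigl(\int_T f\Bigr),\quad l(T_n)=2^{-n}l(T),\quad \mathrm{diam}(T_n)=2^{-n}\mathrm{diam}(T).
\]

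Next, the nested compact triangular regions intersect in a single point $z_0\in U$. Using complex differentiability of $f$ at $z_0$ (\cref{dfn:holofunc}) and continuity of $p$, for any $\ve>0$ there exists $\delta>0$ such that $|z-z_0|<\delta$ implies $p(f(z)-f(z_0)-(z-z_0)f'(z_0))\le\ve\,|z-z_0|$. Choose $n$ so that the triangular region bounded by $T_n$ is contained in $D_\delta(z_0)$. Since the affine function $z\mapsto f(z_0)+(z-z_0)f'(z_0)$ admits the primitive $z\mapsto f(z_0)z+\tfrac12(z-z_0)^2 f'(z_0)$ on $\bbC$, \cref{prp:primint} (applied to a closed curve) gives $\int_{T_n}(f(z_0)+(z-z_0)f'(z_0))\,dz=0$, and the length estimate $p(\int_C g\,dz)\le\sup_{z\in C}p(g(z))\cdot l(C)$ proved just before the theorem yields
\[
p\Bigl(\int_{T_n}f\Bigr)=p\Bigl(\int_{T_n}\bigl(f(z)-f(z_0)-(z-z_0)f'(z_0)\bigr)dz\Bigr)\le \ve\,\mathrm{diam}(T_n)\,l(T_n)=4^{-n}\ve\,\mathrm{diam}(T)\,l(T).
\]
Combining with the subdivision bound gives $p(\int_T f)\le\ve\,\mathrm{diam}(T)\,l(T)$, and letting $\ve\to 0$ forces $p(\int_T f)=0$.

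The main obstacle I anticipate is purely bookkeeping rather than analytic: confirming that the midpoint construction produces the cancellation $\int_T f=\sum_i\int_{T^{(1)}_i}f$, and that the integral of the affine approximation over $T_n$ vanishes via \cref{prp:primint} rather than any appeal to a scalar Cauchy theorem. Both facts reduce immediately to the definition of $\int_C f$ via parametrization together with the evaluation \cref{prp:primint} and the length estimate. Apart from that, the argument is a line-for-line transcription of the classical Goursat proof with $|\cdot|$ replaced by $p\in\Gamma$, using the Hausdorff hypothesis on $E$ to conclude $\int_T f=0$ from $p(\int_T f)=0$ for all $p$.
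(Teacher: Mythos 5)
Your proof is correct and follows essentially the same route as the paper: replace $|\cdot|$ by a seminorm $p$, run the classical Goursat midpoint subdivision to get the nested triangles $T_n$ with $p(\int_T f)\le 4^n p(\int_{T_n}f)$, locate the common point $z_0$, subtract the affine approximation (whose integral vanishes by \cref{prp:primint}), and estimate with the length bound. The only cosmetic difference is that you spell out the subdivision step and carry the extra factor $\mathrm{diam}(T)\,l(T)$ through to the end, whereas the paper absorbs it by taking $\ve/(d_0 l_0)$ in the differentiability bound; these are interchangeable.
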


\begin{proof}
Take a fundamental system $\Gamma$ of semi-norms for $E$. Since $E$ is Hausdorff, it is enough to show that $p(\int_Tf)=0$ for all $p\in \Gamma$. As in the scalar-valued case, one can construct a sequence $\{T_n\}_{n\in \bbN}$ of triangles such that: 
\begin{itemize}
\item 
$T_0=T$. 

\item 
For each $n\in \bbN$, the diameter $d_n$ and length $l_n$ of $T_n$ satisfy $d_n=d_0/2^n$ and $l_n=l_0/2^n$. 

\item 
For each $n\in \bbN$, we have $p(\int_Tf)\le 4^np(\int_{T_n}f)$. 
\end{itemize}
The condition $d_n\le d_0/2^n$ yields that there is a point $a\in \bbC$ which is in the interior or on the boundary of $T_n$ for all $n\in \bbN$. For any $\ve\in \bbR_{>0}$, since $f$ is holomorphic at $a\in U$, there exists $\delta\in \bbR_{>0}$ such that $D_\delta(a)\subset U$ and 
\[
0<|z-a|<\delta\, \Longrightarrow\, 
p\Bigl(\frac{f(z)-f(a)}{z-a}-f'(a)\Bigr)<\frac{\ve}{d_0l_0}. 
\]
Also, since $d_n\le d_0/2^n$ and $a$ is in the interior or on the boundary of $T_n$, there exists $n\in \bbN$ such that $T_n\subset D_\delta(a)$. Fix such $n\in \bbN$. 
Now, notice that the function $U\to E$, $z\mapsto f(a)+(z-a)f'(a)$ has a primitive function, then by \cref{prp:primint}, 
\[
p\bigl(\int_{T_n}f\bigr)=
p\bigl(\int_{T_n}\bigl(f(z)-f(a)-f'(a)(z-a)\bigr)dz\bigr)<
\sup_{z\in T_n}|z-a|\cdot\frac{\ve l_n}{d_0l_0}\le
\sup_{z\in T_n}|z-a|\cdot\frac{\ve}{2^nd_0}. 
\]
Since $|z-a|\le d_n\le d_0/2^n$ for all $z\in T_n$, we have
\[
p\bigl(\int_Tf\bigr)\le 4^np\bigl(\int_{T_n}f\bigr)<\ve, 
\]
which shows the claim. 
\end{proof}

\begin{cor}\label{cor:Goursat}
Let $U\subset \bbC$ be a domain, $T\subset U$ a triangle whose interior is in $U$, and $f\colon U\to E$ a continuous function. If $f$ is holomorphic on $U$ except a point $z_0\in U$, then we have
\[
\int_Tf=0. 
\]
\end{cor}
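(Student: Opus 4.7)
The plan is to imitate the classical proof of Cauchy's theorem with a removable exceptional point, adapted to the locally convex target. Fix a fundamental system $\Gamma$ of continuous seminorms on $E$; since $E$ is Hausdorff, it suffices to show $p\bigl(\int_T f\bigr)=0$ for every $p\in\Gamma$. Let $K$ denote the closed triangular region bounded by $T$ (the convex hull of the three vertices). By hypothesis $K\subset U$, hence $K$ is compact and $M_p\ceq \sup_{z\in K}p(f(z))<\infty$. The argument will be organized by the position of $z_0$ relative to $K$, the goal being to reduce everything to the case when $z_0$ is a vertex of $T$, and then to shrink that vertex.

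First, if $z_0\notin K$, then $K$ lies in the open set $U\bs\{z_0\}$ on which $f$ is holomorphic, so \cref{thm:Goursat} applied on $U\bs\{z_0\}$ gives $\int_T f=0$. Next, if $z_0\in K$ but is not a vertex of $T$, I would subdivide $T$ by joining $z_0$ to the vertices of $T$, obtaining two sub-triangles when $z_0$ lies on an open edge of $T$ and three when $z_0$ lies in the interior of $K$; each sub-triangle has $z_0$ as a vertex, and because the internal edges cancel (with opposite orientations), the signed boundary integrals sum to $\int_T f$. This reduces the problem to the vertex case.

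So assume $z_0$ is a vertex of $T$, say $T=L(z_0,z_1)+L(z_1,z_2)+L(z_2,z_0)$. For $t\in(0,1)$ put $w_i(t)\ceq z_0+t(z_i-z_0)$ for $i=1,2$, and decompose $T$ (modulo canceling interior edges) as the sum of the three triangles
\[
T_0(t)=L(z_0,w_1(t))+L(w_1(t),w_2(t))+L(w_2(t),z_0),
\]
\[
T_1(t)=L(w_1(t),z_1)+L(z_1,z_2)+L(z_2,w_1(t)),\quad T_2(t)=L(w_1(t),z_2)+L(z_2,w_2(t))+L(w_2(t),w_1(t)).
\]
A short convexity check shows that $z_0$ is not a convex combination of the vertices of $T_1(t)$ or $T_2(t)$, so $z_0$ lies outside their closed triangular regions; the first case then gives $\int_{T_1(t)}f=\int_{T_2(t)}f=0$, whence $\int_T f=\int_{T_0(t)}f$. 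Since $l(T_0(t))=t\cdot l(T)$ and the closed region of $T_0(t)$ lies in $K$, the basic estimate $p\bigl(\int_C f\bigr)\le \sup_{z\in C}p(f(z))\cdot l(C)$ established earlier in this appendix yields $p\bigl(\int_T f\bigr)\le M_p\cdot t\cdot l(T)$. Letting $t\to 0^+$ gives $p\bigl(\int_T f\bigr)=0$, and the corollary follows.

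The proof uses nothing from the locally convex calculus beyond the linearity of line integrals, the triangle-style estimate via seminorms, and \cref{thm:Goursat}; the only mildly technical points are verifying that the subdivision identities hold exactly (internal edges cancel with opposite orientations) and that $z_0$ genuinely lies outside the two auxiliary sub-triangles in the vertex case. Neither is a real obstacle: the first is a bookkeeping exercise in the definition of $L(\cdot,\cdot)$, and the second is the elementary observation that $z_0$ is an extreme point of $K$ while the vertices of $T_1(t), T_2(t)$ all differ from $z_0$.
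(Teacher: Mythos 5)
Your proof is correct and follows essentially the same route as the paper: dispose of the easy case where $z_0$ lies outside the closed triangular region, reduce to the vertex case by subdividing along lines from $z_0$ to the vertices, and then shrink a small sub-triangle at the vertex $z_0$ using the $\sup\cdot\mathrm{length}$ seminorm estimate. You fill in two points the paper leaves terse — the explicit parametrization $w_i(t)=z_0+t(z_i-z_0)$ of the shrinking sub-triangle, and the justification of $\int_T f=\int_{T_0(t)}f$ via applying \cref{thm:Goursat} on $U\setminus\{z_0\}$ to the auxiliary triangles $T_1(t),T_2(t)$ after checking $z_0$ is not in their convex hulls — but these are precisely the details the paper's phrase ``one can make a sub-triangle $T'$ \ldots such that $\int_T f=\int_{T'}f$'' is gesturing at, not a different argument.
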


\begin{proof}
If $z_0$ is in the exterior of $T$, then the corollary follows immediately from \cref{thm:Goursat}, so we may assume that $z_0$ is in the interior or on the boundary of $T$. 

First, consider the case that $z_0$ is a vertex of $T$. Take a fundamental system $\Gamma$ of semi-norms for $E$. It is enough to show that $p(\int_Tf)=0$ for all $p\in \Gamma$. Since $f$ is continuous and $T\subset \bbC$ is compact, there exists $t\in \bbR_{>0}$ such that $p(f(z))\le t$ ($z\in T$). For $\ve\in \bbR_{>0}$, by taking two points on the edges of $T$, one can make a sub-triangle $T'\subset T$ such that: 
\begin{itemize}
\item 
The point $z_0$ is a vertex of $T'$. 

\item 
The length of $T'$ satisfies $l(T')<\ve/t$. 

\item
$\int_Tf=\int_{T'}f$. 
\end{itemize}
Thus, we have
\[
p(\int_Tf)=p(\int_{T'}f)\le \sup_{z\in T}f(z)\cdot l(T')<\ve, 
\]
which shows the claim. 

If $z_0$ is not a vertex of $T$, divide $T$ into two or three triangles by connecting $z_0$ and vertices of $T$, then $\int_Tf=0$ follows from the previous case. 
\end{proof}

Let $U\subset \bbC$ be a domain and $a\in U$. We say that $U$ is star-shaped with respect to $a$ if $\{a+t(z-a)\mid t\in[0, 1]\}\subset U$ for all $z\in U$. 

\begin{thm}\label{thm:Cauchy}
Let $U\subset \bbC$ be a star-shaped domain with respect to $a_0\in U$, and $f\colon U\to E$ a continuous function. If $f$ is holomorphic on $U$ except a point $z_0\in U$, then: 
\begin{enumerate}
\item 
There exists a primitive function $F\colon U\to E$ of $f$. 

\item 
For any piecewise smooth closed curve $C\subset U$, we have
\[
\int_Cf=0. 
\]
\end{enumerate}
\end{thm}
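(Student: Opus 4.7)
The plan is to define the candidate primitive explicitly as an integral along the line segment from the star-center, use the Goursat-type result (Corollary A.3.4, which tolerates a single non-holomorphic point) to show that $F$ is indeed a primitive, and then deduce (2) from the existence of a primitive via Proposition A.3.3.

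\medskip

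More precisely, I would define
\[
F\colon U\to E, \quad z\longmapsto \int_{L(a_0, z)}f,
\]
which makes sense because star-shapedness of $U$ with respect to $a_0$ ensures $L(a_0, z)\subset U$ for every $z\in U$. To prove (1), fix $z\in U$ and pick $\delta\in \bbR_{>0}$ with $D_\delta(z)\subset U$. For any $h\in \bbC$ with $0<|h|<\delta$, I would observe that the closed triangle with vertices $a_0, z, z+h$ is contained in $U$: every point has the form $a_0+s((1-t)z+t(z+h)-a_0)$ for $s, t\in [0, 1]$, and since $(1-t)z+t(z+h)\in D_\delta(z)\subset U$, star-shapedness with respect to $a_0$ gives the claim. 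Applying \cref{cor:Goursat} to the triangle $T=L(a_0, z)+L(z, z+h)+L(z+h, a_0)$ yields $\int_Tf=0$, and hence
\[
F(z+h)-F(z)=\int_{L(z, z+h)}f = h\int_0^1 f(z+th)\,dt,
\]
where I have used the parametrization $\gamma(t)=z+th$, $t\in[0, 1]$, of $L(z, z+h)$.

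\medskip

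Next I would establish $F'(z)=f(z)$ as follows. Using the above identity,
\[
\frac{F(z+h)-F(z)}{h}-f(z)=\int_0^1\bigl(f(z+th)-f(z)\bigr)\,dt.
\]
Fix a continuous semi-norm $p$ on $E$ and $\ve\in\bbR_{>0}$. By continuity of $f$ at $z$, there exists $\delta'\in (0,\delta)$ such that $p(f(w)-f(z))<\ve$ whenever $|w-z|<\delta'$. Then for $0<|h|<\delta'$, \cref{lem:inttriaineq} gives
\[
p\!\left(\frac{F(z+h)-F(z)}{h}-f(z)\right)\le\int_0^1 p\bigl(f(z+th)-f(z)\bigr)\,dt<\ve.
\]
This shows $F$ is complex differentiable at $z$ with $F'(z)=f(z)$, completing (1).

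\medskip

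Part (2) is then immediate: given any piecewise smooth closed curve $C\subset U$ with initial and final point $z_0$, \cref{prp:primint} applied to the primitive $F$ gives $\int_Cf = F(z_0)-F(z_0)=0$.

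\medskip

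The only nontrivial point is the geometric argument that the triangle $a_0, z, z+h$ lies in $U$, and the reduction to \cref{cor:Goursat} (rather than \cref{thm:Goursat}) is needed precisely because the exceptional point $z_0$ where $f$ is merely continuous may lie inside this triangle. Apart from this, the argument is a routine seminorm-by-seminorm transcription of the classical scalar proof, so I expect no serious obstacle.
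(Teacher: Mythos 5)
Your proposal is correct and follows essentially the same approach as the paper's own proof: define $F$ by integration along the segment from the star-center, invoke \cref{cor:Goursat} (the Goursat corollary tolerating one exceptional point) to get the additivity identity for $F$, and conclude $F'=f$ from continuity seminorm-by-seminorm, with (2) following via \cref{prp:primint}. Your explicit parametrization of the increment integral and the detailed justification that the triangle with vertices $a_0,z,z+h$ lies in $U$ are minor elaborations of steps the paper treats more tersely, not a different route.
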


\begin{proof}
(1) Note that $L(a_0, z)\subset U$ for all $z\in U$ since $U$ is star-shaped with respect to $a_0$. Define a function $F$ by
\[
F\colon U\to E, \quad z\mapsto \int_{L(a_0, z)}f(\zeta)d\zeta. 
\]
We prove that $F$ is a primitive function of $f$. Let $a\in U$, and take $R\in \bbR_{>0}$ such that $D_R(a)\subset U$. For $z\in D_R(a)$, we have a triangle $T(z)\ceq L(a_0, a)+L(a, z)+L(z, a_0)\subset U$ whose interior in $U$. Hence, for $z\in D_R(a)$, by \cref{cor:Goursat}, 
\[
\int_{T(z)}f(\zeta)d\zeta=0, 
\]
or equivalently, 
\[
F(z)-F(a)=\int_{L(a, z)}f(\zeta)d\zeta. 
\]
Now, take a fundamental system $\Gamma$ of semi-norms for $E$. For $p\in \Gamma$ and $\ve\in \bbR_{>0}$, since $f$ is continuous at $a$, there exists $\delta\in \bbR_{>0}$ such that $\delta<R$ and 
\[
\zeta\in U,\, |\zeta-a|<\delta\, \Longrightarrow\, p(f(\zeta)-f(a))<\ve/2. 
\]
Thus, if $z\in U\bs\{a\}$ satisfies $|z-a|<\delta$, then
\[
p\Bigl(\frac{F(z)-F(a)}{z-a}-f(a)\Bigr)=
\frac{1}{|z-a|}p\Bigl(\int_{L(a, z)}\bigl(f(\zeta)-f(a)\bigr)d\zeta\Bigr)\le
\frac{L(a, z)}{|z-a|}\sup_{z\in L(a, z)}(f(\zeta)-f(a))\le \ve/2<\ve, 
\]
which shows $F'(a)=f(a)$. 

(2) This follows from (1) and \cref{prp:primint}. 
\end{proof}

\begin{thm}\label{thm:Cauchyformu}
Let $U\subset \bbC$ be a domain, and $f\colon U\to E$ a holomorphic function. For an open disk $D$ such that $\ol{D}\subset U$, we have
\[
f(a)=\frac{1}{2\pi\sqrt{-1}}\int_{\pdd D}\frac{f(z)}{z-a}dz \quad (a\in D), 
\]
where $\pdd D$ denotes the positively oriented boundary of $D$. 
\end{thm}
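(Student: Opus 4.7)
The plan is to introduce the auxiliary function $g\colon U\to E$ defined by
\[
g(z)\ceq \frac{f(z)-f(a)}{z-a}\quad(z\neq a),\qquad g(a)\ceq f'(a),
\]
and apply \cref{thm:Cauchy}(2) to $g$. First I would verify that $g$ is continuous on $U$ and holomorphic on $U\setminus\{a\}$: the latter follows from the product/quotient calculus of \cref{ss:holofunc} applied to the $E$-valued holomorphic function $f(z)-f(a)$ and the non-vanishing scalar holomorphic function $z-a$, while the former at $z=a$ is precisely the definition of complex differentiability of $f$ at $a$, since $g(z)\to f'(a)$ as $z\to a$.

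Next, since $\ol{D}\subset U$ is compact and $U$ is open, I would pick a concentric open disk $D'$ with $\ol{D}\subset D'\subset U$; being a disk, $D'$ is star-shaped, and $\partial D\subset D'$ is a piecewise smooth closed curve. Because $g$ is continuous on $D'$ and holomorphic on $D'\setminus\{a\}$, \cref{thm:Cauchy}(2) applied to $g$ on $D'$ gives $\int_{\partial D}g=0$. The constant $f(a)\in E$ can be pulled out of an integral against a scalar function, because scalar multiplication by $f(a)$ is a continuous linear map $\bbC\to E$ and such maps commute with limits of Riemann sums, so the above identity rearranges to
\[
\int_{\partial D}\frac{f(z)}{z-a}\,dz \;=\; f(a)\int_{\partial D}\frac{dz}{z-a}.
\]

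Finally, I would show that the purely scalar integral $\int_{\partial D}dz/(z-a)$ equals $2\pi\sqrt{-1}$. When $a$ is the center of $D$ this is immediate by direct parametrization $z=a+Re^{\sqrt{-1}\,\theta}$, $\theta\in[0,2\pi]$. For general $a\in D$, the standard device is to choose $\epsilon>0$ with $\ol{D_\epsilon(a)}\subset D$ and to deform the contour from $\partial D$ to $\partial D_\epsilon(a)$: cut the annulus $D\setminus\ol{D_\epsilon(a)}$ by two parallel chords avoiding $a$, thereby writing the formal difference $\partial D-\partial D_\epsilon(a)$ as a sum of piecewise smooth closed curves each contained in a star-shaped subdomain of $\bbC\setminus\{a\}$, and apply the scalar specialization of \cref{thm:Cauchy}(2) (i.e.\ the case $E=\bbC$ with no exceptional point) to each piece. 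The main obstacle is precisely this final contour-deformation step: the version of Cauchy's theorem available permits only one exceptional point per star-shaped piece, so the annulus must be cut by hand into simply connected subregions before the theorem can be invoked. Once the scalar identity is established, substituting it into the displayed equation yields the desired formula.
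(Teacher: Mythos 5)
Your proposal follows the paper's own route essentially verbatim: the auxiliary function $g=\wt{f}$, the star-shaped enlargement $D'$ with $\ol{D}\subset D'\subset U$, and the appeal to \cref{thm:Cauchy}(2) to conclude $\int_{\partial D}g=0$ are exactly the steps the paper takes. Where you go beyond the paper is the final scalar integral $\int_{\partial D}dz/(z-a)=2\pi\sqrt{-1}$, which the paper asserts without justification; your concern that this needs an argument for non-central $a\in D$ is legitimate, and your cut-the-annulus scheme does work, but it imports more contour surgery than the appendix has set up. A cleaner route that stays within the tools already developed is to write $\frac{1}{z-a}=\sum_{n\ge0}(a-c)^{n}(z-c)^{-n-1}$, where $c$ is the center of $D$; this converges uniformly on $\partial D$ since $|a-c|<R=|z-c|$ there, so by \cref{prp:unifcompint} one may integrate term by term, reducing the claim to $\int_{\partial D}(z-c)^{-n-1}dz=2\pi\sqrt{-1}\,\delta_{n,0}$, which is a direct parametrization check. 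This is exactly the geometric-series mechanism the paper itself deploys in the proofs of \cref{thm:Taylor} and \cref{thm:Laurent}, so it fits the surrounding development more naturally than a deformation argument.
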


\begin{proof}
Let $a\in D$. Define a function $\wt{f}$ by
\[
\wt{f}\colon U\to E, \quad z\mapsto
\begin{cases}
\displaystyle\frac{f(z)-f(a)}{z-a} & (z\neq a) \\
f'(a) & (z=a),
\end{cases}
\]
then $\wt{f}$ is continuous on $U$ and holomorphic on $U\bs\{a\}$. Take an open disk $D'$ such that $\ol{D}\subset D'\subset U$. Then since $D'$ is star-shaped with respect to the center, we have by \cref{thm:Cauchy} (3), 
\[
\int_{\pdd D}\wt{f}=0, 
\]
and hence
\[
\int_{\pdd D}\frac{f(z)}{z-a}dz=
\int_{\pdd D}\frac{f(a)}{z-a}dz=2\pi\sqrt{-1}f(a). 
\qedhere
\]
\end{proof}

\subsection{Laurent series expansion}

Fix a quasi-complete Hausdorff locally convex space $E$ over $\bbC$. 

\begin{lem}\label{lem:powserunif}
Let $a\in \bbC$ and $R\in \bbR_{>0}$. For a sequence $\{a_n\}_{n\in \bbN}\subset E$, if the series $\sum_{n=0}^{\infty}(z-a)^na_n$ converges in $E$ for all $z\in D_R(a)$, then the series $\sum_{n=0}^{\infty}f_n$ of functions $f_n\colon D_R(a)\to E$, $z\mapsto (z-a)^na_n$ converges uniformly on compact sets. 
\end{lem}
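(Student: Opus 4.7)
The plan is to reduce to uniform convergence on a closed sub-disk, and then apply the Weierstrass M-test (\cref{prp:Mtest}). Fix a fundamental system $\Gamma$ of semi-norms for $E$.

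First, given a compact subset $K \subset D_R(a)$, the continuous function $z \mapsto |z-a|$ attains its supremum on $K$, and that supremum is strictly less than $R$. I will therefore choose $r, \rho \in \bbR_{>0}$ with
\[
\sup_{z \in K}|z-a| \le r < \rho < R.
\]
Since $K \subset \ol{D_r(a)}$, it suffices to prove that $\sum_n f_n$ converges uniformly on $\ol{D_r(a)}$.

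The next step is to observe that applying the hypothesis to $z = a + \rho \in D_R(a)$ forces $\sum_{n=0}^{\infty}\rho^n a_n$ to converge in $E$, hence its general term $\rho^n a_n$ tends to $0$ in $E$. By continuity of each semi-norm, $p(\rho^n a_n) \to 0$ in $\bbR$ for every $p \in \Gamma$, so
\[
M_p \ceq \sup_{n \in \bbN} p(\rho^n a_n) < \infty.
\]
For $z \in \ol{D_r(a)}$, this yields
\[
p(f_n(z)) = |z-a|^n p(a_n) \le r^n p(a_n) = \Bigl(\frac{r}{\rho}\Bigr)^n p(\rho^n a_n) \le M_p \Bigl(\frac{r}{\rho}\Bigr)^n,
\]
so that $p_{\ol{D_r(a)}}(f_n) \le M_p(r/\rho)^n$ in the notation of \cref{ss:unifconv}. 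Since $r/\rho < 1$, the series $\sum_n M_p (r/\rho)^n$ converges, and \cref{prp:Mtest} delivers the desired uniform convergence on $\ol{D_r(a)}$, and a fortiori on $K$.

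There is no real obstacle; the argument is the standard Abel-type trick of majorising a power series at an interior point of the disk by its values at a slightly larger radius. The only point worth flagging is the translation of $\rho^n a_n \to 0$ in $E$ into the scalar bound $\sup_n p(\rho^n a_n) < \infty$, but this is immediate since each $p \in \Gamma$ is continuous and convergent sequences of real numbers are bounded.
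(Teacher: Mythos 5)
Your proof is correct and takes essentially the same approach as the paper: pick an intermediate radius between the compact set and $R$, use convergence at a point of that modulus to bound $p(\rho^n a_n)$ by a constant $M_p$, majorise by the geometric series $M_p(r/\rho)^n$, and invoke the M-test (\cref{prp:Mtest}). The only cosmetic difference is that you pass through the closed disk $\ol{D_r(a)}\supset K$, while the paper works on $K$ directly with $r=\max_{z\in K}|z-a|$.
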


\begin{proof}
Let $K\subset D_R(a)$ be a compact set, and take a fundamental system $\Gamma$ of semi-norms for $E$. Set $r\ceq \max\{|z-a|\mid z\in K\}$, then $r<R$. Fix $z_0\in D_R(a)$ satisfying $r<|z_0-a|<R$. Since $\sum_{n=0}^{\infty}(z_0-a)^na_n$ converges, we have $\lim_{n\to \infty}(z_0-a)^na_n=0$. Hence, for $p\in \Gamma$, there exists $M_p\in \bbR_{>0}$ such that $p((z_0-a)^na_n)<M_p$ for all $n\in \bbN$. For $p\in \Gamma$, we have
\[
p((z-a)^na_n)=\frac{|z-a|^n}{|z_0-a|^n}p(|z_0-a|^na_n)\le 
\frac{r^n}{|z_0-a|^n}M_p \quad (z\in K).
\]
Thus, by \cref{prp:Mtest}, the series $\sum_{n\in \bbN}f_n$ converges uniformly on $K$. 
\end{proof}

\begin{thm}\label{thm:Taylor}
Let $U\subset \bbC$ be a domain, and $f\colon U\to E$ a holomorphic function. For $a\in U$ and $R\in \bbR_{>0}$ such that $D_R(a)\subset U$, there exists a unique expansion
\[
f(z)=\sum_{n=0}^{\infty}(z-a)^na_n \quad (z\in D_R(a)). 
\] 
Moreover, we have
\[
a_n=\frac{1}{2\pi\sqrt{-1}}\int_{\pdd D_r(a)}\frac{f(\zeta)}{(\zeta-a)^{n+1}}d\zeta \quad (r\in \bbR_{>0}, r<R). 
\]
for all $n\in \bbN$
\end{thm}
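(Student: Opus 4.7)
The plan is to adapt the classical complex-analytic Taylor expansion to the quasi-complete Hausdorff locally convex setting, using the Cauchy integral formula \cref{thm:Cauchyformu}, the Weierstrass-type M-test \cref{prp:Mtest}, and the uniform-convergence interchange with contour integrals \cref{prp:unifcompint} developed earlier in the appendix.

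First, I would fix $z\in D_R(a)$ and choose $r\in\bbR_{>0}$ with $|z-a|<r<R$. Then $\ol{D_r(a)}\subset D_R(a)\subset U$, so \cref{thm:Cauchyformu} applies and gives
\[
f(z)=\frac{1}{2\pi\sqrt{-1}}\int_{\pdd D_r(a)}\frac{f(\zeta)}{\zeta-z}d\zeta.
\]
For $\zeta\in\pdd D_r(a)$ the quotient satisfies $|(z-a)/(\zeta-a)|=|z-a|/r<1$, so one has the scalar-valued geometric expansion
\[
\frac{1}{\zeta-z}=\sum_{n=0}^{\infty}\frac{(z-a)^n}{(\zeta-a)^{n+1}}.
\]
Multiplying by $f(\zeta)$, I would then verify that the $E$-valued series $\sum_{n\ge 0} f(\zeta)(z-a)^n/(\zeta-a)^{n+1}$ converges uniformly in $\zeta\in\pdd D_r(a)$. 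Given a continuous seminorm $p$ in a fundamental system for $E$, the function $p\circ f$ is continuous on the compact set $\pdd D_r(a)$, hence bounded by some $M_p\in\bbR_{>0}$, and one obtains the seminorm estimate
\[
p\Bigl(\frac{f(\zeta)(z-a)^n}{(\zeta-a)^{n+1}}\Bigr)\le \frac{|z-a|^n}{r^{n+1}}M_p.
\]
Since $\sum_n|z-a|^n/r^{n+1}$ converges, \cref{prp:Mtest} yields the required uniform convergence. \Cref{prp:unifcompint} then allows me to interchange sum and integral, producing the expansion
\[
f(z)=\sum_{n=0}^{\infty}(z-a)^n\cdot \frac{1}{2\pi\sqrt{-1}}\int_{\pdd D_r(a)}\frac{f(\zeta)}{(\zeta-a)^{n+1}}d\zeta,
\]
which establishes existence together with the stated integral formula for the particular $r$ chosen.

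For uniqueness of the coefficients $a_n$ and independence of the integral formula from the choice of radius $r$, I would pass to scalars via the Hahn-Banach theorem, which guarantees that continuous linear functionals separate points of the Hausdorff locally convex space $E$. For any $\lambda\in E'$, the composite $\lambda\circ f\colon U\to\bbC$ is scalar-valued holomorphic, and the continuity of $\lambda$ combined with convergence of the $E$-valued Taylor series yields $\lambda(f(z))=\sum_n(z-a)^n\lambda(a_n)$. Classical uniqueness of scalar Taylor expansions, together with the scalar Cauchy formula for the coefficients, forces $\lambda(a_n)=\frac{1}{2\pi\sqrt{-1}}\int_{\pdd D_{r'}(a)}\lambda(f(\zeta))(\zeta-a)^{-n-1}d\zeta$ for every $r'\in(0,R)$, independently of $r'$. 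Since $\lambda$ is arbitrary and $E'$ separates points, both uniqueness of $a_n$ and the validity of the integral formula for every admissible $r$ follow at once.

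The main obstacle will be the rigorous interchange of the $E$-valued infinite series with the contour integral, which is delicate in general locally convex spaces; it is resolved precisely by the seminorm estimate above combined with the appendix's \cref{prp:Mtest} and \cref{prp:unifcompint}. A minor secondary subtlety is the appeal to Hahn-Banach for uniqueness, but this is entirely standard given that $E$ is Hausdorff and locally convex.
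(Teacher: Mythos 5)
Your proof is correct. The existence argument is essentially the same as the paper's: apply \cref{thm:Cauchyformu}, expand the Cauchy kernel geometrically, bound by seminorms, apply \cref{prp:Mtest} for uniform convergence, and swap sum and integral with \cref{prp:unifcompint}. Where you diverge is in handling uniqueness and the independence of the coefficient formula from the radius $r$. The paper establishes uniqueness by integrating the assumed expansion term by term (via \cref{lem:powserunif} and \cref{prp:unifcompint}) to pin down $a_n$, and it reconciles the two radii $r_0$ and $r$ by invoking \cref{thm:Cauchy} on the annular region. Your route instead passes to scalars with Hahn-Banach: since $E$ is Hausdorff locally convex, continuous functionals separate points, $\lambda\circ f$ is classically holomorphic for each $\lambda\in E'$, and the scalar Taylor theory then forces both uniqueness and $r$-independence. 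Your approach imports Hahn-Banach, which the paper's self-contained appendix avoids; in return, it sidesteps a delicate point in the paper's argument, namely that \cref{thm:Cauchy} is stated only for star-shaped domains, while the region between $\pdd D_{r_0}(a)$ and $\pdd D_r(a)$ is an annulus, so applying it there really requires an additional (standard but unstated) subdivision argument. Both routes are legitimate; yours trades self-containment for a cleaner treatment of the radius independence.
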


\begin{proof}
The uniqueness of expansion is clear. In fact, if we have an expansion $f(z)=\sum_{n=0}^{\infty}(z-a)^na_n$, then for $r\in \bbR_{>0}$ satisfying $r<R$, by \cref{lem:powserunif} and \cref{prp:unifcompint}, 
\[
\int_{\pdd D_r(a)}\frac{f(\zeta)}{(\zeta-a)^{n+1}}d\zeta=
\sum_{m=0}^{\infty}\int_{\pdd D_r(a)}(\zeta-a)^{m-n-1}a_md\zeta=
\sum_{m=0}^{\infty}2\pi\sqrt{-1}\delta_{m, n}a_m=2\pi\sqrt{-1}a_n. 
\]

We prove the existence of expansion. Fix $r\in \bbR_{>0}$ such that $r<R$. For each $z\in D_R(a)$, take $r_0\in \bbR_{>0}$ satisfying $|z-a|<r_0<R$, then we have by \cref{thm:Cauchyformu}, 
\begin{equation}\label{eq:fzintsum}
f(z)=\frac{1}{2\pi\sqrt{-1}}\int_{\pdd D_{r_0}(a)}\frac{f(\zeta)}{\zeta-a}d\zeta=
\frac{1}{2\pi\sqrt{-1}}\int_{\pdd D_{r_0}(a)}\sum_{n=0}^{\infty}\Bigl(\frac{z-a}{\zeta-a}\Bigr)^n\frac{f(\zeta)}{\zeta-a}d\zeta. 
\end{equation}
Note that for $p\in \Gamma$, where $\Gamma$ is a fundamental system of semi-norms for $E$, we have
\[
p\Bigl(\Bigl(\frac{z-a}{\zeta-a}\Bigr)^n\frac{f(\zeta)}{\zeta-a}\Bigr)\le
\frac{|z-a|^n}{r_0^{n+1}}\cdot\sup\{p(f(\zeta))\mid \zeta\in \pdd D_{r_0}(a)\}<\infty. 
\]
Thus, by \cref{prp:Mtest}, the series on the right hand side of \eqref{eq:fzintsum} converges uniformly on $\pdd D_{r_0}(a)$, and hence by \cref{prp:unifcompint}, 
\[
f(z)=
\sum_{n=0}^{\infty}(z-a)^n\frac{1}{2\pi\sqrt{-1}}\int_{\pdd D_{r_0}(a)}\frac{f(\zeta)}{(\zeta-a)^{n+1}}d\zeta=
\sum_{n=0}^{\infty}(z-a)^n\frac{1}{2\pi\sqrt{-1}}\int_{\pdd D_r(a)}\frac{f(\zeta)}{(\zeta-a)^{n+1}}d\zeta, 
\]
where we used \cref{thm:Cauchy} in the second equality. 
\end{proof}

\begin{lem}\label{lem:powserunif2}
Let $a\in \bbC$ and $R\in \bbR_{>0}$. For a sequence $\{a_n\}_{n\in \bbN}$, if the series $\sum_{n=0}^{\infty}(z-a)^{-n-1}a_n$ converges in $E$ for all $z\in D_R^{\times}(a)$, the series $\sum_{n\in \bbN}f_n$ of functions $f_n\colon D_R^{\times}(a)\to E$, $z\mapsto (z-a)^{-n-1}a_n$ converges uniformly on compact sets. 
\end{lem}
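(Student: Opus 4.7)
The plan is to mirror the proof of \cref{lem:powserunif} verbatim, but with the roles of ``large'' and ``small'' radii exchanged, since the series now involves negative powers of $z-a$.

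First, given any compact subset $K\subset D_R^{\times}(a)$, observe that the continuous function $z\mapsto|z-a|$ is strictly positive on $K$, hence attains a positive minimum $r\ceq \min\{|z-a|\mid z\in K\}>0$. This is the analogue of the maximum $r<R$ used in \cref{lem:powserunif}, but now on the ``inner'' side of the punctured disk. Then fix an auxiliary point $z_0\in \bbC$ satisfying $0<|z_0-a|<r$; such a $z_0$ automatically lies in $D_R^{\times}(a)$ because $r\le R$. By hypothesis the series $\sum_{n=0}^{\infty}(z_0-a)^{-n-1}a_n$ converges in $E$, so its general term tends to $0$ in $E$. Consequently, for every seminorm $p\in\Gamma$ in a fundamental system of seminorms for $E$, there exists $M_p\in \bbR_{>0}$ such that
\[
p\bigl((z_0-a)^{-n-1}a_n\bigr)\le M_p \quad (n\in\bbN).
\]

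The key estimate then proceeds as in \cref{lem:powserunif}, with the quotient inverted: for every $z\in K$ and every $n\in\bbN$ one has
\[
p\bigl((z-a)^{-n-1}a_n\bigr)
=\Bigl(\frac{|z_0-a|}{|z-a|}\Bigr)^{n+1}p\bigl((z_0-a)^{-n-1}a_n\bigr)
\le \Bigl(\frac{|z_0-a|}{r}\Bigr)^{n+1}M_p.
\]
Since $|z_0-a|/r<1$, the real-valued geometric series $\sum_{n\ge 0}(|z_0-a|/r)^{n+1}M_p$ converges. Applying the Weierstrass $M$-test \cref{prp:Mtest} to the restrictions $f_n|_K\colon K\to E$, with $M_p(n)\ceq (|z_0-a|/r)^{n+1}M_p$, yields the uniform convergence of $\sum_{n\in\bbN}f_n$ on $K$.

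The only conceptual step is choosing $r$ and $z_0$ correctly; after that, the argument is essentially identical to the power-series case. No genuine obstacle is anticipated: the quasi-completeness assumption on $E$ needed in \cref{prp:Mtest} is in force throughout \cref{s:App}, and compactness of $K$ inside the punctured disk (not just $D_R(a)$) is precisely what yields the positive lower bound $r$ used to dominate the terms.
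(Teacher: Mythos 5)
Your proof is correct and takes essentially the same approach as the paper: the same choice of $r=\min_{z\in K}|z-a|$, the same auxiliary point $z_0$ with $0<|z_0-a|<r$, the same dominating estimate with ratio $|z_0-a|/r<1$, and the same appeal to \cref{prp:Mtest}.
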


\begin{proof}
Let $K\subset D_R^{\times}(a)$ be a compact set, and take a fundamental system $\Gamma$ of semi-norms for $E$. Set $r\ceq \min\{|z-a|\mid z\in K\}$, then $0<r<R$. Fix $z_0\in D_R^{\times}(a)$ satisfying $0<|z_0-a|<r$. Since the series $\sum_{n=0}^{\infty}(z_0-a)^{-n-1}a_n$ converges, we have $\lim_{n\to \infty}(z_0-a)^{-n-1}a_n=0$. Hence, for $p\in \Gamma$, there exists $M_p\in \bbR_{>0}$ such that $p((z_0-a)^{-n-1}a_n)<M_p$ for all $n\in \bbN$. For $p\in \Gamma$, we have
\[
p((z-a)^{-n-1}a_n)=
\frac{|z_0-a|^{n+1}}{|z-a|^{n+1}}p((z_0-a)^{-n-1}a_n)\le 
\frac{|z_0-a|^{n+1}}{r^{n+1}}M_p \quad (z\in K).
\]
Thus, by \cref{prp:Mtest}, the series $\sum_{n\in \bbN}f_n$ converges uniformly on $K$. 
\end{proof}

\begin{thm}\label{thm:Laurent}
Let $a\in \bbC$ and $R\in \bbR_{>0}$. For a holomorphic function $f\colon D_R^{\times}(a)\to E$, there exists a unique expansion, called the Laurent series expansion
\[
f(z)=\sum_{n=0}^{\infty}(z-a)^{-n-1}a_{-n-1}+\sum_{n=0}^{\infty}(z-a)^na_n \quad (z\in D_R^{\times}(a)). 
\]
Moreover, we have
\[
a_n=\frac{1}{2\pi\sqrt{-1}}\int_{\pdd D_r(a)}\frac{f(\zeta)}{(\zeta-a)^{n+1}} \quad (r\in \bbR_{>0}, r<R). 
\]
for all $n\in \bbZ$. 
\end{thm}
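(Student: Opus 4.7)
The plan is to prove uniqueness and existence separately, adapting the strategy of \cref{thm:Taylor} to the punctured disk $D_R^{\times}(a)$.

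For uniqueness, I would assume $f(z)=\sum_{n=0}^{\infty}(z-a)^{-n-1}b_{-n-1}+\sum_{n=0}^{\infty}(z-a)^nb_n$ on $D_R^{\times}(a)$, fix $r\in\bbR_{>0}$ with $r<R$ and $n\in\bbZ$, and integrate both halves of the series termwise against $(\zeta-a)^{-n-1}$ along $\pdd D_r(a)$. Each series converges uniformly on the compact contour by \cref{lem:powserunif} and \cref{lem:powserunif2}, a property preserved after multiplication by the bounded scalar weight $(\zeta-a)^{-n-1}$, so \cref{prp:unifcompint} legitimizes the exchange of sum and integral. The classical identity $\int_{\pdd D_r(a)}(\zeta-a)^{k}d\zeta=2\pi\sqrt{-1}\,\delta_{k,-1}$ then isolates $b_n$, proving uniqueness together with the claimed contour formula; independence of the choice of $r$ falls out automatically once existence has been shown.

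For existence, my central step is a vector-valued Cauchy integral formula on an annulus: for any $z$ with $r_1<|z-a|<r_2<R$,
\[
f(z)=\frac{1}{2\pi\sqrt{-1}}\int_{\pdd D_{r_2}(a)}\frac{f(\zeta)}{\zeta-z}d\zeta-\frac{1}{2\pi\sqrt{-1}}\int_{\pdd D_{r_1}(a)}\frac{f(\zeta)}{\zeta-z}d\zeta.
\]
To build this I would first establish an annular Cauchy theorem, namely $\int_{\pdd D_{\rho_1}(a)}G=\int_{\pdd D_{\rho_2}(a)}G$ for every $E$-valued holomorphic $G$ on $D_R^{\times}(a)$ and $0<\rho_1<\rho_2<R$, by slicing the closed annulus along six radial segments at angles $k\pi/3$. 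Each resulting curvilinear piece sits inside a set of the form $H_k\cap D_R(a)$, where $H_k$ is a rotated half-plane not containing $a$; this intersection is convex, hence star-shaped, and lies in $D_R^{\times}(a)$. Applying \cref{thm:Cauchy} to the boundary of each piece and summing, the radial-segment contributions cancel in pairs, leaving the desired annular identity. Running the same decomposition on the doubly punctured annulus $\{r_1<|\zeta-a|<r_2\}\setminus\{z\}$, first carving out a small disk $D_\delta(z)$ and covering the remainder by finitely many convex open pieces avoiding both $a$ and $z$, yields the three-term identity $\int_{\pdd D_{r_2}(a)}G-\int_{\pdd D_{r_1}(a)}G=\int_{\pdd D_\delta(z)}G$ for $G(\zeta)=f(\zeta)/(\zeta-z)$, and \cref{thm:Cauchyformu} evaluates the right-hand integral as $2\pi\sqrt{-1}f(z)$.

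With the annular integral formula in hand, the Laurent expansion emerges from the geometric kernel expansions
\[
\frac{1}{\zeta-z}=\sum_{n=0}^{\infty}\frac{(z-a)^n}{(\zeta-a)^{n+1}}\ \ (|\zeta-a|=r_2),\qquad \frac{1}{\zeta-z}=-\sum_{n=0}^{\infty}\frac{(\zeta-a)^n}{(z-a)^{n+1}}\ \ (|\zeta-a|=r_1),
\]
multiplied by the bounded vector-valued factor $f(\zeta)$. Both series converge uniformly on the respective circles by \cref{prp:Mtest}, so \cref{prp:unifcompint} permits term-by-term integration and produces exactly the two halves of the Laurent expansion, with coefficients matching the formula obtained in the uniqueness step. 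The hardest part will be the annular Cauchy theorem itself: \cref{thm:Cauchy} is available only on star-shaped domains, whereas an annulus (let alone a doubly punctured annulus) is not star-shaped, so the combinatorial bookkeeping of the six-piece decomposition, verifying that each piece lies in a convex open subset of $D_R^{\times}(a)\setminus\{z\}$ and that all interior-segment contributions cancel with the correct orientations, is the technical heart of the argument. Everything else is a direct adaptation of the proof of \cref{thm:Taylor}.
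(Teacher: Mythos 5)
Your proposal takes the same overall route as the paper: prove uniqueness by termwise contour integration (justified by \cref{lem:powserunif}, \cref{lem:powserunif2}, and \cref{prp:unifcompint}), and prove existence by establishing an annular Cauchy integral formula and then expanding the geometric kernels. The one genuine difference is that you explicitly fill in a step the paper glosses over. The paper's proof of \cref{thm:Laurent} simply writes down the identity
\[
f(z)=-\frac{1}{2\pi\sqrt{-1}}\int_{\pdd D_{r_1}(a)}\frac{f(\zeta)}{\zeta-z}\,d\zeta+\frac{1}{2\pi\sqrt{-1}}\int_{\pdd D_{r_2}(a)}\frac{f(\zeta)}{\zeta-z}\,d\zeta
\]
and at the end replaces $r_1,r_2$ by a fixed $r$, both times citing ``\cref{thm:Cauchy} and \cref{thm:Cauchyformu}.'' But \cref{thm:Cauchy} is stated only for \emph{star-shaped} domains and \cref{thm:Cauchyformu} requires $f$ holomorphic on a full disk, so neither applies directly to the annulus $\{r_1<|\zeta-a|<r_2\}$: the appeal to those results hides exactly the decomposition-into-convex-pieces argument you make explicit with the six radial cuts (each sector of the annulus lying in a convex set $H_k\cap D_R(a)$ omitting $a$, hence star-shaped, with the radial contributions cancelling in pairs). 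Your plan to handle the integral formula itself by carving out $D_\delta(z)$ and covering the remaining region by convex pieces is the standard keyhole argument and works; a mild shortcut in that second step is to apply the annular equality not to $f(\zeta)/(\zeta-z)$ but to the continuous function $\tilde f(\zeta)\coloneqq (f(\zeta)-f(z))/(\zeta-z)$ (extended by $f'(z)$ at $\zeta=z$), holomorphic off $\{z\}$, which reduces the bookkeeping by reusing the already-proved annular Cauchy theorem together with the scalar identity $\int_{\pdd D_{r_2}(a)}(\zeta-z)^{-1}d\zeta=2\pi\sqrt{-1}$, $\int_{\pdd D_{r_1}(a)}(\zeta-z)^{-1}d\zeta=0$. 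In short: your argument is correct, agrees with the paper's structure, and honestly addresses a detail the paper leaves implicit.
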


\begin{proof}
As in the proof of \cref{thm:Taylor}, one can prove the uniqueness of expansion using \cref{lem:powserunif} and \cref{lem:powserunif2}.

We prove the existence of expansion. Fix $r\in \bbR_{>0}$ such that $r<R$. For $z\in D_R^{\times}(a)$, take $r_1, r_2\in \bbR_{>0}$ satisfying $r_1<|z-a|<r_2<R$, then by \cref{thm:Cauchy} and \cref{thm:Cauchyformu}, 
\begin{equation}\label{eq:Laurent}
\begin{aligned}
f(z)
&=
-\frac{1}{2\pi\sqrt{-1}}\int_{\pdd D_{r_1}(a)}\frac{f(\zeta)}{\zeta-z}d\zeta+
\frac{1}{2\pi\sqrt{-1}}\int_{\pdd D_{r_2}(a)}\frac{f(\zeta)}{\zeta-z}d\zeta\\
&=
\frac{1}{2\pi\sqrt{-1}}\int_{\pdd D_{r_1}(a)}\sum_{n=0}^{\infty}\Bigl(\frac{z-a}{\zeta-a}\Bigr)^n\frac{f(\zeta)}{\zeta-a}d\zeta+
\frac{1}{2\pi\sqrt{-1}}\int_{\pdd D_{r_2}(a)}\sum_{n=0}^{\infty}\Bigl(\frac{\zeta-a}{z-a}\Bigr)^n\frac{f(\zeta)}{z-a}d\zeta. 
\end{aligned}
\end{equation}
As in the proof of \cref{thm:Taylor}, one can show that the series on the right hand side of \eqref{eq:Laurent} converge uniformly on $\pdd D_{r_1}(a)$ and $\pdd D_{r_2}(a)$ respectively. Thus, by \cref{prp:unifcompint}, 
\begin{align*}
f(z)
&=
\sum_{n=0}^{\infty}(z-a)^{-n-1}\frac{1}{2\pi\sqrt{-1}}\int_{\pdd D_{r_1}(a)}(\zeta-a)^nf(\zeta)d\zeta+
\sum_{n=0}^{\infty}(z-a)^n\frac{1}{2\pi\sqrt{-1}}\int_{\pdd D_{r_2}(a)}\frac{f(\zeta)}{(\zeta-a)^{n+1}}d\zeta\\
&=
\sum_{n=0}^{\infty}(z-a)^{-n-1}\frac{1}{2\pi\sqrt{-1}}\int_{\pdd D_r(a)}(\zeta-a)^nf(\zeta)d\zeta+
\sum_{n=0}^{\infty}(z-a)^n\frac{1}{2\pi\sqrt{-1}}\int_{\pdd D_r(a)}\frac{f(\zeta)}{(\zeta-a)^{n+1}}d\zeta,
\end{align*}
where we used \cref{thm:Cauchy} in the second equality. 
\end{proof}

\begin{dfn}
Let $a\in \bbC$ and $R\in \bbR_{>0}$. For a holomorphic function $f\colon D_R^{\times}(a)\to E$, denote by
\[
f(z)=\sum_{n=0}^{\infty}(z-a)^{-n-1}a_{-n-1}+\sum_{n=0}^{\infty}(z-a)^na_n \quad (z\in D_R^{\times}(a)). 
\]
the Laurent series expansion. 
\begin{enumerate}
\item 
The point $a$ is called a removable singularity of $f$ if $a_n=0$ for all negative integers $n<0$. 

\item 
The point $a$ is called a pole of $f$ if there exists a negative integer $n<0$ such that $a_n\neq 0$ and $a_m=0$ for all $m<n$. In this case, the element $\Res_{z=a}f(z)\ceq a_{-1}\in E$ is called the residue of $f$ at $a$. 

\item 
The point $a$ is called a essential singularity of $f$ if it is neither removable or a pole. 
\end{enumerate}
\end{dfn}

\begin{thm}\label{thm:remov}
Let $a\in \bbC$ and $R\in \bbR_{>0}$. For a holomorphic function $f$, the followings are equivalent: 
\begin{clist}
\item 
The point $a$ is a removable singularity of $f$. 

\item 
The net $\{f(z)\}_{z\in D_R(a)\bs\{a\}}$ converges in $E$ (see \cref{ss:holofunc} for the directed order on $D_R(a)\bs\{a\}$). 

\item
For $p\in \Gamma$, where $\Gamma$ is a fundamental system of semi-norms for $E$, there exists a positive real number $0<r<R$ such that $\{p(f(z))\mid z\in D_r^{\times}(a)\}\subset \bbR$ is bounded. 
\end{clist}
\end{thm}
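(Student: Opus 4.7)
The plan is to establish the cycle (i) $\Rightarrow$ (ii) $\Rightarrow$ (iii) $\Rightarrow$ (i), with (iii) $\Rightarrow$ (i) being the substantive direction that exploits the integral formula for the Laurent coefficients.

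For (i) $\Rightarrow$ (ii), assume the negative part of the Laurent expansion vanishes, so $f(z) = \sum_{n=0}^{\infty}(z-a)^n a_n$ on $D_R^\times(a)$. I would define $\tilde{f}\colon D_R(a) \to E$ by the same formula, which makes sense at $z=a$ with value $a_0$. By \cref{lem:powserunif} applied on $D_R(a)$, this series converges uniformly on compact subsets of $D_R(a)$, so by \cref{prp:unifconvcont} (applied in the locally uniform sense) $\tilde{f}$ is continuous on $D_R(a)$. In particular, the net $\{f(z)\}_{z \in D_R(a)\setminus\{a\}} = \{\tilde{f}(z)\}_{z \in D_R(a) \setminus \{a\}}$ converges to $\tilde{f}(a) = a_0$ by \cref{lem:netUbsa}.

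For (ii) $\Rightarrow$ (iii), let $\xi \in E$ be the limit of $\{f(z)\}_{z\in D_R(a)\setminus\{a\}}$. Given $p\in\Gamma$, apply \cref{lem:netUbsa} with $\varepsilon = 1$ to obtain $\delta\in \bbR_{>0}$ with $p(f(z)-\xi)<1$ whenever $z\in D_R(a)\setminus\{a\}$ and $|z-a|<\delta$. Picking any $r\in\bbR_{>0}$ with $r<\min\{\delta, R\}$, the triangle inequality gives $p(f(z)) < 1+p(\xi)$ for all $z\in D_r^\times(a)$, establishing the required boundedness.

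For (iii) $\Rightarrow$ (i), I would invoke the integral formula from \cref{thm:Laurent}: for every $n\in\bbN$ and every $r\in\bbR_{>0}$ with $r<R$,
\[
a_{-n-1} = \frac{1}{2\pi\sqrt{-1}}\int_{\pdd D_r(a)} (\zeta - a)^n f(\zeta)\,d\zeta.
\]
Fix $p\in\Gamma$, and let $r_0\in\bbR_{>0}$ and $M_p\in\bbR_{>0}$ be such that $p(f(z))\le M_p$ for all $z\in D_{r_0}^\times(a)$ (the existence of $M_p$ follows from (iii); replacing $M_p$ by $M_p+1$ if necessary, one may even take $r_0<R$ without loss). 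For any $0<r<r_0$, the estimate in \cref{lem:inttriaineq}, extended along the curve $\pdd D_r(a)$, yields
\[
p(a_{-n-1}) \le \frac{1}{2\pi}\cdot r^n \cdot M_p \cdot 2\pi r = M_p\, r^{n+1}.
\]
Letting $r\to 0$ with $n\ge 0$ fixed gives $p(a_{-n-1})=0$. Since this holds for every $p$ in the fundamental system $\Gamma$ and $E$ is Hausdorff, we conclude $a_{-n-1}=0$ for all $n\in\bbN$, so $a$ is a removable singularity of $f$.

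The main obstacle is the implication (iii) $\Rightarrow$ (i), but the difficulty is almost entirely bookkeeping: once one has the Laurent coefficient formula from \cref{thm:Laurent} and the seminorm-integral inequality from \cref{lem:inttriaineq}, the argument reduces to the classical scalar estimate, with the Hausdorff hypothesis on $E$ providing the final step of annihilating each $a_{-n-1}$ from the vanishing of all seminorms.
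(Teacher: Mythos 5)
Your proposal is correct and follows essentially the same route as the paper: the cycle (i)$\Rightarrow$(ii)$\Rightarrow$(iii)$\Rightarrow$(i), with (i)$\Rightarrow$(ii) via local uniform convergence of the power series and \cref{prp:unifconvcont}, (ii)$\Rightarrow$(iii) by a direct $\varepsilon$-argument from the limit, and (iii)$\Rightarrow$(i) by estimating $p(a_{-n-1}) \le M_p r^{n+1}$ from the Laurent coefficient integral formula and letting $r\to 0$. The only cosmetic difference is that the paper's final estimate invokes the (unlabeled) seminorm bound for curve integrals rather than \cref{lem:inttriaineq} directly, but the substance is identical.
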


\begin{proof}
Denote by $f(z)=\sum_{n=0}^{\infty}(z-a)^{-n-1}a_{-n-1}+\sum_{n=0}^{\infty}(z-a)^na_n$ ($z\in D_R^{\times}(a))$ the Laurent series expansion. 

(i) $\Longrightarrow$ (ii): Since $a$ is removable, we have 
\[
f(z)=\sum_{n=0}^{\infty}(z-a)^na_n \quad (z\in D_R^{\times}(a)).
\]
By \cref{prp:unifconvcont}, the function of $z$ on the right hand side is continuous. Thus, 
\[
\lim_{z\in D_R(a)\bs\{a\}}f(z)=a_0\in E. 
\]

(ii) $\Longrightarrow$ (iii): Set $\xi\ceq \lim_{z\in D_R(a)\bs\{a\}}f(z)\in E$. For $p\in \Gamma$, there exists $r\in \bbR_{>0}$ such that 
\[
z\in D_R(a)\bs\{a\},\, |z-a|<r\, \Longrightarrow\, 
p(f(z)-\xi)<1. 
\]
Thus, we have $p(f(z))<1+p(f(\xi))$ for all $z\in D_r^{\times}(a)$. 

(iii) $\Longrightarrow$ (i): Let $n\in \bbN$. It is enough to show that $p(a_{-n-1})=0$ for all $p\in \Gamma$. By the condition (iii), there exists a positive real number $0<r_0<R$ and $M\in \bbR_{>0}$ such that $p(f(z))<M$ for $z\in D_{r_0}^{\times}(a)$. For $\ve\in \bbR_{>0}$, take $r\in \bbR_{>0}$ such that $r<r_0$ and $Mr^{n+1}<\ve$, then
\[
p(a_{-n-1})=
p\Bigl(\frac{1}{2\pi\sqrt{-1}}\int_{\pdd D_r(a)}(\zeta-a)^nf(\zeta)d\zeta\Bigr)\le 
\frac{Mr^n}{2\pi}l(\pdd D_r(a))=
Mr^{n+1}<\ve, 
\]
which shows $p(a_{-n-1})=0$. 
\end{proof}


\end{document}